\title{$P$-trivial MMP, Zariski decompositions and minimal models for generalised pairs}
\author{Zhengyu Hu}
\date{2025/01/13}
\keywords{}
\subjclass[2010]{Primary: 14E30, 14J17}
\address{Mathematical Sciences Research Center, Chongqing University of Technology, No.69 Hongguang Avenue, Chongqing, 400054, China}
\email{zhengyuhu16@gmail.com}
\newcommand{\Supp}[0]{{\operatorname{Supp}}}
\DeclareMathOperator{\mult}{mult}
\DeclareMathOperator{\ex}{Ex}
\newtheorem{thm}{Theorem}[section]
\newtheorem{lem}[thm]{Lemma}
\newtheorem{cor}[thm]{Corollary}
\newtheorem{prop}[thm]{Proposition}
\newtheorem{conj}[thm]{Conjecture}
\theoremstyle{definition}
\newtheorem{defn}[thm]{Definition}
\newtheorem{rem}[thm]{Remark}
\newtheorem{quest}[thm]{Question}
\newtheorem{exa}[thm]{Example}
\newtheorem*{claim*}{Claim}
\newcommand{\K}{\mathbb K}
\newcommand{\Q}{\mathbb Q}
\newcommand{\R}{\mathbb R}
\newcommand{\Z}{\mathbb Z}
\newcommand{\bir}{\dashrightarrow}
\newcommand{\rddown}[1]{\left\lfloor{#1}\right\rfloor} 
\begin{document}

\maketitle

\begin{abstract}
   We develop a theory of $P$-trivial MMP whose each step is $P$-trivial for a given nef divisor $P$. As an application, we prove that, given a projective generalised klt pair $(X,B+M)$ with data $M'$ being just a nef $\R$-divisor, if $K_X+B+M$ birationally has a Nakayama-Zariski decomposition with nef positive part, and either if $M'$ or the positive part is log numerically effective, then it has a minimal model. Furthermore, we prove this for generalised lc pairs in dimension $3$. This is a generalisation of the main theorem of \cite{bhzariski}. We also prove some related results.
\end{abstract}

\tableofcontents

\section{Introduction}\label{sec1}

We work over an algebraically closed uncountable field $k$ of characteristic zero. All varieties are
quasi-projective over $k$ and a divisor refers to an $\R$-Weil divisor unless stated otherwise.

Extending results from usual pairs and varieties to the setting of generalised pairs \cite{birkarzhang} is an important subject in recent birational geometry. Generalised pairs naturally appear from canonical bundle formulas for several certain classes of algebraic fibred space and to which a minimal model theory can apply. This notion plays a crucial role in proving major conjectures, such as BAB Conjecture \cite{birkar-BAB, Bir21a} and M\textsuperscript{c}Kernan–Shokurov Conjecture \cite{Bir23}. More importantly, there are unexpected new phenomena occur in the geometry of generalised pairs.

In this paper we mainly focus on generalised pairs $(X,B+M)$ with data $M'$ being just a nef $\R$-divisor. Many results in the literature are proved assuming $M'$ is a $\Q$-divisor or slightly weaker condition such as NQC ($M'=\sum_{i} \alpha_i M_i'$ is a convex combination of nef $\Q$-divisors) because this allows us to use similar techniques from the theory for usual pairs (\cite{CLX23, LX23a, LX23b,txie,Xie24}, etc.). Besides, there are some new geometry occurs only in the class of non-NQC generalised pairs. For example, the numerical nonvanishing fails in general without an assumption on data (see Example \ref{exa: nonvanishing fail}). We also remark that non-NQC generalised has a naturally connection to the birational geometry of Kähler varieties (\cite{DH23, DHY23, DH24, HP24}), the study of foliations (\cite{chlx}, etc.) and other related topics (see \cite{Bir21b} and references therein). 

Given a projective generalised lc (g-lc for short) pair $(X,B+M)$, if it has a minimal model, then $K_X+B+M$ birationally has a Nakayama-Zariski decomposition with nef positive part. We may ask if the converse direction is true.

\begin{conj}\label{conj-nz-decomp}
	Let $(X,B+M)$ be a projective g-lc pair. Suppose there is a birational model $\pi: X' \to X$ such that $P_\sigma(\pi^*(K_X+B+M))$ is nef. Then, $(X,B+M)$ has a minimal model.
\end{conj}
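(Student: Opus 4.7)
The plan is to construct a minimal model by running a $P$-trivial MMP that contracts precisely the negative part of the Nakayama--Zariski decomposition. Replacing $(X,B+M)$ by a suitable crepant model over $\pi: X' \to X$, we may assume that $P:=P_\sigma(K_X+B+M)$ is already nef on $X$, with decomposition $K_X+B+M = P+N$ where $N=N_\sigma(K_X+B+M)\ge 0$. A minimal model is reached exactly when $N$ becomes zero after a sequence of birational modifications.

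The first substantive step is to produce a $(K_X+B+M)$-negative extremal ray $R$ that is $P$-trivial. Since $P$ is nef and $N$ is effective with support characterised by the $\sigma$-decomposition, any $(K_X+B+M)$-negative ray must intersect some component of $N$ positively and be trivial against $P$; this is precisely the set-up for invoking the $P$-trivial MMP theory developed in the body of the paper. The second step is to verify that after each contraction or flip, $P$ descends to (or lifts to, across the flip) a nef divisor that agrees with the new $P_\sigma$. This compatibility is formally a feature of the $\sigma$-decomposition, but needs a careful check for generalised pairs whose data $M'$ is only nef, since base-point-free techniques are not available.

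The principal obstacle is termination. A $P$-trivial MMP touches only the negative part $N$, which suggests an induction on a lexicographic invariant of the multiplicities of $N$ or on the number of its components. A natural approach is to run the MMP with scaling of an ample divisor, confine the sequence to a fixed supporting set after passing to a subsequence of scaling parameters, and then combine special termination along components of $N$ with induction on dimension. This is where the extra hypotheses stated in the abstract enter: when $M'$ is log numerically effective, or when the positive part itself is log nef, one recovers enough NQC-type control to invoke finiteness of minimal models and bound the MMP length. Without such a hypothesis I do not see how to force termination, and I expect this to be the genuine difficulty in the conjecture.

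A complementary issue is the non-NQC nature of $M'$. The standard workaround is to approximate $M'$ by a sequence of NQC nef divisors $M'_i \to M'$, produce minimal models for each $(X,B+M_i)$, and pass to a limit; but the existence of these minimal models is itself non-trivial and the limit MMP may fail to terminate. A viable strategy is therefore to bootstrap from the klt and three-dimensional lc results claimed in the abstract, then propagate termination via a perturbation-and-lift argument inside the $P$-trivial MMP framework. In summary, steps one and two should follow from the $P$-trivial machinery plus standard properties of the $\sigma$-decomposition; the essential difficulty is termination, which in full generality appears to require a nontrivial extension of the techniques used for the partial results.
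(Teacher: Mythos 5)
You should first note that the statement you were asked to prove is a \emph{conjecture}: the paper does not prove it in general, and says explicitly that it is only established in special cases (g-klt with a weak nonvanishing hypothesis, dimension $3$, log abundant data, etc.). So the fact that your proposal does not close the argument is not by itself a defect, and your diagnosis that termination is the essential open difficulty, to be handled by special termination along $\rddown{B}$ plus induction on dimension under log effectivity/abundance hypotheses, matches the paper's actual strategy for the cases it does settle (Theorems \ref{thm-sp-term}, \ref{thm-degen}, \ref{thm-dlt'}, \ref{thm-dlt''}).

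There is, however, a concrete false step earlier in your outline, before termination even becomes the issue. You assert that since $P$ is nef and $N\ge 0$, ``any $(K_X+B+M)$-negative ray must intersect some component of $N$ positively and be trivial against $P$.'' Nefness of $P$ only gives $P\cdot R\ge 0$ and hence $N\cdot R<0$ for a negative extremal ray $R$; it does not force $P\cdot R=0$, and in general one cannot even select a $P$-trivial negative ray at each step. Producing $P$-trivial steps is itself the first nontrivial obstacle, and it is precisely where the non-NQC nature of the data bites: one needs the set $\{P\cdot C>0\}$, with $C$ ranging over extremal curves of minimal length, to be bounded away from zero, which by Example \ref{exa: nonvanishing fail} can fail for a nef $\R$-divisor on a variety with irrational nef boundary rays. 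The paper only obtains this boundedness under additional hypotheses such as $P\equiv D\ge 0$, $M\equiv D\ge 0$, or $\Supp\{P\}\subseteq\Supp\{B\}$ (Lemma \ref{lem-P-trivial}, Theorem \ref{thm-klt}), and in the lc case it must replace the naive ``run an MMP on $K_X+B+M+\alpha P$ for fixed $\alpha\gg 0$'' by the $P$-trivial MMP with step-dependent, unbounded coefficients $\alpha_i$, together with the notion of degeneration (Definition \ref{defn-degen}) to recover a genuine MMP with a single $\alpha$. Your proposal treats this selection of $P$-trivial rays as automatic, which it is not; correcting this would require importing exactly the hypotheses of the paper's partial results.
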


The above conjecture is established by C. Birkar and the author in \cite{bhzariski} when $M=0$. The main technique is to run an MMP on $K_X+B$ which is also $P$-trivial, where $P$ is the nef positive part. The same technique works if $M$ or $P$ is \emph{NQC}, that is, $M$ or $P$ can be written as a convex combination of nef $\Q$-divisors (cf. \cite[Theorem I]{txie}). Note that, \cite[Theorem I]{txie} requires both $M$ and $P$ are NQC. In fact it is sufficient to assume one of them to be NQC without any further technique (see Theorem \ref{thm-klt}(2)). Besides, if we assume a certain positivity for the boundary or the data, then we can expect a good minimal model (for example, $B\ge A \ge 0$ for some ample divisor $A$, see \cite{LX23b, Xie24,chlx}).  However, to the best of my knowledge, I have no idea if the conjecture should be expected in general. In this paper, we will establish it in several special cases.

We note that, one subtle property of non-NQC g-pairs is that the numerical nonvanishing fails. The next example is due to J. Han and W. Liu \cite[Example 1.3]{hanliu} which is originally given on Mathoverflow \cite{vddb}. 

\begin{exa}\label{exa: nonvanishing fail}
	Let $E$ be a non-CM elliptic curve, and let $X = E \times E$. Then $\mathrm{NS}(X)=\Z F_1 + \Z F_2 +\Z \Delta$ where $F_1 = \{P\} \times E$, $F_2 = E \times \{P\}$, $P \in E$ is a fixed point and $\Delta$ is the diagonal. Moreover, the intersection matrix with respect to the basis $(F_1,F_2,\Delta)$ is given by
	$$\begin{pmatrix}
		0 & 1 & 1 \\
		1 & 0 & 1\\
		1 & 1 & 0
	\end{pmatrix}$$
	and the nef cone is given by $D^2 \ge 0$ and $D\cdot H \ge 0$ where $H=F_1+F_2+\Delta$, that is,
	\begin{equation*}
		xy+yz+zx \ge 0, \text{~~and ~~}
 		x+y+z \ge 0.
	\end{equation*}
	For a proof, see \cite[Lemma 1.5.4]{lazarsfeld}.
	
	The key observation is that the boundary of the nef cone contains rays that are not defined over $\Q$. For example, take $M = F_1+\sqrt{2} F_2+ (\sqrt{2}- 2)\Delta$. By calculation $M$ is nef. Since $\sqrt{2}$ is irrational, for the open interval $(0,1] $, there are $m,n\in \Z_{>0}$ such that $ n- m\sqrt{2} \in (0,1]$ is dense. Consider the element $D=(m,n,n- 2m)$. One calculates that $D^2=2(n^2-2m^2) >0$ and $C \cdot H=2(2n-m) >0$. Hence $D$ is ample. By Riemann-Roch Theorem $\chi(D)=\frac{1}{2}D^2$ which gives $h^0(D) >0$. So we can assume $D$ is effective.
	
	Finally, we compute $M\cdot D=2 \sqrt{2}(n- m\sqrt{2})$ which is dense in $(0,2 \sqrt{2})$. So we deduce $\{M \cdot C>0|C\text{ is a curve}\}$ is not bounded away from zero. (The above construction is from \cite{vddb}). In particular $K_X+M$ is not numerically effective. Suppose on the contrary there exists an effective $\R$-Cartier divisor $G$ such that $K_X +M \equiv G$. Then the set $\{M \cdot C >0 \} $ is bounded away from zero which is a contradiction.
\end{exa}

Note that in the above example, all curves are $K_X$-trivial but not $K_X$-negative. We do not know if a similar phenomena would happen for $K_X$-negative extremal curves at this point. Given a g-lc pair $(X,B+M)$, we believe there is a deep relation between the set $\{P \cdot C \} \subset \R$, where $P$ is a nef divisor and $C$ runs over all extremal curves, and the property of $P$ and the relation between $P$ and the g-pair. In this paper, we try to establish the relation between (weak) nonvanishing and Conjecture \ref{conj-nz-decomp}. \\

\noindent \textbf{MMP on $K_X+B+M+\alpha P$ for $\alpha \gg 0$.}
A classical technique developed in \cite{bhzariski} is to run an MMP on $K_X+B+M+\alpha P$ for $\alpha \gg 0$, which is $P$-trivial if the set $\{P\cdot C >0\}$ is bounded away from zero where $C$ runs over all extremal curves.

\begin{thm}[\text{=Lemma \ref{lem-P-trivial}+Corollary \ref{cor-klt}}]\label{main-klt}
	Let $(X/Z,B+M)$ be a $\Q$-factorial g-dlt pair and $P$ be a nef$/Z$ divisor. Write $N=K_X+B+M-P$. Suppose one of the followings holds:
	\begin{enumerate}
		\item[(1a)] $(X/Z,B+M)$ is g-klt and $P \equiv D \ge 0$; or 
		
		\item[(1b)] $(X/Z,B+M)$ is g-klt and $M \equiv D$, where $D\ge 0$, $N$ is a $\Q$-Cartier $\Q$-divisor and $s \in \Q_{\ge 0}$; or
		
		\item[(2)] $\Supp \{P \} \subseteq \Supp \{B\} $.
	\end{enumerate}
	Then, there is a number $\alpha >0 $ such that any MMP on $K_X+B+M + a P$ is $P$-trivial for $a \ge \alpha$.
\end{thm}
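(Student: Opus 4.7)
The strategy is to produce, via a length-of-extremal-rays estimate, a universal upper bound on $P\cdot C$ along the MMP, and then separately exhibit a uniform positive lower bound $\epsilon$ on the nonzero values of $P\cdot C$ under each hypothesis.

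First I would run any MMP on $K_X+B+M+aP$ and pick up an extremal ray $R$ at some step. Since $P$ is nef$/Z$ and $(K_X+B+M+aP)\cdot R<0$, the ray is also $(K_X+B+M)$-negative. The length-of-extremal-rays theorem for the $\Q$-factorial g-dlt pair $(X/Z,B+M)$ then produces a rational curve $C$ with $[C]\in R$ and $0<-(K_X+B+M)\cdot C\le 2\dim X$. Combined with $(K_X+B+M+aP)\cdot C<0$ this gives
\[
 a\,P\cdot C \;<\; -(K_X+B+M)\cdot C \;\le\; 2\dim X,
\]
so $P\cdot C<2\dim X/a$. Consequently, if we can find $\epsilon>0$ such that $P\cdot C\ge\epsilon$ whenever $P\cdot C>0$ for every such extremal curve $C$ arising in any MMP step, then taking $\alpha\ge 2\dim X/\epsilon$ forces $P\cdot R=0$, and the MMP is $P$-trivial. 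Note that g-dlt is preserved along the MMP, so the length estimate applies uniformly.

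The remaining task is the lower bound, which has to be tailored to each hypothesis since, by Example \ref{exa: nonvanishing fail}, the values $P\cdot C$ for a general nef $\R$-divisor $P$ may accumulate at $0$. In case (2), using $\Supp\{P\}\subseteq\Supp\{B\}$, I would decompose $P=\lfloor P\rfloor+\{P\}$ and perturb the coefficients of the components of $\{P\}$, which are already contained in $\Supp B$, to replace $B+M$ by a $\Q$-boundary g-dlt pair and $P$ by a nef $\Q$-divisor numerically close to $P$; classical rationality of $K$-negative extremal rays then supplies a positive lower bound on $P\cdot C$. In case (1a), the hypothesis $P\equiv D\ge 0$ with $(X,B+M)$ g-klt lets me thicken the boundary to $B+tD$ for small $t>0$ (still g-klt), reducing the lower bound on $P\cdot C=D\cdot C$ to a discreteness/ACC statement for g-klt pairs. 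In case (1b), the $\Q$-Cartier hypothesis on $N$ together with $M\equiv D\ge 0$ means $P=K_X+B+M-N$ is numerically a $\Q$-Cartier divisor up to the effective $\R$-perturbation $D-M$, which again reduces the lower bound to the rational setting.

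The main obstacle is the uniformity of $\epsilon$ across the entire MMP: the extremal curves $C$ live on the intermediate varieties, and while the pair and the nef divisor $P$ are propagated by pushforward, the perturbation used to produce $\epsilon$ in case (1a), (1b), or (2) must descend through divisorial contractions and flips. The key observation making this work is that each of the three hypotheses, namely effectivity equivalence of $P$, of $M$, or support containment in $\Supp\{B\}$, is preserved under a $P$-trivial MMP step, so the lower bound chosen initially continues to govern the next step. This inductive preservation, together with the length estimate above, is what should close the argument.
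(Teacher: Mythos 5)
Your overall architecture matches the paper's: bound $a\,P\cdot C<-(K_X+B+M)\cdot C\le 2\dim X$ via the length of extremal rays, show the positive values of $P\cdot C$ are bounded away from zero using the hypotheses, and propagate the bound along the MMP. However, the lower-bound step as you describe it has a genuine gap. Approximating $P$ by a single nearby nef $\Q$-divisor $P'$ and invoking rationality of extremal rays does not control $P\cdot C$: one can have $P'\cdot C=0$ while $P\cdot C=(P-P')\cdot C$ is a tiny positive number, and although $(P-P')\cdot C$ is bounded in terms of $\|P-P'\|$ (by perturbing the boundary), that error cannot be forced below the reciprocal Cartier index of $P'$, which blows up as $P'\to P$. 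This is exactly the accumulation phenomenon of Example \ref{exa: nonvanishing fail}. The device the paper uses (in Lemma \ref{lem-P-trivial} and Theorem \ref{thm-klt}) is to write $P$, or a divisor numerically built from $K_X$, $N$ and divisors supported in $\Supp B$, as a \emph{convex combination} $P=\sum_j\alpha_jP_j$ of $\Q$-divisors with the $\alpha_j$ \emph{$\Q$-linearly independent}. Each $P_j\cdot\Gamma$ then lies in $\frac{1}{m}\Z$ and is bounded above and below (because adding $\pm\epsilon(P_j-P)$ to the boundary or data keeps the pair g-dlt), so $P\cdot\Gamma$ takes only finitely many values; and $\Q$-linear independence forces $P\cdot\Gamma=0$ exactly when every $P_j\cdot\Gamma=0$. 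Without this, "discreteness/ACC for g-klt pairs" in cases (1a)--(1b) is not an available statement for an irrational nef class.

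The same device is also what makes your Step 3 (uniformity of $\epsilon$ along the MMP) work: it is not enough that the hypotheses (1a), (1b), (2) are preserved, because re-deriving $\epsilon$ on each model could a priori produce a degenerating sequence of bounds. In the paper, $\Q$-linear independence implies that every $P$-trivial step is automatically $P_j$-trivial for each $j$, hence the auxiliary perturbed g-dlt pairs $(X^k,B^k_j+M^k)$ survive to every subsequent model, and the \emph{same} finite set of possible values of $P^k\cdot\Gamma$ (hence the same $\alpha$) persists. You should incorporate the convex-combination/linear-independence argument explicitly; with it, both the lower bound and its preservation follow, and your plan closes.
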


With the above lemma, we are able to establish Conjecture \ref{conj-nz-decomp} for g-klt pairs under the weak nonvanishing assumption.

\begin{thm}[\text{=Corollary \ref{cor-klt'}}]\label{main-klt'}
	Let $(X,B+M)$ be a projective g-klt pair. Assume $K_X+B+M$ birationally has a Nakayama-Zariski decomposition with nef positive part. Suppose either $M \equiv D \ge 0$ or $K_X+B+M \equiv D \ge 0$. Then, $(X,B+M)$ has a minimal model. 
\end{thm}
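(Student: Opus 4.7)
The plan is to reduce to Theorem \ref{main-klt} and run a $P$-trivial MMP which terminates with a minimal model, in the spirit of \cite{bhzariski}.

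First, I pass to a $\Q$-factorial g-dlt model $\pi\colon X' \to X$ on which the Nakayama--Zariski decomposition $\pi^*(K_X+B+M) = P + N_\sigma$ has $P$ nef, and replace $(X,B+M)$ by its crepant pullback $(X',B'+M')$; by standard reduction, $(X,B+M)$ has a minimal model iff $(X',B'+M')$ does. Writing $K_{X'}+B'+M' = P + N$ with $N := N_\sigma \ge 0$, I verify a hypothesis of Theorem \ref{main-klt} in each case. If $M \equiv D \ge 0$, the generalised data pulls back to a divisor numerically equivalent to $\pi^*D \ge 0$, matching case (1b). If $K_X+B+M \equiv D \ge 0$, then the defining property of $\sigma$-decompositions --- that any effective representative of a pseudo-effective class dominates the negative part --- applied to $\pi^*D \equiv \pi^*(K_X+B+M) = P+N$ gives $\pi^*D \ge N$, whence $P \equiv \pi^*D - N$ is numerically equivalent to an effective divisor, matching case (1a).

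Next, Theorem \ref{main-klt} yields a constant $\alpha > 0$ such that every step of any MMP on $K_{X'}+B'+M' + \alpha P$ is $P$-trivial. I run such an MMP. Any $P$-trivial step that is $(K_{X'}+B'+M'+\alpha P)$-negative is automatically $(K_{X'}+B'+M')$-negative and, via the identity $K_{X'}+B'+M' = P + N$ combined with $P$-triviality, is $N$-negative, so each step contracts a curve meeting $\Supp N$. Since $K_{X'}+B'+M' + \alpha P$ is numerically equivalent to an effective divisor in both cases, no Mori fibre space can appear. Once the MMP terminates, $N$ has been fully contracted and $K_Y+B_Y+M_Y \equiv P_Y$ is nef on the end model $Y$, yielding the desired minimal model of $(X,B+M)$.

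The main obstacle is the termination of this $P$-trivial MMP together with the bookkeeping that the birational transform of $N$ strictly decreases at each step until it vanishes; flips may a priori introduce new divisorial components, so one must exploit the $P$-triviality to control the support of the negative part throughout the MMP. This is the technical heart of the $P$-trivial MMP framework extended here from \cite{bhzariski}, and once that framework is in place, the reduction above unifies the two hypotheses of the corollary under the single mechanism provided by Theorem \ref{main-klt}.
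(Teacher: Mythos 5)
Your overall strategy coincides with the paper's proof of Corollary \ref{cor-klt'}: pass to a $\Q$-factorial model on which $K_X+B+M=P+N$ is the Nakayama--Zariski decomposition, verify a hypothesis of Corollary \ref{cor-klt}, and run the resulting $P$-trivial MMP on $K_X+B+M+\alpha P$, which contracts $N$ and stops once the log canonical divisor equals the nef $P$. There is, however, one step that fails as written: in the case $M\equiv D\ge 0$ you assert that after the replacement ``the generalised data pulls back to a divisor numerically equivalent to $\pi^*D\ge 0$.'' The data of the replaced pair is not $\pi^*M$ but the trace $M'=\mathbf{M}_{X'}$ of the b-divisor, and by the negativity lemma $M'=\pi^*M-E$ with $E\ge 0$ exceptional; so $M'$ need not be numerically equivalent to any effective divisor. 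The paper flags exactly this point (``we may lose the assumption $M\equiv D\ge 0$ but we still have $M\equiv D-sN$''), and it is the entire reason condition (1b) of Corollary \ref{cor-klt} carries the correction term $-sN$ with $s\in\Q_{\ge 0}$ rather than simply requiring $M\equiv D\ge 0$. The repair is to choose the resolution so that the effective exceptional divisor $F=K_{X'}+B'+M'-\pi^*(K_X+B+M)$ is supported on the whole exceptional locus (still g-klt since the coefficients can be taken small); then $\Supp E\subseteq\Supp F\subseteq\Supp N'$ where $N'=N_\sigma(K_{X'}+B'+M')$, so for rational $s\gg 0$ one has $M'+sN'\equiv\pi^*D+(sN'-E)\ge 0$, which is (1b) with $s>0$. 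Your verification of case (1a) from $K_X+B+M\equiv D\ge 0$ is correct.

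Two smaller remarks. Flips are isomorphisms in codimension one, so they cannot ``introduce new divisorial components''; the bookkeeping you worry about is the standard one: each $P$-trivial, $(K+B+M)$-negative step is $N$-negative, so its locus lies in $\Supp N$, and since $N=N_\sigma(K_X+B+M)$ every component of $N$ lies in $\mathbf{B}_-(K_X+B+M)$ and is contracted after finitely many steps by the scaling argument of \cite{bhzariski}. Note also that the logic of the final step runs the other way from how you state it: one does not first terminate and then observe $N$ is gone; rather, contracting $N$ in finitely many steps is what produces a model with $K_Y+B_Y+M_Y=P_Y$ nef, which forces the MMP to stop. With the (1b) fix above, your argument agrees with the paper's.
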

We intuitively explain why the weak nonvanishing condition plays a crucial role here: the condition $K_X+B+M \equiv D \ge 0$ implies that its positive part $P$ is also (numerically) effective. Because MMP is a numerical process, we can assume $P \ge 0$. In particular, $(X,(B+\epsilon P) +M)$ is still g-klt. By length of extremal rays we see the set $\{P^i \cdot C^i\}$ is discrete in $\R_{\ge 0}$ in each step, where $C^i$ is any extremal curve on $X^i$. The same reason works for the condition $M \equiv D \ge 0$. The idea is, if we somehow have $\{M^i\cdot C^i\} \subset \R $ discrete in each step, then the minimal model theory of usual pairs can apply to g-pairs.

We are interested if the above results hold for g-lc pairs. This requires a much more complicated technique called ``$P$-trivial MMP". We state our results in lower dimensions before we discuss the technique.

\begin{thm}[\text{=Corollary \ref{cor-3-dim'}}]\label{main-3-dim'}
	Let $(X,B+M)$ be a projective g-lc pair of dimension $3$. Assume $K_X+B+M$ birationally has a Nakayama-Zariski decomposition with nef positive part. Suppose either $K_X+B+M \equiv D \ge 0$ or $M \equiv D \ge 0$. 
	Then, $(X,B+M)$ has a minimal model.
\end{thm}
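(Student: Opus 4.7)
The plan is to reduce to a $\Q$-factorial g-dlt model, then run a $P$-trivial MMP on $K_X+B+M+\alpha P$ for $\alpha \gg 0$ using the three-dimensional MMP for g-lc pairs, and conclude by showing that such an MMP terminates, all contractions occurring inside the support of the negative part $N_\sigma$ of the Nakayama-Zariski decomposition.

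First, apply a g-dlt modification (available for g-lc pairs) to replace $(X,B+M)$ by a $\Q$-factorial g-dlt model $(Y, B_Y+M_Y)$ on which the Nakayama-Zariski decomposition $K_Y+B_Y+M_Y = P + N_\sigma$ is realised with $P$ nef. A minimal model for $(Y, B_Y+M_Y)$ yields one for $(X,B+M)$, and the nonvanishing hypothesis is preserved under pullback; we rename $Y$ as $X$.

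The core step is to obtain $P$-triviality of each MMP step on $K_X+B+M+\alpha P$ for $\alpha \gg 0$. For this we reduce to case (2) of Theorem \ref{main-klt}, arranging $\Supp\{P\} \subseteq \Supp\{B\}$ by incorporating suitable small multiples of the fractional components of $P$ into the boundary $B$ and compensating via a further g-dlt modification so as to keep the numerical class of $K+B+M$ and the nef positive part $P$ intact. Here the nonvanishing hypothesis plays a crucial role: in the case $K_X+B+M \equiv D \ge 0$, the effectivity of $D$ gives us the freedom to represent $P$ by an effective divisor numerically and hence to carry out the absorption; in the symmetric case $M \equiv D \ge 0$, we first absorb $D$ into the boundary before treating $P$. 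Once $P$-triviality is established, invoking the MMP for g-lc threefolds lets us actually run the sequence; because each step is $P$-trivial, it contracts only components of $N_\sigma$, which has finitely many prime components, so termination follows by a standard counting argument analogous to the one in \cite{bhzariski}. The output is a minimal model of $(X,B+M)$.

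The main obstacle is the reduction to case (2) in the g-lc setting. In the g-klt case one has $\epsilon$-room in the boundary that permits direct use of Theorem \ref{main-klt}(1a)--(1b), but in g-lc this room is gone and the boundary perturbation must be carried out while simultaneously preserving the g-lc property and the Nakayama-Zariski decomposition $P + N_\sigma$. The dimension-three hypothesis is essential because it underwrites the existence and termination of the MMP for g-lc pairs; in higher dimensions the corresponding MMP machinery is not fully available, which explains why Theorem \ref{main-3-dim'} is restricted to threefolds and why its g-klt analogue Theorem \ref{main-klt'} holds in all dimensions.
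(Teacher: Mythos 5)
Your overall frame (pass to a $\Q$-factorial g-dlt model realising $K_X+B+M=P+N_\sigma$ with $P$ nef, then contract $N_\sigma$ by a $P$-trivial process) matches the paper, but the core step is not carried out correctly, and the place where it fails is exactly the difficulty the paper's Section \ref{sec-P-mmp} exists to overcome. You propose to fix a single $\alpha\gg 0$ and run an ordinary MMP on $K_X+B+M+\alpha P$, reducing to the condition $\Supp\{P\}\subseteq\Supp\{B\}$ of Lemma \ref{lem-P-trivial} by ``absorbing'' the fractional components of $P$ into $B$. In the g-lc case this absorption is not available: the proof of Lemma \ref{lem-P-trivial} (and of Theorem \ref{thm-klt}(1)) needs boundaries $B_j$ with $(X,B_j+M)$ still g-dlt after perturbing in the directions of the irrational components of $P$ (equivalently, needs $(X,B+\epsilon D+M)$ to stay g-lc when $P\equiv D\ge 0$). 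When $D$ or the fractional part of $P$ passes through g-lc centres there is no $\epsilon$-room, the lower bound on $P_j\cdot\Gamma$ over extremal curves $\Gamma$ is lost, and no single $\alpha$ makes every step of the MMP $P$-trivial. The paper does not fix $\alpha$: it runs the $P$-trivial MMP of Definition \ref{defn-P-mmp}, in which $\alpha_j$ grows with the step, and then proves \emph{a posteriori} that this MMP degenerates to an MMP on $K_X+B+M+\alpha P$ for one $\alpha$. That degeneration is the content of Theorem \ref{thm-dlt} and Corollary \ref{cor-3-dim}, and it is obtained by induction on strata of $\rddown{B}$: the special termination (Theorem \ref{thm-sp-term}, resting on the coefficient finiteness of Theorem \ref{thm-finite-coeff}) reduces to the surface case, where Lemma \ref{lem-surface} verifies Conjecture \ref{conj-term}; then Theorem \ref{thm-degen} propagates degeneration from $\rddown{B}$ to $X$. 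None of this is replaced by your absorption argument.

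A second, independent gap is your termination claim: ``each step contracts only components of $N_\sigma$, which has finitely many prime components, so termination follows by counting.'' Flips contract no divisors, so counting components of $N_\sigma$ bounds only the number of divisorial contractions, not the length of the MMP. In the paper, termination is deduced only after degeneration is established, via Lemma \ref{lem-degen} (the MMP on $K_X+B+M+\alpha P$ with scaling contracts $\Supp N_\sigma$ after finitely many steps and then stops because what remains is $P_Y$, which is nef), and the finiteness of the number of flips near the boundary strata is supplied by the special termination, not by a counting argument on $N_\sigma$. Finally, note that the role of the hypothesis $K_X+B+M\equiv D\ge 0$ or $M\equiv D\ge 0$ in the paper is to verify the hypotheses of Theorem \ref{thm-dlt} (conditions (1)--(3) on each stratum) and of Theorem \ref{thm-degen}, not to enable an effective representative of $P$ to be moved into the boundary.
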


Given a log smooth sub-dlt pair $(X',B')$ and an $\R$-divisor $D'$, we say $D'$ is \emph{log effective (resp. log abundant)} if for any stratum $S'$ of $(X',B'^{=1})$, the restriction $D'|_{S'}$ is numerically effective, that is, $D'|_{S'} \equiv E' \ge 0$ for some $E'$ (resp. $D'|_{S'}$ is abundant).

\begin{thm}[\text{=Corollary \ref{cor-dlt-abund}}]\label{main-dlt-abund}
	Let $(X,B+M)$ be a projective g-lc pair of dimension $4$. Assume $K_X+B+M$ birationally has a Nakayama-Zariski decomposition with nef positive part. Suppose $M'$ is log abundant.
	Then, $(X,B+M)$ has a minimal model.
\end{thm}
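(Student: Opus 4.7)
The plan is to combine the $P$-trivial MMP developed in the earlier sections with Shokurov-style special termination, reducing the $4$-dimensional case to Theorem \ref{main-3-dim'} on lc centers of dimension at most three. I would first replace $(X,B+M)$ by a $\Q$-factorial g-dlt model and, by passing to a higher birational model if necessary, arrange that the nef positive part $P=P_\sigma(\pi^*(K_X+B+M))$ lives on $X$ itself. Log abundance of $M'$ implies, as a special case applied to the top stratum, that $M'$ is abundant; hence $M\equiv D\ge 0$, so the weak nonvanishing input used by Theorems \ref{main-klt'} and \ref{main-3-dim'} is automatically available.

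After a small perturbation that places $\Supp\{P\}\subseteq \Supp\{B\}$, Theorem \ref{main-klt}(2) produces a number $\alpha>0$ such that any MMP on $K_X+B+M+\alpha P$ is $P$-trivial. A $P$-trivial step is automatically $N$-negative for $N:=K_X+B+M-P$, so the process is an MMP whose sign is controlled by $N$ while $P$ remains nef. The substantive problem is to show that this MMP terminates with a model on which $K_X+B+M$ becomes nef.

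Termination is established via special termination along lc centers. By g-dlt adjunction, the restriction to each lc center $S$ of $(X,B^{=1})$ is a projective g-lc pair $(S,B_S+M_S)$ of dimension at most three, $P|_S$ is nef and yields the nef positive part of the birational Nakayama-Zariski decomposition of $K_S+B_S+M_S$, and the log abundance hypothesis guarantees that $M'_{S'}$ is abundant on a suitable log smooth model $S'$, so in particular $M_S\equiv D_S\ge 0$. Theorem \ref{main-3-dim'} then provides a minimal model for $(S,B_S+M_S)$ on every lc center. Shokurov-type special termination propagates this to the global MMP, showing that after finitely many steps the MMP becomes an isomorphism on a neighborhood of every lc center. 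We are then reduced to the g-klt locus, where Theorem \ref{main-klt'}, applied using $M\equiv D\ge 0$, completes the argument.

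The principal technical obstacle is the compatibility of the $P$-trivial MMP with adjunction to lc centers: one must verify that the restriction of a $P$-trivial step is again $P|_S$-trivial and that the induced birational Nakayama-Zariski decomposition on $S$ genuinely has nef positive part $P|_S$, so that the inductive input of Theorem \ref{main-3-dim'} is legitimately available. Once this compatibility is secured, the log abundance assumption delivers the weak nonvanishing needed on every stratum, and the remainder of the argument runs structurally parallel to the klt case treated in Theorem \ref{main-klt'}.
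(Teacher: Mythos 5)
There are two genuine gaps, and they sit exactly at the points where the g-lc case is harder than the g-klt case. First, your pivotal step~--- ``after a small perturbation that places $\Supp \{P\} \subseteq \Supp\{B\}$, Theorem \ref{main-klt}(2) produces $\alpha>0$ such that any MMP on $K_X+B+M+\alpha P$ is $P$-trivial''~--- is not available here. The irrational part of $P=P_\sigma(K_X+B+M)$ (and of $M$) may be supported on components of $\rddown{B}$ or pass through g-lc centres, where one can neither increase coefficients of $B$ without destroying lc-ness nor decrease the coefficients equal to $1$ without changing $P_\sigma$ and the g-lc centres. This is precisely the obstruction that forces the paper to introduce the $P$-trivial MMP of Definition \ref{defn-P-mmp}, which adds $\lambda A$ with $\lambda\downarrow 0$ and lets $\alpha$ depend on $\lambda$; the price is that the resulting process is a priori \emph{not} an MMP on $K_X+B+M+\alpha P$ for any fixed $\alpha$, so global termination does not come for free and must be recovered from the degeneration statement (Theorem \ref{thm-degen}) together with Lemma \ref{lem-degen}, which uses $P=P_\sigma$ to contract $N_\sigma$. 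Your closing step ``we are then reduced to the g-klt locus, where Theorem \ref{main-klt'} applies'' has no content as written: the pair stays g-lc, an MMP does not localise away from the lc centres, and Theorem \ref{main-klt'} requires the whole pair to be g-klt.

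Second, the adjunction step is wrong as stated: $P_\sigma$ does not commute with restriction to a stratum. From $K_X+B+M=P+N$ with $P$ nef and $S\nsubseteq \Supp N$ you only obtain a \emph{weak} Zariski decomposition $(K_X+B+M)|_S=P|_S+N|_S$; it is not true in general that $P|_S=P_\sigma(K_S+B_S+M_S)$, nor that the latter is nef, so Theorem \ref{main-3-dim'} cannot be fed to the centres. The paper's actual mechanism is different: Theorem \ref{thm-sp-term} shows the restricted MMP is a $P|_{\widetilde{S}}$-trivial MMP on (a small $\Q$-factorialisation of) the centre, and then the \emph{abundance} of $M'|_{S'}$ is used to replace $M_{S}$ by a small effective divisor $D\sim_\R M_S$, absorb its part off $\rddown{B_S}$ into the boundary, and recognise the restricted MMP as an MMP for a \emph{usual} dlt pair of dimension $\le 3$, whose termination is known (Theorem \ref{thm-dlt-abund}); the degeneration theorem then upgrades this to a global degeneration and Lemma \ref{lem-degen} finishes. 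Note that this is where log abundance is genuinely used: the numerical effectivity $M_S\equiv D_S\ge 0$ that you extract would only throw you back onto the open termination problem for non-NQC g-pairs on the centres, whereas $\sim_\R$-effectivity with controllable support is what converts the problem into one about usual pairs.
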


\noindent \textbf{$P$-trivial MMP.}
Suppose we are given a $\Q$-factorial g-dlt pair $(X/Z,B+M)$, a nef$/Z$ divisor $P$ and an ample$/Z$ divisor $A$, we can run an MMP on $K_X+B+M$ ``with scaling of $A$" such that each step is $P$-trivial. The main idea is to run an MMP on $K_X+B+M+\alpha_i P$ with $\alpha_i$ unfixed and sufficiently large in each step to preserve the nefness of $P^i$. Such an MMP is called a $P$-trivial MMP.

\begin{defn}[\text{$P$-trivial MMP, see Definition \ref{defn-P-mmp}}]\label{main-defn-P-mmp}
	Let $(X/Z,B+M)$ be a $\Q$-factorial g-dlt pair and $P$ be a nef divisor. Let $A$ be an ample divisor so that the pair $(X,(B+A)+M)$ is g-dlt and $\Supp \{P\} \cup \Supp \{M \} \subseteq \Supp \{B+A\} $. If $K_X+B+M$ is not $P$-nef (Definition \ref{defn-P-nef}), for every $0< \lambda \ll 1$, there is a number $\alpha \gg 0$ depending on $\lambda$ such that we can run an MMP on $K_X+(B+\lambda A +M)+\alpha P$ which is $P$-trivial and terminates with a good minimal model $(X_j,(B_j+ \lambda A_j)+M_j + \alpha P_j)$ or a Mori fibre space by \cite{bchm}. 
	
	If the above MMP reaches a model $(X_i,B_i+M_i)$ on which $K_{X_i}+B_i+M_i$ is $P_i$-nef, or we reach a Mori fibre space, then we stop. We say the previous model a \emph{$P$-minimal model}. Otherwise we pick a sufficiently small number $\lambda_2 < \lambda_1= \lambda$ and continue the process of an MMP on $K_{X_i}+B_i+\lambda_2 A_i + M_i+\alpha_2 P_i$ which also terminates with a good minimal model.
	
	We continue the process by picking a decreasing sequence $\lambda_1 > \ldots > \lambda_j > \ldots $ with $\lim \lambda_j =0$. Since an MMP on $K_X+B+\lambda_j A + \alpha_j P$ with scaling of $A$ is also an MMP on $K_X+B+\lambda_{j+1} A + \alpha_{j+1} P$ with scaling of $A$, we obtain an MMP with scaling
	$$
	X \bir X_i \bir X_{i+1} \bir \ldots \bir X_i \bir \ldots.
	$$
	We call such an MMP a \emph{$P$-trivial MMP (with scaling of $A$)}.
\end{defn}

In general, given an MMP for a non-NQC g-dlt pair, we usually do not expect an inductive approach because the coefficients of $B^i_{S^i}$ no longer satisies DCC. However, for a $P$-trivial MMP, if $P$ is somehow related to the log canonical divisor $K_X+B+M$, then the $P$-triviality will restrain the behavior of ${M^i \cdot C^i}$, where $C^i$ is any extremal curve on $X^i$. This gives the finiteness of the coefficients (see Theorem \ref{thm-finite-coeff}). As a consequence, we obtain a special termination.

\begin{thm}[\text{A special termination, =Theorem \ref{thm-sp-term}}]\label{main-thm-sp-term}
	Let $(X/Z,B+M)$ be a $\Q$-factorial g-dlt pair and $K_X+B+M=P+N$ where $P$ is a nef$/Z$ divisor and $N$ is a $\Q$-Cartier $\Q$-divisor. Given a $P$-trivial MMP on $K_X+B+M$ with scaling of some ample$/Z$ divisor $A$ 
	$$
	(X,B+M) \bir \cdots \bir (X^i,B^i+M^i) \bir (X^{i+1},B^{i+1}+M^{i+1}) \bir \cdots,
	$$
	let $S$ be a g-lc centre such that $S \nsubseteq \Supp N$. Suppose that the above MMP does not contract $S$ and the induced map $(S^{i}, B_{S^{i}}+M_{S^{i}}) \bir (S^{i+1}, B_{S^{i+1}}+M_{S^{i+1}})$ is an isomorphism on $\rddown{B_{S^i}}$ for $i \gg 0$. Then, this map can be lifted to small $\Q$-factorialisations  $(\widetilde{S}^{i}, B_{\widetilde{S}^{i}}+M_{\widetilde{S}^{i}}) \bir (\widetilde{S}^{i+1}, B_{\widetilde{S}^{i+1}}+M_{\widetilde{S}^{i+1}})$ which is a $P^i|_{\widetilde{S}^{i}}$-trivial MMP on $K_{\widetilde{S}^{i}}+B_{\widetilde{S}^{i}}+M_{\widetilde{S}^{i}}$ with scaling of $A^i|_{\widetilde{S}^{i}}$.
	
	Furthermore, if $(S^i,B_{S^i}+M_{S^i})$ has a minimal model for some $i \gg 0$, then the above MMP terminates near $S^i$ for $i \gg 0$. 
\end{thm}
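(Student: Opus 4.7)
The strategy is a special termination argument in the spirit of Shokurov and Fujino, adapted to the setting of the $P$-trivial MMP for generalised pairs. First I would set up adjunction along $S$: since $S \nsubseteq \Supp N$ and $N$ is $\Q$-Cartier, each restriction $N^i|_{S^i}$ is a well-defined $\Q$-Cartier $\Q$-divisor, and g-dlt adjunction produces a generalised pair structure $(S^i, B_{S^i}+M_{S^i})$ satisfying
\[
K_{S^i}+B_{S^i}+M_{S^i} \;=\; P^i|_{S^i}+N^i|_{S^i},
\]
with $P^i|_{S^i}$ nef. Each extremal ray $R^i$ contracted by the MMP on $X^i$ is $P^i$-trivial, so its image curve class on $S^i$, when non-zero, is $P^i|_{S^i}$-trivial; similarly the scaling divisor $A^i$ restricts to an ample $A^i|_{S^i}$.

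Next I construct the lifted MMP. For $i\gg 0$ the hypothesis forces $S^i \bir S^{i+1}$ to be an isomorphism on $\rddown{B_{S^i}}$, hence small near it; by g-dlt discrepancy comparison no divisorial contraction of a g-lc centre of $(S^i,B_{S^i}+M_{S^i})$ can occur, so the map is everywhere small. Taking simultaneous small $\Q$-factorialisations $\widetilde{S}^i \to S^i$, the map $\widetilde{S}^i \bir \widetilde{S}^{i+1}$ is small and decomposes into a finite sequence of steps of an MMP on $K_{\widetilde{S}^i}+B_{\widetilde{S}^i}+M_{\widetilde{S}^i}$: restrict the $(K_{X^i}+B^i+M^i+\lambda^i A^i+\alpha^i P^i)$-extremal contraction on $X^i$ to $S^i$, apply the cone theorem and length of extremal rays on $S^i$ via g-dlt adjunction, and lift to $\widetilde{S}^i$. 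The scaling divisor is $A^i|_{\widetilde{S}^i}$ and each step is $P^i|_{\widetilde{S}^i}$-trivial. This exhibits the claimed lifted $P|_{\widetilde{S}}$-trivial MMP on $\widetilde{S}^i$ with scaling of $A^i|_{\widetilde{S}^i}$.

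For the furthermore, if $(S^{i_0}, B_{S^{i_0}}+M_{S^{i_0}})$ admits a minimal model for some $i_0\gg 0$, then so does its small $\Q$-factorialisation. An MMP with scaling of an ample divisor on a pair admitting a minimal model terminates in the standard fashion, and the $P$-trivial MMP is, as an overall process, an MMP with scaling of $A$ (see Definition~\ref{main-defn-P-mmp}); hence the induced MMP on $\widetilde{S}^i$ terminates for $i\gg i_0$. Therefore eventually no further extremal ray on $X^j$ meets $S^j$, so every subsequent step is an isomorphism in a neighbourhood of $S$, giving termination of the $X$-MMP near $S$.

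The main obstacle is the second paragraph: one must verify that restricting a $P$-trivial step on $X^i$ to $\widetilde{S}^i$ yields a bona fide step (or finite sequence of steps) of a $P^i|_{\widetilde{S}^i}$-trivial MMP with the correct scaling threshold $\lambda^i$ and large parameter $\alpha^i$. Subtleties include ensuring that $K_{\widetilde{S}^i}+B_{\widetilde{S}^i}+M_{\widetilde{S}^i}$ is genuinely negative, not merely trivial, on the restricted ray, controlling the non-NQC moduli part $M$ under adjunction, and matching the scaling thresholds on $X^i$ and $\widetilde{S}^i$. These are the standard technical hurdles in special termination, exacerbated here by the variable $\alpha^i$ and the absence of NQC, but manageable via the interplay between $P$-triviality and g-dlt adjunction.
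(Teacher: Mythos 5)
Your outline follows the general shape of a Shokurov--Fujino special termination, but the central step is not established. You claim that for $i\gg 0$ the map $S^i \bir S^{i+1}$ is ``everywhere small'' because it is an isomorphism on $\rddown{B_{S^i}}$ and because ``no divisorial contraction of a g-lc centre of $(S^i,B_{S^i}+M_{S^i})$ can occur.'' This is a non sequitur: ruling out contraction of g-lc centres of the restricted pair says nothing about divisors of $S^i$ whose centre is \emph{not} contained in $\rddown{B_{S^i}}$, and a flip on $X^i$ can perfectly well extract or contract such divisors on $S^i$. Establishing that $S^i\bir S^{i+1}$ is an isomorphism in codimension one is exactly the hard point of special termination. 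The paper does it with a difficulty function
$d(i)=\sum_{b\in\Omega}\sharp\{E \mid a(E,S^i,B_{S^i}+M_{S^i})\le 1-b,\ c_{S^i}(E)\nsubseteq\rddown{B_{S^i}}\}$,
whose finiteness and monotonicity depend on the finiteness of the coefficient set $\Omega$ supplied by Theorem \ref{thm-finite-coeff}. That theorem is precisely where the hypotheses that $N$ is a $\Q$-Cartier $\Q$-divisor and $S\nsubseteq\Supp N$ enter; in the non-NQC setting the coefficients of $B_{S^i}$ need not lie in a DCC set a priori, so this substitute is essential. Your argument never uses these hypotheses beyond defining $N^i|_{S^i}$, which is a sign the key mechanism is missing.

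A secondary issue: your construction of the lifted MMP by ``restricting the extremal contraction to $S^i$ and applying the cone theorem on $S^i$'' is not how the lift is actually obtained. The restriction of a flip to $S^i$ is a priori just a birational map over the base $T^i$ of the induced contraction $S^i\to T^i\leftarrow S^{i+1}$, not a single extremal step. The paper instead runs a relative MMP over $T^i$ on the small $\Q$-factorialisation $\widetilde{S}^i$, using that $K_{S^{i+1}}+B_{S^{i+1}}+M_{S^{i+1}}$ is ample over $T^i$ together with Lemma \ref{lem-gmm} and Corollary \ref{cor-term} to terminate it, and then checks via the flop property that the scaling divisor pushes down correctly. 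Your ``furthermore'' paragraph is acceptable in spirit (termination with scaling given a minimal model, via Corollary \ref{cor-term} and $\lim\lambda_i=0$), but it rests on the unproved earlier steps.
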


It is natural to ask when a $P$-trivial MMP is just an MMP on $K_X+B+M+\alpha P$ for some $\alpha >0$. So we introduce the following definition.
\begin{defn}[\text{Degenerations, =Definition \ref{defn-degen}}]
		Let $(X/Z,B+M)$ be a $\Q$-factorial g-dlt pair and $P$ be a nef$/Z$ divisor. Given a $P$-trivial MMP on $K_X+B+M$ with scaling of $A$
		, we say the above MMP \emph{degenrates to an MMP on $K_X+B+M+\alpha P$} for some $\alpha \gg 0$, if each step is an MMP with scaling of $A$ on $K_{X^i}+B^i+M^i + \alpha P^i$. 
		
		More precisely, for any $\lambda_i \ge \lambda \ge \lambda_{i+1}$ where $\lambda_i$'s are the coefficients appeared in the MMP, we have $K_{X^i}+B^i+ \lambda A^i +M^i + \alpha P^i$ is nef$/Z$. 
	\end{defn}

\noindent \textbf{Zariski decompositions.}
We apply the above technique to investigate the relation between the existence of Zariski decompositions and the existence of minimal models.

Given a pseudo-effective divisor $D$, we say $D=P+N$ is a weak Zariski decomposition if $P$ is nef and $N \ge 0$. We propose the following conjecture.
\begin{conj}\label{main-conj-term}
	Let $(X/Z,B+M)$ be a $\Q$-factorial g-dlt pair and $K_X+B+M=P+N$ be a weak Zariski decomposition such that $N$ is a $\Q$-Cartier $\Q$-divisor. Given a $P$-trivial MMP with scaling of an ample divisor $A$, suppose it degenerates to an MMP with scaling of $A$ on $K_X+B+M+\alpha P$ for some $\alpha>0$. Then, it terminates with a minimal model after finitely many steps.
\end{conj}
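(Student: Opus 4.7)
Set $D := K_X + B + M + \alpha P = (1+\alpha) P + N$. Since each step is $P$-trivial and contracts a $D$-negative extremal ray $R_i$, we have $P^i \cdot R_i = 0$, hence $N^i \cdot R_i < 0$. Under the degeneration hypothesis, the MMP is therefore a standard MMP with scaling of $A$ on the $\R$-Cartier divisor $D$, which is the log canonical divisor of the $\Q$-factorial g-dlt pair $(X, B + (M + \alpha P))$ obtained by absorbing $\alpha P$ into the nef data. Throughout the MMP, $P^i$ remains nef and $N^i$ is effective $\Q$-Cartier of some fixed finite index $\ell$ independent of $i$.

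The plan is to induct on $\dim X$, combining Theorem \ref{main-thm-sp-term} with a separate argument in the g-klt locus. For each g-lc centre $S \not\subset \Supp N$, Theorem \ref{main-thm-sp-term} lifts the restricted MMP to a $P^i|_{\widetilde{S}^i}$-trivial MMP with scaling of $A^i|_{\widetilde{S}^i}$ on a small $\Q$-factorialisation $\widetilde{S}^i$. Using adjunction I would verify that the nefness of $D + \lambda A$ on $X^i$ descends to nefness of its restriction on $\widetilde{S}^i$, so the restricted MMP itself degenerates to an MMP on $K_{\widetilde{S}^i} + B_{\widetilde{S}^i} + M_{\widetilde{S}^i} + \alpha P^i|_{\widetilde{S}^i}$. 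By the inductive hypothesis this terminates with a minimal model, and the second half of Theorem \ref{main-thm-sp-term} yields termination near $S^i$. Centres $S \subset \Supp N$ are handled by a mild perturbation: replace $N$ by $N - \epsilon S$ and add an $\epsilon$-correction to $B$, reducing to the previous case.

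After finitely many steps, the MMP is an isomorphism on $\rddown{B^i}$ and we reduce to the g-klt locus. Here I would argue as follows. By length of extremal rays on g-dlt pairs, $-(K_{X^i}+B^i+M^i) \cdot C_i \le 2 \dim X$ for some curve $C_i$ generating $R_i$, and since $P^i \cdot C_i = 0$ we get $N^i \cdot C_i \in [-2\dim X, 0)$. Combined with the finite index $\ell$ of $N^i$, the values $\{N^i \cdot C_i\}$ lie in the finite set $\frac{1}{\ell} \Z \cap [-2 \dim X, 0)$. Together with the finiteness-of-coefficients theorem (Theorem \ref{thm-finite-coeff}) and a standard Shokurov-polytope plus finiteness-of-models argument, only finitely many flips can occur; termination of divisorial contractions then follows from Picard rank considerations.

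The main obstacle will be the verification in the second paragraph that the restricted MMP on $\widetilde{S}^i$ indeed degenerates in the inductive sense with the same, or at least a uniformly bounded, parameter $\alpha$. Controlling how adjunction interacts with the nef data $M$ and with the $\Q$-factorialisation is delicate; in particular, one must rule out the possibility that $P^i|_{\widetilde{S}^i}$ loses positivity in a way that forces $\alpha$ to blow up along the induction on $\dim X$. A secondary difficulty, treating g-lc centres inside $\Supp N$, may require iterating the argument or working with a carefully perturbed decomposition in order to stay within the class of weak Zariski decompositions.
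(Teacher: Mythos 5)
The statement you are trying to prove is stated in the paper as a \emph{conjecture} (Conjecture \ref{conj-term}), not a theorem: the paper offers no proof of it in general, only the two-dimensional case (Lemma \ref{lem-surface}), and it is used as a hypothesis in the inductive results (Theorems \ref{thm-dlt} and \ref{thm-dlt'}). So any complete ``proof'' would be a genuine new result, and yours is not one: the decisive step is missing.

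The fatal gap is the last paragraph of your g-klt-locus argument. Knowing that the intersection numbers $N^i\cdot C_i$ lie in a finite (or discrete) set does not bound the number of flips; that is exactly the situation for ordinary $\Q$-factorial klt pairs with rational coefficients, where termination of flips is open in dimension $\ge 5$. The ``standard Shokurov-polytope plus finiteness-of-models argument'' you invoke requires a big boundary (or log general type), as in \cite{bchm}; no such positivity is assumed here, so there is no finiteness of models to appeal to. Your reduction therefore lands precisely on the open termination problem rather than resolving it. Two further points are also broken: (i) the Cartier index of $N^i$ is not preserved under flips, so there is no fixed $\ell$ with $N^i\cdot C_i\in\frac{1}{\ell}\Z$ for all $i$; and (ii) the perturbation for g-lc centres $S\subset\Supp N$ does not work --- if $S$ is a divisorial centre then $\mu_S B=1$ already, so $B+\epsilon S$ is not a boundary, and if $S$ has higher codimension then $N-\epsilon S$ is not even a divisor. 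The correct reading of the paper is that the special-termination machinery (Theorem \ref{thm-sp-term}, Theorem \ref{thm-degen}) reduces the problem to the part of the MMP away from the g-lc centres, and it is precisely there that the conjecture remains open.
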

Note that the above conjecture is much weaker than the existence of minimal models for a non-NQC g-pair (Corollary \ref{cor-term}). Although we do not know if we should expect the existence of minimal models for non-NQC g-pairs, we still expect that the above conjecture holds because the degeneration seems a strong assumption on $P$ and hence on $M$ as $N$ is a $\Q$-Cartier $\Q$-divisor.

The following theorem is a higher dimensional version of Theorem \ref{main-3-dim'}.
\begin{thm}[\text{=Theorem \ref{thm-dlt'}}]\label{main-thm-dlt'}
	Assume Conjecture \ref{main-conj-term} holds in dimension $d-1$.
	
	Let $(X,B+M)$ be a g-lc pair of dimension $d$ with data $M'$. Assume $K_X+B+M$ birationally has a Nakayama-Zariski decomposition with nef positive part. Suppose either $M'$ or the positive part is log effective. Then, $(X,B+M)$ has a minimal model.
\end{thm}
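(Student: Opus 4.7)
The strategy is to prove the theorem by induction on dimension, combining a $P$-trivial MMP on $X$ with the special termination of Theorem \ref{main-thm-sp-term} and invoking the inductive hypothesis (Conjecture \ref{main-conj-term} in dimension $d-1$) on each g-lc centre.

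First I would reduce to the $\Q$-factorial g-dlt situation by passing to a suitable g-dlt modification compatible with the log resolution of the data, so that $K_X+B+M = P + N$ is an honest Nakayama-Zariski decomposition with $P$ nef and $N \ge 0$, and the log effectiveness of $M'$ (respectively $P$) descends to every stratum of $(X, B^{=1})$. Fix an ample divisor $A$ with $(X,(B+A)+M)$ still g-dlt and $\Supp\{P\}\cup \Supp\{M\}\subseteq \Supp\{B+A\}$. Applying Theorem \ref{main-klt} on the ambient $X$ (where log effectiveness on the top stratum provides $P\equiv D\ge 0$ or $M\equiv D\ge 0$), for each small $\lambda >0$ there exists $\alpha = \alpha(\lambda) \gg 0$ such that an MMP on $K_X+B+\lambda A+M+\alpha P$ is $P$-trivial; these assemble into a $P$-trivial MMP with scaling of $A$ in the sense of Definition \ref{main-defn-P-mmp}.

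Next, by Theorem \ref{main-thm-sp-term}, termination of this MMP reduces to verifying termination near each g-lc centre $S \nsubseteq \Supp N$. Passing to a small $\Q$-factorialisation $\widetilde{S}$, the induced map is a $P^i|_{\widetilde{S}^i}$-trivial MMP on the $(d-1)$-dimensional g-dlt pair $(\widetilde{S}, B_{\widetilde{S}}+M_{\widetilde{S}})$ with scaling of $A^i|_{\widetilde{S}^i}$. The Nakayama-Zariski decomposition restricts to a weak Zariski decomposition $K_{\widetilde{S}}+B_{\widetilde{S}}+M_{\widetilde{S}} = P|_{\widetilde{S}} + N|_{\widetilde{S}}$ with $N|_{\widetilde{S}}$ being a $\Q$-Cartier $\Q$-divisor (arranged by the dlt construction), while the log effective hypothesis provides $P|_{\widetilde{S}} \equiv E \ge 0$ or $M_{\widetilde{S}} \equiv E \ge 0$. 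A second application of Theorem \ref{main-klt} on $\widetilde{S}$ then shows that the restricted $P|_{\widetilde{S}}$-trivial MMP degenerates to an ordinary MMP on $K_{\widetilde{S}}+B_{\widetilde{S}}+M_{\widetilde{S}}+\alpha P|_{\widetilde{S}}$ for some $\alpha \gg 0$, and Conjecture \ref{main-conj-term} in dimension $d-1$ delivers termination on $\widetilde{S}$.

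Applying special termination iteratively over g-lc centres, ordered by decreasing codimension, the $P$-trivial MMP on $X$ terminates with a $P$-minimal model $(X^j, B^j+M^j)$. Since $N = N_\sigma(K_X+B+M)$ lies in the diminished base locus and the MMP is $P$-trivial throughout, all components of $N$ are contracted, and $P$-nefness at the endpoint upgrades to honest nefness of $K_{X^j}+B^j+M^j$, furnishing a minimal model of $(X,B+M)$. The main obstacle I expect is the compatibility of uniform choices of $\alpha$ across strata in the degeneration step: a single $\alpha \gg 0$ must simultaneously witness $P$-triviality on $X$ and degeneration to a usual MMP on each restricted $\widetilde{S}$ so that Conjecture \ref{main-conj-term} is genuinely applicable. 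It is precisely the log effectiveness of $M'$ or $P$, which controls $\{M^i \cdot C^i\}$ or $\{P^i \cdot C^i\}$ uniformly on both $X$ and the strata, that makes this uniform choice possible.
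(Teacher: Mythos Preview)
Your overall shape is right (pass to a $\Q$-factorial g-dlt model with $P=P_\sigma$ nef and $N$ a $\Q$-divisor avoiding g-lc centres, run a $P$-trivial MMP, and induct via special termination), but two steps do not go through as written.

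First, your use of Theorem~\ref{main-klt} on the g-lc centre $\widetilde S$ is not justified. Conditions (1a) and (1b) of that theorem require the pair to be g-\emph{klt}, while $(\widetilde S,B_{\widetilde S}+M_{\widetilde S})$ is only g-dlt (it has its own lc centres), and condition (2) requires $\Supp\{P|_{\widetilde S}\}\subseteq\Supp\{B_{\widetilde S}\}$, which log effectiveness does not give. More to the point, Theorem~\ref{main-klt} only says that an MMP you \emph{choose} to run on $K+\alpha P$ is $P$-trivial; it does not show that the $P$-trivial MMP \emph{handed to you} by Theorem~\ref{main-thm-sp-term} degenerates to a single fixed $\alpha$. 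The paper handles this with a separate result (Theorem~\ref{thm-degen} in the body), proved for g-dlt pairs: once the $P$-trivial MMP terminates near and degenerates on $\lfloor B\rfloor$, one splits the divisor witnessing effectivity into a part supported on $\lfloor B\rfloor$ and a part whose support avoids all g-lc centres, and uses length-of-extremal-ray bounds on the latter to force a uniform $\alpha$ to work on all of $X$. This is precisely the issue you flag in your last paragraph, and it needs that extra argument; log effectiveness alone does not supply the uniform $\alpha$.

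Second, special termination near the g-lc centres does not yield termination of the full MMP, so you cannot yet speak of a ``$P$-minimal model at the endpoint''. The paper's argument instead establishes, by induction on dimension (this is Theorem~\ref{thm-dlt} in the body), that the $P$-trivial MMP on $X$ \emph{degenerates} to an MMP on $K_X+B+M+\alpha P$ for one $\alpha\gg 0$; only then does the Nakayama--Zariski structure $K_X+B+M+\alpha P=(1+\alpha)P+N$ force the MMP to contract $\Supp N$ in finitely many steps and reach a model with $K_Y+B_Y+M_Y=P_Y$ nef (Lemma~\ref{lem-degen}). So the logical order is: special termination $+$ Conjecture~\ref{main-conj-term} in dimension $d-1$ $\Rightarrow$ termination and degeneration on $\lfloor B\rfloor$ $\Rightarrow$ degeneration on $X$ (via the g-dlt argument above) $\Rightarrow$ termination with a minimal model. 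Your proposal reverses the last two steps and thereby misses the key degeneration-on-$X$ step.
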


The following theorem is a higher dimensional version of Theorem \ref{main-dlt-abund}.

\begin{thm}[\text{=Theorem \ref{thm-dlt-abund'}}]\label{main-thm-dlt-abund'}
	Assume the terminations of MMP for dlt usual pairs in dimension$\le d-1$. 
	
	Let $(X,B+M)$ be a projective g-lc pair of dimension $d$ with data $M'$. Assume $K_X+B+M$ birationally has a Nakayama-Zariski decomposition with nef positive part. Suppose $M'$ is log abundant.
	Then, $(X,B+M)$ has a minimal model.
\end{thm}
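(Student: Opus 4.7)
The plan is to run a $P$-trivial MMP with scaling of an ample divisor and invoke the special termination Theorem \ref{main-thm-sp-term}. The restrictions to g-lc centres will be controlled via log abundance together with the assumed termination of MMP for dlt usual pairs in dimension $\le d-1$. After passing to a $\Q$-factorial g-dlt modification (log abundance is preserved under such reductions), we may assume $(X,B+M)$ is $\Q$-factorial g-dlt with a birational weak Zariski decomposition $K_X+B+M = P+N$ whose positive part $P$ is nef; by standard perturbation we may further take $N$ to be a $\Q$-Cartier $\Q$-divisor. Fix an ample $A$ as in Definition \ref{main-defn-P-mmp} and run a $P$-trivial MMP with scaling of $A$,
\[
(X, B+M) \bir (X^1, B^1+M^1) \bir \cdots,
\]
so that each $P^i$ remains nef.

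To apply special termination, fix a g-lc centre $S$. By Theorem \ref{main-thm-sp-term}, the induced map on $S^i$ lifts to a $P^i|_{\widetilde{S}^i}$-trivial MMP on the small $\Q$-factorialisation $(\widetilde{S}^i, B_{\widetilde{S}^i}+M_{\widetilde{S}^i})$ with scaling of $A^i|_{\widetilde{S}^i}$, and the finiteness of coefficients Theorem \ref{thm-finite-coeff} ensures that the coefficients of $B_{\widetilde{S}^i}$ remain in a finite set. By the log abundance hypothesis, the restricted data $M'|_{S'}$ is abundant. Using the Iitaka fibration of an abundant divisor together with a canonical bundle formula, the abundance of $M'|_{S'}$ lets one replace the restricted g-lc pair numerically by a dlt usual pair in dimension $\le d-1$; the assumed termination of MMP for dlt usual pairs in lower dimension then produces a minimal model for $(\widetilde{S}^i, B_{\widetilde{S}^i}+M_{\widetilde{S}^i})$ for $i \gg 0$. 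The furthermore part of Theorem \ref{main-thm-sp-term} then yields termination of the MMP near $S^i$. Inducting on g-lc centres in increasing dimension, we conclude that the MMP eventually avoids the g-lc locus entirely.

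Once the MMP stabilises away from g-lc centres, we are effectively running an MMP for a g-klt pair with nef positive part in its Nakayama-Zariski decomposition. Since log abundance of $M'$ gives in particular $M \equiv D \ge 0$, Theorem \ref{main-klt'} applies and produces a minimal model for $(X,B+M)$. The main obstacle is the second paragraph: translating abundance of $M'|_{S'}$, together with the finiteness of coefficients from Theorem \ref{thm-finite-coeff}, into existence of a minimal model for the restricted $P^i|_{\widetilde{S}^i}$-trivial MMP. The reduction of the abundant g-lc pair on $\widetilde{S}^i$ to a dlt usual pair via an Iitaka fibration and canonical bundle formula is the delicate technical step, and it must be performed uniformly along the MMP so that the lower-dimensional termination hypothesis directly feeds into minimal models for the restricted pairs.
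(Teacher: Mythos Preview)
Your overall architecture is right and matches the paper: pass to a $\Q$-factorial g-dlt model with $P=P_\sigma(K_X+B+M)$ nef and $N$ a $\Q$-Cartier $\Q$-divisor, run a $P$-trivial MMP with scaling, use the special termination Theorem~\ref{main-thm-sp-term} inductively over g-lc centres, and then finish once the MMP avoids $\rddown{B}$. The difficulty is exactly where you flag it, and your proposed fix is the wrong tool.

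The Iitaka fibration and canonical bundle formula do \emph{not} ``replace the restricted g-lc pair numerically by a dlt usual pair'' in the same dimension. The canonical bundle formula takes a fibration and produces a \emph{generalised} pair on the \emph{base}; it neither stays in dimension $\dim S$ nor removes the nef part of the data. What you actually need is much simpler: since $M'|_{S'}$ is nef and abundant, it is $\R$-linearly equivalent to an effective divisor with arbitrarily small coefficients (Lemma~\ref{lem-nef-abundant-divisor}). Once the restricted $P|_{\widetilde S^i}$-trivial MMP has, by deeper induction on the dimension of g-lc centres, terminated near $\rddown{B_{\widetilde S^i}}$, choose $D^i\sim_\R M_{\widetilde S^i}$ effective with small coefficients and split $D^i=C^i+G^i$ with $\Supp C^i\subset\rddown{B_{\widetilde S^i}}$. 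Each remaining step is $C^i$-trivial and $P^i|_{\widetilde S^i}$-trivial, hence is a step of an MMP on the \emph{usual} dlt pair $(\widetilde S^i,B_{\widetilde S^i}+G^i)$, and this terminates by your hypothesis in dimension $\le d-1$. This is precisely Theorem~\ref{thm-dlt-abund}; you should invoke it rather than attempt a canonical bundle formula reduction.

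Your final paragraph also needs adjustment. After termination near $\rddown{B^i}$ the pair is still g-dlt, and the running sequence is a $P$-trivial MMP, not an MMP on $K_X+B+M$, so Theorem~\ref{main-klt'} does not apply directly. The paper instead uses Theorem~\ref{thm-degen}: abundance of $M'$ gives $M\equiv D\ge 0$, hence $P\equiv K_X+(B+D)-N$ satisfies condition~(1), so once the MMP is trivial near $\rddown{B^i}$ it \emph{degenerates} to an MMP on $K_X+B+M+\alpha P$ for a fixed $\alpha\gg 0$. Since $P=P_\sigma(K_X+B+M)$, Lemma~\ref{lem-degen} then forces the MMP to contract $N$ and terminate at a minimal model.
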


\noindent \textbf{Related results: on weak minimal models and generalised $\R$-complements.}
The notion of weak minimal models of projective g-lc pairs (see Definition \ref{defn-wmm}) follows from that of usual pairs defined in the unpublished manuscript \cite{hu}. For usual lc pairs, the existence of weak minimal models is clearly equivalent to the existence of minimal models, which is established in \cite{bhzariski}. However, it is not known for non-NQC g-pairs.

\begin{thm}[\text{=Theorem \ref{thm-dlt''}}]\label{main-thm-dlt''}
	Let $(X,B+M)$ be a projective g-lc pair with data $M'$. Assume $K_X+B+M$ birationally has a Nakayama-Zariski decomposition with nef positive part. Suppose either $M'$ or the positive part is log effective. Then, $(X,B+M)$ has a weak minimal model.
\end{thm}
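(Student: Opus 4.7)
The plan is to adapt the strategy of Theorem \ref{main-thm-dlt'} but without relying on Conjecture \ref{main-conj-term}. We will run a $P$-trivial MMP with scaling (Definition \ref{main-defn-P-mmp}) and extract a weak minimal model from the stabilisation near g-lc centres supplied by Theorem \ref{main-thm-sp-term}, using the log effective hypothesis to feed induction on dimension. The key observation is that weak minimality, unlike full minimality, does not require global termination of the MMP.

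First I would take a $\Q$-factorial g-dlt modification $\pi \colon X' \to X$ on which the positive part $P' = P_\sigma(\pi^*(K_X+B+M))$ is nef, and pick an ample $A'$ such that $(X', (B'+A') + M')$ is g-dlt with $\Supp\{P'\} \cup \Supp\{M'\} \subseteq \Supp\{B'+A'\}$. The log effectivity of either $M'$ or $P'$ is preserved by this reduction and, via adjunction, is inherited by the restriction to each lc stratum. I then run a $P'$-trivial MMP with scaling of $A'$, producing an infinite sequence $(X^i, B^i + M^i) \bir (X^{i+1}, B^{i+1} + M^{i+1})$ along which $P^i$ remains nef.

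Next I would apply the special termination Theorem \ref{main-thm-sp-term} along each g-lc centre $S$: the induced map on a small $\Q$-factorialisation $\widetilde{S}^i$ is a $P^i|_{\widetilde{S}^i}$-trivial MMP on a pair of dimension $\dim X - 1$, whose boundary and data again satisfy the log effectivity condition via adjunction. By induction on dimension the current theorem yields a weak minimal model on $\widetilde{S}^i$; plugging this back into Theorem \ref{main-thm-sp-term} shows that the MMP is eventually an isomorphism on a neighbourhood of $\lfloor B^i \rfloor$ for $i \gg 0$.

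Finally, to assemble a weak minimal model of $(X, B+M)$, I would fix $i$ large enough that the MMP has stabilised near $\lfloor B^i \rfloor$ and argue that $(X^i, B^i + M^i)$ verifies Definition \ref{defn-wmm}. The hard part will be to check the negativity condition on the divisors contracted by $X \bir X^i$: this combines the nefness of $P^i$, the klt-like behaviour of the pair off the stabilised lc locus (where the argument of Theorem \ref{main-klt'} applies), and a careful comparison on a common resolution of $\pi^*(K_X + B + M)$ with the pushforward of $K_{X^i} + B^i + M^i$ via the Nakayama--Zariski decomposition. The delicate point is that weak minimality requires matching only the $N_\sigma$-part of the decomposition, which is precisely what is stabilised by a $P$-trivial MMP in the absence of Conjecture \ref{main-conj-term}.
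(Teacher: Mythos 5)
Your plan diverges from the paper's in two places, and both are genuine gaps rather than alternative routes.

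First, you misread where the ``weakness'' of a weak minimal model comes from. Definition \ref{defn-wmm} still requires $K_W+B_W+M_W$ to be nef on the model; the relaxation is only in the discrepancy inequalities (the $\sigma_D$ correction term), not in the nefness. So your final step --- stopping the $P$-trivial MMP at a finite stage $X^i$ where it has merely stabilised near $\rddown{B^i}$ and trying to verify Definition \ref{defn-wmm} there --- cannot work: at such a stage $K_{X^i}+B^i+M^i$ need not be nef, and no amount of careful comparison on a common resolution will fix that. The paper instead gets the ``weak'' entirely from a boundary modification: Lemma \ref{induction-lem} replaces $(X,B+M)$ by a log smooth model $(W,B_W+M_W)$ with $B_W=B_W'-B_W'\wedge N_W'$, arranged so that a \emph{genuine} minimal model of the modified pair is a weak minimal model of the original, and so that strong discrepancy inequalities ($a(D,W,B_W+M_W)+\mu_D N_W>1$ on the support of $N_W$) hold. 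The whole remaining effort is then to prove actual termination of a $P$-trivial MMP for the modified pair, via degeneration (Theorem \ref{thm-degen}) and Lemma \ref{lem-degen}.

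Second, your inductive step is circular in a way that Theorem \ref{thm-sp-term} does not absorb. The ``Furthermore'' clause of that theorem requires the restricted pair $(S^i,B_{S^i}+M_{S^i})$ to have a \emph{minimal} model in order to conclude termination near $S^i$; feeding it a weak minimal model obtained from your induction hypothesis gives nothing, since for non-NQC g-pairs the existence of a weak minimal model is not known to imply the existence of a minimal model (see the discussion after Definition \ref{defn-wmm} and Proposition \ref{prop-wmm}). This is exactly the obstruction the paper's Steps 2--3 are designed to overcome without invoking Conjecture \ref{conj-term}: one uses the degeneration of the induced MMP on the strata (by induction on the theorem itself), the auxiliary terminating MMPs $X^i\bir Y^i$ on $K_{X^i}+B^i+M^i+\lambda_iA^i+\alpha P^i$ made possible because $\beta_i\to\infty$, and then the discrepancy inequalities from Lemma \ref{induction-lem} together with Lemma \ref{lem-adjunction} and the negativity lemma to show $E|_{S_{W^i}}=0$ and $\pi^*N^i|_{S_{W^i}}=0$, which forces $K_{S^i}+B_{S^i}+M_{S^i}=P^i|_{S^i}$ to be nef already. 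None of this machinery appears in your outline, and without it the claimed stabilisation near the g-lc centres is unsupported.
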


Notations as above, if $\mathbf{B}_-(K_X+B+M)$ does not intersect with g-lc centres, then $(X,B+M)$ has a minimal model. So Theorem \ref{main-thm-dlt''} is a generalisation of Theorem \ref{main-klt'}.\\

The theory of complements is introduced by Shokurov and substantially developed in \cite{birkar-BAB}. We prove the following result.
\begin{thm}[\text{=Theorem \ref{thm-a-mm}}]\label{main-thm-a-mm}
	Let $(X/Z,B+M)$ be a projective g-pair with data $M'$. Suppose 
	\begin{itemize}
		\item $(X,B+M)$ has a g-klt $\R$-complement (Definition \ref{defn-complement}),
		
		\item $-(K_X+B+M)$ birationally has a Nakayama-Zariski decomposition with nef positive part, and
		
		\item either $M \equiv D \ge 0$ or $-(K_X+B+M) \equiv D \ge 0$.
	\end{itemize}   
	Then, we can run an MMP on $-(K_X+B+M)$ which termminates with a g-klt pair $(Y,B_Y+M_Y)$ with data $M'$ on which $-(K_Y+B_Y+M_Y)$ is nef. 
\end{thm}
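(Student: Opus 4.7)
The plan is to reduce to Theorem \ref{main-klt'} by constructing an auxiliary g-klt pair whose ordinary MMP is simultaneously an MMP on $-(K_X+B+M)$. Let $(X,B^{+}+M)$ be a g-klt $\R$-complement of $(X,B+M)$ chosen with the same data $M'$: thus $B^{+}\ge B$ and $K_X+B^{+}+M\equiv 0/Z$. Set $\Delta := B^{+}-B\ge 0$. For a sufficiently small rational $t\in(0,1)$ the pair $(X,(B+(1+t)\Delta)+M)$ is still g-klt by continuity of log discrepancies, and a direct computation yields
$$K_X+(B+(1+t)\Delta)+M \;\equiv\; t\Delta \;\equiv\; -t(K_X+B+M)/Z.$$
This positive-scalar proportionality is the engine of the argument: the $(K_X+(B+(1+t)\Delta)+M)$-negative extremal rays are exactly the $(-(K_X+B+M))$-negative ones, so any MMP on the auxiliary log canonical divisor is simultaneously an MMP on $-(K_X+B+M)$ for $(X,B+M)$.

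Next I verify the hypotheses of Theorem \ref{main-klt'} for the auxiliary pair. On any birational model $\pi\colon X'\to X$ where $P_\sigma(-\pi^*(K_X+B+M))$ is nef, the same proportionality gives
$$P_\sigma\bigl(\pi^*(K_X+(B+(1+t)\Delta)+M)\bigr) \;=\; tP_\sigma\bigl(-\pi^*(K_X+B+M)\bigr),$$
which is still nef. For the weak nonvanishing assumption: in the case $-(K_X+B+M)\equiv D\ge 0$ the auxiliary log canonical divisor is numerically $tD\ge 0$; in the case $M\equiv D\ge 0$ the data of the auxiliary pair is unchanged, so the condition transfers directly. Hence Theorem \ref{main-klt'} supplies an MMP for the auxiliary pair that terminates on a minimal model $(Y,(B_Y+(1+t)\Delta_Y)+M_Y)$. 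By the proportionality every step is a legitimate step of a $(-(K_X+B+M))$-MMP, and nefness of $K_Y+(B_Y+(1+t)\Delta_Y)+M_Y$ is equivalent to nefness of $-(K_Y+B_Y+M_Y)$. Finally, $(Y,B_Y+M_Y)$ is g-klt because $B_Y\le B_Y+(1+t)\Delta_Y$ and the data $M'$ is unchanged.

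The main obstacle I anticipate is purely definitional: ensuring that a g-klt $\R$-complement can be chosen so that the nef part is literally $M'$ (not merely numerically equivalent to it), which is what is needed for the g-klt-ness of the auxiliary pair and of the output to make sense with respect to $M$. This matches the standard convention when working with complements of generalised pairs in Birkar's line of work, but has to be checked against Definition \ref{defn-complement}; if the definition only ensures numerical equivalence of data, one replaces $M'$ throughout by the complement's nef part, whose numerical class equals $M'$, without loss. A secondary bookkeeping point is to identify the type of each step across the two MMPs (divisorial contraction, flip, or Mori fibre contraction), but this is automatic from the strict positive scalar relation between the two log canonical divisors.
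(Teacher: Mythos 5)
Your overall strategy --- build an auxiliary g-klt pair whose log canonical divisor is a positive multiple of $-(K_X+B+M)$ in the numerical sense, then invoke Theorem \ref{main-klt'} together with the ``any MMP with scaling terminates once a minimal model exists'' principle (Corollary \ref{cor-term}) --- is the same as the paper's. But there is a genuine gap in the case where only $M\equiv D\ge 0$ is available. Definition \ref{defn-complement} produces a complement of the form $(X,(B+C)+(M+N))$ with data $M'+N'$, where $N'$ is a nef divisor that need not be zero; it only gives $-(K_X+B+M)\sim_\R C+N$ with $C\ge 0$ effective but $N$ merely a pushforward of a nef divisor. Your $\Delta:=B^+-B$ is therefore not the whole story: the correct auxiliary pair is $(X,(B+(1+t)C)+(M+(1+t)N))$ with data $M'+(1+t)N'$. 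Its log canonical divisor is $\equiv -t(K_X+B+M)$, which is \emph{not} known to be numerically effective in this case, and its data $M+(1+t)N$ is \emph{not} known to be numerically effective either (even though $M\equiv D\ge 0$, the summand $N$ can destroy this). So neither alternative hypothesis of Theorem \ref{main-klt'} is verified for the auxiliary pair. Your proposed repair (``the data of the auxiliary pair is unchanged'') only applies when $N'=0$; but then $-(K_X+B+M)\sim_\R C\ge 0$ and you are automatically in the other case, so the alternative $M\equiv D\ge 0$ is never genuinely handled. (In the case $-(K_X+B+M)\equiv D\ge 0$ your argument does go through, since the auxiliary log canonical divisor is $\equiv tD\ge 0$ regardless of $N'$.)

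The paper avoids this by building the auxiliary pair from the Nakayama--Zariski decomposition rather than from the complement. On a log resolution $\pi\colon X'\to X$ one writes $-(K_{X'}+B'+M')=P'+N'$ with $P'$ nef and $N'\ge 0$, and forms the g-klt pair $(X',(B'+N'+R')+(M'+(1+\epsilon)P'))$, where $R'=\epsilon N'+F+E$ is an effective rational perturbation supported on $\operatorname{Ex}(\pi)\cup\Supp N'$; the complement is used only to guarantee that the coefficients of $B'+N'$ stay below $1$, so that this pair is g-klt. The log canonical divisor of this pair is $\epsilon P'+R'$, again proportional to $-(K_X+B+M)$, but now the case $M\equiv D\ge 0$ can be closed: $M'\equiv \pi^*D-(\text{effective exceptional})$ and the exceptional part is bounded by a multiple of $R'$, which is exactly the shape ``$M\equiv D'-sN$'' required by Corollary \ref{cor-klt}(1b) (via the proof of Corollary \ref{cor-klt'}), while in the other case $P'\equiv D'\ge 0$ gives (1a). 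If you want to keep your complement-based construction, you must either restrict to complements with $N'=0$ (which weakens the theorem) or replace the appeal to the numerical-effectivity hypotheses of Theorem \ref{main-klt'} by a direct verification of the hypotheses of Theorem \ref{thm-klt} for the data $M'+(1+t)N'$, which is precisely the step that is missing.
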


\noindent\textbf{Acknowledgement.} The work is supported by Overseas High-Level Young Talent Recruitment Programs and partially supported by Chongqing Natural Science Foundation Innovation and Development Joint Fund CSTB2023NSCQ-LZX0031. The author would like to thank Professors Caucher Birkar, Chen Jiang, Jihao Liu for discussions and comments. The work is carried out when the author is visiting NCTS, Taipei. He thanks Professor Jungkai Chen for his hospitality.

\section{Preliminaries}
We work over an algebraically closed uncountable field $k$ of characteristic zero. All varieties are
quasi-projective over $k$ and a divisor refers to an $\R$-Weil divisor unless stated otherwise.

We denote by $\mathbb{K}$ the rational number field $\Q$ or the real number field $\R$.\\
\noindent \textbf{B-divisors.}
We recall some definitions regarding b-divisors. Let $X$ be a variety. A b-divisor $\mathbf{D}$ of $X$ is a family
$\{\mathbf{D}_{X'}\}$ of $\R$-Weil divisors indexed by all birational models $X'$ of $X$
such that $\mu_*(\mathbf{D}_{X''}) = \mathbf{D}_{X'}$ if $\mu: X'' \to X'$ is a birational contraction.

In most cases we focus on a class of b-divisors but not in full generality. An \emph{$\K$-b-Cartier b-divisor} $\mathbf{M}$ is defined by the choice of  
a projective birational morphism 
$X' \to X$ from a normal variety $X'$ and an $\K$-Cartier divisor $M'$ on $X'$ in the way that $\mathbf{M}_{X''}=\mu^*M'$ for any birational model $\mu: X'' \to X'$. In this case we say that $M'$ \emph{represents} $\mathbf{M}$ or $\mathbf{M}$ \emph{descends} to $X'$, and denote by $^*M'$. 

Given an $\K$-b-Cartier b-divisors $\mathbf{M}$ on $X$ represented by $M'$ and a surjective projective morphism $f: Y \to X$, we define the pull-back of $\mathbf{M}$ as the $\K$-b-Cartier b-divisors $f^*\mathbf{M}$ represented by $f'^* X'$ where $f': Y' \to X'$ is induced by $f$. Similarly, given a prime divisor $S$ of $X$, we define $\mathbf{M}|_S$ as it is represented by $M'|_{S'}$ where $S'$ is the birational transform of $S$ on $X'$. Note that the definition is independent of the choice of $X'$.

An $\K$-b-Cartier b-divisor represented by some $X'\to X$ and $M'$ is \emph{b-nef (resp. nef and abundant)} if $M'$ is 
nef (resp. nef and abundant). \\


\noindent \textbf{Generalised pairs.}
For the basic theory of generalised pairs we refer to \cite[Section 4]{birkarzhang}.
Below we collect some notions and basic properties.

A \emph{generalised pair} (\emph{g-pair} for short) $(X/Z,B+M)$ consists of 
\begin{itemize}
	\item a normal variety $X$ equipped with a projective
	morphism $X\to Z$, 
	
	\item an effective $\R$-divisor $B$ on $X$, and 
	
	\item a b-$\R$-Cartier b-divisor $^*M'$ represented 
	by some birational model $X' \overset{\phi}\to X$ and a nef divisor
	$M'$ on $X'$ such that $K_{X}+B+M$ is $\R$-Cartier,
	where $M := (^*M')_X$.
\end{itemize}

We usually refer to the pair by saying $(X/Z,B+M)$ is a \emph{g-pair with data} $M'$, and we omit the data $M'$ if it is not important. Since a b-$\R$-Cartier b-divisor is defined birationally, in practice we will often replace $X'$ with a log resolution and replace $M'$ with its pullback. We say $(X',B' +M')$, where $K_{X'}+B'+M'$ is the pull-back of $K_X+B+M$, is \emph{log smooth} if $(X',B')$ is log smooth and $^*M'$ descends to $X'$. In this case, we say $(X',B'+M')$, is a \emph{log resolution} of $(X,B+M)$. 
When $Z$ is not relevant we usually drop it
and do not mention it: in this case one can just assume $X \to Z$ is the identity. 
When $Z$ is a point we also drop it but say the pair is projective. Given a prime divisor $D$ over $X$, we define the discrepancy $a(D,X,B+M)= 1- \mu_D B'$ where $\mu_D$ denotes the multiplicity. 

We say a g-pair $(X/Z,B+M)$ is 
\emph{generalised lc} or \emph{g-lc} (resp. \emph{generalised klt} or \emph{g-klt})
if for each $D$ the discrepancy $a(D,X,B+M)$ is $\ge 0$ (resp. $>0$).
A g-lc pair $(X/Z,B+M)$ with data $M'$ is \emph{generalised dlt} or \emph{g-dlt} if 
there exists a log resolution $(X'/Z,B'+M') \to X$ such that the discrepancy $a(E,X,B,M) >0$ for every exceptional$/X$ divisor on $X'$. If in addition each connected component of $\rddown{B}$ is irreducible, we say the pair is \emph{generalised plt} or \emph{g-plt} for short. 

We will show that the above definition of g-dlt pairs coincides with that in \cite{birkar-BAB}. We first show that the above definition implies that in \cite{birkar-BAB}.
\begin{lem}
	Given a g-dlt pair $(X,B+M)$ with data $M'$ on $\phi:X' \to X$, let $\eta$ be the generic point of a g-lc centre of $(X, B+M)$. Suppose $\phi$ is a log resolution such that $a(E,X,B+M) >0$ for all exceptional divisors on $X'$. Then, $\phi$ is an isomorphism over $\eta$. In particular, $(X, B)$ is log smooth near $\eta_i$ and $M' = \phi^*M$ holds over a neighbourhood of $\eta$.
\end{lem}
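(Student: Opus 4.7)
I argue by contradiction. Suppose $\phi$ is not an isomorphism over $\eta$; let $W = \overline{\{\eta\}}$. Since $\phi:X'\to X$ is a projective birational morphism to a normal variety, its non-isomorphism locus on $X$ equals the union $\bigcup_i \phi(E_i)$ of images of prime exceptional divisors. By irreducibility of $W$, some exceptional $E\subset X'$ satisfies $W\subseteq \phi(E)$, and the g-dlt hypothesis gives $a(E,X,B+M)>0$, hence $\mu_E B' < 1$.

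Next, since $\eta$ is the generic point of a g-lc centre, I pick a prime b-divisor $D$ over $X$ with $a(D,X,B+M)=0$ and centre $W$. The g-dlt hypothesis forbids $D$ from being realised as an exceptional divisor on $X'$. Either $D$ is non-exceptional on $X'$---so $W=D$ is a prime divisor on $X$ with $\mu_D B=1$---or $D$ has centre of codimension $\ge 2$ on $X'$. In the latter situation I pass to a higher log resolution $\psi:Y\to X'$ realising $D$ as a divisor of coefficient one and reduce, via the SNC/g-lc correspondence on log-smooth g-pairs (the g-lc centres of $(X',B'+M')$ being exactly the strata of $B'^{=1}$) and minimality of the stratum of $B'^{=1}$ dominating $W$, to the former case, possibly in lower dimension. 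In the former case, both $D'\subset X'$ (with $\mu_{D'} B'=1$) and $E$ (with $\mu_E B'<1$) dominate $D$; Zariski's connectedness theorem applied to $\phi^{-1}(\eta_D)$ together with the SNC hypothesis on $(X',\Supp B'\cup \Exc(\phi))$ produces an exceptional prime $E_0\subset X'$ with $\phi(E_0)=D$ and $E_0\cap D'\ne\emptyset$. Applying adjunction of the log-smooth g-pair $(X',B'+M')$ to $D'$ yields a log-smooth g-pair $(D',B_{D'}+M_{D'})$ satisfying $(K_{X'}+B'+M')|_{D'}=K_{D'}+B_{D'}+M_{D'}=(\phi|_{D'})^*(K_D+B_D+M_D)$; the descent of $\mathbf{M}$ to $X'$ ensures $M'|_{D'}$ represents the descent of $\mathbf{M}|_D$ to $D'$. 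The divisor $E_0\cap D'$ is then exceptional for $\phi|_{D'}:D'\to D$, and inducting on dimension by adjoining along a stratum of $B_{D'}^{=1}$ containing $\phi(E_0\cap D')$ (itself a g-lc centre of $(D,B_D+M_D)$ inherited from $(X,B+M)$) eventually produces, via the relative negativity lemma, a divisor on $X'$ whose coefficient in $B'$ is $\ge 1$, contradicting the g-dlt hypothesis.

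The main obstacle is the inductive adjunction step: one must carefully track the descent of the moduli part through each adjunction and verify that the induced pairs remain log-smooth and g-dlt, so that the discrepancy calculations on the lower-dimensional strata remain compatible with those on $X$; this is where the generalised setting diverges from the classical dlt proof. Once the contradiction is reached, $\phi$ is an isomorphism over some open $U\ni\eta$; consequently $(X,B)$ inherits log smoothness from $(X',B')$ near $\eta$, and the identity $M'=\phi^*M$ on $\phi^{-1}(U)$ follows immediately from $\phi|_{\phi^{-1}(U)}$ being an isomorphism.
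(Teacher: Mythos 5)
Your strategy (contradiction, then induction on dimension via repeated adjunction) is much heavier than what is needed, and as written it does not close. The paper's proof is a short perturbation argument: since every $\phi$-exceptional prime divisor has positive log discrepancy, its coefficient in $B'$ is $<1$, so the log smooth sub-pair $(X',B'+\epsilon G)$ with $G=\Exc(\phi)$ is still sub-dlt for $0<\epsilon\ll 1$; any divisor $E$ over $X$ whose centre on $X$ lies in $Z:=\phi(G)$ has centre on $X'$ inside $G$, whence $a(E,X,B+M)=a(E,X',B')>a(E,X',B'+\epsilon G)\ge 0$. Thus no divisor with log discrepancy $0$ can have centre inside $Z$, so $\eta\notin Z$ and $\phi$ is an isomorphism over $\eta$. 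No induction, no adjunction, no case analysis.

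The concrete defects in your argument are these. First, your ``former case'' is vacuous for the simplest possible reason, which you never invoke: if $W=D$ were a prime divisor of $X$, it could not satisfy $W\subseteq\phi(E)$, because the image of an exceptional divisor under a proper birational morphism to a normal variety has codimension at least two. Instead of saying this, you assert the existence of an exceptional prime $E_0\subset X'$ with $\phi(E_0)=D$ for a divisor $D\subset X$ --- which is impossible for exactly the same reason --- so the argument you actually give in that case cannot be carried out. Second, the reduction of the ``latter case'' to the former ``possibly in lower dimension'' is not an argument: you do not specify which pair you adjoin to, why the adjoint pair again satisfies the hypotheses of the lemma (in particular why its boundary has all exceptional coefficients $<1$ on the chosen resolution), or how a contradiction on a stratum lifts back to $X$; you yourself label this step the ``main obstacle,'' which is to say it is missing. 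Third, your terminal contradiction --- ``a divisor on $X'$ whose coefficient in $B'$ is $\ge 1$'' --- only contradicts the hypothesis if that divisor is $\phi$-exceptional; non-exceptional components of $B'$ with coefficient $1$ are entirely compatible with g-dlt, and you do not establish exceptionality. I recommend discarding the induction and arguing directly with the perturbed pair $(X',B'+\epsilon\,\Exc(\phi))$ as above.
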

\begin{proof}
	Since $G:=\mathrm{Ex}(\phi)$ is a divisor, by assumption we see $(X',B'+\epsilon G)$ is sub-dlt for $\epsilon \ll 1$. Hence for any divisor $E$ with $c_X E \in Z:=\phi(G)$, we see $a(E,X,B+M) > a(E,X',B'+\epsilon G) > 0$. One concludes that $\eta \in X \setminus Z$.
\end{proof}
One can easily verify that being g-dlt or g-plt is preserved under an MMP.\\

\noindent \textbf{Minimal models.}
A g-lc pair $(Y/Z,B_Y+M_Y)$ with data $M_{Y'}$ is a \emph{log birational model} of a g-lc pair $(X/Z,B+M)$ with data $M'$ if we are given a birational map
$\phi\colon X\bir Y$, $B_Y=B^\sim+E$ where $B^\sim$ is the birational transform of $B$ and
$E$ is the reduced exceptional divisor of $\phi^{-1}$, that is, $E=\sum E_j$ where $E_j$ are the
exceptional/$X$ prime divisors on $Y$ and $^*M_{Y'}=^*M'$ as b-divisors. 

A log birational model $(X'/Z,B'+M')$ is a \emph{log smooth model} of $(X/Z,B+M)$ if it is log smooth with data $M'$.

A log birational model $(Y/Z,B_Y+M_Y)$ is a \emph{weak log canonical (weak lc for short) model} of $(X/Z,B+M)$ if

$\bullet$ $K_Y+B_Y+M_Y$ is nef$/Z$, and

$\bullet$ for any prime divisor $D$ on $X$ which is exceptional/$Y$, we have
$$
a(D,X,B+M)\le a(D,Y,B_Y+M_Y).
$$

A weak lc model $(Y/Z,B_Y+M_Y)$ is a \emph{minimal model} of $(X/Z,B+M)$ if

$\bullet$ $Y$ is $\Q$-factorial,

$\bullet$ the above inequality on log discrepancies is strict.

A minimal model $(Y/Z, B_Y+M_Y)$ is a \emph{good minimal model} if $K_Y + B_Y +M_Y$ is semi-ample$/Z$. In this case, $K_Y + B_Y +M_Y$ defines a contraction $g:Y \to W$ such that $K_Y + B_Y +M_Y=g^*A_W$ for some ample$/Z$ divisor $A_W$. We say $W$ is the \emph{canonical model} of $(X/Z,B+M)$.  \\






For the definitions of invariant Iitaka dimensions and numerical dimensions we refer to \cite[Section 2]{hashizumehu}.

\begin{defn}[Log abundance]
	Let $f\colon X\to Z$ be a projective morphism of normal variety to a variety, and $D$ be an $\mathbb{R}$-Cartier divisor on $X$. 
	We say that $D$ is {\em abundant over} $Z$ if the equality $\kappa_{\iota}(X/Z,D)=\kappa_{\sigma}(X/Z,D)$ holds. 
	
	Given a g-dlt pair $(X/Z,B+M)$, we say that $D$ is {\em log abundant$/Z$} with respect to $(X,B+M)$ if $D$ is abundant over $Z$ and for any g-lc center $S$ of $(X,B+M)$, the restriction $D|_{S}$ is abundant over $Z$.
\end{defn}

The next lemma is \cite[V. 2.3 Lemma]{nakayama}. But the reader should note that the original statement asserts that $\pi^*D \sim_\Q g^*B$ which does NOT hold in general unless $D$ is effective. For a proof using invariant Iitaka fibrations, see \cite{hu3} and \cite[Lemma 2.8]{hu2}.

\begin{lem}[Nef and abundant divisor, \text{cf.\cite[V. 2.3 Lemma]{nakayama}}]\label{lem-nef-abundant-divisor}
	Let $f\colon X\to Z$ be a projective morphism from a normal variety to a variety, and let $D$ be a nef$/Z$ $\R$-Cartier divisor on $X$. Then, the following conditions are equivalent:
	\begin{enumerate}
		\item $D$ is abundant over $Z$.
		
		\item There exist a birational model $\pi: X' \to X$, a surjective morphism $g : X' \to Y$ of smooth quasi-projective varieties over $Z$,
		and a nef and big$/Z$ divisor $B$ of $Y$ such that $\pi^*D \sim_\R g^*B$.
	\end{enumerate} 
\end{lem}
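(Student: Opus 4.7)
The plan is to prove the two directions separately, treating $(2) \Rightarrow (1)$ as the routine direction and $(1) \Rightarrow (2)$ via the \emph{invariant Iitaka fibration} of $D$.

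For $(2) \Rightarrow (1)$, since $B$ is nef and big$/Z$, one has $\kappa_{\iota}(Y/Z,B)=\kappa_{\sigma}(Y/Z,B)=\dim Y-\dim Z$. Both $\kappa_{\sigma}$ and $\kappa_{\iota}$ are preserved under $\R$-linear equivalence, under birational pullback, and under pullback by a surjective morphism (after Stein factorisation); applying this to $\pi\colon X'\to X$ and to $g\colon X'\to Y$ with $\pi^*D\sim_{\R} g^*B$ immediately gives $\kappa_{\iota}(X/Z,D)=\kappa_{\sigma}(X/Z,D)$, so $D$ is abundant over $Z$.

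For $(1) \Rightarrow (2)$, I would use the invariant Iitaka fibration. Since $\kappa_{\iota}(X/Z,D)\ge 0$, pass to a sufficiently high log resolution $\pi\colon X'\to X$ and pick $D''\ge 0$ with $\pi^*D\sim_{\R}D''$. Applying the ordinary Iitaka fibration to $D''$ and invoking the invariance statement from \cite{hu3}, \cite[Lemma 2.8]{hu2} (which is exactly the point where the $\R$-linear refinement replaces the $\Q$-linear version of Nakayama), one obtains a surjective morphism $g\colon X'\to Y$ of smooth quasi-projective varieties over $Z$, an $\R$-Cartier divisor $B$ on $Y$, and an identity
\[
\pi^*D \sim_{\R} g^*B,
\]
with $\dim Y-\dim Z=\kappa_{\iota}(X/Z,D)$. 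After Stein factorisation I may assume $g$ has connected fibres. Nefness of $B$ then follows from the projection formula: any curve $C\subset Y$ with image a point in $Z$ lifts to a curve $C'\subset X'$ with $g_*C'=(\deg C'/C)\,C$, whence $B\cdot C$ is a positive multiple of $\pi^*D\cdot C'\ge 0$.

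For bigness of $B$ over $Z$, the compatibility of $\kappa_{\sigma}$ with surjective morphisms and $\R$-linear equivalence yields
\[
\kappa_{\sigma}(Y/Z,B)=\kappa_{\sigma}(X'/Z,g^*B)=\kappa_{\sigma}(X/Z,\pi^*D)=\kappa_{\sigma}(X/Z,D)=\kappa_{\iota}(X/Z,D)=\dim Y-\dim Z,
\]
so $B$ is big over $Z$, finishing $(1) \Rightarrow (2)$. The main obstacle is the very first step of this direction: establishing the honest $\R$-linear equivalence $\pi^*D\sim_{\R}g^*B$ rather than mere numerical equivalence (or $\Q$-linear equivalence modulo an effectivity assumption on $D$). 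This is exactly what Nakayama's original argument fails to give for $\R$-Cartier $D$ not assumed effective, and it is why one must appeal to the invariant-Iitaka-fibration construction of \cite{hu3,hu2} rather than to \cite[V.\,2.3]{nakayama} directly.
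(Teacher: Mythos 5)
Your proposal follows exactly the route the paper indicates: the paper does not prove Lemma \ref{lem-nef-abundant-divisor} in the text but defers to the invariant-Iitaka-fibration argument of \cite{hu3} and \cite[Lemma 2.8]{hu2}, which is precisely what you invoke for $(1)\Rightarrow(2)$, and you correctly isolate the same subtlety the paper itself flags, namely that the honest $\R$-linear equivalence $\pi^*D\sim_\R g^*B$ is what Nakayama's original argument fails to deliver for non-effective $\R$-Cartier $D$. Your easy direction $(2)\Rightarrow(1)$, via invariance of $\kappa_\sigma$ and $\kappa_\iota$ under $\R$-linear equivalence, birational pullback and surjective pullback, is standard and correct.
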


\noindent \textbf{Nakayama-Zariski decompositions.}
Nakayama \cite{nakayama} defined a decomposition $D=P_\sigma(D)+N_\sigma(D)$ for any pseudo-effective
divisor $D$ on a smooth projective variety. We refer to this as the \emph{Nakayama-Zariski decomposition}.
We call $P_\sigma$ the positive part and $N_\sigma$ the negative part. We can
extend it to the singular case as follows.
Let $X$ be a normal projective variety and $D$ be a pseudo-effective divisor on $X$. We define $P_\sigma(D)$
by taking a resolution $f\colon W\to X$ and letting $P_\sigma(D):=f_*P_\sigma(f^*D)$. For properties of Nakayama-Zariski decompositions and more details, we refer to \cite{nakayama} and \cite{bhzariski}.  \\

\section{Minimal model theory for non-NQC g-pairs}

\subsection{Non-NQC g-pairs}
In this subsection we collect some useful results for g-pairs. In particular, we will show the definition of g-dlt pairs in this paper coincides with that in \cite{birkar-BAB}.

We begin with easy lemmas from the negativity lemma.
\begin{lem}[\text{\cite[Proposition 3.8]{hanli}, cf.\cite[Theorem 3.4]{birkar-flip}}]\label{lem-exc-2}
	Let $(X/Z, B+M)$ be a g-lc pair such
	that $K_X + B+M \equiv E/Z$ with $E \ge 0$ very exceptional$/Z$ and that $X$ is $\Q$-factorial klt. Then, any MMP$/Z$
	on $K_X + B+M$ with scaling of an ample$/Z$ divisor terminates with a good minimal model $Y$ on
	which $K_Y + B_Y +M_Y \equiv E_Y = 0/Z$.
\end{lem}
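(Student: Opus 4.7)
This is a generalised pair version of Birkar's \cite[Theorem 3.4]{birkar-flip} (and is essentially \cite[Proposition 3.8]{hanli}). My strategy would be to reduce the MMP on $K_X+B+M$ to one on a pair whose log canonical divisor is numerically equivalent to an honest effective divisor supported in $\Supp E$, and then invoke the negativity lemma for very exceptional divisors at the terminal model.

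First I would exploit the $\Q$-factorial klt assumption on $X$ to pick $0<\epsilon\ll 1$ so that $(X/Z,B+\epsilon E+M)$ is still g-lc (the klt condition on the ambient $X$ provides room to absorb a small multiple of $E$ into the boundary). Then
\[
K_X+B+\epsilon E+M\equiv (1+\epsilon)E\,/\,Z,
\]
and the key observation is that the $(K_X+B+M)$-negative extremal rays $R/Z$ are exactly the $(K_X+B+\epsilon E+M)$-negative ones: any such $R$ satisfies $E\cdot R<0$ (since $K_X+B+M\equiv E$), hence lies in $\Supp E$ and is negative against $(1+\epsilon)E$, and conversely. Thus any MMP$/Z$ on $K_X+B+M$ with scaling of an ample$/Z$ divisor $A$ proceeds step-by-step as an MMP on $K_X+B+\epsilon E+M$ with scaling of $A$.

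Second, I would invoke the g-pair MMP machinery: existence of flips and divisorial contractions for $\Q$-factorial g-lc pairs is by now standard, and termination of an MMP with scaling of an ample divisor on a g-lc pair whose log canonical divisor is numerically equivalent to a fixed effective divisor (here $(1+\epsilon)E$) follows from the BCHM-style results available in the g-pair setting. Each step modifies $E^i$, either strictly removing a component of $\Supp E$ (divisorial case) or flipping an $E^i$-negative curve; very exceptionality is preserved throughout.

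Third, at the terminal model $(Y/Z,B_Y+M_Y)$ one has $K_Y+B_Y+M_Y$ nef$/Z$ with $K_Y+B_Y+M_Y\equiv E_Y/Z$ where $E_Y$ is the birational transform of $E$, still very exceptional$/Z$. The negativity lemma for very exceptional divisors (a nef $\R$-divisor numerically equivalent over $Z$ to a very exceptional effective one must be zero) now forces $E_Y=0$, so $K_Y+B_Y+M_Y\equiv 0/Z$. Abundance for numerically trivial divisors on g-klt pairs then upgrades this to semi-ampleness$/Z$, making $Y$ a good minimal model.

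The main obstacle is termination in the (possibly non-NQC) g-pair setting, where MMP termination is not available in general. This is circumvented precisely by the perturbation trick: after replacing $(X,B+M)$ by $(X,B+\epsilon E+M)$ the log canonical divisor becomes numerically equivalent to a fixed effective divisor supported in $\Supp E$, so that termination follows from the known g-pair termination results for MMP with scaling on pairs with effective generalised log canonical divisor; the klt assumption on $X$ is what allows this perturbation to preserve the relevant singularity bounds.
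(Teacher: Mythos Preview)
The paper does not give its own proof of this lemma; it is quoted from \cite{hanli} (cf.\ \cite{birkar-flip}). Evaluated on its own terms, however, your proposal has a genuine gap.

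The problem is your second step, where you assert that ``termination of an MMP with scaling of an ample divisor on a g-lc pair whose log canonical divisor is numerically equivalent to a fixed effective divisor follows from the BCHM-style results available in the g-pair setting.'' No such result exists: BCHM-type termination needs bigness of the log canonical divisor or an ample contribution in the boundary, not mere effectivity. Your $\epsilon E$ perturbation does nothing to repair this, since $(1+\epsilon)E$ is still only effective and very exceptional over $Z$, hence certainly not big$/Z$. The very exceptional hypothesis is exactly what drives termination here, and you never use it for that purpose --- you only invoke it at the terminal model, \emph{after} having assumed termination. (Your first step is also shaky: $X$ being of klt type does not by itself guarantee that $(X,B+\epsilon E+M)$ is g-lc, since $\Supp E$ may meet g-lc centres of $(X,B+M)$; but this is a fixable technicality, and the perturbation is in any case unnecessary.)

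The argument in \cite{birkar-flip} and \cite{hanli} proceeds instead by analysing the limit $\lambda=\lim_i\lambda_i$ of the scaling coefficients. If $\lambda>0$, the given MMP is simultaneously an MMP with scaling on $K_X+B+\tfrac{\lambda}{2}A+M$, which now carries an honest ample part in the boundary, and \cite{bchm} gives termination. If $\lambda=0$, one works on each $X_i$: there $E_i+\lambda_i A_i$ is nef$/Z$ while $E_i\ge 0$ remains very exceptional$/Z$, and Birkar's negativity lemma for very exceptional divisors \cite[Lemma~3.3]{birkar-flip}, combined with $\lambda_i\to 0$, forces $E_i=0$ at some finite stage --- so the MMP has already terminated. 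Your Step~3 is correct once termination is in hand, but termination is the entire content of the lemma, and your proposal does not establish it.
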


\begin{lem}[\text{\cite[Proposition 3.8]{hanli}, cf.\cite[Theorem 3.5]{birkar-flip}}]\label{lem-exc-3}
	Let $(X/Z, B+M)$ be a g-lc pair such that $X$ is $\Q$-factorial klt, $X \to Z$ is birational and $K_X + B+M \equiv Q = Q_+ - Q_-/Z$ where $Q_+$, $Q_- \ge 0$ have
	no common components and $Q_+$ is exceptional$/Z$. Then, any MMP$/Z$ on
	$K_X + B+M$ with scaling of an ample$/Z$ divisor contracts $Q_+$ after finite steps.
\end{lem}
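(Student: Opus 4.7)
The plan is to reduce the statement to Lemma~\ref{lem-exc-2} via the identity
\[
K_X + B + Q_- + M \equiv Q_+/Z,
\]
where $Q_+\ge 0$ is exceptional$/Z$, and hence very exceptional$/Z$ because $X\to Z$ is birational. If $(X,(B+Q_-)+M)$ were g-lc, Lemma~\ref{lem-exc-2} applied to this ``companion'' divisor would immediately give an MMP$/Z$ with scaling that terminates with $K+B+Q_-+M\equiv 0/Z$, and along the way $Q_+$ would be contracted.

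First I would address the potential failure of g-lc for $(X,(B+Q_-)+M)$: components of $Q_-$ may coincide with components of $\lfloor B\rfloor$, pushing coefficients above $1$. Here I would exploit the $\Q$-factorial klt hypothesis on $X$ to perform a small dlt modification extracting suitable divisors so that on the resulting model the boundary $B+Q_-$ has coefficients in $[0,1]$, the positive part remains very exceptional$/Z$, and the pair becomes g-lc. The klt assumption provides the room for such an extraction without creating new lc places. I would then run the MMP on the modified companion divisor with scaling and apply Lemma~\ref{lem-exc-2} to obtain termination with $Q_+$ contracted.

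The remaining task is to transfer this conclusion to the original MMP on $K_X+B+M$. The key observation is that any $(K_X+B+M)$-negative extremal ray $R$ satisfies $(Q_+-Q_-)\cdot R<0$. Supposing for contradiction that $Q_+$ is not contracted within finitely many steps, on each intermediate model $X^i$ the pushforward $Q_+^i$ remains effective and exceptional$/Z$; by the negativity lemma applied to the birational morphism $X^i\to Z$, one finds a curve $C\subset\Supp Q_+^i$ contracted over $Z$ with $Q_+^i\cdot C<0$. Since $Q_+$ and $Q_-$ share no components, $Q_-^i\cdot C\ge 0$, and so $(K+B+M)\cdot C=(Q_+^i-Q_-^i)\cdot C<0$. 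Combining this persistent non-nefness with the termination of the companion MMP forces the contraction of $Q_+$ after finitely many steps. The principal obstacle is reconciling the two MMPs — the original one and the companion one need not proceed by the same sequence of extremal rays — so the matching argument must rely on scaling by the ample divisor together with the compatibility of the extremal rays with the fixed decomposition $Q_+-Q_-$; a secondary, more technical, obstacle is the g-lc reduction when components of $Q_-$ lie inside $\lfloor B\rfloor$, which the $\Q$-factorial klt structure of $X$ is designed to handle.
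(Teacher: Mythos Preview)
The paper does not supply its own proof of this lemma; it is quoted directly from \cite[Proposition~3.8]{hanli}, which adapts \cite[Theorem~3.5]{birkar-flip} to generalised pairs. So the relevant comparison is with Birkar's argument.

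Your reduction to Lemma~\ref{lem-exc-2} via the companion pair $(X,(B+Q_-)+M)$ has a genuine gap, and it is precisely the one you flag as ``the principal obstacle'' but do not resolve. An MMP on $K_X+B+M$ is \emph{not} an MMP on $K_X+B+Q_-+M$: for a $(K_X+B+M)$-negative extremal ray $R$ one has $(K_X+B+Q_-+M)\cdot R = Q_+\cdot R$, and there is no reason for this to be negative. Hence the termination of the companion MMP, which is a \emph{different} sequence of birational maps on \emph{different} models, says nothing about the given MMP. Your closing sentence --- that ``persistent non-nefness'' together with termination of the companion MMP ``forces the contraction of $Q_+$'' --- is where the argument breaks: persistent non-nefness only means the original MMP keeps running, and you have supplied no mechanism linking the two sequences of extremal rays.

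Birkar's proof works directly on the given MMP with scaling of $A$. Let $\lambda=\lim\lambda_i$. The negativity observation you already made shows that as long as $Q_+^i\neq 0$, the divisor $K_{X^i}+B^i+M^i$ is not nef$/Z$, so the MMP does not stop. If $\lambda>0$, the same MMP is an MMP with scaling on $K_X+B+\tfrac{\lambda}{2}A+M$, and since $X$ is $\Q$-factorial klt and $A$ is ample$/Z$, this terminates by \cite{bchm}; on the terminal model $K+B+M$ is nef$/Z$, forcing $Q_+=0$. If $\lambda=0$, then after finitely many steps only flips remain; fixing $i$ and pushing the nef$/Z$ divisors $K_{X^j}+B^j+\lambda_jA^j+M^j$ for $j\ge i$ along the small maps back to $X^i$ shows that $Q_+^i-Q_-^i\equiv K_{X^i}+B^i+M^i$ lies in the closed movable cone over $Z$. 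Birkar's negativity lemma \cite[Lemma~3.3]{birkar-flip} then applies to the decomposition $Q_+^i-Q_-^i$ with $Q_+^i$ very exceptional$/Z$ and gives $Q_+^i=0$. No dlt modification to accommodate $\Supp Q_-\cap\rddown{B}$ is needed, because the argument never passes to the companion pair.
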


\begin{lem}\label{lem-gmm}
	Let $(X/Z,B+M)$ be a g-lc pair projective over $Z$. Suppose it has the canonical model and $K_X+B+M$ is abundant$/Z$. Then $(X/Z,B+M)$ has a good minimal model.
\end{lem}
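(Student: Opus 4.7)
The plan is to use the canonical model $h\colon W\to Z$ of $(X/Z,B+M)$ to organise an MMP over $W$. Combined with the abundance hypothesis this will express $K_X+B+M$ as the sum of a pullback of the ample divisor $K_W+B_W+M_W$ and a very exceptional$/W$ effective divisor, and a relative MMP will then contract the exceptional part to produce a good minimal model.

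First, I would replace $(X,B+M)$ by a $\Q$-factorial g-dlt dlt blow-up and, after a further log resolution, assume that the induced birational map $\phi\colon X\to W$ is a morphism; the abundance of $K_X+B+M$ over $Z$ and the existence of the canonical model are both preserved under such a replacement, and a good minimal model of the blow-up descends to one of $(X,B+M)$. The definition of g-dlt in the paper (strict inequality $a(E,X,B+M)>0$ for exceptional $E$) forces the underlying variety to be $\Q$-factorial klt, because $a(E,X,0)\ge a(E,X,B+M)>0$ for exceptional $E$ on a log resolution. Using the defining property of the canonical model applied to this common resolution, one writes
\[
K_X+B+M \;=\; \phi^{*}(K_W+B_W+M_W)+E, \qquad E\ge 0 \text{ and $\phi$-exceptional,}
\]
so that $K_X+B+M\equiv E/W$ and $E$ is very exceptional over $W$. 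The role of the abundance hypothesis is to guarantee that the canonical model is ``full dimensional'' in the sense that $\dim W-\dim Z=\kappa_\sigma(K_X+B+M)$, so that no numerically nef, non-exceptional residue is hidden in the difference between $\phi^{*}(K_W+B_W+M_W)$ and $K_X+B+M$.

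I would then run a $(K_X+B+M)$-MMP over $W$ with scaling of an ample$/W$ divisor. Since $K_X+B+M\equiv E/W$ with $E$ effective and very exceptional$/W$, and the underlying variety is $\Q$-factorial klt, Lemma \ref{lem-exc-2} applies and the MMP terminates with a $\Q$-factorial g-lc model $(Y,B_Y+M_Y)$ on which $E_Y=0$. Then
\[
K_Y+B_Y+M_Y \;=\; \phi_Y^{*}(K_W+B_W+M_W)
\]
is the pullback of an ample$/Z$ divisor, hence semi-ample$/Z$, and $(Y,B_Y+M_Y)$ is the required good minimal model.

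The main technical obstacle is establishing the decomposition $K_X+B+M=\phi^{*}(K_W+B_W+M_W)+E$ with $E$ effective and $\phi$-exceptional on the log resolution. In the classical pair setting this is standard from the canonical-model property together with the negativity lemma; in the generalised pair setting one has to track the b-divisor $^{*}M'$ carefully through $\phi$, and it is here that abundance is used to rule out a nef, non-exceptional contribution to $E$ coming from a possible gap between $\kappa_\iota$ and $\kappa_\sigma$. Once the decomposition is in hand, the termination via Lemma \ref{lem-exc-2} and the semi-ampleness of the pullback are formal.
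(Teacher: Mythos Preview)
Your argument only treats the case when $K_X+B+M$ is big over $Z$. You write ``the induced birational map $\phi\colon X\to W$'' and the formula $K_X+B+M=\phi^{*}(K_W+B_W+M_W)+E$ with $E$ $\phi$-exceptional, but the canonical model $W$ (denoted $T$ in the paper) has relative dimension $\kappa_\iota(X/Z,K_X+B+M)$ over $Z$, which in general is strictly smaller than $\dim X-\dim Z$. The map $X\dashrightarrow W$ then has positive-dimensional general fibre, there is no natural g-pair structure $(W,B_W+M_W)$, and ``$\phi$-exceptional'' has no meaning. In the big case your argument is fine, but there abundance is automatic and the hypothesis is vacuous; the whole content of the lemma lies in the non-big case, which your plan does not address.

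The paper's proof is organised around exactly this difficulty. One passes to a resolution $Y\to T$ of the canonical model and a morphism $f\colon X\to Y$. Abundance is used, via the additivity inequality for $\kappa_\sigma$, to force $\kappa_\sigma(X/Y,K_X+B+M)=0$, so the horizontal part of an effective representative $D\sim_\R K_X+B+M$ coincides with $N_\sigma$ on the general fibre. A toroidal (weak semistable) reduction then makes the vertical part of $D$ very exceptional over $Y$, and Lemma~\ref{lem-exc-2} contracts $D$ by an MMP over $Y$. This does not yet finish: on the resulting model one has $K_{X'}+B'+M'\sim_\R f'^{*}(P_Y+N_Y)$ with $P_Y=h^{*}P_T$ ample and $N_Y\ge 0$ exceptional over $T$; a second application of Lemma~\ref{lem-exc-2}, now over $T$, contracts $f'^{*}N_Y$ and yields the good minimal model. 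None of these steps --- the use of $\kappa_\sigma(X/Y,\cdot)=0$, the toroidal reduction, or the two-stage MMP --- appears in your plan.
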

\begin{proof}
	Replacing $X$ we can assume $M$ is nef$/Z$ and $g: X \bir T$ is a morphism to the canonical model $T$. We see $\kappa(X/T,=K_X+B+M) = 0$ (\cite[I\!I, 3.14]{nakayama}). Let $h: Y \to T$ be a nonsingular birational model and we can assume $f : X \bir Y$ is a morphism. Note that $0 \le D \sim_\R K_X+B+M \sim_\R f^*P_{Y} +F$ for some big and semi-ample$/Z$ divisor $P_Y$
	. Since $D$ is abundant, we see $ \kappa_\sigma(X/Y,D)=0$ (\cite[Inequaity 3.3]{fujino-additivity}, cf.\cite[V, 4.1]{nakayama}). Indeed, we can assume $Z$ is point by passing to very general fibres, and we have 
	$$
	\kappa_\sigma(X,D+f^*P_Y) \ge \kappa_\sigma(X/Y,D) + \kappa (Y,P_Y),
	$$
	which forces $ \kappa_\sigma(X/Y,D)=0$ as $	\kappa_\sigma(X,D+f^*P_Y)= \kappa_\sigma(X,D)=\dim T$.  
	Replacing $(X,B+M)$ and $Y$ we can assume further that:
	\begin{itemize}
		\item $f:(X,\Delta) \to (Y,\Delta_Y)$ is toroidal and equidimensional from a quasi-smooth toroidal pair (hence $\Q$-factorial) to a smooth toroidal pair (hence log smooth),
		
		\item $B,M,D$ are supported by $\Delta$.
	\end{itemize}
	Because the horizontal part $D^h|_{X_\eta}=N_\sigma(D^h|_{X_\eta})$ and the vertical part $D^v \sim_\R E \ge 0$ for some very exceptional divisor $E$, we can run an MMP$/Y$ on $K_X+B+M$ which terminates with a minimal model $(X',B'+M')$ on which the push-down $D' \sim_\R 0/Y$ by Lemma \ref{lem-exc-2}. 
	
	By definition of canonical model \cite[Definition 3.6.5 and 3.6.7]{bchm}, we see $D'  \sim_\R  f^*D_Y=f^*(P_Y+N_Y)$ where $f^*N_Y \ge 0$ is contained in the fixed part of $|D'/Z|_\R$ and $P_Y=h^*P_T$ for some ample divisor $P_T$. It follows that $N_Y \ge 0$ is contained in the fixed part of $|D_Y/Z|_\R$ and hence exceptional$/T$ which in turn implies that $N$ is very exceptional$/T$. Again by Lemma \ref{lem-exc-2}, one can run an MMP$/T$ on $K_{X'}+B'+M'$ which terminates with a model on which the push-down $D'' \sim_\R 0/T$.
\end{proof}

\noindent \textbf{Boundedness of intersection numbers.} 
We discuss some properties regarding the boundedness of intersection numbers with extremal curves. This is essential to the MMP for non-NQC g-pairs.
\begin{defn}[Extremal curves of minimal length]\label{defn-ext-curve}
	Let $X \to Z$ be a projective morphism from a normal variety and $\Gamma$ be a curve on $X$. We say $\Gamma$ is \emph{extremal of minimal length} if
	\begin{itemize}
		\item $\Gamma$ generates an extremal ray of $\overline{NE}(X/Z)$; and
		
		\item $H\cdot \Gamma$ is minimal for all curves in the extremal ray, where $H$ is an ample$/Z$ Cartier divisor.
	\end{itemize}
\end{defn}

The following lemma is obvious.
\begin{lem}
	Let $(X/Z,B+M)$ be a g-lc pair and $\Gamma$ be an extremal curve of minimal length. Then, for any boundary $B'$ and data $M'$, such that $(X,B'+M')$ is g-lc, we have $(K_X+B'+M') \cdot \Gamma \ge -2 \dim X$. 
\end{lem}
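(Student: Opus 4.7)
The plan is to reduce to the classical length-of-extremal-rays estimate, noting that the bound is automatic unless the ray is $(K_X+B'+M')$-negative. First, if $(K_X+B'+M')\cdot \Gamma \ge 0$, then the asserted inequality is trivial since $-2\dim X<0$. So assume $(K_X+B'+M')\cdot \Gamma < 0$. Then the extremal ray $R=\R_{\ge 0}[\Gamma]$ of $\NEbar(X/Z)$ is $(K_X+B'+M')$-negative.

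Next, I would invoke the cone theorem with length of extremal rays for g-lc pairs (for instance in the form established in \cite{birkarzhang}, which generalises the classical Kawamata statement to g-pairs): there exists a rational curve $C\subset X$ generating the same ray $R$ such that
\[
0< -(K_X+B'+M')\cdot C \le 2\dim X.
\]
Because $[C]$ and $[\Gamma]$ both lie in the one-dimensional ray $R$, we have $[C]=\mu [\Gamma]$ numerically for some $\mu>0$. Comparing $H$-degrees gives $\mu=\frac{H\cdot C}{H\cdot \Gamma}\ge 1$, since $\Gamma$ was chosen so that $H\cdot \Gamma$ is minimal among curves in $R$.

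Finally, combining these two facts yields
\[
-(K_X+B'+M')\cdot \Gamma \;=\; \frac{1}{\mu}\bigl(-(K_X+B'+M')\cdot C\bigr)\;\le\;\frac{2\dim X}{\mu}\;\le\; 2\dim X,
\]
which is the claimed bound. The only nontrivial ingredient is the length estimate from the cone theorem for g-lc pairs; everything else is formal manipulation with the extremal ray. So the main point to be careful about is citing the correct form of the cone theorem that applies in the non-NQC generalised setting (where $M'$ is only assumed nef $\R$-Cartier), but this is available in the literature.
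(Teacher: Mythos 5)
Your argument is correct and is exactly what the paper intends --- the lemma is stated there without proof (``the following lemma is obvious''), and the intended content is precisely your reduction: the bound is trivial unless the ray is $(K_X+B'+M')$-negative, in which case the length-of-extremal-rays estimate produces a rational curve $C$ in the ray with $-(K_X+B'+M')\cdot C\le 2\dim X$, and minimality of $H\cdot \Gamma$ gives $\mu\ge 1$ and hence the bound for $\Gamma$ itself. The only caveat is the citation: the cone theorem with the $2\dim X$ length bound for generalised pairs is not in \cite{birkarzhang} but rather in Han--Li \cite{hanli} (in the NQC setting) and its extensions to arbitrary nef data, which --- as you rightly flag --- is the one genuinely non-formal input here.
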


\begin{lem}[Boundedness of intersection numbers]\label{lem-bound-int-num}
	Let $(X/Z,B+M)$ be a $\Q$-factorial g-dlt pair and $D$ be a prime divisor. Then, there is a number $l >0$ such that, for any $(K_X+B+M)$-negative extremal curve $\Gamma$ of minimal length, 
	\begin{enumerate}
		\item if $\Supp D$ does not contain any lc centre, then $D\cdot \Gamma \ge - l$ where $l$ depends on the log canonical threshold (lct) of $D$ and $\dim X$;
		
		\item if $\Supp D \subset \Supp B$, then $D\cdot \Gamma \le l$ where $l$ depends on $\mu_D B$ and $\dim X$;
		
		\item if $M'=M_1'+M_2'$ of two nef divisors, then $M_1 \cdot \Gamma \ge -l$ where $l$ depends on the lct of $M_1'$ and $\dim X$; and 
		
		\item if $M'=M_1'+M_2'$ of two nef divisors, $M_1 \cdot \Gamma \le l$ where $l$ depends on $\dim X$.
	\end{enumerate}
\end{lem}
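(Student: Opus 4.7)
All four bounds follow from one mechanism: apply the preceding length-of-extremal-rays lemma (which gives $(K_X+B'+M')\cdot\Gamma \ge -2\dim X$ for any g-lc structure on $X$ evaluated on the original extremal curve $\Gamma$) to a suitably perturbed pair, then cancel against the two-sided bound $-2\dim X \le (K_X+B+M)\cdot\Gamma < 0$ (the lower bound is the same lemma applied to $(X,B+M)$ itself). For each of (1)--(4), the task is to choose an auxiliary g-lc pair whose difference from $(X,B+M)$ is a positive scalar multiple of the divisor whose intersection with $\Gamma$ we want to bound, and then subtract.

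Concretely, in (1) I set $t=\lct(D;X,B+M)>0$ (positive because $\Supp D$ contains no g-lc centre) and apply the estimate to $(X,B+tD+M)$ to get $tD\cdot\Gamma \ge -2\dim X -(K_X+B+M)\cdot\Gamma \ge -2\dim X$, so $D\cdot\Gamma \ge -2\dim X/t$. In (2), with $b=\mu_D B$, the pair $(X,B-bD+M)$ is still g-lc, and the estimate yields $bD\cdot\Gamma \le (K_X+B+M)\cdot\Gamma + 2\dim X \le 2\dim X$. In (3), let $\tau>0$ be the supremum of $s \ge 0$ such that the g-pair $(X,B+M+sM_1)$ with data $M'+sM_1'$ is g-lc (positivity of $\tau$ uses g-dlt of $(X,B+M)$: on a chosen log resolution the boundary coefficients are strictly less than $1$ on exceptional divisors, and the perturbation only moves them by an exceptional amount); applying the estimate at $s=\tau$ gives $\tau M_1\cdot\Gamma \ge -2\dim X$.

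The one part needing a separate observation is (4): I would argue that $(X,B+M_2)$ with data $M_2'$ is g-lc \emph{without} any further hypothesis, so that the estimate applied to this pair immediately gives $M_1\cdot\Gamma \le (K_X+B+M)\cdot\Gamma + 2\dim X \le 2\dim X$, an upper bound depending only on $\dim X$. To verify g-lc, pass to a common log resolution $\phi\colon X'\to X$ on which $M'$ descends and write $\phi^*(K_X+B+M_2) = K_{X'}+B^{**}-(\phi^*M_1-M_1')+M_2'$; the divisor $M_1'-\phi^*M_1$ is $\phi$-nef (since $M_1'$ is nef and $\phi^*M_1$ is numerically trivial on $\phi$-contracted curves) and $\phi$-exceptional, so the negativity lemma gives $\phi^*M_1-M_1' \ge 0$, and the new boundary has coefficients bounded above by those of $B^{**}$, hence $\le 1$. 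The main conceptual point, if any, is isolating this negativity-lemma argument in (4) and interpreting the ``lct of $M_1'$'' in (3) as an lct for perturbing the nef data of a g-pair rather than the usual lct of an effective divisor; once these are in place each bound is a one-line computation.
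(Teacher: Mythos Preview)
Your proposal is correct and follows exactly the paper's approach: in each case one applies the preceding length-of-extremal-rays bound to a perturbed g-lc pair (namely $(X,B+\epsilon D+M)$, $(X,B-(\mu_D B)D+M)$, $(X,B+M+\epsilon M_1)$, and $(X,B+M_2)$ respectively) and subtracts. Your write-up actually supplies more detail than the paper's proof --- in particular the negativity-lemma verification that $(X,B+M_2)$ with data $M_2'$ is g-lc in (4), and the positivity of the threshold in (3) --- whereas the paper simply asserts these pairs are g-lc and proceeds.
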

\begin{proof}
	(1). Since $(X,B+\epsilon D + M)$ is g-lc, one derives $(K_X+B+\epsilon D +M) \cdot \Gamma \ge -2 \dim X$ which in turn implies $D\cdot \Gamma \ge -2/ \epsilon \dim X$.
	
	(2) Since $(K_X+B- (\mu_D B) D +M) \cdot \Gamma \ge -2 \dim X$, we deduce $D\cdot \Gamma \le 2/\mu_D B \dim X$.
	
	(3) Since $(X,B + M+\epsilon M_1)$ is g-lc, one derives $(K_X+B +M +\epsilon M_1) \cdot \Gamma \ge -2 \dim X$ which in turn implies $M_1 \cdot \Gamma \ge -2/ \epsilon \dim X$.
	
	(4) Since $(K_X+B +M_2) \cdot \Gamma \ge -2 \dim X$, we deduce $M_1\cdot \Gamma \le 2 \dim X$.
\end{proof}

\begin{rem}\label{rem-int-num}
	Notations as above, if $(X,B+M) \bir (X^i,B^i+M^i)$ is $(K_X+B+M)$-non-positive (for example, an MMP on $K_X+B+M$ where $B^i,M^i$ are birational transforms, then the inequalities (2) and (4) are preserved for $D^i$; and if $(X^i,B^i+\epsilon D^i +M^i)$ and $(X^i,B^i +M^i +\epsilon M_1^i)$ are all g-lc for some $\epsilon >0$, then the inequalities (1) and (3) are preserved.
\end{rem}

We immediately obtain the next lemma.

\begin{lem}[\text{cf.\cite[Theorem 3.2]{bhzariski}}]\label{lem-P-trivial}
	Let $(X/Z,B+M)$ be a $\Q$-factorial g-dlt pair and $P$ be a nef divisor. Suppose that $\Supp \{P \} \subseteq \Supp \{B\} $. Then, any MMP on $K_X+B+M +\alpha P$ is $P$-trivial for all $\alpha \gg 0$. 
\end{lem}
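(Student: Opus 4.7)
The plan is to follow the strategy of \cite[Theorem 3.2]{bhzariski}, which handles the usual-pair case $M=0$, and to adapt it to the generalised setting using Lemma~\ref{lem-bound-int-num} and Remark~\ref{rem-int-num}. First I would write $P=\sum_j p_j D_j$ with $p_j>0$; by hypothesis each $D_j$ has coefficient $b_j:=\mu_{D_j}B>0$ in $B$, so $t:=\min_j b_j/p_j>0$ and $B-tP\ge 0$, which in particular makes $(X,(B-tP)+M)$ a g-lc pair. I then proceed by induction on the MMP step under the inductive hypothesis that the first $i-1$ steps are $P$-trivial: then $(X^i,B^i+M^i)$ is still $\Q$-factorial g-dlt, the strict transform $P^i$ remains nef with $\Supp P^i\subseteq\Supp B^i$, and the coefficients $b_j$ are preserved.

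Now let $R$ be a $(K_{X^i}+B^i+M^i+\alpha P^i)$-negative extremal ray and $\Gamma$ an extremal curve of minimal length in $R$. Using the identity
\[
K_{X^i}+B^i+M^i+\alpha P^i \;=\; K_{X^i}+(B^i-tP^i)+M^i+(\alpha+t)P^i,
\]
the nefness of $(\alpha+t)P^i$ forces $R$ to be $(K_{X^i}+(B^i-tP^i)+M^i)$-negative. Applying length of extremal rays to this g-lc pair yields $(K_{X^i}+(B^i-tP^i)+M^i)\cdot\Gamma\ge -2\dim X$, and hence
\[
P^i\cdot\Gamma \;\le\; \frac{2\dim X}{\alpha+t}.
\]
The same reasoning shows $R$ is $(K_{X^i}+B^i+M^i)$-negative, so by Lemma~\ref{lem-bound-int-num}(2) together with Remark~\ref{rem-int-num} each component satisfies $D_j^i\cdot\Gamma\le 2\dim X/b_j$, while $P^i\cdot\Gamma\ge 0$ by nefness of $P^i$.

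The main obstacle is to upgrade the upper bound into the equality $P^i\cdot\Gamma=0$. The plan is to exhibit a uniform positive constant $\delta>0$, depending only on $(X,B+M)$, $P$ and $\dim X$, such that every positive value attained by $P^i\cdot\Gamma$ as $R$ ranges over all extremal rays appearing in all possible MMP sequences satisfies $P^i\cdot\Gamma\ge\delta$; choosing $\alpha$ so that $2\dim X/(\alpha+t)<\delta$ then forces $P^i\cdot\Gamma=0$ and closes the induction. Such a $\delta$ is expected because only finitely many prime divisors appear in $\Supp P$, each with a fixed coefficient $b_j$, and the numbers $D_j^i\cdot\Gamma$ are rational with denominators controlled by the $\Q$-factorial singularities inherited along the MMP, so that the linear combination $\sum_j p_j(D_j^i\cdot\Gamma)$ cannot take arbitrarily small positive values. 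Making this rigidity quantitative is the delicate point of the argument; as in \cite[Theorem 3.2]{bhzariski}, the generalised setting introduces no new essential difficulty once Lemma~\ref{lem-bound-int-num} is available, and the same $\alpha$ then works uniformly for every MMP starting from $(X,B+M)$.
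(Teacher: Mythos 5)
Your proposal has a genuine gap at exactly the point you yourself flag as ``the delicate point'': the claim that the positive values of $P^i\cdot\Gamma$ are uniformly bounded away from zero. The heuristic you offer --- that $\sum_j p_j(D_j^i\cdot\Gamma)$ cannot take arbitrarily small positive values because the $D_j^i\cdot\Gamma$ are rational with controlled denominators --- does not work as stated: the coefficients $p_j$ of $P$ are arbitrary positive reals, Lemma~\ref{lem-bound-int-num}(2) only gives an \emph{upper} bound on each $D_j^i\cdot\Gamma$ (individual components of a nef divisor can meet an extremal curve negatively, so there is no termwise lower bound), and the Cartier indices of the $D_j^i$ are not uniformly controlled along an a priori infinite MMP. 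A sum of real multiples of rationals subject only to upper bounds can accumulate at $0^+$. The paper supplies the missing structure with a different decomposition: write $P=\sum_j\alpha_jP_j$ as a convex combination of $\Q$-divisors with $\|P_j-P\|\ll1$ and the $\alpha_j$ \emph{$\Q$-linearly independent}, and set $B_j:=B+P_j-P$, so that $N=K_X+B_j+M-P_j$ with $(X,B_j+M)$ still g-dlt. Length of extremal rays applied to each $(X,B_j+M)$ bounds every $P_j\cdot\Gamma$ from \emph{below}, and $\Q$-linear independence of the $\alpha_j$ together with rationality of the $P_j\cdot\Gamma$ forces the positive values of $P\cdot\Gamma=\sum_j\alpha_j(P_j\cdot\Gamma)$ away from zero. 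The induction along the MMP then works because a $P$-trivial step is automatically $P_j$-trivial for every $j$ (again by $\Q$-linear independence), so the pairs $(X^k,B^k_j+M^k)$ remain g-dlt and the lower bounds persist.

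You also misread the hypothesis: $\Supp \{P\}\subseteq\Supp\{B\}$ refers to the \emph{fractional parts} of $P$ and $B$, not to $\Supp P\subseteq\Supp B$. It does not assert that every component of $P$ appears in $B$ with positive coefficient (so your $t=\min_j b_j/p_j$ need not exist); what it guarantees is precisely that the perturbed boundaries $B_j=B+P_j-P$ above remain g-dlt, which is where the hypothesis enters the proof. Your reduction via $B-tP\ge0$ plays no role in the real argument, since the upper bound $P^i\cdot\Gamma\le 2\dim X/\alpha$ already follows from $(K_{X^i}+B^i+M^i)\cdot\Gamma\ge-2\dim X$.
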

\begin{proof}
	Let $P=\sum_{j} \alpha_i P_j$ be a convex combination of $\Q$-divisors such that $\alpha_j$'s are $\Q$-linearly independent and $\|P_j -P\| \ll 1$. Let $N:=K_X+B+M-P$. Since there are boundaries $B_j$ such that $(X,B_j+M)$ is g-dlt and $N=K_X+B_j+M-P_j$, for any $(K_X+B+M)$-negative extremal curve $\Gamma$ of minimal length, we have $P_j \cdot \Gamma$ is bounded from below and hence the set $\{P \cdot \Gamma >0\}$ is bounded away from zero. Pick $\alpha \gg 0$, a step of MMP on $K_X+B+M+\alpha P$ is $P$-trivial. 
	
	Let $(X,B+M) \bir (X^k,B^k+M^k)$ be an MMP on $(K_X+B+M)$, where $B^k,M^k$ are birational transforms. Since each $P^i$-trivial step is also $P_j^i$-trivial, we deduce that $(X^k,B^k_j +M^k)$ are all g-dlt. Hence the lower bounds of $P_j^k \cdot \Gamma$ is preserved, where $\Gamma$ runs over all $(K_{X^k}+B^k+M^k)$-negative extremal curve of minimal length, which in turn implies that a step of MMP on $K_{X^k}+B^k+M^k+\alpha P^k$ is $P^k$-trivial.
\end{proof}

\noindent \textbf{G-dlt pairs.}
Next we include some results for g-dlt pairs. Some are from the unpublished manuscript \cite{hu4}.
\begin{lem}[\text{\cite[Lemma 2.2]{hu4}}]\label{dlt-resolution}
	Let $(X,B+M)$ be a $\Q$-factorial g-lc pair with data $M'$ on $\phi: X' \to X$. Suppose there is an open subset $U \subset X$ containing the generic points of all g-lc centres such that $(U,B|_U)$ is log smooth and $M'|_{U'}=\phi|_{U'}^* M|_U$ where $U'$ is the inverse image. Then, $(X,B+M)$ is g-dlt.
\end{lem}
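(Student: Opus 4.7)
The plan is to exhibit a log resolution $\psi: Y \to X$ of $(X,B+M)$ that is an isomorphism over $U$; given such a $\psi$, the g-dlt property is immediate. Indeed, any $\psi$-exceptional divisor $E$ on $Y$ then satisfies $\psi(E)\subseteq X\setminus U$, so the generic point of $\psi(E)$ lies outside $U$. Since $U$ contains the generic points of all g-lc centres by hypothesis, $\psi(E)$ cannot be a g-lc centre; as $(X,B+M)$ is g-lc, we have $a(E,X,B+M)\ge 0$ with equality if and only if $E$ is a g-lc place, i.e.\ $\psi(E)$ is a g-lc centre. Hence $a(E,X,B+M)>0$, verifying g-dlt.

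To construct $\psi$, first take a log resolution $\psi_0: Y_0\to X$ of the usual pair $(X,B)$ that is an isomorphism over the log smooth locus of $(X,B)$; since $(U,B|_U)$ is log smooth by hypothesis, this locus contains $U$, so $\psi_0$ is an isomorphism over $U$. Write $U_0:=\psi_0^{-1}(U)$, which we identify with $U$ via $\psi_0$. Next, take further blowups $\psi_1: Y\to Y_0$ so that the b-divisor $\mathbf{M}$ descends to $Y$. The hypothesis $M'|_{U'}=\phi|_{U'}^*M|_U$ says that $\mathbf{M}$ already descends to $U$ as the $\R$-Cartier divisor $M|_U$; therefore $\mathbf{M}$ descends on $Y_0$ over the open subset $U_0$. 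Consequently, the blowups comprising $\psi_1$ can be chosen with centres in $Y_0\setminus U_0$, so $\psi_1$ is an isomorphism over $U_0$. The composite $\psi:=\psi_0\circ\psi_1$ is then a log resolution of $(X,B+M)$ that is an isomorphism over $U$.

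The key technical point (and main obstacle) is the second step, namely arranging $\psi_1$ to make $\mathbf{M}$ descend globally while staying an isomorphism over $U_0$. The naive choice, taking $\psi_1$ to be a common resolution of $Y_0$ and $X'$, could introduce exceptional divisors over $U_0$ whenever $\phi|_{U'}$ fails to be an isomorphism. To circumvent this one uses that the hypothesis forces any such exceptional divisors in a common resolution to be $\mathbf{M}$-trivial (their coefficients in the pullback of $M'$ are determined by $\phi|_{U'}^*M|_U$), so they can be contracted back over $U_0$, or equivalently one resolves the obstruction-to-descending of $\mathbf{M}$ only in $Y_0\setminus U_0$. The underlying b-divisorial statement being invoked is the standard fact that an $\R$-b-Cartier b-divisor which descends to an open subset of the base can be made to descend globally by blowups localised in the complement of that open subset.
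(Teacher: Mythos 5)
Your reduction is correct and is the same one the paper implicitly uses: it suffices to produce a log resolution $\psi\colon Y\to X$ of $(X,B+M)$ that is an isomorphism over $U$, since then every $\psi$-exceptional divisor has centre inside $X\setminus U$, hence cannot be a g-lc place, hence has positive discrepancy. The first half of your construction (a log resolution of $(X,B)$ that is an isomorphism over the log smooth locus) is also fine.

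The gap is the step you yourself flag as the main obstacle and then dispose of by calling it ``the standard fact that an $\R$-b-Cartier b-divisor which descends to an open subset of the base can be made to descend globally by blowups localised in the complement of that open subset.'' This is not a standard fact one may cite; it is, up to the easy reduction above, the entire content of the lemma, and nothing in your write-up proves it. Descent of $\mathbf{M}$ to a model $Y$ is not merely a matter of which divisors $Y$ extracts: one needs $\mathbf{M}_Y=p_{Y*}q^*M'$ to be $\R$-Cartier and to pull back to $q^*M'$ on a common resolution with $X'$, and it is not clear that blowups with centres in $Y_0\setminus U_0$ ever achieve this. Your alternative phrasing --- that the $\mathbf{M}$-trivial exceptional divisors of a common resolution lying over $U_0$ ``can be contracted back over $U_0$'' --- is an existence-of-contraction claim, and contractions of prescribed divisors on a projective variety do not exist for free; producing them is exactly what the paper's argument supplies. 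The paper runs an MMP over $X$ on $K_{X'}+B'_\epsilon+G+\alpha M'$ for $\alpha\gg 0$: Lemma \ref{lem-P-trivial} makes this MMP $M'$-trivial (so the descent of $\mathbf{M}$ is preserved at every step), \cite{bchm} and Lemma \ref{lem-exc-3} make it terminate after contracting precisely the exceptional divisors $D$ with $\mult_D(\phi^*M-M')=0$, and a discrepancy comparison then shows the resulting model is an isomorphism over the generic points of all g-lc centres. To complete your proof you would need to supply this (or an equivalent) argument in place of the appeal to a ``standard fact.''
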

\begin{proof}
	Pick a number $0< \epsilon \ll 1$ and write 
	$$
	K_{X'} +B_\epsilon' +M'=\pi^*(K_X+(1-\epsilon )B +M) +F
	$$
	where $B_\epsilon',F \ge 0$ have no common components. 
	Pick an exceptional divisor $G\ge 0$ such that $-G$ is ample$/X$ and $\|G\| \ll 1$. By construction $(X',(B_\epsilon'+G) +\alpha M')$ is g-klt where $\alpha \gg 0$. Now run an MMP$/X$ on $	K_{X'} +B_\epsilon'+G +\alpha M' $. Since $M'=M'-\phi^* M/X$ and $G$ contains all the exceptional divisors, by Lemma \ref{lem-P-trivial} the above MMP is $M'$-trivial for $\alpha \gg 0$. Moreover, by the \cite{bchm} and Lemma \ref{lem-exc-3}, the MMP terminates with $\pi:X'' \to X$ and contracts all exceptional prime divisors $D$ with $\mult_D (\pi^*M -M') =0$, and the data descends to $X'$. 
	
	We claim that $\pi$ is an isomorphism over the generic point of every g-lc centre. To see this, we suppose the converse over some g-lc centre $V$. Since $V$ is also an lc centre of $(X,B)$, there is a divisor $D$ over $X$ whose centre is $V$ with $a(D,X,B)=0$. Since $X$ is $\Q$-factorial, the exceptional locus $\ex (\pi)$ has pure codimension one, and hence there exists a prime exceptional divisor $E$ which contains the centre of $D$ in its support. By our construction, we see $\mult_E (\pi^*M -M'') >0$ which in turn implies that $a(E,X,B) > a(E,X,B+M)=a(E,X'',B'')$, where $K_{X''}+B''+M''=\pi^*(K_X+B+M)$. It follows that $0=a(D,X,B)>a(D,X'',B'')$. This is a contradiction. 
\end{proof}

\begin{lem}[\text{\cite[Lemma 2.3]{hu4}}]\label{lem-g-dlt}
	Let $(X,B+M)$ be a g-lc pair. Suppose there is an open subset $U \subset X$ containing the generic points of all g-lc centres such that $(U,B|_U)$ is log smooth and $M'|_{U'}=\phi|_{U'}^* M|_U$ where $U'$ is the inverse image. Then, $X$ admits a small $\Q$-factorialisation which is isomorphic near the generic point of every g-lc centre.  
\end{lem}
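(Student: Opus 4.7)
The plan is to adapt the construction in the proof of Lemma~\ref{dlt-resolution} to produce a \emph{small} $\Q$-factorialisation of $X$, and then to exploit the smoothness of $U$ to verify the isomorphism claim. First, I take a log resolution $\phi\colon X'\to X$ of the g-pair $(X,B+M)$ so that $(X',B')$ is log smooth and $M$ descends to $X'$ with $M'$ nef. For $0<\epsilon\ll 1$ I write
\[
K_{X'}+B'_\epsilon+M'=\phi^*(K_X+(1-\epsilon)B+M)+F
\]
with $B'_\epsilon,F\ge 0$ sharing no common components, and I choose an effective $\phi$-exceptional divisor $G$ with $-G$ being $\phi$-ample, $\|G\|\ll 1$, and the crucial additional requirement $\Supp G=\Exc(\phi)$. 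With this $G$, the pair $(X',(B'_\epsilon+G)+\alpha M')$ is $\Q$-factorial g-klt for $\alpha\gg 0$.

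Next I would run an $M'$-trivial MMP$/X$ on $K_{X'}+B'_\epsilon+G+\alpha M'$, exactly as in the proof of Lemma~\ref{dlt-resolution}. By the negativity lemma $M'-\phi^*M=-D_-$ for some effective $\phi$-exceptional $D_-$, and the hypothesis $M'|_{U'}=\phi|_{U'}^*M|_U$ forces $\Supp D_-$ to lie above $X\setminus U$. Hence Lemma~\ref{lem-P-trivial} yields $M'$-triviality for $\alpha\gg 0$, and by \cite{bchm} together with Lemma~\ref{lem-exc-3} the MMP terminates with a birational morphism $\pi_1\colon Y_1\to X$ contracting every exceptional divisor $E$ with $\mu_E D_-=0$, in particular every exceptional $E$ whose centre meets $U$. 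Any remaining $\pi_1$-exceptional divisor $E$ thus has $c_X(E)\subseteq X\setminus U$, so $c_X(E)$ is not a g-lc centre and $a(E,X,B+M)>0$.

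I then run a second MMP$/X$ on $K_{Y_1}+(B_{Y_1})_{\epsilon'}+M_{Y_1}$ for some $0<\epsilon'\ll 1$. The identity $K_{Y_1}+(B_{Y_1})_{\epsilon'}+M_{Y_1}=\pi_1^*(K_X+(1-\epsilon')B+M)+F_{Y_1}$, combined with the strict positivity of discrepancies just established, gives $\Supp F_{Y_1}=\Exc(\pi_1)$; as $F_{Y_1}$ is very exceptional over $X$, Lemma~\ref{lem-exc-2} guarantees termination with $F_{Y_1}$ entirely contracted. The resulting birational morphism $\pi\colon X''\to X$ is small and $X''$ is $\Q$-factorial. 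Finally, since $(U,B|_U)$ is log smooth, $U$ itself is smooth, so $X$ is smooth (hence $\Q$-factorial) at the generic point $\eta_W$ of any g-lc centre $W$; a small birational morphism from a $\Q$-factorial variety onto a variety that is $\Q$-factorial at a point is automatically an isomorphism in a neighbourhood of that point, so $\pi$ is an isomorphism near $\eta_W$.

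The main obstacle I anticipate is the verification in the first-round MMP that every exceptional $E$ with $\mu_E D_-=0$ lands in the positive part $Q_+$ of $Q:=K_{X'}+B'_\epsilon+G+\alpha M'$ over $X$, so that Lemma~\ref{lem-exc-3} forces its contraction. The inequality $\mu_E F+\mu_E G>(\alpha-1)\mu_E D_-$ must hold for every such $E$; since $\mu_E G>0$ for every exceptional $E$ by our choice of $G$, this can be arranged by first fixing $\epsilon$, then $\|G\|$, and only then choosing $\alpha$ above the threshold dictated by Lemma~\ref{lem-P-trivial}. A secondary technicality is to reconcile the two-round MMP with the scaling framework so that $\Q$-factoriality and the g-klt structure propagate correctly through both rounds.
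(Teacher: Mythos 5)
Your first round (the $M'$-trivial MMP on $K_{X'}+B'_\epsilon+G+\alpha M'$, following the proof of Lemma~\ref{dlt-resolution}) and your final observation (a small morphism onto a variety that is $\Q$-factorial near the generic point of a g-lc centre is an isomorphism there, since $U$ is smooth) are both fine. The gap is in the second round. With the decomposition convention you set up in your first display --- $(B_{Y_1})_{\epsilon'}, F_{Y_1}\ge 0$ with no common components in $K_{Y_1}+(B_{Y_1})_{\epsilon'}+M_{Y_1}=\pi_1^*(K_X+(1-\epsilon')B+M)+F_{Y_1}$ --- a $\pi_1$-exceptional divisor $E$ satisfies $\mu_E F_{Y_1}>0$ if and only if $a(E,X,(1-\epsilon')B+M)>1$, not merely $>0$. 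The strict positivity you established in round one only gives $a(E,X,B+M)>0$; any remaining exceptional divisor with discrepancy in $(0,1]$ appears as a component of the boundary $(B_{Y_1})_{\epsilon'}$ with positive coefficient, contributes nothing to $F_{Y_1}$, and Lemma~\ref{lem-exc-2} gives no reason for the MMP to contract it. So the output of your second round need not be small. (If instead you intend $(B_{Y_1})_{\epsilon'}$ to contain the full reduced exceptional divisor, so that $F_{Y_1}=\sum_E a(E,\cdot)E$ does have support equal to $\Exc(\pi_1)$, then you must justify that this augmented pair on $Y_1$ is still g-lc --- raising exceptional coefficients to $1$ on the non-log-smooth output of an MMP is exactly the kind of step that can destroy lc-ness, and you give no argument for it.)

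The paper's proof avoids both issues and needs only one round: it takes a log resolution $\pi:X''\to X$ chosen to be an isomorphism over $U$, adds to the boundary an auxiliary divisor $E''$ supported on \emph{all} exceptional prime divisors with coefficients $\ll 1$ (legitimate because every exceptional divisor has centre in $X\setminus U$, hence positive discrepancy, so $(X'',B''+E'')$ stays dlt on the log smooth model), replaces $M''$ by an $\R$-linearly equivalent effective divisor over $X$ with multiplicities along the $E_i$ much smaller than those of $E''$, and then applies Lemma~\ref{lem-exc-3}: since $K_{X''}+B''+E''\equiv E''+F-M''/X$ and every exceptional divisor lies in the positive part, the MMP contracts all of them, yielding a small $\Q$-factorial model which is automatically an isomorphism over $U$. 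To repair your argument you should do the same in your second round: add all remaining $\pi_1$-exceptional divisors with sufficiently small coefficients and invoke Lemma~\ref{lem-exc-3} rather than relying on $F_{Y_1}$ alone.
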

\begin{proof}
	There exists a log smooth dlt pair $\pi:(X'',B'') \to X$ such that $\pi$ is an isomorphism over $U$, where $K_{X''}+B''+M''=\pi^*(K_X+B+M)+F$, and $B'',F\ge 0$ have no common components. Let $E''$ be an exceptional divisor which contains all exceptional prime divisors in its support and $\|E\| \ll 1$ so that $(X'',B''+E'')$ is dlt. Now run an MMP$/X$ on $K_{X''}+B''+E''$ with scaling of an ample$/X$ divisor. Since $K_{X''}+B''+E''=E''+F-M''/X$ and $M''$ is the push-down of a nef and big$/X$ divisor $M'$, after replacing we can take $M'' \ge 0$ and $\mult_{E_i} M'' \ll1 $ for any irreducible component $E_i$ of $E''$. So, this MMP contracts $E''$ after finitely many steps by Lemma \ref{lem-exc-3}, hence reach a $\Q$-factorial model $X''$ which gives a small contraction to $X$. Note that $X'' \to X$ is an isomorphism over $U$. 
\end{proof}

\begin{prop}[\text{\cite[Lemma 2.3]{hu4}}]
	Let $(X,B+M)$ be a g-lc pair. Suppose there is an open subset $U \subset X$ containing the generic points of all g-lc centres such that $(U,B|_U)$ is log smooth and $M'|_{U'}=\phi|_{U'}^* M|_U$ where $U'$ is the inverse image. Then, $(X,B+M)$ is g-dlt.
\end{prop}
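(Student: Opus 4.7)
The plan is to reduce the general case to the $\Q$-factorial case, which is already handled by Lemma \ref{dlt-resolution}, by passing to a small $\Q$-factorialisation and then descending the g-dlt property back down. The key observation is that a small birational morphism preserves discrepancies and cannot create divisorial exceptional loci.

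First I would invoke Lemma \ref{lem-g-dlt} to obtain a small $\Q$-factorialisation $\pi\colon \widetilde{X}\to X$ which is an isomorphism over a neighbourhood of the generic point of every g-lc centre; in particular $\pi$ is an isomorphism over $U$. Define $\widetilde{B}:=\pi^{-1}_*B$ and let $\widetilde{M}$ be the trace on $\widetilde{X}$ of the b-divisor $^{*}M'$. Because $\pi$ is small, no divisors are extracted, so $\pi^*(K_X+B+M)=K_{\widetilde{X}}+\widetilde{B}+\widetilde{M}$, which makes $(\widetilde{X},\widetilde{B}+\widetilde{M})$ a $\Q$-factorial g-lc pair whose data is still $M'$ (via the induced model $X'\to \widetilde{X}$). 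Setting $\widetilde{U}:=\pi^{-1}(U)\cong U$, the hypothesis of Lemma \ref{dlt-resolution} is satisfied on $\widetilde{U}$, so $(\widetilde{X},\widetilde{B}+\widetilde{M})$ is g-dlt.

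Next I would transfer the g-dlt property from $\widetilde{X}$ down to $X$. Choose a log resolution $f\colon X'\to \widetilde{X}$ which witnesses the g-dlt property of $(\widetilde{X},\widetilde{B}+\widetilde{M})$, i.e.\ such that $a(E,\widetilde{X},\widetilde{B}+\widetilde{M})>0$ for every $f$-exceptional prime divisor $E$ on $X'$, and such that $^*M'$ descends to $X'$. After possibly blowing up further (in a way that only introduces exceptional divisors with discrepancy at least that of $\widetilde{B}+\widetilde{M}$), we may assume the composition $g:=\pi\circ f\colon X'\to X$ is a log resolution of $(X,B+M)$ to which $^*M'$ still descends. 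For any $g$-exceptional prime divisor $E$ on $X'$, the image $g(E)$ has codimension $\ge 2$ in $X$. If $f(E)$ were a prime divisor on $\widetilde{X}$, then since $\pi$ is small and contracts no divisors, $\pi(f(E))=g(E)$ would be a prime divisor on $X$, a contradiction. Hence $E$ is $f$-exceptional, and using $\pi^*(K_X+B+M)=K_{\widetilde{X}}+\widetilde{B}+\widetilde{M}$ we obtain
\[
a(E,X,B+M)=a(E,\widetilde{X},\widetilde{B}+\widetilde{M})>0.
\]
Thus every $g$-exceptional prime divisor has strictly positive discrepancy, and $(X,B+M)$ is g-dlt as required.

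The main point I expect to be fiddly is the last paragraph: making sure the composition $g=\pi\circ f$ can genuinely be taken to be a log resolution of $(X,B+M)$ without destroying the positivity of discrepancies on exceptional divisors. This is routine — one resolves the indeterminacy of $X'\dashrightarrow X$ by a sequence of smooth blow-ups above the codimension-$\ge 2$ non-isomorphism locus of $\pi$, and each such blow-up produces exceptional divisors $E$ with $a(E,X,B+M)=a(E,\widetilde{X},\widetilde{B}+\widetilde{M})>0$ since $\pi$ is small and the centre lies in the smooth g-klt locus of $\widetilde{X}$ — but it is the only place where care is needed.
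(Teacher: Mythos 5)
Your proposal is correct and follows exactly the route the paper intends: its entire proof is ``Combine Lemma~\ref{dlt-resolution} and Lemma~\ref{lem-g-dlt}'', i.e.\ pass to the small $\Q$-factorialisation from Lemma~\ref{lem-g-dlt}, apply Lemma~\ref{dlt-resolution} there, and descend via crepancy of the small morphism. Your spelled-out descent step (exceptional divisors over $X$ coincide with those over $\widetilde{X}$ because $\pi$ contracts no divisors, and discrepancies agree since $\pi^*(K_X+B+M)=K_{\widetilde X}+\widetilde B+\widetilde M$) is the correct filling-in of that one-line argument.
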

\begin{proof}
	Combine Lemma \ref{dlt-resolution} and Lemma \ref{lem-g-dlt}.
\end{proof}

\begin{prop}[\text{\cite[Lemma 2.4]{hu4}, cf.\cite[Corollary 5.52]{kollar-mori}}]\label{lem-g-dlt-2}
	Let $(X,B+M)$ be a g-dlt pair. Then, any g-lc centre is normal.
\end{prop}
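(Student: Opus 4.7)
The strategy is to adapt the classical proof of Koll\'ar--Mori's Corollary~5.52 for dlt pairs to the generalised setting, using the structure lemmas \ref{dlt-resolution} and \ref{lem-g-dlt} established above together with a relative vanishing argument for g-pairs.

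I would induct on $\dim X-\dim S$. In the base case, $S$ is an irreducible component of $\lfloor B\rfloor$. Take a log resolution $\phi\colon X'\to X$ realising the g-dlt condition, so that $a(E,X,B+M)>0$ for every $\phi$-exceptional $E$; in particular $\lfloor B'\rfloor$ equals the strict transform of $\lfloor B\rfloor$ on the smooth model $X'$. The strict transform $T\subset X'$ of $S$ is then a smooth prime divisor, hence normal, and $\phi|_T\colon T\to S$ is a proper birational morphism which, by Lemma \ref{dlt-resolution}, is an isomorphism over a neighbourhood of the generic point of $S$.

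To deduce that $S$ is normal it suffices to verify $(\phi|_T)_*\mathcal{O}_T=\mathcal{O}_S$, since $T$ is normal and birational onto $S$. Pushing forward the exact sequence $0\to\mathcal{O}_{X'}(-T)\to\mathcal{O}_{X'}\to\mathcal{O}_T\to 0$ and using $\phi_*\mathcal{O}_{X'}=\mathcal{O}_X$ (as $X$ is normal), this reduces to the vanishing $R^1\phi_*\mathcal{O}_{X'}(-T)=0$. Using $K_{X'}+B'+M'\sim_{\R}0/X$, one rewrites $-T\sim_{\R}K_{X'}+(B'-T)+M'/X$; after a small perturbation of $B'-T$ to land strictly in the klt range away from $T$, relative Kawamata--Viehweg vanishing applied to the g-lc pair $(X',(B'-T)+M')$ (with $M'$ nef over $X$ and $B'-T$ supported on an SNC divisor with coefficients $\le 1$) gives the desired vanishing.

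For the inductive step, if $S$ is not an irreducible component of $\lfloor B\rfloor$, choose an irreducible component $T'\supset S$ of $\lfloor B\rfloor$ on which $S$ is a g-lc centre. The base case gives $T'$ normal, and generalised divisorial adjunction produces a g-dlt pair $(T',B_{T'}+M_{T'})$ in which $S$ is again a g-lc centre; the inductive hypothesis applied to $(T',B_{T'}+M_{T'})$ completes the argument. The main obstacle will be the vanishing in the base case: arranging a perturbation of the boundary that simultaneously preserves the nefness of the moduli part over $X$ and places the remaining boundary strictly in the klt range with SNC support. This is routine once organised carefully but demands attention to the bookkeeping of coefficients; everything else is a direct transcription of the classical dlt argument.
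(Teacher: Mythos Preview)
Your outline follows the same classical Koll\'ar--Mori strategy the paper invokes, and the inductive step via adjunction is fine. The base case, however, has a genuine gap. You want $R^{1}\phi_{*}\mathcal{O}_{X'}(-T)=0$ via Kawamata--Viehweg applied to $(X',(B'-T)+M')$, but if $\lfloor B\rfloor$ has components other than $S$, then $B'-T$ still carries coefficients equal to $1$ on their strict transforms, so $(X',B'-T)$ is only sub-lc, not sub-klt, and the klt form of relative Kawamata--Viehweg does not apply. Your proposed fix, ``a small perturbation of $B'-T$'', does not work on the resolution: if you subtract $\epsilon T_{2}'$ from $B'-T$ for another component $T_{2}'\subset\lfloor B'\rfloor$, then $-T-K_{X'}-(B'-T-\epsilon T_{2}')\equiv_{X} M'+\epsilon T_{2}'$, and there is no reason for $M'+\epsilon T_{2}'$ to be nef over $X$, since $T_{2}'$ is a non-exceptional effective divisor and $M'$ is merely an $\R$-nef divisor with no positivity to absorb it.

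The paper handles exactly this point by first reducing on $X$ to the g-plt case: replacing $B$ by $B-\epsilon(\lfloor B\rfloor-S)$ keeps $(X,B+M)$ g-dlt (this is the analogue of \cite[Proposition~2.43]{kollar-mori}) and makes $S=\lfloor B\rfloor$ irreducible. After this reduction $B'-T$ has all coefficients strictly below $1$, and your vanishing argument goes through. The paper further arranges $\Supp\{M\}\subset\Supp\{B\}$ and that $B+M$ is a $\Q$-divisor, which lets it quote the connectedness lemma \cite[Theorem~5.48]{kollar-mori} and \cite[Proposition~5.51]{kollar-mori} verbatim for ordinary pairs rather than reproving vanishing in the generalised setting; once you make the g-plt reduction, either route works.
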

\begin{proof}
	Since the proof of \cite[Proposition 2.43]{kollar-mori} works verbatim, we can assume $(X,B+M)$ is g-plt, $\Supp \{M\} \subset \{B\}$ and $B+M$ is a $\Q$-divisor. The rest follows from the connectedness lemma \cite[Theorem 5.48]{kollar-mori} and the proof of \cite[Proposition 5.51]{kollar-mori} verbatim. 
\end{proof}

\subsection{Termination with scaling versus the existence of minimal models}

We begin with a lemma from \cite[Theorem 4.1, Steps 2 and 3]{birkar-flip}.
\begin{lem}[\text{cf.\cite[Theorem 4.1]{birkar-flip}}]\label{mm}
	Let $(X/Z,B+M)$ be a g-lc pair and $(Y/Z,B_Y+M_Y)$ be a minimal model. Let $C \ge 0$ be an $\R$-Cartier divisor of $X$ whose support contains no g-lc centres and $C_Y$ be its birational transform. Then, there exists a $\Q$-factorial g-dlt blow-up $Y' \to Y$ such that $(Y'/Z,B_{Y'} +\delta C_{Y'}+M_{Y'})$ is g-dlt for $0 \le \delta \ll 1$ with $C_{Y'}$ contains no g-lc centres. 
	
	Moreover, if $(X/Z,B+M)$ is g-dlt and $C$ is ample, then there is a boundary $\Delta_{Y'}$ such that $K_{Y'}+B_{Y'}+ \delta C_{Y'}+M_{Y'} \sim_\R K_{Y'} + \Delta_{Y'}$ and $(Y',\Delta_{Y'})$ is klt.
\end{lem}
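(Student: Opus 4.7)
The plan is to first construct a $\Q$-factorial g-dlt blow-up $\pi\colon Y' \to Y$, then verify that no g-lc centre of the resulting pair is contained in $\Supp(C_{Y'})$, and finally treat the moreover clause.

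First, I take a log resolution $h\colon \tilde Y \to Y$ of $(Y, B_Y + M_Y + C_Y)$ to which the data $M'$ descends, chosen so that the induced map $X \bir \tilde Y$ is resolved on a further common model. Let $\tilde B$ be the sum of the strict transform of $B_Y$ and all reduced $h$-exceptional prime divisors; then $K_{\tilde Y} + \tilde B + \tilde M - h^*(K_Y + B_Y + M_Y)$ is effective and $h$-exceptional, supported precisely on the $h$-exceptional divisors of positive discrepancy. Running a $(K_{\tilde Y} + \tilde B + \tilde M)$-MMP over $Y$ with scaling of an ample divisor and applying Lemma~\ref{lem-exc-3}, this MMP contracts exactly these positive-discrepancy exceptional divisors and terminates with the desired $\Q$-factorial g-dlt blow-up $\pi\colon Y' \to Y$ satisfying $K_{Y'} + B_{Y'} + M_{Y'} = \pi^*(K_Y + B_Y + M_Y)$.

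For the non-containment, combining the minimal model hypothesis with the negativity lemma on a common resolution $W$ of $X$, $Y$, and $Y'$ gives the discrepancy inequality $a(D, X, B+M) \le a(D, Y', B_{Y'} + M_{Y'})$ for every prime divisor $D$ over $X$. Hence every g-lc place $D$ of $(Y', B_{Y'} + M_{Y'})$ satisfies $a(D, X, B+M) = 0$, so its centre $c_X(D)$ is a g-lc centre of $(X, B+M)$ and therefore disjoint from $\Supp(C)$. Tracing this through $W$ (using that the birational transform $C_{Y'}$ is compatible with pullback-pushforward on $W$), the corresponding centre on $Y'$ lies outside $\Supp(C_{Y'})$. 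Since every g-lc centre of $(Y', B_{Y'} + M_{Y'})$ arises from such a g-lc place, Lemma~\ref{dlt-resolution} then gives that $(Y', B_{Y'} + \delta C_{Y'} + M_{Y'})$ is g-dlt for every $0 < \delta \ll 1$: near the generic point of each g-lc centre, $C_{Y'}$ vanishes, so log smoothness and the descent of $M'$ are preserved.

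For the moreover part, assume in addition $(X, B+M)$ is g-dlt and $C$ is ample. On a log resolution $\phi'\colon X' \to X$ on which $M'$ descends, $\phi'^* C$ is nef and big, so $M' + \epsilon \phi'^* C$ is nef and big for every $\epsilon > 0$. By Kodaira's lemma, write $M' + \epsilon \phi'^* C \sim_\R A' + F'$ with $A'$ ample and $F' \ge 0$, where $A'$ is chosen as a general effective $\R$-representative via a very-ample decomposition and Bertini, so that $\Supp(A')$ avoids all g-lc centres and meets the components of $B'$ transversely. Descending through a common model yields $M_{Y'} + \epsilon C_{Y'} \sim_\R A_{Y'} + F_{Y'}$, whence
\[
K_{Y'} + B_{Y'} + \delta C_{Y'} + M_{Y'} \sim_\R K_{Y'} + B_{Y'} + (\delta - \epsilon) C_{Y'} + A_{Y'} + F_{Y'}
\]
for $0 < \epsilon < \delta$. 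To upgrade the right-hand side to a genuinely klt pair $(Y', \Delta_{Y'})$ one further exploits the ampleness of $C$ to $\R$-linearly decrease the coefficient-$1$ components of $B_{Y'}$, which by the correspondence above are strict transforms of components of $\lfloor B \rfloor$ on $X$ or $\pi$-exceptional g-lc places. The main obstacle is precisely this last step: arranging the perturbation and Kodaira-type decomposition so that $\Delta_{Y'}$ is simultaneously effective, has all coefficients strictly less than $1$, and has klt singularities. This requires combining Kodaira's lemma, Bertini's theorem, and the log smoothness near g-lc centres granted by the g-dlt hypothesis on $(X, B+M)$, with a careful trade-off between the ample class represented by $A_{Y'}$ and the small deficit $\eta \lfloor B_{Y'}\rfloor$ that must be absorbed.
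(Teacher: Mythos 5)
Your construction of the blow-up runs the MMP over $Y$ on $K_{\tilde Y}+\tilde B+\tilde M$ \emph{without} $C$ in the boundary, and this creates the gap in your first part. You try to recover the non-containment of g-lc centres in $\Supp C_{Y'}$ a posteriori: from $a(D,Y',B_{Y'}+M_{Y'})=0$ you correctly get $a(D,X,B+M)=0$, hence $c_X(D)\nsubseteq\Supp C$ (note: not ``disjoint from'' --- the hypothesis only forbids containment), but the step ``tracing this through $W$'' does not give $c_{Y'}(D)\nsubseteq\Supp C_{Y'}$. For a higher-codimensional g-lc centre $V'$ of $Y'$ --- typically a stratum of divisors newly extracted over a small g-lc centre of $Y$ --- the generic point of $V'$ need not lie in the domain of definition of $Y'\bir X$; on a common resolution $W$, the centre $c_W(D)$ may sit inside a $q'$-exceptional divisor mapping into a component of $C_{Y'}$ without lying on the strict transform of $C$, so $c_X(D)\nsubseteq\Supp C$ places no constraint on $V'\subseteq\Supp C_{Y'}$. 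Concretely, if $Y$ has a zero-dimensional g-lc centre $P$ and $C_Y$ passes through $P$, a stratum $S\cap T$ of the extracted divisors over $P$ can land inside $\widetilde{C}_{Y}$ on $Y'$ even though $P\nsubseteq\Supp C_Y$. The paper forecloses this by running the MMP on $K_W+B_W+\delta C_W+M_W$ over $Y$ (using that $C_W\equiv -F/Y$ for $F$ exceptional$/Y$, so Lemma~\ref{lem-exc-3} still applies): g-dltness of the pair \emph{including} $\delta C$ is then preserved at every step, and log smoothness of $(Y',B_{Y'}+\delta C_{Y'})$ near the generic point of each g-lc centre forces the centre not to be contained in $\Supp C_{Y'}$ for dimension reasons. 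You would need either to adopt this modification or to supply a genuinely different argument for the non-containment.

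The moreover clause is not proved: you explicitly leave as ``the main obstacle'' exactly the step that constitutes the claim, namely producing an effective $\Delta_{Y'}$ with coefficients $<1$ and klt singularities. Your difficulty stems from trying to perform the perturbation on $Y'$, where $C_{Y'}$ is no longer ample. The paper instead perturbs on $X$, where $C$ \emph{is} ample: since $(X,B+M)$ is g-dlt one finds $\Delta$ with $(X,\Delta)$ klt and $K_X+\Delta\sim_\R K_X+B+\delta C+M$, then writes $K_W+\Delta_W=p^*(K_X+\Delta)+E$ with $E\ge 0$ and transports klt-ness to $Y'$ by comparing discrepancies. As written, your argument for this half is a description of what remains to be done rather than a proof.
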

\begin{proof}
	Let $X \overset{p}{\leftarrow} W \overset{q}{\to} Y$ be a common resolution. Let $B_W=B^\sim+E^\sim$ where $B^\sim $ is the birational transform and $E^\sim$ be the reduced exceptional divisor. Let $E:=p^*(K_X+B+M) -q^*(K_Y+B_Y+M_Y)$, $E':=K_W+B_W+M_W-p^*(K_X+B+M)$ and $C_W=p^*C$. Now we run an MMP$/Y$ on $K_W+B_W +\delta C_W +M_W$. Since $Y$ is $\Q$-factorial, $C_W\equiv -F/Z$ for some exceptional$/Y$ divisor $F$. Because $\Supp (E+E') \subseteq \Supp (E+E'-\delta F)$ and $\Supp (E+E')$ contains all exceptional$/Y$ divisors $D$ with $a(D,Y,B_Y+M_Y)> 0$, by Lemma \ref{lem-exc-3} the above MMP reaches a model $(Y',B_{Y'}+\delta C_{Y'}+ M_{Y'})$ on which any exceptional$/Y$ divisor $D$ has $a(D,Y,B_Y+M_Y)=0$. Since $C_{W}$ contains no lc centres, we deduce that $C_{Y'}$ contains no g-lc centres. 
	
	If $(X/Z,B+M)$ is g-dlt and $C$ is ample, then there is a boundary $\Delta$ such that $K_X+\Delta \sim_\R K_X+B+\delta C+M$ and $(X,\Delta)$ is klt. Hence $(W,\Delta_W)$ is klt where $K_W+\Delta_W=p^*(K_X+\Delta) +E$. We conclude the lemma by comparing the discrepancies.
\end{proof}

Now we give a special case of \cite[Theorem 4.1]{birkar-flip} in the setting of g-pairs.
\begin{lem}\label{lem-sp-term}
	Let $(X/Z,B+M)$ be a g-dlt pair which has a minimal model. Given an ample divisor $A$ and a decreasing sequence of numbers $\lambda_1 \ge \lambda_2 \ge \ldots \ge \lambda_i \ge \ldots$ such that $\lim \lambda_i =0$, suppose there is a sequence of birational maps
	$$
	(X,B+M) \bir \cdots \bir (X^i,B^i+M^i) \bir (X^{i+1},B^{i+1}+M^{i+1}) \bir \cdots
	$$
	satisfying:
	\begin{enumerate}
		\item $(X^i,(B^i+\lambda_i A^i)+M^i)$ is a weak lc model of $(X,(B+\lambda_i A)+M)$ on which the birational transform $A^i$ is $\R$-Cartier; 
		
		
		\item $K_{X^i} + B^i+\lambda A^i+M^i$ is nef$/Z$ if $\lambda_{i}\ge \lambda \ge \lambda_{i+1}$.
	\end{enumerate}
	Then, $K_{X^i} + B^i+M^i$ is nef$/Z$ for all $i \gg 0$. 
\end{lem}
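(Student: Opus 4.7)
The plan is to adapt the strategy of \cite[Theorem 4.1, Steps 2 and 3]{birkar-flip} to the generalised pair setting. The key input is the assumed existence of a minimal model of $(X,B+M)$, which we use to build a reference weak lc model of $(X,B+\lambda A+M)$ that remains nef for \emph{all} $\lambda$ in a small interval $[0,\delta_0]$, and then compare it with the $X^i$ via the negativity lemma.

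\textbf{Step 1 (Reference model).} Start from a minimal model $(Y,B_Y+M_Y)$ of $(X,B+M)$. Applying Lemma~\ref{mm} with $C=A$, we may replace $Y$ by a $\Q$-factorial g-dlt, crepant modification $Y'$ with $(Y',B_{Y'}+\delta A_{Y'}+M_{Y'})$ g-dlt for $0\le\delta\le\delta_0$, the birational transform $A_{Y'}$ avoiding all g-lc centres, and a klt boundary $\Delta_{Y'}$ with $K_{Y'}+\Delta_{Y'}\sim_{\R}K_{Y'}+B_{Y'}+\delta_0 A_{Y'}+M_{Y'}$. Crepancy yields $K_{Y'}+B_{Y'}+M_{Y'}$ nef$/Z$, hence $K_{Y'}+\Delta_{Y'}$ is pseudo-effective. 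Appealing to \cite{bchm}, we run a $(K_{Y'}+\Delta_{Y'})$-MMP; taking $\delta_0$ sufficiently small and scaling appropriately, each contracted extremal ray can be arranged to be $(K_{Y'}+B_{Y'}+M_{Y'})$-trivial, so that $(Y^{\ast},B_{Y^{\ast}}+M_{Y^{\ast}})$ is still a minimal model of $(X,B+M)$. Hence both $K_{Y^{\ast}}+B_{Y^{\ast}}+M_{Y^{\ast}}$ and $K_{Y^{\ast}}+B_{Y^{\ast}}+\delta_0 A_{Y^{\ast}}+M_{Y^{\ast}}$ are nef$/Z$, and convexity of the nef cone then gives $K_{Y^{\ast}}+B_{Y^{\ast}}+\lambda A_{Y^{\ast}}+M_{Y^{\ast}}$ nef$/Z$ for every $\lambda\in[0,\delta_0]$.

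\textbf{Step 2 (Compare $X^i$ with $Y^{\ast}$).} Fix $i$ large enough that $\lambda_i\le\delta_0$. Then both $(X^i,B^i+\lambda_i A^i+M^i)$ and $(Y^{\ast},B_{Y^{\ast}}+\lambda_i A_{Y^{\ast}}+M_{Y^{\ast}})$ are weak lc models of $(X,B+\lambda_i A+M)$. Taking a common log resolution $p_i\colon W_i\to X^i$ and $q_i\colon W_i\to Y^{\ast}$, two applications of the negativity lemma force the identity
\begin{equation*}
p_i^{\ast}(K_{X^i}+B^i+\lambda_i A^i+M^i)=q_i^{\ast}(K_{Y^{\ast}}+B_{Y^{\ast}}+\lambda_i A_{Y^{\ast}}+M_{Y^{\ast}}).
\end{equation*}
Passing to a small $\Q$-factorialisation of $X^i$ furnished by Lemma~\ref{lem-g-dlt}, and using that $A_{Y^{\ast}}$ avoids g-lc centres, this identity forces $X^i\bir Y^{\ast}$ to be an isomorphism in codimension one for all $i\gg 0$.

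\textbf{Step 3 (Transfer nefness to $\lambda=0$).} Subtracting $\lambda_i(p_i^{\ast}A^i-q_i^{\ast}A_{Y^{\ast}})$ from both sides of the Step~2 identity, and noting that this correction is supported on divisors that are exceptional on both $X^i$ and $Y^{\ast}$ (since $A^i$, $A_{Y^{\ast}}$ are birational transforms of $A$), we see that $K_{X^i}+B^i+M^i$ and the push-down of $K_{Y^{\ast}}+B_{Y^{\ast}}+M_{Y^{\ast}}$ agree on $X^i$. Under the pseudo-isomorphism from Step~2, nefness of log canonical divisors transfers, and Step~1 gives nefness of $K_{X^i}+B^i+M^i$ over $Z$ as required.

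The main obstacle lies in Step~2: extracting a pseudo-isomorphism from the identity of log canonical pullbacks on a common resolution in the non-NQC g-dlt setting, where $X^i$ is only assumed to be $\R$-Cartier at $A^i$ and not necessarily $\Q$-factorial, while $A_{Y^{\ast}}$ is merely a birational transform rather than ample on $Y^{\ast}$. This requires a careful use of the small $\Q$-factorialisation of Lemma~\ref{lem-g-dlt} together with the negativity lemma, paralleling the klt-case argument in \cite{birkar-flip} but with extra care to handle g-lc centres. A subsidiary subtlety is Step~1's scaling argument to preserve the nefness of $K_{Y^{\ast}}+B_{Y^{\ast}}+M_{Y^{\ast}}$ throughout the $(K_{Y'}+\Delta_{Y'})$-MMP; this relies on $\delta_0$ being chosen small enough that any $(K_{Y'}+\Delta_{Y'})$-negative ray is forced to be $(K_{Y'}+B_{Y'}+M_{Y'})$-trivial.
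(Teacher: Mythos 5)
Your outline follows the paper's proof (build a reference model that is simultaneously a minimal model of $(X,(B+\lambda A)+M)$ for all small $\lambda$, then compare with $X^i$ via the negativity lemma), but two steps as written have genuine gaps, and both are exactly where the non-NQC hypothesis bites. First, in Step 1 you assert that by "taking $\delta_0$ sufficiently small" every ray contracted in the $(K_{Y'}+\Delta_{Y'})$-MMP is $(K_{Y'}+B_{Y'}+M_{Y'})$-trivial. That inference requires the positive intersection numbers $(K_{Y'}+B_{Y'}+M_{Y'})\cdot R$ with extremal rays to be bounded away from zero, which fails for a general nef non-NQC $M$ (this is precisely the phenomenon of Example \ref{exa: nonvanishing fail}). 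The paper does not shrink $\delta$; it runs the MMP on $K_Y+B_Y+\delta A_Y+M_Y+\alpha P_Y$ with $P_Y=K_Y+B_Y+M_Y$ and $\alpha\gg 0$, and invokes Lemma \ref{lem-P-trivial} after arranging $\Supp\{P_Y\}\subseteq\Supp\{B_Y+\delta A_Y\}$; the $P_Y$-triviality is forced by the large coefficient $\alpha$, not by the smallness of $\delta$. Your proposal as stated does not supply this mechanism.

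Second, Step 3 rests on the claim that "under the pseudo-isomorphism from Step 2, nefness of log canonical divisors transfers." Nefness is not preserved under isomorphisms in codimension one (flops are the standard counterexample), and knowing only that $K_{X^i}+B^i+M^i$ and the push-down of $K_{Y^{\ast}}+B_{Y^{\ast}}+M_{Y^{\ast}}$ agree as divisors on $X^i$ is not enough. What you need, and what the paper extracts, is the crepant identity $p_i^{\ast}(K_{X^i}+B^i+M^i)=q_i^{\ast}(K_{Y^{\ast}}+B_{Y^{\ast}}+M_{Y^{\ast}})$ at $\lambda=0$. This is obtained by using hypothesis (2) of the lemma: the Step 2 identity holds for every $\lambda\in[\lambda_{i+1},\lambda_i]$ (both sides being weak lc models of $(X,(B+\lambda A)+M)$ throughout the interval), so taking two distinct values of $\lambda$ gives $p_i^{\ast}A^i=q_i^{\ast}A_{Y^{\ast}}$, and subtracting yields the identity at $\lambda=0$; nefness of $K_{X^i}+B^i+M^i$ then follows because its pullback equals the pullback of a nef divisor. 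Your proposal never uses the interval hypothesis, and the substitute argument (exceptionality of $\lambda_i(p_i^{\ast}A^i-q_i^{\ast}A_{Y^{\ast}})$ plus the codimension-one isomorphism, which the paper neither claims nor needs) does not close this gap.
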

\begin{proof}
	Let $(Y/Z,B_Y+M_Y)$ be a minimal model of $(X/Z,B+M)$ and $A_Y$ be the birational transform of $A$. We claim that we can assume $(Y/Z,(B_Y+\delta A_Y)+M_Y)$ is a $\Q$-factorial g-dlt minimal model of $(X/Z,(B+\delta A)+M )$ for all $0 \le \delta \ll 1$. To this end,  by Lemma \ref{mm}, replacing $Y$ we can assume $(Y,B_Y+M_Y)$ is $\Q$-factorial g-dlt and there is a boundary $\Delta_Y$ such that $K_Y+B_Y+ \delta A_Y+M_Y \sim_\R K_Y + \Delta_Y$ and $(Y,\Delta_Y)$ is klt. Now we run an MMP on $K_Y+B_Y+M_Y+\delta A_Y +\alpha P_Y$ where $P_Y=K_Y+B_Y+M_Y$ is nef. Since we can assume that $P_Y$ is supported by $B_Y+\delta A_Y$, by Lemma \ref{lem-P-trivial}, the above MMP is $P$-trivial for $\alpha \gg 0$. We see the above MMP is also an MMP on $K_Y+B_Y+M_Y+\delta A_Y$. Moreover, this MMP terminates by \cite{bchm}. Hence replacing $(Y/Z,B_Y+M_Y)$ with a minimal model we obtain the claim.
	
	Now suppose $K_{X^i} + B^i+M^i$ is not nef$/Z$ for some $i \gg 0$. Pick $i \gg 0$ such that $\lambda_{i+1} < \lambda_{i} < \delta$. Note that $^*A \le ^* \!A^i$. By a calculation we obtain the equation $^* (K_{X_i}+B_i+M_i + \lambda A_i) = ^* (K_Y+B_Y+M_Y+\lambda A_Y)$ for $\lambda_{i+1} \le \lambda \le \lambda_{i}$ which contradicts to the equation $^* (K_{X_i}+B_i+M_i) \gneq ^*(K_Y+B_Y+M_Y)$.
\end{proof}

As an immediate corollary we obtain the following.
\begin{cor}[\text{cf. \cite[Theorem 1.9]{birkar-flip}}]\label{cor-term}
	Let $(X/Z,B+M)$ be a g-dlt pair. Suppose $(X/Z,B+M)$ has a minimal model. Then, any MMP on $K_X+B+M$ with scaling of an ample divisor terminates with a minimal model. 
\end{cor}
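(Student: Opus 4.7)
Plan. The plan is to verify the hypotheses of Lemma \ref{lem-sp-term} for a given MMP and deduce termination. Let $(X,B+M)=(X^0,B^0+M^0)\bir (X^1,B^1+M^1)\bir\cdots$ be an MMP$/Z$ on $K_X+B+M$ with scaling of an ample$/Z$ divisor $A$, with decreasing scaling constants $\lambda_1\ge\lambda_2\ge\cdots$, and set $\mu:=\lim_i\lambda_i\ge 0$. Hypothesis (2) of Lemma \ref{lem-sp-term} holds by the very definition of MMP with scaling: $K_{X^i}+B^i+\lambda A^i+M^i$ is nef$/Z$ for $\lambda\in[\lambda_{i+1},\lambda_i]$. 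For hypothesis (1), the step at $X^{j-1}$ contracts an extremal ray $R_{j-1}$ with $(K+B+\lambda_j A+M)\cdot R_{j-1}=0$ and $(K+B+M)\cdot R_{j-1}<0$; writing $(K+B+\lambda A+M)\cdot R_{j-1}=(1-\lambda/\lambda_j)(K+B+M)\cdot R_{j-1}\le 0$ for $\lambda\le\lambda_j$, each step is $(K+B+\lambda_i A+M)$-non-positive for every $j\le i$. The composition $(X,B+M)\bir (X^i,B^i+M^i)$ therefore realises $(X^i,B^i+\lambda_i A^i+M^i)$ as a weak lc model of $(X,B+\lambda_i A+M)$.

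If $\mu=0$, Lemma \ref{lem-sp-term} immediately yields that $K_{X^i}+B^i+M^i$ is nef$/Z$ for all $i\gg 0$, so the MMP terminates with a minimal model. If $\mu>0$, I would reduce to the previous case by a rescaling trick. One computes $(K+(B+\mu A)+M)\cdot R_{j-1}=-(\lambda_j-\mu)A\cdot R_{j-1}\le 0$, strict whenever $\lambda_j>\mu$, so the same sequence of maps is an MMP on $K+(B+\mu A)+M$ with scaling of $A$, with new scaling constants $\lambda_i-\mu\to 0$. After replacing $A$ by a general member of its $\R$-linear class so that $(X,B+\mu A+M)$ remains $\Q$-factorial g-dlt (equivalently, viewing $\mu A$ as part of the nef data), existence of a minimal model of $(X,B+\mu A+M)$ follows from an extension of \cite{bchm} to g-dlt pairs with big boundary/nef data, since $\mu A$ contributes an ample summand. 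Lemma \ref{lem-sp-term} applied to this reinterpreted MMP with limit $0$ then forces termination in finitely many steps, contradicting infiniteness. Hence $\mu=0$ and the first case applies.

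The main obstacle is the case $\mu>0$: it requires invoking a BCHM-type existence statement for $(X,B+\mu A+M)$ that is not an explicit hypothesis of the corollary but is a standard consequence of \cite{bchm} and its g-pair extensions because $B+\mu A$ has big support. A secondary bookkeeping point is justifying the replacement of $A$ by a sufficiently general representative so that $(X,B+\mu A+M)$ is g-dlt; this is harmless since an MMP with scaling is insensitive to replacing $A$ at the outset by an $\R$-linearly equivalent general ample divisor. With these inputs in place, the verification of hypotheses (1) and (2) is routine and the conclusion follows from Lemma \ref{lem-sp-term}.
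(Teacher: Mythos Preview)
Your approach is essentially the paper's: reduce to $\lim_i\lambda_i=0$ via \cite{bchm}, then apply Lemma~\ref{lem-sp-term}. Your verification of hypotheses (1) and (2) of Lemma~\ref{lem-sp-term} is correct.

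There is, however, a gap in your treatment of the case $\mu>0$. Lemma~\ref{lem-sp-term} applied to the pair $(X,(B+\mu A)+M)$ with scaling sequence $\lambda_i-\mu\to 0$ only yields that $K_{X^i}+B^i+\mu A^i+M^i$ is nef$/Z$ for all $i\gg 0$; it does \emph{not} give termination of the original MMP on $K_X+B+M$. Once this nefness holds at some $X^{i_0}$, one deduces $\lambda_j=\mu$ for all $j>i_0$, and the original MMP may in principle continue indefinitely as a sequence of $(K+B+\mu A+M)$-trivial flips (that is, $(K+B+\mu A+M)$-flops), which your argument does not exclude. So the sentence ``forces termination in finitely many steps, contradicting infiniteness'' does not follow from what you have established.

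The fix is what the paper's one-line citation of \cite{bchm} actually intends: pick any $0<\mu'<\mu$. Since $\lambda_j\ge\mu>\mu'$ for every $j$, each step satisfies $(K_{X^{j-1}}+B^{j-1}+\mu'A^{j-1}+M^{j-1})\cdot R_{j-1}<0$, so the full (infinite) sequence is a genuine MMP on $K_X+B+\mu'A+M$ with scaling of $A$. After replacing $A$ by a general member one has $K_X+B+\mu'A+M\sim_\R K_X+\Delta$ with $(X,\Delta)$ klt and $\Delta$ big, whence \cite{bchm} gives termination of this MMP in finitely many steps, contradicting infiniteness. This directly forces $\mu=0$, and then your first case finishes the proof exactly as in the paper.
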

\begin{proof}
	By \cite{bchm} we see $\lambda =0$. Apply Lemma \ref{lem-sp-term}.
\end{proof}

We introduce a generalisation of the notion weak lc models. The notion of weak minimal models of projective g-lc pairs follows from that of usual pairs defined in the unpublished manuscript \cite{hu}.
\begin{defn}[\text{Weak minimal models, cf. \cite[Lemma 2.9]{hu}}]\label{defn-wmm}
	A projective g-lc pair $(W,B_W+M_W)$ with data $M_{W'}$ is a \emph{weak minimal model} of a projective g-lc pair $(X,B+M)$ if:
	\begin{itemize}
		\item $W$ is birational to $X$ and $^*M'=^*M_{W'}$ as b-divisors;
		
		\item $K_W+B_W+M_W$ is nef;
		
		\item $a(D,X,B+M) \le a(D,W,B_W+M_W)$ for every prime divisor $D$ on $X$; and
		
		\item $a(D,X,B+M) +\sigma_D(K_X+B+M) \ge a(D,W,B_W+M_W)$ for every prime divisor $D$ on $W$.
	\end{itemize}
\end{defn}

One can easily obtain the equivalence between the existence of a minimal model and a weak minimal model for NQC g-pairs as below (cf. \cite[Lemma 2.9]{hu}). However, it is not known for non-NQC g-pairs.

\begin{prop}\label{prop-wmm}
	Let $(X,B+M)$ be a projective g-lc pair. Let $(W,B_W+M_W)$ be a weak minimal model of $(X,B+M)$. Then,
	$$
	a(D,X,B+M) +\sigma_D(K_X+B+M) = a(D,W,B_W+M_W) 
	$$
	for every prime divisor $D$ over $X$. In particular, if $K_W+B_W+M_W$ is NQC (for example, semi-ample), then it has a minimal model.
\end{prop}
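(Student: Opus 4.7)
The plan is to take a common log resolution, identify the difference of the two pullbacks with the Nakayama negative part, and read off the discrepancy equation. Choose birational morphisms $h\colon Y\to X$ and $h'\colon Y\to W$ from a common log resolution on which $^*M'$ descends to $M_Y$, and write $K_Y+B_Y+M_Y=h^*(K_X+B+M)$ and $K_Y+B_Y^W+M_Y=h'^*(K_W+B_W+M_W)$. Set
$$
E:=B_Y-B_Y^W=h^*(K_X+B+M)-h'^*(K_W+B_W+M_W),
$$
so that for every prime divisor $D$ on $Y$ one has $\mu_D E=a(D,W,B_W+M_W)-a(D,X,B+M)$. The two discrepancy conditions of Definition \ref{defn-wmm} translate to $\mu_D E\ge 0$ whenever $D$ is non-exceptional over $X$, and $\mu_D E\le \sigma_D(h^*(K_X+B+M))$ whenever $D$ is non-exceptional over $W$. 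The goal is to prove $E=N_\sigma(h^*(K_X+B+M))$ on $Y$; once this is done the equation extends to every prime divisor over $X$ by pulling back to a finer log resolution and using $\sigma_\Gamma(\tau^*D)=\mu_\Gamma \tau^*N_\sigma(D)$ whenever $P_\sigma(D)$ is nef.

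The first step is to show $E\ge 0$. For $D$ non-exceptional over $X$ this is immediate from the first weak minimal model condition. For $D$ exceptional over $X$, I would compute $h'_*E=\phi_*(K_X+B+M)-(K_W+B_W+M_W)$, which is supported on the $\phi^{-1}$-exceptional locus; the g-lc condition on $(W,B_W+M_W)$ (which forces $\mu_{D_W}B_W\le 1$) together with the first discrepancy inequality shows $h'_*E\le 0$, and the negativity lemma applied to $h'$ then yields $E\ge 0$. With this in hand, $h^*(K_X+B+M)=h'^*(K_W+B_W+M_W)+E$ is a weak Zariski decomposition with nef positive summand, so subadditivity of $\sigma$ at the nef piece gives $N_\sigma(h^*(K_X+B+M))\le E$, i.e.\ $\sigma_D\le \mu_D E$ for every prime $D$ on $Y$.

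For the opposite inequality, write $F:=E-N_\sigma\ge 0$, so $P_\sigma=h'^*(K_W+B_W+M_W)+F$. Condition (4) of weak minimal model forces $\mu_D E=\sigma_D$, hence $\mu_D F=0$, for every $D$ non-exceptional over $W$, so $F$ is $h'$-exceptional. A second application of the negativity lemma on $h'$, exploiting that $P_\sigma$ is $\sigma$-decomposed so that $F=P_\sigma-h'^*(K_W+B_W+M_W)$ is anti-nef along general $h'$-fibers modulo exceptional corrections, forces $F=0$. Hence $E=N_\sigma$, and the discrepancy equation holds for every prime divisor on $Y$, and then on any log resolution by pullback.

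For the ``in particular'' statement, if $K_W+B_W+M_W$ is NQC (or semi-ample) then the nef positive part $P_\sigma(h^*(K_X+B+M))=h'^*(K_W+B_W+M_W)$ inherits this property; applying the NQC case of the main theorem (cf.\ \cite[Theorem I]{txie} and Theorem \ref{main-klt}) via a $P$-trivial MMP on a suitable log smooth model of $(X,B+M)$ with $P$ the nef positive part produces a minimal model of $(X,B+M)$. The main obstacle is the identification $F=0$ in the second step: in dimension $\ge 3$ the positive part $P_\sigma$ need not be nef, so the negativity lemma cannot be applied directly to $P_\sigma$, and one must combine the $\sigma$-decomposed property of $P_\sigma$ with a careful fibre analysis of $h'$.
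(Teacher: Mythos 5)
Your overall strategy --- identifying $E:=h^*(K_X+B+M)-h'^*(K_W+B_W+M_W)$ with $N_\sigma(h^*(K_X+B+M))$ and reading off the discrepancy identity --- is the right one, and is essentially the same computation as the paper's; the paper merely packages it differently, passing to a log smooth model with a morphism $g\colon X\to W$ and replacing $B$ by an auxiliary boundary $B'$ with $\mu_D B'=\mu_D B_W$ on divisors surviving on $W$, so that $N:=K_X+B'+M-g^*(K_W+B_W+M_W)$ is effective and $g$-exceptional by construction together with the third condition of Definition \ref{defn-wmm}, whence $N=N_\sigma(K_X+B'+M)$ with no negativity lemma needed. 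The genuine gap in your write-up is the proof that $E\ge 0$ on divisors exceptional over $X$. You apply the negativity lemma to $h'\colon Y\to W$, but for that you would need $-E$ to be nef over $W$, and $-E\equiv -h^*(K_X+B+M)$ over $W$ is not nef over $W$ in general. Moreover, the inference ``$h'_*E\le 0$, hence $E\ge 0$'' is not any form of the negativity lemma, and the claim $h'_*E\le 0$ is itself false in general: condition (3) of Definition \ref{defn-wmm} gives $\mu_D(h'_*E)=a(D,W,B_W+M_W)-a(D,X,B+M)\ge 0$ along every divisor $D$ common to $X$ and $W$. The correct application is over $X$: since $K_W+B_W+M_W$ is nef, $-E\equiv h'^*(K_W+B_W+M_W)$ over $X$ is nef over $X$, while $h_*E\ge 0$ is exactly condition (3); the negativity lemma for $h$ then yields $E\ge 0$.

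With that step repaired the rest of your argument does go through, and the ``main obstacle'' you flag at the end is not actually an obstacle: you do not need $P_\sigma$ to be nef. Having $E\ge 0$ gives $N_\sigma(h^*(K_X+B+M))\le E$ because the other summand is nef, and $F:=E-N_\sigma$ is effective and $h'$-exceptional by condition (4). Then the standard fact that $\sigma_\Gamma(h'^*D_W+F)\ge \mu_\Gamma F$ for any pseudo-effective $D_W$ and any effective $h'$-exceptional $F$ (applied with $D_W=K_W+B_W+M_W$ nef) gives $F\le N_\sigma\bigl(P_\sigma(h^*(K_X+B+M))\bigr)=0$; this is exactly the fact the paper invokes to conclude $N=N_\sigma(K_X+B'+M)$, and it is a statement about exceptional divisors over $W$, not about nefness of $P_\sigma$. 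Your treatment of the ``in particular'' clause is at the same (brief) level of detail as the paper's and is acceptable.
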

\begin{proof}
	Replacing $(X,B+M)$ with a log smooth model we can assume $g:X \bir W$ is a morphism. Since $a(D,X,B+M) +\sigma_D(K_X+B+M) \ge a(D,W,B_W+M_W)$ for every prime divisor $D$ on $W$, let $0 \le B' \le B$ such that $\mu_D B'= \mu_D B_W$ if $D $ is a divisor on $ W$, and $\mu_D B'= \mu_D B$ otherwise. Note that $B- B' \le N_\sigma(K_X+B+M)$ and hence a minimal model of $(X,B+M)$ is a weak lc model of $(X,B'+M)$. Note that $N:= K_X+B'+M -g^*(K_W+B_W+M_W) $ is effective and exceptional$/W$ which in turn implies that $N=N_\sigma(K_X+B'+M)$. Therefore we deduce 	$$
	a(D,X,B+M) +\sigma_D(K_X+B+M) = a(D,W,B_W+M_W) 
	$$
	for every prime divisor $D$ over $X$.
\end{proof}


\subsection{MMP on $K_X+B+M+\alpha P$ for $\alpha \gg 0$}

We begin with a technical result. Given an $\R$-divisor $D$, we denote its irrational part (the part with irrational coefficients) by $\{D\}_{ir}$.
\begin{thm}\label{thm-klt}
	Let $(X/Z,B+M)$ be a $\Q$-factorial g-dlt pair with data $M'$ and $P$ be a nef$/Z$ divisor. Suppose $M'=M_1'+M_2'$ where $M_1'$ is NQC and $M_2'$ is nef. Write $M_1,M_2$ as their push-downs and $N=K_X+B+M-P$. Suppose further that one of the followings holds:
	\begin{enumerate}
		\item $P \equiv \pm (tK_X- C - sN+D) -rM_1 $ where $t \in \Q_{\ge 0}$, $r \in \Q$, $sN$ is a $\Q$-Cartier $\Q$-divisor with $s \ge 0$, $C, D \ge 0$, $\Supp C \subseteq \Supp B$ and $\Supp D$ contains no g-lc centres; or
		
		\item There is a divisor $Q \equiv P-rM_1$ for some $r \in \Q$ such that the irrational part $\{Q\}_{ir}$ of $Q$ has its support contained in $\Supp \{B\}$.
	\end{enumerate} 
    Then, there is a number $\alpha >0 $ such that any MMP on $K_X+B+M + a P$ is $P$-trivial for $a \ge \alpha$.
\end{thm}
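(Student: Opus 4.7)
The plan is to adapt the argument of Lemma \ref{lem-P-trivial}. For each case, I will exhibit a convex combination $P\equiv\sum_j\alpha_jP_j$ where the $P_j$ are $\Q$-Cartier divisors of uniformly bounded Cartier index and the $\alpha_j$'s are nonnegative reals summing to $1$ whose irrational parts are $\Q$-linearly independent; alongside, for each $j$ I will produce a boundary $B_j\ge 0$ and NQC nef data $M^{(j)}{}'$ close to $M'$ so that $(X,B_j+M^{(j)})$ is g-dlt and $K_X+B_j+M^{(j)}-P_j\equiv N$. Given such data, Lemma \ref{lem-bound-int-num} applied to each g-dlt pair $(X,B_j+M^{(j)})$ bounds $P_j\cdot\Gamma$ from below for every $(K_X+B+M)$-negative extremal curve $\Gamma$ of minimal length; combining with $P_j\cdot\Gamma\in\tfrac1d\Z$ and the $\Q$-linear independence of the $\alpha_j$'s exactly as in Lemma \ref{lem-P-trivial}, the set $\{P\cdot\Gamma>0\}$ is bounded below by some $\epsilon>0$. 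Taking $\alpha>2\dim X/\epsilon$ then forces $P\cdot\Gamma=0$ on any $(K_X+B+M+\alpha P)$-negative extremal ray; moreover since a $P^i$-trivial step is simultaneously $P_j^i$-trivial, the pairs $(X^i,B_j^i+M^{(j),i})$ remain g-dlt and the argument propagates along the whole MMP.

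For case (2), use the NQC decomposition $M_1=\sum_k\beta_kM_{1,k}$ with nef $\Q$-divisors $M_{1,k}$ and a splitting $Q=Q^{\mathrm{rat}}+\sum_i\gamma_iE_i$ in which the irrational part lies on $E_i\subseteq\Supp\{B\}$. Perturbing each irrational $\gamma_i,\beta_k$ to nearby rationals $\gamma_i^{(j)},\beta_k^{(j)}$ produces the desired $\Q$-Cartier divisors $P_j:=Q^{\mathrm{rat}}+\sum_i\gamma_i^{(j)}E_i+r\sum_k\beta_k^{(j)}M_{1,k}$ together with a convex expression $P=\sum_j\alpha_jP_j$ whose irrational $\alpha_j$'s are $\Q$-linearly independent. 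Setting $B_j:=B+\sum_i(\gamma_i^{(j)}-\gamma_i)E_i$ (still effective and carrying only supports already in $\Supp B$) and defining $M^{(j)}$ by replacing $M_1$ in the data with $\sum_k\beta_k^{(j)}M_{1,k}$ (with the factor $r$ absorbed by rescaling the perturbations) yields the required g-dlt pair with $K_X+B_j+M^{(j)}-P_j\equiv N$.

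For case (1), use $N=K_X+B+M-P$ to rewrite the hypothesis so that the irrational content of $P$, modulo the $\Q$-Cartier pieces $tK_X$, $sN$, $B$, $M_2$, is carried only by $\{C\}_{ir}$, $\{D\}_{ir}$, and $\{rM_1\}_{ir}$. The hypothesis $\Supp C\subseteq\Supp B$ absorbs $\{C\}_{ir}$ into $B_j$ exactly as in case (2); the hypothesis that $\Supp D$ contains no g-lc centre allows the perturbation of $\{D\}_{ir}$ to be added to $B_j$ as a small multiple of $D$ without violating g-dltness (so that $(X,B_j+M^{(j)})$ remains g-dlt for $\|P_j-P\|\ll 1$); and $\{rM_1\}_{ir}$ is absorbed into $M^{(j)}$ via the NQC decomposition as in case (2). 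The $\pm$ sign ambiguity is handled symmetrically, since the argument only uses the supports and the numerical identity. The main obstacle is the simultaneous bookkeeping required to distribute the perturbations between $B_j$ and $M^{(j)}$ so that $P_j$ remains $\Q$-Cartier of bounded index, $(X,B_j+M^{(j)})$ stays g-dlt, and $K_X+B_j+M^{(j)}-P_j\equiv N$ holds uniformly in $j$; the sign of $r$ also requires care, as negative $r$ means the perturbation of $rM_1$ cannot be directly added to $M^{(j)}$, but must be compensated by rescaling the NQC pieces so that each summand remains nef. Once this distribution is set up, the diophantine argument from Lemma \ref{lem-P-trivial} closes the proof.
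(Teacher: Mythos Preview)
Your overall strategy --- convex decomposition of $P$ into $\Q$-divisors $P_j$ with $\Q$-linearly independent weights, lower bounds on $P_j\cdot\Gamma$ via auxiliary g-dlt pairs, discreteness, and propagation along the MMP via simultaneous $P_j$-triviality --- is exactly the paper's, and your treatment of Case~(2) is essentially identical to what the paper does.

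In Case~(1), however, your insistence on the identity $K_X+B_j+M^{(j)}-P_j\equiv N$ creates a genuine obstacle that you have not resolved. To satisfy this identity you are forced to absorb the perturbation $\pm(D_j-D)$ into $B_j$; but nothing in the hypothesis guarantees $\Supp D\subseteq\Supp B$, so $B_j$ can fail to be effective. Your phrase ``added to $B_j$ as a small multiple of $D$'' does not repair this: adding a fixed $\epsilon_0 D$ to $B$ replaces $N$ by $N+\epsilon_0 D$, and to bound $P_j\cdot\Gamma$ from below via the identity you would then need an \emph{upper} bound on $D\cdot\Gamma$, which Lemma~\ref{lem-bound-int-num} does not supply unless $\Supp D\subseteq\Supp B$. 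The paper sidesteps this by \emph{not} demanding the identity. It writes $P_i=G_i-sN$ with $G_i=tK_X-C_i+D_i+rM_{1,i}$, observes that $-sN\cdot\Gamma\ge 0$ on any $(K_X+B+M)$-negative extremal curve (since $N=K_X+B+M-P$ with $P$ nef and $s\ge 0$), and so reduces to bounding $G_i\cdot\Gamma$ from below. That bound comes from the g-dlt pair $\bigl(X,\tfrac{1}{1+\epsilon t}(B-\epsilon C_i+\epsilon D_i)+\tfrac{1}{1+\epsilon t}(M-\epsilon rM_{1,i})\bigr)$, in which the crucial point is that $+\epsilon D_i\ge 0$ is \emph{added} to the boundary, so effectivity is automatic and the hypothesis ``$\Supp D$ contains no g-lc centres'' is exactly what is needed for g-dltness. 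This separation --- disposing of $sN$ by a sign argument and of $D$ by adding rather than perturbing --- is the idea your sketch is missing.

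Two minor points: the data $M^{(j)\prime}$ you construct cannot be NQC in general since $M_2'$ need not be, but only nefness is required and that does hold; and your appeal to Lemma~\ref{lem-bound-int-num} for bounding $P_j\cdot\Gamma$ is imprecise, since that lemma treats prime divisors --- the actual input is the extremal-length inequality $(K_X+B'+M')\cdot\Gamma\ge -2\dim X$ valid for any g-lc pair $(X,B'+M')$.
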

\begin{proof}
	\emph{Case (1).} 
	We assume $G = tK_X- C+D $ and $P=G-sN$ since the other sub-case is proved in the same way. Let $G= \sum_{i} \alpha_i G_i$ be a convex combination of $\Q$-divisors such that $\alpha_i$'s are $\Q$-linearly independent. We can also assume $C$, $D$ and $M_1$ have no common components. Write $G_i= tK_X-C_i+D_i+rM_{1,i}$ and $P_i=G_i-sN$ such that $M_{1,i}'$ is a nef $\Q$-divisor and $M_1=\sum_{i} \alpha_i M_{1,i}$. In particular, $M_1'=\sum_{i} \alpha_i M_{1,i}'$ by the negativity lemma. Because
	$$
	K_X+B+\epsilon G_i= (1+\epsilon t) K_X+ B-\epsilon C_i +\epsilon D_i -\epsilon r M_{1,i},
	$$
	we see $(X,\frac{1}{1+\epsilon t}(B-\epsilon C_i +\epsilon D_i) + \frac{1}{1+\epsilon t}(M-\epsilon r M_{1,i}))$ is g-dlt for some $\epsilon \ll 1$. Hence, for any $(K_X+B+M)$-negative extremal curve $\Gamma$ of minimal length, we have $P_i \cdot \Gamma \ge G_i \cdot \Gamma \ge -l$ where $l>0$ depends on $\epsilon, t$ and $\dim X$. Therefore there is a number $\alpha >0 $ such that a step of MMP on $K_X+B+M + a P$ is $P$-trivial for $a \ge \alpha$. Since by the $\Q$-linear independence of $\alpha_i$'s the previous step is also $P_i$-trivial for every $i$, we conclude the first part by induction.
	
	\emph{Case (2).} 
	We can assume $Q= P-rM_1$ can be written as $Q=\Phi +\Delta$ where $\Phi$ is a $\Q$-divisor and $\Supp \Delta \subseteq \Supp \{B\}$. Hence, for any $\epsilon \ll 1$, we have a convex decomposition $P=\sum_{i} \alpha_i P_i$ of $P_i=\Phi+ \Delta_i + r M_{1,i}$ where $M_{1,i}', \Delta_i$ are nef $\Q$-divisors, $\| \Delta_i - \Delta \| < \epsilon $ and $\|M_{1,i}' -M_1'\| < \epsilon$. Since we can assume $\alpha_i$'s are $\Q$-linearly independent, we see $\Supp \Delta= \Supp \Delta_i$ and $M_1'=\sum_{i} \alpha_i M_{1,i}'$. One can easily verify that $(X,B+\epsilon (P_i-P) +M)$ is g-dlt. Hence, for any $(K_X+B+M)$-negative extremal curve $\Gamma$ of minimal length, we have $P_i \cdot \Gamma \ge -l$ where $l>0$ depends on $\epsilon$ and $\dim X$. By the same reasoning as in the previous paragraph we conclude the second part.
\end{proof}

\begin{rem}
	Note that adding an NQC nef divisor to the data will not cause much trouble. We therefore put $M_1=0$ from now on. In this case,  Condition (2) is satisfied if the irrational part $\{M\}_{ir} $ is contained in $\Supp B$, and $\Supp \{N\}_{ir} \subseteq \Supp B$.
	
	Applying the above arguments, the reader can by himself add an NQC nef divisor to the data in the statements for the rest of the paper. 
\end{rem}

We obtain the following special cases of Theorem \ref{thm-klt}.
\begin{cor}\label{cor-klt}
	Let $(X/Z,B+M)$ be a $\Q$-factorial g-klt pair and $P$ be a nef$/Z$ divisor. Write $N=K_X+B+M-P$. Suppose one of the followings holds:
	\begin{enumerate}
		\item[(1a)] $P \equiv D \ge 0$; or 
		
		\item[(1b)] $M \equiv D -sN$, where $D \ge 0$, $N$ is a $\Q$-Cartier $\Q$-divisor and $s \in \Q_{\ge 0}$.
		
	\end{enumerate}
   Then, there is a number $\alpha >0 $ such that any MMP on $K_X+B+M + a P$ is $P$-trivial for $a \ge \alpha$.
\end{cor}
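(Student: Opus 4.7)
The plan is to reduce both cases (1a) and (1b) directly to Theorem \ref{thm-klt}(1). Note first that a $\Q$-factorial g-klt pair is automatically g-dlt, so the hypothesis there is met. The key simplification is that a g-klt pair has no proper g-lc centres (every log discrepancy is strictly positive), so the condition that $\Supp D$ contain no g-lc centres in Theorem \ref{thm-klt}(1) is automatically satisfied for any effective divisor on $X$. This explains why the formulation collapses so drastically in the g-klt setting.

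In case (1a) the hypothesis $P \equiv D \ge 0$ fits the pattern $P \equiv \pm(tK_X - C - sN + D) - rM_1$ of Theorem \ref{thm-klt}(1) by taking the $+$ sign with $t = s = r = 0$, $C = 0$, and the trivial decomposition $M_1 = 0$, $M_2 = M$. The auxiliary requirement $\Supp C \subseteq \Supp B$ is vacuous, and, as already observed, the hypothesis on $\Supp D$ is automatic. Theorem \ref{thm-klt}(1) then produces the claimed $\alpha > 0$.

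In case (1b) the first step is to translate the hypothesis on $M$ into a hypothesis on $P$ using $K_X + B + M = P + N$: from $M \equiv D - sN$ one gets
\[
P \;=\; K_X + B + M - N \;\equiv\; K_X + B + D - (s+1)N.
\]
This matches $P \equiv +(tK_X - C - s'N + D') - rM_1$ with $M_1 = 0$, $r = 0$, $t = 1$, $C = 0$, $D' := B + D$, and $s' := s + 1$. The value $s' = s+1$ still lies in $\Q_{\ge 0}$, and $s'N$ remains $\Q$-Cartier since $N$ is. Because $(X,B+M)$ is g-klt, $\Supp(B+D)$ contains no g-lc centres, so Theorem \ref{thm-klt}(1) again applies.

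I expect no substantive obstacle: each case is a formal substitution into Theorem \ref{thm-klt}(1), and the only real content is the remark that in the g-klt regime the g-lc centre hypotheses are free. If anything, the bookkeeping step worth checking carefully is the sign in case (1b), to confirm that after passing from $M$ to $P$ via $K_X+B+M = P+N$ one lands on the $+$ branch of the formula (rather than having to invoke the $-$ branch), which the computation above confirms.
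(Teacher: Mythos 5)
Your proposal is correct and takes essentially the same route as the paper, which also handles both cases by direct substitution into Theorem \ref{thm-klt}(1): case (1a) with $P \equiv D \ge 0$, and case (1b) by rewriting $P \equiv K_X + B + D - (1+s)N$, exactly the computation you carry out. Your explicit remarks that the g-lc centre condition on $\Supp D$ is vacuous for g-klt pairs and that the $+$ branch of the formula applies are precisely what the paper leaves implicit.
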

\begin{proof}
	(1a) and (1b) follows from the Condition (1) in Theorem \ref{thm-klt} by letting $P \equiv D \ge 0$ or $P\equiv K_X+B+D-(1+s)N$. 
\end{proof}

In particular, we obtain the next corollary when $P$ is the positive part of the Nakayama-Zariski decomposition.
\begin{cor}\label{cor-klt'}
	Let $(X,B+M)$ be a projective g-klt pair. Assume $K_X+B+M$ birationally has a Nakayama-Zariski decomposition with nef positive part. Suppose either $M \equiv D \ge 0$ or $K_X+B+M \equiv D \ge 0$. Then, $(X,B+M)$ is a minimal model. 
\end{cor}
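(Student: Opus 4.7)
The plan is to run a $P$-trivial MMP on $K_X+B+M+\alpha P$ for sufficiently large $\alpha$ via Corollary \ref{cor-klt}, and to extract a minimal model from its termination, adapting the strategy of \cite{bhzariski}.

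First I would pass to a higher $\Q$-factorial g-klt log smooth model on which the positive part $P=P_\sigma(\pi^*(K_X+B+M))$ descends and is nef; this uses the Nakayama-Zariski hypothesis, and a minimal model of the lifted pair produces a minimal model of the original. I would then verify the hypothesis of Corollary \ref{cor-klt} in both cases: if $M\equiv D\ge 0$, then condition (1b) holds with $s=0$; if $K_X+B+M\equiv D\ge 0$, then since the Nakayama-Zariski decomposition depends only on the numerical class and $0\le P_\sigma(D)\le D$ for an effective $D$, we get $P\equiv P_\sigma(D)\ge 0$, and condition (1a) applies. Corollary \ref{cor-klt} then yields $\alpha>0$ such that every MMP on $K_X+B+M+\alpha P$ is $P$-trivial.

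Now run such an MMP with scaling of an ample divisor. Each step being $P$-trivial implies that $P^i$ stays nef along the MMP, and that each contracted extremal ray $R$ satisfies $P^i\cdot R=0$ together with $(K_{X^i}+B^i+M^i+\alpha P^i)\cdot R<0$. Consequently $(K_{X^i}+B^i+M^i)\cdot R<0$ and $N^i\cdot R<0$, where $N=K_X+B+M-P\ge 0$ is the negative part. Hence the same MMP is simultaneously an MMP on $K_X+B+M$ and an MMP on the effective $\R$-divisor $N$; its termination at a model $Y$ contracts $\Supp N^i$ after finitely many steps, so $N_Y=0$ and $K_Y+B_Y+M_Y=P_Y$ is nef, producing a minimal model. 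The pseudo-effectivity of $K_X+B+M$ (from the NZ hypothesis) rules out a Mori fibre space at the end.

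The main obstacle is the termination of this $P$-trivial MMP: because the nef data $M$ need not be NQC, BCHM does not apply directly. The remedy, implicit in the proof of Theorem \ref{thm-klt}, is the convex decomposition $P=\sum\alpha_i P_i$ of $P$ into $\Q$-divisors with $\Q$-linearly independent coefficients $\alpha_i$. By this linear independence, the $P$-triviality of each step is equivalent to $P_i$-triviality for every $i$, which makes the intersection numbers $\{P^i\cdot C^i\}$ discrete in $\R_{\ge 0}$ and allows one to invoke the usual MMP termination theorems for NQC pairs, echoing the strategy of \cite{bhzariski} now applied to the effective divisor $N$.
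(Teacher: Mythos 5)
Your overall route is the same as the paper's: pass to a $\Q$-factorial model on which $K_X+B+M=P+N$ is the Nakayama--Zariski decomposition with $P$ nef and $N$ a $\Q$-Cartier $\Q$-divisor, apply Corollary \ref{cor-klt} to get a $P$-trivial MMP on $K_X+B+M+\alpha P$, and use $P$-triviality to see each step is $(K_X+B+M)$- and $N$-negative, so the MMP contracts $N$ and ends at a model with $K_Y+B_Y+M_Y=P_Y$ nef. The treatment of the case $K_X+B+M\equiv D\ge 0$ (via $P\equiv P_\sigma(D)\ge 0$ and condition (1a)) is correct.

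There is, however, one step that fails as stated: in the case $M\equiv D\ge 0$ you assert that condition (1b) holds with $s=0$. The hypotheses of Corollary \ref{cor-klt} can only be checked \emph{after} the replacement, and numerical effectivity of the nef part is not preserved by it: on the new model the representative of the data is $M'=\phi^*M-F$ with $F\ge 0$ exceptional (when $M$ is $\R$-Cartier; in general one compares on a common resolution), so $M'\equiv\phi^*D-F$ need not be effective. This is precisely why condition (1b) allows a general $s\in\Q_{\ge 0}$, and why the paper's proof explicitly records that "we may lose the assumption $M\equiv D\ge 0$ but we still have $M\equiv D-sN$": one arranges the replacement so that $\Supp F\subseteq\Supp N$ (e.g.\ by forcing all exceptional divisors into the negative part and perturbing $N$ to a $\Q$-divisor), whence $M'\equiv(\phi^*D+sN-F)-sN$ with $\phi^*D+sN-F\ge 0$ for $s\gg 0$. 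With that correction your argument matches the paper's. A minor further remark: the convex decomposition $P=\sum_i\alpha_iP_i$ with $\Q$-linearly independent $\alpha_i$ is the mechanism that gives $P$-triviality and its persistence along the MMP (boundedness away from zero of $\{P\cdot\Gamma>0\}$); termination itself still rests on running the MMP with scaling of an ample divisor and absorbing $B+tA+M+\alpha P$ into a big klt boundary so that \cite{bchm} applies, which is how the paper (tersely) concludes.
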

\begin{proof}
	Replacing $(X,B+M)$ we can assume it is $\Q$-factorial and $K_X+B+M=P+N$ is the Nakayama-Zariski decomposition with $P$ nef and $N$ being a $\Q$-Cartier $\Q$-divisor. Note that we may lose the assumption $M \equiv D \ge 0$ but we still have $M \equiv D -sN$ for some $s \in \Q_{\ge 0}$. By Corollary \ref{cor-klt} we can run an MMP on $K_X+B+M+\alpha P$ which is $P$-trivial for some $\alpha >0$. This MMP contracts $N$ after finitely many steps and terminates with a model $(Y,B_Y+M_Y)$ on which $K_Y+B_Y+M_Y=P_Y$ is nef.
\end{proof}
We are interested if the above results hold for g-lc pairs. We will treat it in Section \ref{sec-P-mmp}.\\

\noindent \textbf{Generalised $\R$-complements.}
The theory of complements is introduced by V. V. Shokurov and substantially developed by C. Birkar \cite{birkar-BAB}.

\begin{defn}[Generalised $\R$-complements]\label{defn-complement}
	Let $(X/Z,B+M)$ be a g-pair with data $M'$. We say $(X,B+M)$ has a \emph{g-klt $\R$-complement} if there exists an effective divisor $C$ and a nef divisor $N'$ on $X'$ (possibly replacing $X'$ with a higher resolution) such that $(X,(B+C)+(M+N))$ is g-klt with data $M'+N'$ and that $K_X+B+C+M+N \sim_\R 0$.
\end{defn}

\begin{thm}\label{thm-a-mm}
	Let $(X,B+M)$ be a projective g-pair with data $M'$. Suppose 
	\begin{itemize}
		\item $(X,B+M)$ has a g-klt $\R$-complement,
		
		\item $-(K_X+B+M)$ birationally has a Nakayama-Zariski decomposition with nef positive part, and
		
		\item either $M \equiv D \ge 0$ or $-(K_X+B+M) \equiv D \ge 0$.
	\end{itemize}   
    Then, we can run an MMP on $-(K_X+B+M)$ which termminates with a g-klt pair $(Y,B_Y+M_Y)$ with data $M'$ on which $-(K_Y+B_Y+M_Y)$ is nef. 
\end{thm}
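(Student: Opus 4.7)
The plan is to convert the desired MMP on $L:=-(K_X+B+M)$ into a standard $P$-trivial MMP on the log canonical divisor of an auxiliary g-klt pair constructed from the given $\R$-complement, and then to invoke Theorem~\ref{thm-klt} together with the termination argument of Corollary~\ref{cor-klt'}. First I would replace $X$ by a small $\Q$-factorialisation of the g-klt $\R$-complement $(X,(B+C)+(M+N))$, so that $X$ becomes $\Q$-factorial and $(X,B+M)$ remains g-klt; a further birational replacement, carried out exactly as in the proof of Corollary~\ref{cor-klt'}, would make the Nakayama--Zariski decomposition $L=P+N_\sigma$ descend to $X$ with $P$ nef and $N_\sigma\ge 0$ a $\Q$-Cartier $\Q$-divisor.

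\textbf{Auxiliary pair.} For a small rational $\epsilon>0$ I would consider the $\Q$-factorial g-klt pair
$$
\Xi=\big(X,\,(B+(1+\epsilon)C)+(M+(1+\epsilon)N)\big)
$$
with data $M'+(1+\epsilon)N'$. Using $K_X+B+C+M+N\sim_\R 0$, its log canonical divisor satisfies $K_\Xi\sim_\R\epsilon(C+N)\sim_\R\epsilon L$, and the Nakayama--Zariski decomposition of $K_\Xi$ is $\epsilon P+\epsilon N_\sigma$ with $P_\Xi:=\epsilon P$ nef. The key check would be to verify hypothesis~(1) of Theorem~\ref{thm-klt} for $\Xi$ with this nef $P_\Xi$: in the case $L\equiv D'\ge 0$, since $N_\sigma\le D'$ one gets $P_\Xi\equiv\epsilon(D'-N_\sigma)\ge 0$, matching~(1) with the ``$+$'' sign and $t=s=r=0$, $C=0$; in the case $M\equiv D_M\ge 0$, substituting $K_X\equiv -B-M-P-N_\sigma$ into $\epsilon P$ yields $\epsilon P\equiv -\epsilon K_X-\epsilon(B+D_M+N_\sigma)$, which matches~(1) with the ``$-$'' sign, $t=\epsilon$, $s=r=0$, $C=0$, and effective $D=\epsilon(B+D_M+N_\sigma)$. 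The support conditions on $C$ and $D$ are vacuous because $\Xi$ is g-klt.

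\textbf{Running the MMP, and the main obstacle.} By Theorem~\ref{thm-klt} there is $\alpha>0$ such that any MMP on $K_\Xi+\alpha P_\Xi$ is $P_\Xi$-trivial, and I would run such an MMP with scaling of an ample divisor. Each contracted extremal ray $R$ satisfies $P_\Xi\cdot R=0$ and $K_\Xi\cdot R<0$, equivalently $L\cdot R<0$ and $(K_X+B+M)\cdot R>0$; hence the same sequence of birational maps is an MMP on $-(K_X+B+M)$. Because the MMP is $P$-trivial, it reduces to contracting the $\Q$-Cartier $\Q$-divisor $\epsilon N_\sigma$ and will terminate after finitely many steps exactly as in the proof of Corollary~\ref{cor-klt'}. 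The output $\Xi_Y$ would be $\Q$-factorial g-klt with $K_{\Xi,Y}=\epsilon L_Y$ nef, so the subpair $(Y,B_Y+M_Y)$ with data $M'$ is g-klt and $-(K_Y+B_Y+M_Y)=L_Y$ is nef, as required. The main obstacle will be Case~(ii): $L$ is not known a priori to be numerically equivalent to an effective divisor, so the auxiliary pair construction must be used to transfer the effectivity hypothesis on $M$ into the required effectivity of $D$ via the identity $K_X\equiv -B-M-P-N_\sigma$; a secondary technical point is arranging $N_\sigma$ to be $\Q$-Cartier $\Q$-divisor on $X$, which is handled by the same preliminary reduction as in Corollary~\ref{cor-klt'}.
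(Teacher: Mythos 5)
Your proposal is correct in substance and follows the same overall strategy as the paper: build an auxiliary $\Q$-factorial g-klt pair whose log canonical divisor is (a positive multiple of) $-(K_X+B+M)$ with nef positive part $\epsilon P$, verify the effectivity hypotheses of Theorem~\ref{thm-klt}/Corollary~\ref{cor-klt'}, and let the resulting $P$-trivial MMP contract $N_\sigma$. The difference is in how the auxiliary pair is assembled. You scale the complement, taking $(X,(B+(1+\epsilon)C)+(M+(1+\epsilon)N))$ so that $K_\Xi\sim_\R \epsilon L$ via $K_X+B+C+M+N\sim_\R 0$, whereas the paper scales the Zariski decomposition itself, putting $(1+\epsilon)N'$ (plus corrections $F+E$) into the boundary and $(1+\epsilon)P'$ into the nef data, so that the new log canonical divisor is literally $\epsilon P'+R'$. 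Your route makes g-klt-ness of the auxiliary pair immediate, and your substitution $K_X\equiv -B-M-P-N_\sigma$ to land in Theorem~\ref{thm-klt}(1) with $t=\epsilon$ in the case $M\equiv D_M\ge 0$ is a clean alternative to the paper's verification $M'\equiv D'-sR'$. The one place where you should import the paper's extra care is the ``further birational replacement'': passing to the model where $P_\sigma$ becomes nef is not a small modification, so the crepant pullbacks of $B$ and of $B+C$ acquire negative (exceptional) coefficients, and your $\Xi$ as written is only sub-klt there. The complement hypothesis guarantees these coefficients are $<1$, so one adds the effective exceptional divisor $F:=-(B'+N')^{<0}$ (and a small $\Q$-perturbation $E$) to the boundary exactly as in the paper's proof; this shifts $K_{\Xi'}$ from $\epsilon L'$ to $\epsilon L'+F+E$ and correspondingly shifts $N_{\Xi'}$, but your verification of Theorem~\ref{thm-klt}(1) goes through since $F+E$ is supported in the new boundary. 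With that adjustment the argument is complete.
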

\begin{proof}
	Let $\pi: (X',B'+M') \to X$ be a log resolution where $K_{X'}+B'+M'=\pi^*(K_X+B+M)$, such that $-(K_{X'}+B'+M')= P'+N'$ is the Nakayama-Zariski decomposition with $P'$ nef. Since $(X,B+M)$ is potentially g-klt, we see the coefficients of $B'+N' < 1$. Let $F:=-(B'+N')^{<0}$. We see $F \ge 0$ is exceptional$/X$. Pick $0<\epsilon \ll 1$. Let $E\ge 0$ be a divisor such that $\|E\| \ll 1$, $\Supp E= \mathrm{Ex}(\pi) \bigcup \Supp N'$ and $R':=\epsilon N'+F+E$ is a $\Q$-divisor. We see a minimal model of the g-klt pair $(X',(B'+N'+R')+(M'+(1+\epsilon)P'))$ is a minimal model of $(X,(B+(1+\epsilon)N) + (M +(1+\epsilon )P))$. Note that, running an MMP on $-(K_X+B+M)$ is equivalent to an MMP on $K_X+B+(1+\epsilon)N + M +(1+\epsilon) P$. By Corollary \ref{cor-term}, it suffices to show $(X',(B'+N'+R')+(M'+(1+\epsilon)P'))$ has a minimal model.
	
    Note that $K_{X'}+(B'+N'+R')+(M'+(1+\epsilon)P')=\epsilon P'+R'$ is the Nakayama-Zariski decomposition, and by assumption either $P' \equiv D' \ge 0$ or $M' \equiv D' -sR'$ for some $s \in \Q_{\ge 0}$ and $D' \ge 0$. By Corollary \ref{cor-klt'} we conclude the theorem.
\end{proof}

\section{$P$-trivial MMP and its applications}\label{sec-P-mmp}

\subsection{$P$-trivial MMP}
We begin with a lemma which allows us to run MMP with scaling of some particular divisor (for example, an ample divisor) for non-NQC g-pairs.
\begin{lem}\label{lem-P-mmp}
Let $(X/Z,B+M)$ be a $\Q$-factorial g-klt pair. Let $C $ be a divisor such that $(X,(B+C)+M)$ is g-klt and $K_X+(B+C)+M $ is nef. Suppose that $\Supp \{M \} \subseteq \Supp B $. Let
$$
\lambda=\inf \{t\ge 0 | K_X+B+tC +M \text{ is nef}\}.
$$
If $\lambda>0$, then there exists a $(K_X+B+M)$-negative extremal curve $\Gamma$ such that $(K_X+B+\lambda C+M )\cdot \Gamma =0$. 	

Moreover, let $P$ be a nef divisor with $\Supp \{P \} \subseteq \Supp B $ and
$$
\lambda(\alpha)=\inf \{t\ge 0 | K_X+B+tC +M+\alpha P \text{ is nef}\}.
$$
Then, there exists a number $l>0$ such that, if $\alpha \ge l$, then we have $P \cdot \Gamma =0$ and $\lambda(\alpha)$ is independent of $\alpha$; and if we run an MMP on $K_X+B+M+l P$ with scaling of $C$, then each step is $P$-trivial and the above MMP is also an MMP on  $K_X+B+M+\alpha P$ with scaling of $C$. 
\end{lem}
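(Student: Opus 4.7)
The statement has two parts: first, when $\lambda>0$ the infimum is attained by a $(K_X+B+M)$-negative extremal ray; and second, for $\alpha$ sufficiently large the threshold $\lambda(\alpha)$ stabilises, is attained by a $P$-trivial extremal ray, and the $C$-scaled MMP on $K_X+B+M+lP$ coincides with the one on $K_X+B+M+\alpha P$. The plan is to combine the length-of-extremal-rays bound with the uniform intersection-number estimates from Lemma~\ref{lem-P-trivial} and Lemma~\ref{lem-bound-int-num}, using the support hypotheses $\Supp\{M\}\subseteq\Supp B$ and $\Supp\{P\}\subseteq\Supp B$ to approximate the non-NQC data by $\Q$-divisors without leaving the g-klt category.

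For the first part, by closedness of the nef cone $K_X+B+\lambda C+M$ is nef. I would write $M=\sum_i\alpha_iM_i$ as a convex combination with positive $\Q$-linearly independent $\alpha_i$ and each $M_i'$ a nef $\Q$-divisor on the common birational model, the perturbation being confined to $\Supp B$ so that $(X,B+M_i)$ and $(X,B+C+M_i)$ remain g-klt. Each $K_X+B+tC+M_i$ is then a $\Q$-Cartier $\Q$-divisor to which the usual cone theorem applies, so its negative extremal rays are locally finite. Combining with the bounded-intersection estimates of Lemma~\ref{lem-bound-int-num}, the quantities $M_i\cdot\Gamma$ stay uniformly controlled for $(K_X+B+M)$-negative minimal-length extremal curves $\Gamma$, so the possible values of $(K_X+B+\lambda C+M)\cdot\Gamma$ form a discrete set. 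Since $K_X+B+tC+M$ fails to be nef for any $t<\lambda$, the infimum must be attained by some extremal ray.

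For the second part, apply the first part to the g-klt pair $(X,B+(M+\alpha P))$, which still satisfies the hypotheses because $\Supp\{M+\alpha P\}\subseteq\Supp B$ and $K_X+(B+C)+(M+\alpha P)$ remains nef. This produces for each $\alpha\ge 0$ a minimal-length extremal curve $\Gamma=\Gamma_\alpha$ with
\[
0=(K_X+B+\lambda(\alpha)C+M+\alpha P)\cdot\Gamma=(K_X+B+M)\cdot\Gamma+\lambda(\alpha)\,C\cdot\Gamma+\alpha\,P\cdot\Gamma.
\]
Nefness of $P$ rules out $(K_X+B+M)\cdot\Gamma\ge 0$, so the length bound gives $(K_X+B+M)\cdot\Gamma\in[-2n,0)$, and because $K_X+B+C+M$ is nef we have $C\cdot\Gamma>0$. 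The proof of Lemma~\ref{lem-P-trivial} applied to $P$ produces $\epsilon>0$, depending only on $P$ and $n$, with $P\cdot\Gamma\ge\epsilon$ whenever $P\cdot\Gamma>0$. Taking $l:=2n/\epsilon$, the displayed identity forces $P\cdot\Gamma=0$ for every $\alpha\ge l$, after which $\lambda(\alpha)=-(K_X+B+M)\cdot\Gamma/(C\cdot\Gamma)$ is independent of $\alpha$.

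The final assertion then follows by iterating along the MMP: at each step the contracted or flipped ray is $P$-trivial by the preceding argument, the hypotheses $\Supp\{M^i\}\subseteq\Supp B^i$ and $\Supp\{P^i\}\subseteq\Supp B^i$ persist on the next model (cf.\ Remark~\ref{rem-int-num}), and $P$-triviality makes this step simultaneously a step of the MMP on $K_X+B+M+\alpha P$ with scaling of $C$ for every $\alpha\ge l$. The main technical difficulty is Part~1, where one must verify that the approximation of $M$ keeps the pair g-klt and that the discreteness argument genuinely pins down an attaining extremal ray in the non-NQC setting; once this is in place, the remainder is straightforward bookkeeping from the length-of-extremal-rays bound and the uniform estimate behind Lemma~\ref{lem-P-trivial}.
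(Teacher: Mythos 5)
Your second half follows the paper's route (use Lemma \ref{lem-P-trivial} to bound $\{P\cdot\Gamma>0\}$ away from zero and choose $l$ accordingly), but the first part, which you correctly identify as the technical core, contains a genuine error. You propose to write $M=\sum_i\alpha_iM_i$ with each $M_i'$ a \emph{nef} $\Q$-divisor on a common model. That decomposition is precisely the NQC hypothesis on the data, and it does not exist in general: the nef cone can have irrational extremal rays (Example \ref{exa: nonvanishing fail}), and the whole point of this lemma is to work without NQC. The hypothesis $\Supp \{M\}\subseteq \Supp B$ is not there to let you perturb $M'$ inside the nef cone; it is there so that the \emph{boundary} can be perturbed. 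The paper writes $B=\sum_j\alpha_jB_j$ and $B+C=\sum_k\beta_k\Delta_k$ as convex combinations with $B_j+M$ and $\Delta_k+M$ $\Q$-divisors and $(X,B_j+M)$, $(X,\Delta_k+M)$ g-dlt, leaving the (possibly non-NQC) data untouched. Then $m(K_X+B_j+M)\cdot\Gamma_i$ and $m(K_X+\Delta_k+M)\cdot\Gamma_i$ are integers bounded below by $-2m\dim X$, and
$$
\frac{1}{\lambda_i}=\frac{\sum_k\beta_kn'_{i,k}}{-\sum_j\alpha_jn_{i,j}}+1
$$
ranges over a set on which the infimum is attained. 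This integrality is exactly what your argument lacks: ``$M_i\cdot\Gamma$ uniformly controlled'' gives only boundedness, and boundedness of intersection numbers does not make the set of nef thresholds $\lambda_i$ attain its supremum.

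There is also a smaller gap in the second part: from $P\cdot\Gamma_\alpha=0$ you conclude that $\lambda(\alpha)=-(K_X+B+M)\cdot\Gamma_\alpha/(C\cdot\Gamma_\alpha)$ is independent of $\alpha$, but $\Gamma_\alpha$ itself depends on $\alpha$, so the formula alone does not give independence. The paper instead argues by contradiction: $\lambda(\alpha)$ is non-increasing in $\alpha$ because $P$ is nef, and if $\lambda(\alpha)<\lambda(l)$ for some $\alpha\ge l$ one produces a $(K_X+B+M+lP)$-negative extremal curve $\Gamma$ of minimal length with $(K_X+B+\lambda(\alpha)C+M+lP)\cdot\Gamma<0$ and $(K_X+B+\lambda(\alpha)C+M+\alpha P)\cdot\Gamma=0$, forcing $P\cdot\Gamma>0$ and contradicting the choice of $l$. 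This step is fillable, but it must be made explicit.
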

\begin{proof}
Let $\{\Gamma_i\}$ be a set of $(K_X+B+M)$-negative extremal curves of minimal length which generates all $K_X+B+M$-negative extremal rays. Define $\lambda_i \in (0,\lambda]$ as $(K_X+B+\lambda_iC+M)\cdot \Gamma_i=0$. We see $\lambda= \sup \lambda_i$, and
$$
\frac{1}{\lambda_i}:= \frac{C\cdot \Gamma_i}{-(K_X+B+M)\cdot \Gamma_i}=\frac{(K_X+B+C+M)\cdot \Gamma_i}{-(K_X+B+M)\cdot \Gamma_i} +1.
$$
Because $\Supp \{M \} \subseteq \Supp \{B\} $ and both $(X,B+M)$ and $(X,B+C+M)$ are g-klt, there exists boundaries $B_j$, $\Delta_k$ such that 
\begin{itemize}
	\item $B=\sum_j \alpha_j B_j$, $B+C=\sum_k \beta_k \Delta_k$ are convex combination;
	
	\item $B_j+M$, $\Delta_k+M$ are $\Q$-divisors;
	
	\item $(X,B_j+M)$, $(X,\Delta_k+M)$ are g-dlt.
\end{itemize}
Therefore, we have $(K_X+B_j+M)\cdot \Gamma_i= \frac{n_{i,j}}{m}$ and $(K_X+\Delta_k+M)\cdot \Gamma_i= \frac{n_{i,k}'}{m}$ with $n_{i,j}, n_{i,k}' \ge -2m \dim X$ for some positive integer $m$, and  
$$
\frac{1}{\lambda_i}:= \frac{\sum_k \beta_k n_{i,k}'}{-\sum_j \alpha_j n_{i,j}}+1
$$
where $0< -\sum_j \alpha_j n_{i,j} \le 2 \dim X$. Therefore $\lambda= \lambda_i$ for some $i$.

Moreover, by Lemma \ref{lem-P-trivial} there exists a number $l>0$ such that $P \cdot \Gamma =0$ for any $(K_X+B+M+ lP)$-negative extremal curve $\Gamma$ of minimal length. If $\lambda(\alpha) < \lambda(l)$, then there is a $(K_X+B+M+lP )$-negative extremal curve $\Gamma$ of minimal length such that $(K_X+B+\lambda(\alpha)C +M+l P) \cdot \Gamma <0$ and $(K_X+B+\lambda(\alpha) C +M+\alpha P) \cdot \Gamma =0$. So $\lambda(\alpha) =\lambda(l)$ by a contradiction. Since $l$ is preserved under MMP, the rest is easy.
\end{proof}

\begin{defn}[$P$-nefness]\label{defn-P-nef}
	Let $X$ be a normal variety projective over $Z$ and $P$ be a nef divisor. An $\R$-Cartier divisor $D$ is said to be \emph{$P$-nef (resp. $P$-pseudo-effective)} if for some ample$/Z$ divisor $A$, for any small number $\epsilon >0$, there exists a number $\alpha>0$ such that $D+\epsilon A +\alpha P$ is nef (resp. pseudo-effective).  
\end{defn}

Note that the above definition is independent of the choice of the ample divisor $A$. An $\R$-Cartier divisor $D$ is $P$-nef if and only if $D$ is non-negative on all $P$-trivial curves, or equivalently, $P$ is positive on every $D$-negative curves. Moreover, $P$-nefness and $P$-pseudo-effectiveness are preserved under pull-backs as the following lemma indicates. 
\begin{lem}\label{lem-pullbacks}
	Let $X$ be a normal variety projective over $Z$, $P$ be a nef divisor and $f: Y \to X$ be a projective morphism. If an $\R$-Cartier divisor $D$ is $P$-nef (resp. $P$-pseudo-effective), then $f^*D$ is $f^*P$-nef (resp. $f^*P$-pseudo-effective).
\end{lem}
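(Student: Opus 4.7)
The plan is to unwind the definition of $P$-nefness on $X$ and transfer it to $Y$ via pullback, using the fact that while $f^*A$ is only nef$/Z$ (not necessarily ample$/Z$), any ample$/Z$ divisor $A_Y$ on $Y$ can absorb a small positive multiple of $f^*A$ and remain ample.

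First I would fix an arbitrary ample$/Z$ divisor $A_Y$ on $Y$, and let $A$ be the ample$/Z$ divisor on $X$ provided by the hypothesis that $D$ is $P$-nef. Since $A_Y$ is ample$/Z$ and $f^*A$ is nef$/Z$, by openness of ampleness there exists a constant $c \gg 0$ such that $A_Y - \tfrac{1}{c} f^*A$ is still ample$/Z$ on $Y$.

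Next, given any small $\epsilon > 0$, I would set $\epsilon' := \epsilon/c$, which remains small. Applying the definition of $P$-nefness of $D$ with this $\epsilon'$, there exists $\alpha > 0$ such that $D + \epsilon' A + \alpha P$ is nef$/Z$ on $X$. Pulling back under $f$ then shows that $f^*D + \epsilon' f^*A + \alpha f^*P$ is nef$/Z$ on $Y$. Finally, the decomposition
$$
f^*D + \epsilon A_Y + \alpha f^*P \;=\; \bigl(f^*D + \epsilon' f^*A + \alpha f^*P\bigr) \;+\; \epsilon\bigl(A_Y - \tfrac{1}{c} f^*A\bigr)
$$
expresses the left-hand side as a sum of a nef$/Z$ divisor and an ample$/Z$ divisor, hence nef$/Z$. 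As $\epsilon>0$ was an arbitrary small number and $A_Y$ is a fixed ample$/Z$ divisor on $Y$, this verifies the definition of $f^*P$-nefness for $f^*D$. The pseudo-effective case is identical, since in the displayed decomposition the first summand becomes pseudo-effective$/Z$ (by pulling back a pseudo-effective divisor, which remains pseudo-effective under any projective morphism) and the second remains ample$/Z$, and pseudo-effective plus ample is pseudo-effective.

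The proof is essentially routine; the only mildly delicate point is the choice of the constant $c$, which is needed precisely because $f^*A$ is in general not ample$/Z$, so the naive pullback of the witness inequality on $X$ does not immediately give the required witness inequality on $Y$ with the fixed ample divisor $A_Y$. Absorbing $(1/c)f^*A$ into $A_Y$ converts the nef$/Z$ pullback $f^*A$ into the genuinely ample$/Z$ divisor $A_Y$ up to a nef$/Z$ correction, and this is the whole content of the argument.
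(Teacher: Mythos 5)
Your proof is correct and is essentially the same as the paper's: the paper writes the identity $f^*D+\epsilon A_Y+\alpha f^*P=\epsilon(A_Y-\delta f^*A)+f^*(D+\epsilon\delta A+\alpha P)$ with $\delta\ll 1$, which is your decomposition with $\delta=1/c$ and $\epsilon'=\epsilon\delta$. Nothing to add.
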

\begin{proof}
	Given any $\epsilon > 0$, note that $f^*D +\epsilon A_Y +\alpha f^*P=\epsilon (A_Y -\delta f^*A)+f^*(D+ \epsilon \delta A +\alpha P)$ is nef (resp. pseudo-effective) for $\delta \ll 1$ and $\alpha \gg 0$.
\end{proof}

Since we will lose the ampleness under an MMP, we introduce the following notion which allows us to freely reindex the MMP.
\begin{defn}[Birationally ample divisors]
	Let $(X/Z,B+M)$ be a g-lc pair. A big divisor $A \ge 0$ is said to be \emph{birationally ample (b-ample in short) with respect to $(X,B+M)$} if
	\begin{itemize}
		\item $(X/Z,(B+A)+M)$ is g-lc; and
		
		\item The augmented base $\mathbf{B}_{+}(A/Z)$ contains no g-lc centres of $(X,(B+A)+M)$. 
	\end{itemize}
\end{defn}

\begin{defn}[$P$-trivial MMP]\label{defn-P-mmp}
	Let $(X/Z,B+M)$ be a $\Q$-factorial g-dlt pair and $P$ be a nef divisor. Let $A$ be a b-ample divisor so that the pair $(X,(B+A)+M)$ is g-dlt and $\Supp \{P\} \cup \Supp \{M \} \subseteq \Supp \{B+A\} $. If $K_X+B+M$ is not $P$-nef, then for every $0< \lambda \ll 1$, there is a number $\alpha \gg 0$ depending on $\lambda$ such that we can run an MMP on $K_X+(B+\lambda A +M)+\alpha P$ which is $P$-trivial and terminates with a good minimal model $(X_j,(B_j+ \lambda A_j)+M_j + \alpha P_j)$ or a Mori fibre space by \cite{bchm}. 
	
	If the above MMP reaches a model $(X_i,B_i+M_i)$ on which $K_{X_i}+B_i+M_i$ is $P_i$-nef, or we reach a Mori fibre space, then we stop. We say the previous model a \emph{$P$-minimal model}. Otherwise we pick a sufficiently small number $\lambda_2 < \lambda_1= \lambda$ and continue the process of an MMP on $K_{X_i}+B_i+\lambda_2 A_i + M_i+\alpha_2 P_i$ which also terminates with a good minimal model.
    
    We continue the process by picking a decreasing sequence $\lambda_1 > \ldots > \lambda_j > \ldots $ with $\lim \lambda_j =0$. Since an MMP on $K_X+B+\lambda_j A + \alpha_j P$ with scaling of $A$ is also an MMP on $K_X+B+\lambda_{j+1} A + \alpha_{j+1} P$ with scaling of $A$ by Lemma \ref{lem-P-mmp}, we obtain an MMP with scaling
    $$
    X \bir X_i \bir X_{i+1} \bir \ldots \bir X_i \bir \ldots.
    $$
    We call such an MMP a \emph{$P$-trivial MMP (with scaling of $A$)}.
\end{defn}

In this paper, all $P$-trivial MMP are supposed to be run with scaling of b-ample divisors unless stated otherwise.

\begin{quest}
	When does a $P$-trivial MMP terminate with a $P$-minimal model or a Mori fibre space? 
\end{quest}

\begin{lem}
	Let $(X/Z,B+M)$ be a $\Q$-factorial g-dlt pair and $P$ be a nef divisor. Then, a $P$-trivial MMP on $K_X+B+M$ terminates with a Mori fibre space if and only $K_X+B+M$ is not $P$-pseudo effective.
\end{lem}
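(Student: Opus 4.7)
The plan is to exploit two features of the $P$-trivial MMP. First, every step contracts an extremal ray $R$ on which $P\cdot R=0$ and $A\cdot R>0$, so the step is $(K+B+\lambda A+M+\alpha P)$-negative simultaneously for all $\lambda\le\lambda_i$ and all $\alpha\ge 0$, where $\lambda_i$ is the scaling value at that step. Consequently, pseudo-effectiveness of any such divisor is preserved forward along the MMP, and via the negativity lemma on a common resolution non-pseudo-effectiveness propagates in both directions. Second, \cite{bchm} guarantees that at each stage of the $P$-trivial MMP the classical MMP terminates, with the dichotomy good minimal model versus Mori fibre space governed precisely by the pseudo-effectiveness of the starting divisor.

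For ($\Rightarrow$), suppose the $P$-trivial MMP terminates with a Mori fibre space $X_j\to Y_j$ at stage $j$. The final extremal ray is $P$-trivial and is generated by a covering family of curves $\Gamma$ with $(K_{X_j}+B_j+\lambda_j A_j+M_j+\alpha P_j)\cdot\Gamma<0$ for every $\alpha\ge 0$. Hence $K_{X_j}+B_j+\lambda_j A_j+M_j+\alpha P_j$ is not pseudo-effective on $X_j$ for any $\alpha\ge 0$. Propagating non-pseudo-effectiveness backward along $X\bir X_j$ shows $K_X+B+\lambda_j A+M+\alpha P$ is not pseudo-effective on $X$ for every $\alpha>0$, so taking $\epsilon:=\lambda_j$ in Definition \ref{defn-P-nef} gives that $K_X+B+M$ is not $P$-pseudo-effective.

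For ($\Leftarrow$), assume $K_X+B+M$ is not $P$-pseudo-effective, so there exists $\epsilon>0$ with $K_X+B+\epsilon A+M+\alpha P$ not pseudo-effective for any $\alpha>0$, and a fortiori the same holds with $\lambda_j\le\epsilon$ in place of $\epsilon$. I would first rule out the MMP terminating at a $P$-minimal model $X_{j_0}$: if it did, then $K_{X_{j_0}}+B_{j_0}+M_{j_0}$ would be $P_{j_0}$-nef, hence $P_{j_0}$-pseudo-effective; after writing the strict transform $A_{j_0}$ as ample plus effective on $X_{j_0}$ and using the standard MMP identity $f^*(K_X+B+M+\epsilon A+\alpha P)=g^*(K_{X_{j_0}}+B_{j_0}+M_{j_0}+\epsilon A_{j_0}+\alpha P_{j_0})+F$ on a common resolution with $F\ge 0$ exceptional, one would deduce $K_X+B+M$ is $P$-pseudo-effective, contradicting the hypothesis. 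Therefore the MMP either terminates with a Mori fibre space at some earlier stage, or reaches a stage $j$ with $\lambda_j\le\epsilon$; in the latter case, forward propagation of non-pseudo-effectiveness through $X\bir X_{j-1}$ ensures the starting divisor $K_{X_{j-1}}+B_{j-1}+\lambda_j A_{j-1}+M_{j-1}+\alpha_j P_{j-1}$ at stage $j$ is not pseudo-effective, so by \cite{bchm} this stage terminates with a Mori fibre space.

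The main obstacle is the backward direction, specifically the lifting of $P$-nefness from a putative $P$-minimal model back to $X$: the b-ample divisor $A$ does not remain ample under the MMP, so one must decompose $A_{j_0}$ as ample plus effective on $X_{j_0}$ before the MMP pull-back identity can be invoked. The propagation of non-pseudo-effectiveness along $P$-trivial MMP steps itself reduces to the standard negativity lemma argument on a common resolution.
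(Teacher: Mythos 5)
Your proposal is correct and follows essentially the same route as the paper: the key point in both is that non-$P$-pseudo-effectiveness yields an $\epsilon>0$ with $K_X+B+\epsilon A+M+\alpha P$ not pseudo-effective for all $\alpha$, after which the $P$-trivial MMP is identified with an MMP on that divisor and the dichotomy of \cite{bchm} forces a Mori fibre space. The paper's proof is a two-line sketch of only this implication; your write-up additionally supplies the converse (via the $P$-trivial covering family on the Mori fibre space) and rules out the $P$-minimal-model outcome, both of which are consistent with, and fill in, what the paper leaves implicit.
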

\begin{proof}
	Given a $P$-trivial MMP on $K_X+B+M$ with scaling of a b-ample divisor $A$, there is a sufficiently small number $\epsilon >0$ such that $K_X+B+M +\epsilon A +\alpha P$ is not pseudo-effective for any $\alpha >0$. By Lemma \ref{lem-P-trivial}, the above MMP is also an MMP on $K_X+B+\epsilon A +M +\alpha P$ with scaling of $A$ for some $\alpha \gg 0$. 
\end{proof}

\begin{quest}
	When is the condition that $K_X+B+M$ is not $P$-pseudo-effective equivalent to that $K_X+B+M+ \alpha P$ is not pseudo-effective for all $\alpha >0$? 
\end{quest}
We believe that any affirmative answer to the above question leads a deeper investigation of Conjecture \ref{conj-nz-decomp}. We give a $P$-trivial version of Corollary \ref{cor-term}.
\begin{lem}[\text{cf.Lemma \ref{mm}}]\label{pmm}
	Let $(X/Z,B+M)$ be a g-lc pair and $P$ be a nef divisor. Suppose $(X/Z,B+M+cP)$ has a minimal model $(Y,B_Y+M_Y+cP_Y)$ for some $c \ge 0$ and the birational map $X \bir Y$ is $P$-trivial. Let $C \ge 0$ be an $\R$-Cartier divisor of $X$ whose support contains no g-lc centres and $C_Y$ be its birational transform. Then, there exists a $\Q$-factorial g-dlt blow-up $Y' \to Y$ such that $(Y'/Z,(B_{Y'}+\delta C_{Y'})+M_{Y'} )$ is g-dlt with $C_{Y'}$ contains no g-lc centres. Moreover, if $(X/Z,B+M)$ is g-dlt and $C$ is b-ample, then for any $\alpha>0$, there is a boundary $\Delta_{Y'}$ such that $K_{Y'}+B_{Y'}+ \delta C_{Y'}+M_{Y'}+\alpha P_{Y'} \sim_\R K_{Y'} + \Delta_{Y'}$ and $(Y',\Delta_{Y'})$ is klt.
\end{lem}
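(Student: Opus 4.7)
The plan is to parallel the proof of Lemma~\ref{mm} with adjustments for the extra term $\alpha P_{Y'}$. First I would take a common log resolution $X\overset{p}{\leftarrow}W\overset{q}{\to}Y$, write $B_W=B^{\sim}+E^{\sim}$, and set $E:=p^*(K_X+B+M)-q^*(K_Y+B_Y+M_Y)$ together with $E':=K_W+B_W+M_W-p^*(K_X+B+M)$. Since $X\bir Y$ is $P$-trivial, we have $p^*P=q^*P_Y$ on $W$; in particular $P_Y$ is nef on $Y$ by a projection-formula argument, and the $cP$-contributions cancel in the definition of $E$. By the negativity lemma, $E\ge 0$ is exceptional$/Y$ and $\Supp(E+E')$ contains every exceptional$/Y$ divisor of positive discrepancy for $(Y,B_Y+M_Y)$.

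Next I would run an MMP$/Y$ on $K_W+B_W+\delta C_W+M_W$; since $P_W\equiv 0/Y$, adjoining any $\alpha P_W$ does not alter this MMP. As in Lemma~\ref{mm}, $\Q$-factoriality of $Y$ forces $C_W\equiv -F/Y$ for some effective exceptional $F$, so Lemma~\ref{lem-exc-3} ensures the MMP contracts every exceptional$/Y$ divisor of positive discrepancy, producing a small $\Q$-factorialisation $Y'\to Y$ with $(Y',(B_{Y'}+\delta C_{Y'})+M_{Y'})$ g-dlt. The condition on $C_{Y'}$ follows because $Y'\to Y$ is an isomorphism over the generic point of every g-lc centre and $C$ itself contains no g-lc centre.

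For the second part, assume $(X,B+M)$ is g-dlt and $C$ is b-ample. Since $Y'\to Y$ is small, $P_{Y'}$ is the pullback of the nef divisor $P_Y$ and hence nef, while $C_{Y'}$ is big as the birational transform of the big divisor $C$. Pass to a log resolution $\pi:Z\to Y'$ on which $M'$ descends to a nef divisor $M_Z$. Then $M_Z+\alpha\,\pi^*P_{Y'}+\tfrac{\delta}{2}\pi^*C_{Y'}$ is nef plus big, hence big, so Kodaira's lemma yields $\sim_\R H+F$ with $H$ ample on $Z$ and $F\ge 0$. Replacing $H$ by a general small effective divisor in its linear system and pushing down to $Y'$, one obtains $G_{Y'}\ge 0$ with $G_{Y'}\sim_\R M_{Y'}+\alpha P_{Y'}+\tfrac{\delta}{2}C_{Y'}$ whose support avoids all g-lc centres of $(Y',B_{Y'}+M_{Y'})$. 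Setting $\Delta_{Y'}:=B_{Y'}+\tfrac{\delta}{2}C_{Y'}+G_{Y'}$, the required $\R$-linear equivalence is immediate, and klt-ness of $(Y',\Delta_{Y'})$ follows by comparing discrepancies with the g-dlt pair $(Y',B_{Y'}+\delta C_{Y'}+M_{Y'})$: the coefficients of $\tfrac{\delta}{2}C_{Y'}$ are small and $G_{Y'}$ is general along divisors over g-lc centres, so all log discrepancies strictly exceed zero.

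The main obstacle will be confirming that $\Delta_{Y'}$ is klt as an \emph{ordinary} pair rather than only as a g-pair, despite $M'$ being possibly non-NQC. This hinges on the g-dlt structure guaranteeing a log resolution on which $M'$ descends as a nef divisor, combined with the b-ampleness of $C_{Y'}$ providing just enough positivity on $Z$ to absorb the nef contribution $M_Z+\alpha\pi^*P_{Y'}$ into a general effective divisor that misses every g-lc centre.
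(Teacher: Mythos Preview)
Your first part is fine and essentially reproduces the construction in Lemma~\ref{mm}; the paper is terser, applying Lemma~\ref{mm} directly to the pair $(X,B+M+cP)$ and its minimal model $(Y,B_Y+M_Y+cP_Y)$, then observing that $P$-triviality forces $P_{Y'}$ to be the pull-back of the nef divisor $P_Y$, so the term $cP_{Y'}$ can be freely added to or removed from the nef data without changing any discrepancy.

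The second part, however, has a real gap. Your candidate $\Delta_{Y'}=B_{Y'}+\tfrac{\delta}{2}C_{Y'}+G_{Y'}$ satisfies $\Delta_{Y'}\ge B_{Y'}$, so for every component $S\subset\lfloor B_{Y'}\rfloor$ one has $\mu_S\Delta_{Y'}\ge 1$; hence $(Y',\Delta_{Y'})$ is at best lc, never klt. Appealing to a comparison with the g-dlt pair $(Y',B_{Y'}+\delta C_{Y'}+M_{Y'})$ cannot repair this, since that pair already has log discrepancy exactly $0$ along each g-lc centre, and your replacement of $\tfrac{\delta}{2}C_{Y'}+M_{Y'}+\alpha P_{Y'}$ by an \emph{effective} $G_{Y'}$ can only push discrepancies down, not up. Making $G_{Y'}$ general near g-lc centres does not touch the coefficient $1$ already sitting in $B_{Y'}$.

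What is missing is the step that uses the ampleness (or b-ampleness) of $C$ to \emph{strictly lower} the coefficients of $\lfloor B\rfloor$ before absorbing the nef part. The paper does this by working on $X$ rather than on $Y'$: as in Lemma~\ref{mm}, one finds on $X$ a boundary $\Delta\sim_\R B+\delta C+M+\alpha P$ with $(X,\Delta)$ klt, using that $\delta C-\epsilon\lfloor B\rfloor$ is still ample for $0<\epsilon\ll\delta$, and then transports $\Delta$ to $Y'$ through the common resolution and the effective exceptional divisor $E$. Attempting this directly on $Y'$ is genuinely harder, because $C_{Y'}$ is only big there and you would need $\mathbf{B}_+(C_{Y'})$ to avoid every component of $\lfloor B_{Y'}\rfloor$, which you have not arranged.
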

\begin{proof}
	By Lemma \ref{mm} there exists a $\Q$-factorial g-dlt blow-up $Y' \to Y$ such that $(Y'/Z,(B_{Y'}+\delta C_{Y'})+M_{Y'} +c P_{Y'})$ is g-dlt with $C_{Y'}$ contains no g-lc centres. Since $X \bir Y $ is $P$-trivial, we see $P_{Y'}$ is the pull-back of $P_Y$. Hence the lemma follows.
\end{proof}

\begin{prop}[\text{cf.Corollary \ref{cor-term}}]
	Let $(X/Z,B+M)$ be a $\Q$-factorial g-dlt pair and $P$ be a nef divisor. Suppose $(X/Z,B+M+cP)$ has a minimal model $(Y,B_Y+M_Y+cP_Y)$ for some $c \ge 0$ and the birational map $X \bir Y$ is $P$-trivial. Then, any $P$-trivial MMP on $K_X+B+M$ terminates with a minimal model of $(X,B+M+ \alpha P)$ for every $\alpha \ge c$.
\end{prop}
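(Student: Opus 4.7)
The plan is to reduce to Corollary \ref{cor-term}, using $P$-triviality to freely shift the coefficient of $P$ in the log canonical divisor. First I would show that $(Y,B_Y+M_Y+\alpha P_Y)$ is a minimal model of $(X,B+M+\alpha P)$ for every $\alpha\ge c$. On any common resolution $p\colon W\to X$, $q\colon W\to Y$, the $P$-triviality of $X\bir Y$ gives $p^{*}P=q^{*}P_{Y}$, so
\[
a(D,X,B+M+\alpha P)-a(D,Y,B_{Y}+M_{Y}+\alpha P_{Y})
\]
is independent of $\alpha$; hence the discrepancy inequality (strict on divisors exceptional$/Y$) transfers from $\alpha=c$ to every $\alpha\ge c$. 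Nefness of $K_{Y}+B_{Y}+M_{Y}+\alpha P_{Y}=(K_{Y}+B_{Y}+M_{Y}+cP_{Y})+(\alpha-c)P_{Y}$ follows from summing nef divisors, since $P_{Y}$ is nef as the pushforward of the nef $P$ under a $P$-trivial map.

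Next I would reinterpret the $P$-trivial MMP as an honest MMP with scaling of $A$ on a single log canonical divisor. Each $P$-trivial step contracts an extremal ray $R$ with $P\cdot R=0$, so the intersection $(K_{X}+B+\lambda A+M+\alpha P)\cdot R$ and the scaling threshold $\lambda$ are independent of $\alpha$. By Lemma \ref{lem-P-mmp} (as already invoked in Definition \ref{defn-P-mmp}), after fixing any $\alpha^{*}\ge c$ that dominates the uniformly bounded stagewise $P$-triviality thresholds of Lemma \ref{lem-P-trivial}, the entire $P$-trivial MMP coincides with an MMP on $K_{X}+B+M+\alpha^{*}P$ with scaling of $A$. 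Since $(X,B+M+\alpha^{*}P)$ has a minimal model by the previous paragraph, Corollary \ref{cor-term} forces the MMP to terminate at a $\Q$-factorial g-dlt model $(T,B_{T}+M_{T})$ such that $(T,B_{T}+M_{T}+\alpha^{*}P_{T})$ is a minimal model of $(X,B+M+\alpha^{*}P)$.

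Finally, I would verify that $(T,B_{T}+M_{T}+\alpha P_{T})$ is a minimal model of $(X,B+M+\alpha P)$ for every $\alpha\ge c$. The map $X\bir T$ is $P$-trivial as a composition of $P$-trivial steps, so the discrepancy calculation of the first paragraph applies (with $Y$ replaced by $T$) and delivers the required discrepancy inequalities for every $\alpha\ge c$. For nefness, $T\bir Y$ is a small birational map between two $\Q$-factorial minimal models of $(X,B+M+\alpha^{*}P)$; combined with the $P$-triviality of $X\bir T$ and $X\bir Y$, this implies that the pullbacks of $K_{X}+B+M+\alpha P$ from $T$ and from $Y$ to any common resolution coincide, so the projection formula transfers nefness on $Y$ to nefness on $T$. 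The principal technical obstacle lies in the second paragraph, namely reconciling the stagewise thresholds $\alpha_{j}$ into a single uniform $\alpha^{*}$; this relies on the stability of the coefficient and support conditions entering Lemma \ref{lem-P-trivial} as $\lambda_{j}\to 0$.
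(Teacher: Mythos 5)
Your first and last paragraphs are essentially right and agree with what the paper does: $P$-triviality of $X\bir Y$ gives $p^{*}P=q^{*}P_{Y}$ on a common resolution, so both the discrepancy comparison and the nefness of $K_Y+B_Y+M_Y+\alpha P_Y$ transfer from $\alpha=c$ to every $\alpha\ge c$ (the paper additionally upgrades $Y$ via Lemma \ref{pmm} and an auxiliary $P$-trivial and $(K_Y+B_Y+M_Y+cP_Y)$-trivial MMP so that this remains true after adding $\delta A_Y$, which it needs below). The gap is your second paragraph. The claim that the whole $P$-trivial MMP coincides with an MMP with scaling of $A$ on $K_X+B+M+\alpha^{*}P$ for a \emph{single} $\alpha^{*}$ is precisely the \emph{degeneration} property of Definition \ref{defn-degen}, which the paper explicitly refuses to take for granted: it is the content of Conjecture \ref{conj-term} and is only proved under extra hypotheses in Theorem \ref{thm-degen}. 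Concretely, the segment of the MMP run at level $\lambda_j$ is an MMP with scaling of $A$ on $K_X+B+M+\alpha P$ only for $\alpha\ge\alpha_j$ (for smaller $\alpha$ the nefness of $K_{X^i}+B^i+\lambda A^i+M^i+\alpha P^i$ on $[\lambda_{i+1},\lambda_i]$ can fail), and the thresholds $\alpha_j$ are chosen afresh for each $\lambda_j$ by applying Lemma \ref{lem-P-trivial} to the current model; over an infinite MMP there is no a priori uniform bound. A uniform $\alpha^{*}$ is only available \emph{a posteriori}, once termination --- the very thing to be proved --- is known, so the reduction to Corollary \ref{cor-term} is circular. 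You flag this as the ``principal technical obstacle,'' but it is not a technicality: it is the heart of the matter.

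The paper's proof circumvents it by never fixing a uniform coefficient. For each $i$ it takes the stage-dependent $\beta=\alpha_{j(i)}$, notes that $(X^i,B^i+\lambda A^i+M^i+\beta P^i)$ and $(Y,B_Y+\lambda A_Y+M_Y+\beta P_Y)$ are both weak lc models of $(X,(B+\lambda A)+M+\beta P)$ for every $\lambda\in[\lambda_{i+1},\lambda_i]$, hence have equal pullbacks to a common resolution. Evaluating this identity at two values of $\lambda$ eliminates the $A$-term by linearity, and since $^{*}P^i={}^{*}P_Y$ the coefficient $\beta$ can then be replaced by any $\alpha\ge c$, yielding $^{*}(K_{X^i}+B^i+M^i+\alpha P^i)={}^{*}(K_Y+B_Y+M_Y+\alpha P_Y)$. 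If $K_{X^i}+B^i+M^i+\alpha P^i$ were not nef for $i\gg0$, this would contradict the strict inequality $^{*}(K_{X^i}+B^i+M^i+\alpha P^i)\gneq{}^{*}(K_Y+B_Y+M_Y+\alpha P_Y)$ from the negativity lemma. It is this linearity-in-$\lambda$ trick, making the stage-dependent $\beta$ harmless, that your proposal is missing; without it (or without first proving degeneration) the argument does not close.
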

\begin{proof}
	By Lemma \ref{pmm}, replacing $(Y,B_Y+M_Y)$ we can assume it is $\Q$-factorial g-dlt and $(Y,(B_Y+\delta A_Y)+M_Y)$ is g-dlt. Now run an MMP on $K_Y+B_Y+M_Y+cP_Y+\delta A_Y +\beta(K_Y+B_Y+M_Y+cP_Y)+\beta' P_Y$ for $\beta, \beta' \gg 0$. By Lemma \ref{lem-P-trivial}, the above MMP is $P$-trivial and $(K_Y+B_Y+M_Y+cP_Y)$-trivial. Replacing again we can assume $(Y,(B_Y+\delta A_Y)+M_Y+\alpha P_Y)$ is a minimal model of $(X/Z,(B+\delta A)+M+\alpha P)$ for $\delta \ll 1$ and any $\alpha \ge c$.
	
	Let $X \bir X^i $ be a sequence of $P$-trivial MMP on $K_X+B+M$ with scaling of a b-ample$/Z$ divisor $A$. Now suppose $K_{X^i} + B^i+M^i +\alpha P^i$ is not nef$/Z$ for $i \gg 0$. Pick $i \gg 0$ such that $\lambda_{i+1} < \lambda_{i} < \delta$. Note that $^*A \le ^* \!A^i$. Since $X \bir X^i$ is an MMP with scaling on $K_X+B+M+\beta P$, we obtain the equation $^* (K_{X^i}+B^i+M^i + \lambda A^i +\beta P^i) = ^* (K_Y+B_Y+M_Y+\lambda A_Y +\beta P_Y)$ for $\lambda_{i+1} \le \lambda \le \lambda_{i}$ which contradicts to the equation $^* (K_{X^i}+B^i+M^i +\alpha P^i) \gneq ^*(K_Y+B_Y+M_Y+\alpha P_Y)$.
\end{proof}

\subsection{A special termination on $P$-trivial MMP}

\begin{defn}[Local Cartier index]
	Let $X$ be a normal variety of klt type of dimension $d$, that is, there is a boundary $B$ such that $(X,B)$ is klt. Let $x \in X$ be a point of codimension two. Pick $d-2$ general hyperplanes $H_i$ passing through $x$, one verifies that $\bigcap_i H_i$ is $\Q$-factorial. We define the \emph{local Cartier index at $x$} as the minimal positive integer $l$ such that for any integral Weil divisor $D \subset \bigcap_i H_i$ in a neighborhood of $x$, the multiple $lD$ is Cartier.
	
	By \cite[Remark 4.8]{birkarzhang} and \cite[3.9 Proposition]{sho}, if $V=\overline{\{x\}} \subset S$ for a g-lc pair $(X,B+M)$ where $S \subset \rddown{B}$, then $l$ is the index of $K_X+S$ along $V$ which is independent of the choice of $H_i$'s. 
\end{defn}

\begin{rem}\label{rem-index}
	Notations as above, since by \cite{bchm} $X$ is $\Q$-factorial in codimension two, there is an open neighborhood $U \ni x$ such that $U$ is $\Q$-factorial. Comparing the coefficients on a resolution, one deduces that $l$ equals to the minimal positive integer $l'$ such that $l'D$ is Cartier or any integral Weil divisor $D \subset U$. In particular, $l$ is independent of the choice of $H_i$'s and hence is well-defined everywhere.
\end{rem}

From the remark we immediately obtain the following.
\begin{lem}
	Let $X$ be a normal variety of klt type, $S$ be a normal hypersurface and $V$ be a prime divisor of $S$. Let $D=\sum_{i} a_i D_i$ be an $\R$-Cartier divisor with $S \nsubseteq \Supp D$ and $l$ be the local Cartier index at (the generic point of) $V$. Then, the multiplicity $\mu_V(D|_S)$ is of the form $\sum_{i} a_i \frac{m_i}{l}$, where $m_i$ are non-negative integers with $m_i>0$ if and only if $D_i$ passes through $V$.
\end{lem}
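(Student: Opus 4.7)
The plan is to reduce the statement to a purely local computation at the generic point of $V$, using Remark \ref{rem-index} as the key input. After shrinking $X$, I would pass to an open neighborhood $U$ of the generic point of $V$ that is $\Q$-factorial and in which every integral Weil divisor has Cartier index dividing $l$; by the remark this is possible. In particular, for each prime component $D_i$ of $D$, the Weil divisor $lD_i$ is Cartier on $U$.

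Next, since $S \nsubseteq \Supp D_i$ by hypothesis, the effective Cartier divisor $lD_i$ restricts to an effective Cartier divisor $(lD_i)|_S$ on $S \cap U$ in a neighborhood of $V$. Its multiplicity $m_i := \mu_V((lD_i)|_S)$ at the codimension-one point $V$ of $S$ is therefore a well-defined non-negative integer. By the standard convention for restriction of $\Q$-Cartier divisors, one has $\mu_V(D_i|_S) = m_i/l$, and $m_i > 0$ if and only if $V$ lies in the support of $(lD_i)|_S$, i.e.\ if and only if $D_i$ passes through $V$.

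Finally, taking the $\R$-linear combination and using additivity of multiplicities at a fixed codimension-one point gives
\[
\mu_V(D|_S) \;=\; \sum_i a_i\, \mu_V(D_i|_S) \;=\; \sum_i a_i\, \frac{m_i}{l},
\]
which is the desired expression. The only point that requires care is the well-definedness of $D_i|_S$ as a $\Q$-Cartier divisor near $V$; this is handled precisely by Remark \ref{rem-index}, which guarantees that the same integer $l$ works uniformly for all integral Weil divisors on the chosen neighborhood. There is no substantive obstacle beyond bookkeeping, since once Remark \ref{rem-index} is granted the whole statement is a direct unwinding of definitions.
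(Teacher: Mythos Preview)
Your argument is correct and more direct than the paper's. The paper instead passes to a log resolution $\pi\colon X'\to X$ of $(X,S+D)$, lets $S'$ be the strict transform of $S$, and observes that for the unique prime divisor $V'\subset S'$ dominating $V$ (unique and birational to $V$ because $S$ is normal) one has $\mu_V(D|_S)=\mu_{V'}(\pi^*D|_{S'})=\mu_E(\pi^*D)$ for the unique component $E$ of $\Supp\pi^*D$ with $V'\subset S'\cap E$; the form $\sum_i a_i m_i/l$ then follows because $lD_i$ is Cartier near $V$, so $\mu_E(\pi^*D_i)\in\tfrac{1}{l}\Z_{\ge 0}$. Your approach bypasses the resolution entirely and reads off $m_i$ from the restriction of the Cartier divisor $lD_i$ to $S$ at the DVR $\mathcal{O}_{S,V}$, which is both shorter and conceptually cleaner. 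The paper's detour through a resolution is not needed for this lemma, though it mirrors the framework of the subsequent Lemma~\ref{lem-coeff} and Lemma~\ref{lem-canonical-blow-up}, where coefficients are systematically computed on log resolutions.
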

\begin{proof}
	Let $\pi: X' \to X$ be a log resolution of$(X,S+D)$. We see $\mu_{V'}(\pi^*D|_{S'})= \mu_{E} \pi^*D$ for some divisor $E$ and $V'$ is a component of $S' \bigcap E$ which dominates $V$. Since $S$ is normal, we see $V'$ is unique and $V' \to V$ is birational, and hence $\mu_V(D|_S)=\mu_{V'}(\pi^*D|_{S'})$.
\end{proof}

The following lemma follows from \cite[Remark 4.8 and Proposition 4.9]{birkarzhang}.
\begin{lem}\label{lem-coeff}
	Let $(X,B+M)$ be a g-dlt pair with data $M'$ on $\pi: X' \to X$ and $S \subset \rddown{B}$ be a component. Then, there is an adjunction formula 
	$ 
	K_S+B_S+M_S=(K_X+B+M)|_S$ and for any divisor $V$ of $S$ we have $$\mu_V(B_s) = \frac{l-1}{l} + \sum_{k} b_k \frac{d_i}{l} + \mu_V((\pi|_S)_*( \pi^*M-M')|_{S'}) 
	$$ 
	where $l$ is the local Cartier index at $V$ and $S'$ is the birational transform of $S$. Note that $\pi^*M-M'$ is well-defined near the generic point of $V$ even in the non-$\Q$-factorial case by Remark \ref{rem-index}.
\end{lem}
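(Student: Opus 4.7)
Since the coefficient $\mu_V(B_S)$ is a purely local quantity at the generic point of $V$, the plan is to localize near $V$ and reduce to a situation where the classical Shokurov-type adjunction can be applied directly, with the moduli part treated separately. By Remark \ref{rem-index}, $X$ is $\Q$-factorial in an open neighborhood $U$ of the generic point of $V$ and the local Cartier index $l$ is intrinsic there; so after shrinking $X$ around $V$ we may assume $X$ is $\Q$-factorial. The divisor $K_X+S$ is then $\Q$-Cartier with index exactly $l$ along $V$, which is where the denominator $l$ in every term of the formula will originate.

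Next I would establish the adjunction formula. Choose the log resolution $\pi\colon X'\to X$ representing $M'$, with the birational transform $S'$ of $S$ smooth and $(X',B')$ log smooth near $S'$, where $K_{X'}+B'+M'=\pi^*(K_X+B+M)+E'$ and $E'$ is effective exceptional supported away from $S'$ by the g-dlt hypothesis (so that $\pi$ is an isomorphism at the generic point of $V$, or at least $S'\to S$ is birational). Usual log-smooth adjunction gives
\[
K_{S'}+B_{S'}+M_{S'}=(K_{X'}+B'+M')|_{S'}
\]
with $B_{S'}=(B'-S')|_{S'}$ and $M_{S'}=M'|_{S'}$. Pushing forward along $\pi|_{S'}\colon S'\to S$ produces the desired adjunction $K_S+B_S+M_S=(K_X+B+M)|_S$, with $B_S=(\pi|_{S'})_*B_{S'}$ and $M_S=(\pi|_{S'})_*(M'|_{S'})$.

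To read off the coefficient at $V$, I would split $B=S+(B-S)=S+\sum_k b_kD_k$ and correspondingly compare $(K_X+B+M)|_S$ to $K_S+M_S$. The contribution from $(K_X+S)|_S$ is the standard Shokurov different, whose coefficient at $V$ is exactly $\tfrac{l-1}{l}$ by the $\Q$-factorial index computation (this is the content of \cite[Remark 4.8]{birkarzhang} combined with \cite[3.9 Proposition]{sho}). The contribution from each $D_k$ gives $b_k\tfrac{d_i}{l}$ by the preceding lemma applied to $D=D_k$. Finally, since $M=\pi_*M'+\pi_*(\pi^*M-M')$ holds as Weil divisors (with $\pi_*(\pi^*M-M')=0$), but after restriction to $S'$ and push-down to $S$ the difference $(\pi|_{S'})_*((\pi^*M-M')|_{S'})$ lands inside $B_S$ rather than $M_S$; tracking this discrepancy between restricting first and pulling back first produces exactly the third summand $\mu_V((\pi|_S)_*(\pi^*M-M')|_{S'})$. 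Summing the three contributions yields the stated formula.

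The main technical point is the third step: one must verify that the moduli part $M_S$ computed via the resolution differs from the naive restriction $M|_S$ by precisely the push-forward of $(\pi^*M-M')|_{S'}$, and that this correction is non-negative at $V$ so it can be absorbed into the boundary coefficient. This is where g-dlt (ensuring $S'\to S$ is birational and $\pi^*M-M'$ is well-defined near $V$ by Remark \ref{rem-index}) together with the negativity lemma is essential; everything else is a bookkeeping exercise with local Cartier indices.
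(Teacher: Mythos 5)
Your proposal is essentially correct and follows the same route the paper intends: the paper gives no proof of this lemma at all, deferring entirely to \cite[Remark 4.8 and Proposition 4.9]{birkarzhang}, and your decomposition of $\mu_V(B_S)$ into the different of $(K_X+S)|_S$ (giving $\tfrac{l-1}{l}$), the restrictions of the $D_k$ via the preceding lemma (giving $b_k\tfrac{d_k}{l}$), and the moduli correction $(\pi|_{S'})_*((\pi^*M-M')|_{S'})$ (effective by the negativity lemma, since $M'$ is nef$/X$ and $\pi_*(M'-\pi^*M)=0$) is exactly the computation behind that reference. One small imprecision: the g-dlt hypothesis guarantees that $\pi$ is an isomorphism over the generic points of g-lc centres, but $V$ is an arbitrary prime divisor of $S$ and need not be a g-lc centre, so you cannot assume $\pi$ is an isomorphism at the generic point of $V$; what you actually need, and what suffices, is the weaker statement you hedge with — that $S'\to S$ is birational (automatic since $S$ is a divisor, hence not contracted) together with the $\Q$-factoriality of $X$ in codimension two from Remark \ref{rem-index} to make $\pi^*M$ and $\pi^*(K_X+S)$ well-defined near $V$.
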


Recall the following easy fact (cf. \cite[Lemma 2.45]{kollar-mori}).
\begin{lem}
	Let $(X,B)$ be a log smooth sub-lc pair and $E$ is a prime divisor over $X$ such that $a(E,X,B) <1$. Then the centre $c_X(E)$ is a stratum of $(X, \Supp B^{>0})$.
\end{lem}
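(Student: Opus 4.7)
The plan is to pass to local coordinates at a general point of $\eta := c_X(E)$ and obtain a lower bound for $a(E,X,B)$ by comparing against the log canonical toroidal pair whose boundary is the full sum of coordinate hyperplanes.

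First, I would choose a general closed point $p \in \eta$ and use the log smoothness of $(X,B)$ to pick regular parameters $z_1,\dots,z_n$ of $\mathcal{O}_{X,p}$ such that $\eta = (z_1=\cdots=z_r=0)$ locally, where $r := \codim_X \eta$, and such that every component of $\Supp B$ passing through $p$ is one of the coordinate hyperplanes $(z_i=0)$. Writing $B = \sum_i b_i (z_i=0)$ near $p$, with the convention $b_i=0$ when $(z_i=0)$ is not actually a component of $B$, the sub-lc hypothesis gives $b_i \le 1$ for every $i$.

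Next, set $w_i := \nu_E(z_i)$, where $\nu_E$ is the divisorial valuation of $E$, centred at the generic point $\xi$ of $\eta$. Because $z_i \in \mathfrak{m}_\xi$ for $i \le r$ and $\nu_E$ is integer-valued, we obtain $w_i \ge 1$ in that range, while $z_i$ is a unit in $\mathcal{O}_{X,\xi}$ for $i > r$ (it does not vanish on $\eta$), so $w_i = 0$. The crucial step is to invoke the log canonicity of the toroidal pair $\bigl(X,\sum_{i=1}^n(z_i=0)\bigr)$, which holds near $p$ since all coefficients are $1$ and the pair is log smooth. This gives $a\bigl(E,X,\sum_i(z_i=0)\bigr) \ge 0$, i.e. $a(E,X,0) \ge \sum_i w_i = \sum_{i=1}^r w_i$. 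Combining with the identity $a(E,X,B) = a(E,X,0) - \sum_i b_i w_i$ (terms with $i > r$ vanish because $w_i = 0$ there) yields the key inequality
$$
a(E,X,B) \;\ge\; \sum_{i=1}^{r}(1-b_i)\, w_i.
$$

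Finally, I would argue by contradiction: if $\eta$ were not a stratum of $(X,\Supp B^{>0})$, then some index $i_0 \le r$ must satisfy $b_{i_0} \le 0$, and since each remaining summand is non-negative this forces $(1-b_{i_0})w_{i_0} \ge w_{i_0} \ge 1$, hence $a(E,X,B) \ge 1$, contradicting the hypothesis. Therefore $b_i > 0$ for every $i \le r$, each hyperplane $(z_i=0)$ with $i \le r$ is a component of $\Supp B^{>0}$, and $\eta$ coincides locally with their intersection; by irreducibility $\eta$ is a stratum of $(X,\Supp B^{>0})$. The main subtlety I anticipate is making the SNC/coordinate set-up rigorous near the generic point $\xi$ of $\eta$ when $\eta$ is not closed; this is handled by working étale-locally (or in the formal completion) at a general closed point $p$ of $\eta$.
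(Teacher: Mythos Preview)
Your argument is correct. The paper does not actually supply a proof of this lemma; it records it as an easy fact with a pointer to \cite[Lemma~2.45]{kollar-mori}, so you are giving a self-contained argument where the paper simply cites the literature. Your approach --- comparing against the log canonical toroidal pair $\bigl(X,\sum_i(z_i=0)\bigr)$ to obtain $a(E,X,0)\ge\sum_i w_i$ and hence the key inequality $a(E,X,B)\ge\sum_{i\le r}(1-b_i)w_i$ --- is the standard way to establish such statements and is in the same spirit as the Koll\'ar--Mori reference.

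On the coordinate subtlety you flag at the end: it dissolves once you note that at a \emph{general} $p\in\eta$ the only components of $\Supp B$ through $p$ are those containing $\eta$. So take $z_1,\dots,z_k$ to be their local equations (part of a regular system by SNC, since $\eta\subseteq D_1\cap\cdots\cap D_k$), extend to a regular system of parameters $z_1,\dots,z_r$ for $\mathcal{O}_{X,\xi}$, and then to a full system $z_1,\dots,z_n$. With this ordering every nonzero $b_i$ has $i\le k\le r$, and your contradiction step goes through exactly as written: if $\eta$ is not a stratum of $(X,\Supp B^{>0})$ then some $i_0\le r$ has $b_{i_0}\le 0$, forcing $(1-b_{i_0})w_{i_0}\ge w_{i_0}\ge 1$. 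No passage to the \'etale topology or completion is needed.
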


\begin{lem}[A canonical blow-up]\label{lem-canonical-blow-up}
	Let $(X,B+M)$ be a g-dlt pair and $N$ be an $\R$-Cartier divisor. Let $\pi:(W,B_W+M_W) \to X$ be a log resolution of $(X,(B+N)+M)$ 
	and the components of $\{B_W^{>0}\}$ are disjoint, where $K_W+B_W+M_W=\pi^*(K_X+B+M)$. Given a prime divisor $E$ over $W$ such that $0 < a(E,X,B+M) < 1$. Then, $E$ is obtained by a sequence of blow-ups 
	$$
	E \subset W_n \to \ldots \to W_0=W
	$$
	along a stratum $V_i \subset W_i$ of $\Supp B_{W_i}^{>0}$ and $V_0$ is a component of $D \bigcap (\bigcap_i S_i)$ for some $D \subset \{B_W^{>0}\}$ and some $S_i \subset B_W^{=1}$. Moreover, 
	\begin{enumerate}
		\item if $b:=\mu_D B_W$ and $b':=1 - a(E,X,B+M)$, then $b':= db-d+1 \ge 0$ for some $d \in \Z_{>0}$; and
		
		\item if $n:=\mu_D \pi^*N$ and $n_i:= \mu_{S_i} \pi^*N$ where $S_i \subset B_W^{=1}$ are those components containing $V_0$, then $\mu_E ^*N= m n +\sum_{i} m_i n_i $ where $m,m_i \in \Z_{>0}$.
	\end{enumerate}

		
\end{lem}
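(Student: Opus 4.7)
The plan is to realize $E$ as the last exceptional divisor of a canonical tower of blow-ups of log canonical strata, and then verify both numerical formulas by induction along the tower.

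First, I identify the initial centre. Since $a(E, W, B_W + M_W) = a(E, X, B + M) < 1$ and $(W, \Supp B_W)$ is snc with the components of $\{B_W^{>0}\}$ pairwise disjoint, there is at most one fractional component $D$ of $B_W^{>0}$ through $c_W(E)$. Letting $S_1, \ldots, S_k \subset B_W^{=1}$ denote the coefficient-one components containing $c_W(E)$, I take $V_0$ to be the irreducible component of $D \cap S_1 \cap \cdots \cap S_k$ containing $c_W(E)$. Then $V_0$ is a stratum of $\Supp B_W^{>0}$ of the required form.

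Next, I would blow up $V_0$, which has codimension $1 + k$ in $W$, to produce $\pi_{01} \colon W_1 \to W$ with exceptional divisor $E_0^*$. With $M_{W_1} := \pi_{01}^* M_W$ and $B_{W_1}$ defined by $K_{W_1} + B_{W_1} + M_{W_1} = (\pi \circ \pi_{01})^*(K_X + B + M)$, a direct snc coordinate computation yields
\[
\mu_{E_0^*} B_{W_1} = (b + k) - k = b, \qquad \mu_{E_0^*}\bigl(\pi_{01}^* \pi^* N\bigr) = n + \sum_{i \colon S_i \supset V_0} n_i,
\]
the second equality using that by the log resolution hypothesis any other component of $\pi^* N$ through $V_0$ meets $V_0$ transversely and hence contributes zero multiplicity along $V_0$. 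If $E = E_0^*$, the lemma holds with $d = m = 1$ and each $m_i = 1$; otherwise I iterate the construction on $(W_1, B_{W_1} + M_{W_1})$, at each stage choosing a minimal stratum of $\Supp B_{W_i}^{>0}$ containing $c_{W_i}(E)$ to blow up.

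Inductively, at a blow-up of a stratum through which $\delta$ fractional components (by induction of coefficients $d_j b - (d_j - 1)$ with $d_j \in \Z_{>0}$) and $k'$ coefficient-one components pass, a direct computation gives the new exceptional's coefficient in the boundary as
\[
\sum_{j} \bigl(d_j b - (d_j - 1)\bigr) + k' - \bigl(\delta + k' - 1\bigr) = d b - d + 1, \qquad d := \sum_j d_j \in \Z_{>0},
\]
and its multiplicity in the pull-back of $\pi^* N$ as the corresponding positive integer combination of $n$ and the $n_i$. After finitely many steps $E$ appears as an exceptional divisor, giving $b' = d b - d + 1$ and $\mu_E({}^{*}N) = m n + \sum m_i n_i$. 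Positivity of $m$ and each $m_i$ follows because $c_W(E) \subseteq V_0$ lies in $D$ and in every listed $S_i$, so each blow-up in the tower picks up positive multiplicity from those divisors.

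The main obstacle I anticipate is the inductive bookkeeping once the disjointness of fractional components is lost: from the second blow-up onward, both $E_0^*$ and $D^\sim$ can pass through a subsequent stratum with the same coefficient $b$. The identity $b' = d b - d + 1$ propagates cleanly by the linear algebra above, but the analogous identity for $\mu_E({}^{*}N)$ demands that no extraneous component of $\pi^* N$ enter the sum. Ensuring this amounts to verifying that each $V_j$ remains in general position relative to the components of $\pi^* N$ not among $\{D, S_i\}$, which follows from the snc hypothesis via a standard transversality argument.
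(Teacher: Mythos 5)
Your proposal is correct and follows essentially the same route as the paper: realize $E$ by iterated blow-ups of strata of $\Supp B_{W_i}^{>0}$ (the paper invokes \cite[Lemma 2.45]{kollar-mori} and phrases the coefficient bookkeeping via the dual complex, whereas you carry out the explicit local computation, and you are in fact more explicit than the paper about part (2) and the transversality/codimension count excluding extraneous components of $\pi^*N$). One small point to tighten: the positivity of each $m_i$ does not follow from later centres lying on the strict transforms of $D$ and the $S_i$ (they need not), but rather from the fact that each centre $V_j$ with $j\ge 1$ lies on the previous exceptional divisor $E_{j-1}^*$, whose multiplicity vector already dominates $(1,\ldots,1)$ starting from $E_0^*$; with this observation your induction closes.
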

\begin{proof}
	Note that if $a(E,X,B+M) <1$, then $c_{W_i}(E)$ is a stratum of $B_{W_i}^{>0}$ by the previous lemma and $E \subset W_n$ by \cite[Lemma 2.45]{kollar-mori}. Since $a(E,X,B+M) >0$ and the components of $\{B_W^{>0}\}$ are disjoint, we deduce $V_0$ is a component of $ D \bigcap (\bigcap_i S_i)$ for some $D \subset \{B_W^{>0}\}$ and some $S_i \subset B_W^{=1}$. The rest can be argued by a dual complex of the log smooth pair $(W,B_W)$. Note that the coefficient $b'$ is obtained by a sequence of simplicial decomposition of a simplex. One calculates that $b':= db-d+1 \ge 0$ for some $d \in \Z_{>0}$ by an induction on the number of blow-ups.
\end{proof}

Given a set $\Lambda \subset \R$ and a divisor $D$, we denote that the coefficients of $D$ belong to $\Lambda$ by $D \in \Lambda$. 
\begin{thm}\label{thm-finite-coeff}
	Let $\Sigma=\{n_1,\ldots,n_p\}$ be a finite set in $\Q$, and let $(X/Z,B+M)$ be a g-dlt pair such that $K_X+B+M=P+N$ with $N$ being a $\Q$-Cartier $\Q$-divisor. Then, there exist finite sets $\Lambda ,\Omega \subset (0,1)$ depending on $(X,B+M)$, $\Sigma$ and $P$ such that, for any birational map $\varphi: (X/Z,B+M) \bir (X'/Z,B'+M')$ to a g-dlt pair with the same data satisfying:
    \begin{enumerate}
    	\item $\varphi$ is an isomorphism between open subsets intersecting all g-lc centres of $(X/Z,B+M)$ and $(X'/Z,B'+M')$;
    	
    	\item $K_{X'} +B'+M' =P'+N'$ with $B' \in \Lambda$, $^*P=^*P'$ and $^*N \ge ^*N'$ as b-divisors; and
    	
    	\item for any prime divisor $D \subset X'$, $\mu_D(N')$ is of the form $\sum_{i,j} n_{i,j}\frac{m_{i,j}'}{q_j'}$ where $n_{i,j} \in \Sigma$ and $m_{i,j}', q_j' \in \Z_{>0}$ satisfying $\frac{q_j'-1}{q_j'} \le \mu_D(B')$ if $  \mu_D(B') <1$ and $q_j'=1$ if $\mu_D(B')=1$,  
    \end{enumerate}
    we have $B' \in \Lambda$; and if there is a component $S \subset \rddown{B}$, then $B'_{S'} \in \Omega$. Moreover, if $S \nsubseteq \Supp N$, then for any prime divisor $V \subset S'$, $\mu_V(N'|_{S'})$ is of the form $\sum_{i} n_{i,j}\frac{s_{i,j}}{r_j}$ where $s_{i,j}, r_j\in \Z_{>0}$ satisfying $\frac{r_j-1}{r_j} \le \mu_V(B'_{S'})$ if $\mu_V(B'_{S'})<1$ and $r_j=1$ if $\mu_V(B'_{S'})=1$.  
\end{thm}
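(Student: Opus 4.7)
The plan is to exploit the explicit coefficient formulas from Lemmas~\ref{lem-coeff} and \ref{lem-canonical-blow-up}, together with the rigidity of $P$ as a b-divisor. First I fix a common log resolution dominating both $X$ and $X'$ on which the data $M'$ descends; by condition~(1) the g-lc centres of $(X,B+M)$ and $(X',B'+M')$ correspond under $\varphi$, so discrepancies of divisors centred at these loci may be computed from either side, and the identity ${}^*P = {}^*P'$ makes $P$ rigid under the birational maps considered.

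Next I construct $\Lambda$. For any prime divisor $D$ (on $X'$ or exceptional over it), rewrite $\mu_D B' = -\mu_D K_{X'} - \mu_D M' + \mu_D P' + \mu_D N'$ on a model where $D$ appears. By Lemma~\ref{lem-canonical-blow-up}, $D$ is obtained by a sequence of blow-ups along strata of a fixed log resolution of $(X,B+M)$, and the coefficients of $K_{X'} + M' - P'$ at $D$ take values in a fixed finite set determined by $(X,B+M)$ and $P$. Condition~(3) gives $\mu_D N'$ the form $\sum n_{i,j}\,m_{i,j}'/q_j'$ with $n_{i,j} \in \Sigma$ and $\tfrac{q_j'-1}{q_j'} \le \mu_D B'$. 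Since $\mu_D B' < 1$ forces $q_j' \le 1/(1 - \mu_D B')$, the structural formula $b' = db - d + 1$ from Lemma~\ref{lem-canonical-blow-up} closes the loop and confines $\mu_D B'$ to a finite subset $\Lambda$ of $(0,1)$; a simultaneous induction on the blow-up tree bounds all denominators $q_j'$ appearing and hence all possible coefficients.

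For the adjunction coefficient $\mu_V B'_{S'}$ along a prime divisor $V \subset S'$, Lemma~\ref{lem-coeff} gives
\[
\mu_V B'_{S'} = \frac{l-1}{l} + \sum_k b_k' \frac{d_k}{l} + \mu_V(\Theta),
\]
where $l$ is the local Cartier index at $V$, the $b_k' \in \Lambda$ are coefficients of $B'$ through $V$, and $\Theta$ is the $M$-contribution pushed forward from the resolution. The first two summands lie in a finite set because $\Lambda$ is finite and $l$ is bounded: $l$ coincides with some $q_j'$ associated to a component of $B'$ through $V$ with coefficient strictly less than one, so the same bound as for $\Lambda$ applies. The third summand $\mu_V(\Theta)$ is the essential difficulty and constitutes the main obstacle. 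I plan to overcome it using condition~(1), which localises the comparison near g-lc centres where the b-divisor ${}^*M = {}^*M'$ is rigid: the contribution is then determined by the fixed resolution of $(X,B+M)$ and therefore takes only finitely many values, producing the finite set $\Omega$.

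Finally, for the \emph{moreover} claim, when $S \nsubseteq \Supp N$ the hypothesis ${}^*N \ge {}^*N'$ combined with $N$ being $\Q$-Cartier gives $S' \nsubseteq \Supp N'$, so $N'|_{S'}$ is well-defined. Its multiplicity at $V$ is computed via the same local Cartier index analysis applied to $S'$ in place of $X'$: the denominators $r_j$ arise as local Cartier indices on $S'$, divide the corresponding $q_j'$ by a standard comparison on a resolution, and satisfy $\tfrac{r_j - 1}{r_j} \le \mu_V B'_{S'}$ by the g-dlt structure of $(S', B'_{S'} + M'_{S'})$ obtained via adjunction. Together with the finite rational pool $\Sigma$, this yields the claimed form $\mu_V(N'|_{S'}) = \sum n_{i,j}\,s_{i,j}/r_j$, with the case $\mu_V B'_{S'} = 1$ handled separately since it forces $r_j = 1$.
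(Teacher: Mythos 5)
Your construction of $\Lambda$ follows the paper's route in outline (Lemma \ref{lem-canonical-blow-up} confines the crepant-pullback coefficient to a finite set, condition (3) bounds the denominators $q_j'$, and ${}^*P={}^*P'$ reduces everything to a comparison of $N$ with $N'$), although you never clearly separate the two quantities that the paper compares: the coefficient $b=\mu_D B_Y$ of the crepant pullback of $K_X+B+M$ to a common resolution, which lies in a finite set by the formula $b=db_0-d+1$, and the coefficient $b'=\mu_D B'$, which is recovered from $b$ via $b-b'=\mu_D(\upsilon^*N-\upsilon'^*N')\in[0,1)$, a rational number with bounded denominator. Your displayed identity $\mu_D B'=-\mu_D K_{X'}-\mu_D M'+\mu_D P'+\mu_D N'$ is not well defined term by term, and the claim that ``the coefficients of $K_{X'}+M'-P'$ at $D$ take values in a fixed finite set determined by $(X,B+M)$ and $P$'' is false as stated (that quantity is $\mu_D N'-\mu_D B'$, which depends on $N'$); still, the intended argument is recoverable and essentially the paper's.

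The genuine gap is in $\Omega$, precisely at the point you yourself flag as ``the essential difficulty.'' Your plan is to bound the three summands of Lemma \ref{lem-coeff} separately and to dispose of the moduli term $\mu_V(\Theta)$ by appealing to condition (1) and the rigidity of ${}^*M$. This cannot work: condition (1) only gives an isomorphism near the generic points of the g-lc centres of $(X,B+M)$, whereas the generic point of $V\subset S'$ is a codimension-two point of $X'$ that is in general not a g-lc centre and not in that open set, so $\mu_V((\pi'|_{S'})_*(\pi'^*M'-M_{W'})|_{S'_{W'}})$ genuinely varies with $\varphi$ and is not determined by a fixed resolution of $(X,B+M)$. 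Indeed the whole point of the theorem is that, for non-NQC $M$, finiteness must come from the hypotheses ${}^*P={}^*P'$ and ${}^*N\ge{}^*N'$, not from $M$ itself. The paper's mechanism, which your proposal is missing, is to rewrite the adjunction coefficient as a divisorial coefficient on a higher model: choosing a log resolution $W'\to X'$ dominating $W$, one has $\mu_V B'_{S'}=\mu_E B'_{W'}$ for an exceptional divisor $E$ with $V'$ a component of $S_{W'}\cap E$; the moduli contribution is thereby absorbed, and the $\Lambda$-argument applies verbatim to $c=\mu_E B_{W'}\in\Lambda$ and $c-c'=\mu_E((\pi\circ\rho)^*N-\pi'^*N')=n-\sum_k n_k'\tfrac{e_k}{l}$. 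Relatedly, in the ``moreover'' part your claim that the $r_j$ ``divide the corresponding $q_j'$'' is backwards: the paper produces $r_j=q_{k,j}\,l$ with $l$ the local Cartier index at $V$, and the inequality $\tfrac{r_j-1}{r_j}\le\mu_V(B'_{S'})$ comes from the explicit estimate $\mu_V B'_{S'}\ge\tfrac{l-1}{l}+\tfrac{b_k'}{l}\ge\tfrac{q_{k,j}l-1}{q_{k,j}l}$, not from the g-dlt structure of the adjoint pair.
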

\begin{proof}
	
	Let $\pi:(W,B_W+M_W) \to X$ be a fixed log resolution of $(X,(B+N)+M)$ such that the components of $\{B_W^{>0}\}$ are disjoint. We set $\{db_i-d+1 \ge 0|d \in \Z_{>0}\} \subset \Lambda$ where $b_i$ are coefficients of $B_W^{>0}$. For any divisor $D$ of $X'$ such that $1 >b'= \mu_D B' >0$, consider a common resolution $X \overset{\upsilon}{\longleftarrow} Y \overset{\upsilon'}{\to } X'$ and we have $1 > b:= \mu_D B_Y \ge b'$. By Lemma \ref{lem-canonical-blow-up} we see $b \in \Lambda$. By assumptions and Lemma \ref{lem-canonical-blow-up} we see $1 > b-b'= \mu_D (\upsilon^* N - \upsilon'^* N') =m \overline{n}  +\sum_{i} m_i \overline{n}_i - \sum_{i} n_{i,j}\frac{m_{i,j}}{q_j} \ge 0$ where $ \overline{n}, \overline{n}_i \in \Q$ are coefficients of $\pi^*N$, and $n_{i,j}  \in\Sigma$ and $m, m_{i,j} , q_j \in \Z_{>0}$ with $\frac{q_j-1}{q_j} \le b' \le b$. Hence there are only finite possibilities of $b'$ which completes the existence of $\Lambda$. 
	
	Let $V$ be a prime divisor of $S' \subset X'$. Suppose $0< \mu_V B'_{S'} <1$. There is a log resolution $\pi': (W',B_{W'} +M_W') \to X'$ such that $\rho: W' \to W$ is a morphism. We see $V'$ is a component of $S_{W'} \bigcap E$ and $\mu_{V} B'_{S'} = \mu_{V'}( B'_{W'}-S_{W'})|_{S_{W'}}= \mu_E B'_{W'}$, where $S_{W'}$ is the birational transform of $S$. By Lemma \ref{lem-canonical-blow-up}, there exists a component $D$ of $B_W^{>0}$ such that $E$ is obtained by a sequence of blows-ups along strata over a subset of $D$ and $\mu_E(B_{W'}) \in \Lambda$ 
	. Because $^*P=^*P'$ and $^*N \ge ^*N'$, we deduce
	\begin{equation*}
		1> c:=\mu_E B_{W'}  \ge \mu_E B'_{W'}=:c'>0
	\end{equation*}
    and 
    $$c-c'= \mu_E ((\pi\circ \rho)^*N - \pi'^*N')= n-\sum_k n_k' \frac{e_k}{l}$$ where $l$ is the local Cartier index at $V$, $n$ is a $\Z_{>0}$-combination of the coefficients of $\pi^*N$ and $n_k'$ is the coefficient of $N$ along $D_k$. So by assumptions we have $n_k'=\sum_{i,j} n_{k,i,j}\frac{m_{k,i,j}'}{q_{k,j}'}$ where $n_{k,i,j} \in \Sigma$ and $m_{k,i,j}', q_{k,j}' \in \Z_{>0}$ satisfying $\frac{q_{k,j}'-1}{q_{k,j}'} \le \mu_{D_k}(B')=:b_k'$. Since $c \in \Lambda$, by Lemma \ref{lem-coeff} we have $\frac{l-1}{l} \le c' \le c$, where $l$ is the local index at $V$. Since $b_k' \in \Lambda$, one deduces that there are only finite possibilities of $b'_k$ which completes the existence of $\Omega$.

	Suppose $S \nsubseteq \Supp N$ and we prove the last statement. By Lemma \ref{lem-coeff}, we have $$\mu_{V'} B'_{S'} =  \frac{l-1}{l} + \sum_{k} b_k' \frac{d_k}{l} + \mu_E (\pi'^*M' -M_{W'}).$$ 
    Note that $\mu_{V'} B'_{S'} \ge  \frac{l-1}{l} +  \frac{b_k'}{l}$ which in turn implies 
    $$
    \frac{q_{k,j}l -1}{q_{k,j}l} \le \frac{l-1}{l}+\frac{q_{k,j}-1}{lq_{k,j}} \le \mu_{V'} B'_{S'}.
    $$
    In particular, $\mu_{V'} (N'|_{S'})= \sum_k n_k' \frac{e_k}{l}$. Finally, if $\mu_{V'} B'_{S'}=1$, then $l=1$ and $q_{k,j}=1$ for all $k,j$, which completes the proof.

\end{proof}

	

Recall the following easy fact of discrepancies.

\begin{lem}\label{lem-difficulty}
	Let $(X/Z,B+M)$ and $(X^+/Z,B^+ +M^+)$ be two g-lc pairs with birational underlying varieties and the same data. Let $f: X \to T$ and $f^+: X^+ \to T$ be two birational morphisms over $Z$. Suppose that
	\begin{itemize}
		\item $a(D,X,B+M) \le a(D,X^+,B^+ +M^+)$ for any prime divisor $D$ on $X^+$ (resp. on $X$);
		
		\item $K_X+B+M$ is anti-nef$/T$ (resp. anti-ample$/T$) and $K_{X^+}+B^+ +M^+$ is ample$/T$ (resp. nef$/T$); and 
		
		\item $E$ is a prime divisor on $X^+$ (resp. on $X$) which is exceptional$/T$. 
	\end{itemize}
	Then, we have $a(E,X,B+M) < a(E,X^+,B^+ +M^+)$.
\end{lem}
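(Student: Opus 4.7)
The plan is to combine a common-resolution computation with the negativity lemma and then upgrade to strict inequality via the ampleness hypothesis. I will first take a sufficiently high smooth common resolution $g:W\to X$ and $g^+:W\to X^+$ chosen so that $h:=f\circ g=f^+\circ g^+:W\to T$, and consider
\[
N:=(g^+)^*(K_{X^+}+B^++M^+)-g^*(K_X+B+M).
\]
Since both pairs share the same b-$\R$-Cartier moduli data, the pulled-back moduli parts agree on $W$, so $N$ reduces to a difference of boundary divisors $B^+_W-B_W$, and its coefficient at any prime divisor $D'$ of $W$ equals the discrepancy gap $a(D',X,B+M)-a(D',X^+,B^++M^+)$. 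In either case of the hypothesis, $N$ is a sum of two $h$-nef classes, so $N$ is nef over $T$.

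For the non-strict comparison, I will apply the negativity lemma along $g^+$ in the first case (respectively $g$ in the symmetric case). Every $g^+$-contracted curve is $h$-contracted, so $N$ is $g^+$-nef. The pushforward $(g^+)_*N$ has coefficient $a(D,X,B+M)-a(D,X^+,B^++M^+)\le 0$ at every prime divisor $D\subset X^+$ by hypothesis, hence the negativity lemma forces $-N\ge 0$ throughout $W$, giving the non-strict inequality $a(D',X,B+M)\le a(D',X^+,B^++M^+)$ for every prime divisor $D'$ over $X$.

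To upgrade to the strict inequality at $E$, I will assume for contradiction that $\mu_{E_W}(N)=0$, where $E_W$ is the strict transform. Since $E$ is exceptional over $T$, the restriction $f^+|_E$ (resp.\ $f|_E$) has positive-dimensional general fibre; the ampleness of $K_{X^+}+B^++M^+$ over $T$ (resp.\ of $-(K_X+B+M)$) then lets me pick a curve $C$ in such a fibre meeting the ample class strictly positively. Lifting to $C_W\subset E_W$ and applying the projection formula to both terms, a direct computation (using that $g^+_*C_W$ and $g_*C_W$ are contracted to a point in $T$) gives $N\cdot C_W>0$. On the other hand, if $C_W$ is chosen generic in $E_W$, it is not contained in any other component $D'$ of $\Supp(-N)$, so each $D'\cdot C_W\ge 0$ and each contribution $\mu_{D'}(N)(D'\cdot C_W)\le 0$; together with the assumption $\mu_{E_W}(N)=0$ this yields $N\cdot C_W\le 0$, a contradiction. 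The main obstacle is this last generality step: I must ensure $W$ is high enough and $C$ sufficiently generic in a general fibre of $E\to f^+(E)$ (or $E\to f(E)$) so that $C_W$ avoids every codimension-two intersection $E_W\cap D'$ with other components of $\Supp(-N)$.
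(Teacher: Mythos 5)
Your proposal is correct and follows essentially the same route as the paper: take a common resolution, compare the two crepant pullbacks (your $N$ is the negative of the paper's $G$), obtain the non-strict inequality from the negativity lemma applied over $X^+$, and then get strictness at $E$ by intersecting with curves in $E_W$ contracted over $T$, where ampleness of $K_{X^+}+B^++M^+$ over $T$ forces a positive intersection against $N$. The genericity issue you flag at the end is the standard "a covering family of curves on $E_W$ has general member not contained in the proper closed subset $\Supp(-N)\cap E_W$" argument and is not a gap.
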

\begin{proof}
	Let $\alpha:W \to X$ and $\beta: W \to X^+$ be a common resolution. Because $G:=\alpha^*(K_X+B+M) - \beta^*(K_{X^+}+B^+ +M^+)$ is anti-nef$/X^+$, by comparing discrepancies we see $\beta_*G$ is effective and hence $G$ is effective. Because the birational transform of $E$ is dominated by curves over $T$ with negative intersection numbers by $G$, we deduce $\mu_E G >0$. The other case can be argued verbatim.
\end{proof}

\begin{thm}[A special termination]\label{thm-sp-term}
	Let $(X/Z,B+M)$ be a $\Q$-factorial g-dlt pair and $K_X+B+M=P+N$ where $P$ is a nef$/Z$ divisor and $N$ is a $\Q$-Cartier $\Q$-divisor. Given a $P$-trivial MMP on $K_X+B+M$ with scaling of some b-ample$/Z$ divisor $A$ 
	$$
	(X,B+M) \bir \cdots \bir (X^i,B^i+M^i) \bir (X^{i+1},B^{i+1}+M^{i+1}) \bir \cdots,
	$$
	let $S$ be a g-lc centre such that $S \nsubseteq \Supp N$. Suppose that the above MMP does not contract $S$ and the induced map $(S^{i}, B_{S^{i}}+M_{S^{i}}) \bir (S^{i+1}, B_{S^{i+1}}+M_{S^{i+1}})$ is an isomorphism on $\rddown{B_{S^i}}$ for $i \gg 0$. Then, this map can be lifted to small $\Q$-factorialisations  $(\widetilde{S}^{i}, B_{\widetilde{S}^{i}}+M_{\widetilde{S}^{i}}) \bir (\widetilde{S}^{i+1}, B_{\widetilde{S}^{i+1}}+M_{\widetilde{S}^{i+1}})$ which is a $P^i|_{\widetilde{S}^{i}}$-trivial MMP on $K_{\widetilde{S}^{i}}+B_{\widetilde{S}^{i}}+M_{\widetilde{S}^{i}}$ with scaling of $A^i|_{\widetilde{S}^{i}}$.
	
	Furthermore, if $(S^i,B_{S^i}+M_{S^i})$ has a minimal model for some $i \gg 0$, then the above MMP terminates near $S^i$ for $i \gg 0$. 
\end{thm}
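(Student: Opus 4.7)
The plan is to lift the induced maps on $S^i$ to small $\Q$-factorialisations, realise the lift as a $P^i|_{\widetilde{S}^i}$-trivial MMP there, and then combine the hypothesised minimal model on $S^i$ with Theorem \ref{thm-finite-coeff} to force termination. For $i\gg 0$, take small $\Q$-factorialisations $\widetilde{S}^i\to S^i$ which are isomorphisms over a neighbourhood of every g-lc centre of $(S^i,B_{S^i}+M_{S^i})$, using Lemma \ref{lem-g-dlt}. Since $S\nsubseteq \Supp N$, the restriction $N^i|_{\widetilde{S}^i}$ is a well-defined $\Q$-Cartier $\Q$-divisor, and by adjunction
\[
K_{\widetilde{S}^i}+B_{\widetilde{S}^i}+M_{\widetilde{S}^i}=P^i|_{\widetilde{S}^i}+N^i|_{\widetilde{S}^i}.
\]
The hypothesis that $S^i\bir S^{i+1}$ is an isomorphism on $\rddown{B_{S^i}}$ means $\widetilde{S}^i \bir \widetilde{S}^{i+1}$ is an isomorphism over an open subset meeting every g-lc centre, which is precisely the condition needed to apply Theorem \ref{thm-finite-coeff} along the subsequence.

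I would then realise each map $\widetilde{S}^i \bir \widetilde{S}^{i+1}$ as a sequence of $(K_{\widetilde{S}^i}+B_{\widetilde{S}^i}+M_{\widetilde{S}^i})$-MMP steps with scaling of $A^i|_{\widetilde{S}^i}$, using that $A^i|_{\widetilde{S}^i}$ remains b-ample because $\widetilde{S}^i\to S^i$ is small and isomorphic over g-lc centres, and using the length-of-extremal-rays calculation of Lemma \ref{lem-P-mmp} to match the scaling numbers on $\widetilde{S}^i$ with those of the ambient MMP. The $P^i|_{\widetilde{S}^i}$-triviality of the lifted MMP follows from the ambient $P^i$-triviality by pulling back to a common resolution together with the projection formula. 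For the furthermore part, Theorem \ref{thm-finite-coeff} applies: the b-divisor represented by $P$ is preserved under the $P$-trivial MMP, the one represented by $N$ only decreases by the negativity lemma, and the coefficients of $N^i$ live in a fixed finite set of rationals; hence the coefficients of $B_{S^i}$ (and therefore of $B_{\widetilde{S}^i}$) lie in a fixed finite set $\Omega\subset(0,1)$. Consequently $(\widetilde{S}^i,B_{\widetilde{S}^i}+M_{\widetilde{S}^i})$ inherits a minimal model from $(S^i,B_{S^i}+M_{S^i})$, and by the $P$-trivial strengthening of Corollary \ref{cor-term} proved via the scaling comparison of Lemma \ref{lem-sp-term}, the lifted MMP on $\widetilde{S}^i$ terminates. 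Since any ambient step that affects $S^j$ induces a non-trivial step on $\widetilde{S}^j$, this gives termination of the ambient MMP near $S^j$ for $j\gg 0$.

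The hard part is the realisation step: one must verify that the lifted birational map $\widetilde{S}^i\bir\widetilde{S}^{i+1}$ genuinely decomposes into $(K_{\widetilde{S}^i}+B_{\widetilde{S}^i}+M_{\widetilde{S}^i})$-MMP steps with scaling of $A^i|_{\widetilde{S}^i}$ rather than being only numerically non-positive. Beyond a careful extremal-ray analysis identifying the contracted loci on $\widetilde{S}^i$ with those of the ambient $P$-trivial extremal step via the smallness of $\widetilde{S}^i\to S^i$ on g-lc centres, this step relies on Theorem \ref{thm-finite-coeff} to keep the arithmetic of the coefficients, and hence the scaling numbers on $\widetilde{S}^i$ and on $X^i$, in a common finite combinatorial family all along the subsequence; a secondary technical point is ensuring the conditions (2) and (3) of Theorem \ref{thm-finite-coeff} on the b-divisors of $P$ and $N$ are genuinely preserved by every $P$-trivial step, which is an application of the negativity lemma at each iteration.
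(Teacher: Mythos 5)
There is a genuine gap at the step you yourself flag as ``the hard part''. Before one can lift $S^i\bir S^{i+1}$ to small $\Q$-factorialisations and realise it as an MMP, one must first show that $S^i\bir S^{i+1}$ is an isomorphism \emph{in codimension one} for $i\gg 0$: an MMP step never extracts divisors, so if the induced map on $S^i$ extracted a divisor the realisation would be impossible. Your hypothesis only controls the behaviour on $\rddown{B_{S^i}}$, i.e.\ on the g-lc centres of $S^i$; the flip $X^i\to Z^i\leftarrow X^{i+1}$ can a priori still contract or extract divisors of $S^i$ whose generic points lie outside $\rddown{B_{S^i}}$. The paper closes this with a decreasing-difficulty argument: using Theorem \ref{thm-finite-coeff} to produce the finite coefficient set $\Omega$, it defines
$d(i)=\sum_{b\in\Omega}\sharp\{E\mid a(E,S^i,B_{S^i}+M_{S^i})\le 1-b,\ c_{S^i}(E)\nsubseteq\rddown{B_{S^i}}\}$,
shows $d(i+1)\le d(i)$ always, and shows via Lemma \ref{lem-difficulty} that the inequality is strict whenever $S^i\bir S^{i+1}$ fails to be an isomorphism in codimension one. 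This is the essential use of Theorem \ref{thm-finite-coeff} (to make $d(i)$ a finite, monotone invariant), which is different from the bookkeeping role you assign to it.

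Second, your mechanism for decomposing the lifted map into MMP steps (extremal-ray matching plus projection formula) is not the paper's and, as stated, does not yield a decomposition. The paper instead uses the induced diagram $S^i\to T^i\leftarrow S^{i+1}$ coming from the ambient flip: $K_{S^{i+1}}+B_{S^{i+1}}+M_{S^{i+1}}$ is ample over $T^i$, so one runs a relative MMP over $T^i$ on the small $\Q$-factorialisation $\widetilde{S}^i$, which terminates with a good minimal model over $T^i$ by Lemma \ref{lem-gmm} and Corollary \ref{cor-term}; that model is $\widetilde{S}^{i+1}$, and the relation $K_{\widetilde{S}^i}+B_{\widetilde{S}^i}+M_{\widetilde{S}^i}+\lambda_i A|_{\widetilde{S}^i}\equiv 0/T^i$ identifies the steps as an MMP with scaling of $A|_{\widetilde{S}^i}$ (with the push-down of $A^i|_{\widetilde{S}^i}$ identified via the flop). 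Your ``furthermore'' conclusion is essentially right once these two points are supplied, but as written the proposal is missing the two ideas that make the theorem work.
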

\begin{proof}
	We first claim that $S^{i} \bir S^{i+1}$ is an isomorphism in codimension one. Reindexing we can assume the above MMP consists of flips and is an isomorphism at the generic points of g-lc centres. By Theorem \ref{thm-finite-coeff} there is a finite set $\Omega \subset (0,1]$ such that $B_{S^i} \in \Omega$ for all $i$. Define the difficulty (cf. \cite[Definition 2.9]{fujino-sp}) 
	$$d(i)=d(S^i,B_{S^i}+M_{S^i}):=\sum_{b \in \Omega}\sharp \{E|a(E,S^i,B_{S^i}+M_{S^i}) \le 1- b, c_{S^i}(E) \nsubseteq \rddown{B_{S^i}}\}.$$
	Clearly $d(i) < +\infty$ and $d(i+1) \le d(i)$ for all $i$. Because the flip $X^i \to Z^i \leftarrow X^{i+1}$ induces $S^i \to T^i \leftarrow S^{i+1}$, if $S^i \bir S^{i+1}$ is not an isomorphism in codimension one, then by Lemma \ref{lem-difficulty} the strict inequality $d(i+1) < d(i)$ holds. Therefore the claim follows when we put $i \gg 0$ so that $d(i)$ is minimal.
	
	Let $\widetilde{S}^{i}$ be a small $\Q$-factorialisation. Since $K_{S^{i+1}}+B_{S^{i+1}}+M_{S^{i+1}}$ is ample$/T^i$, we can run an MMP$/T^i$ on $K_{\widetilde{S}^{i}}+B_{\widetilde{S}^{i}}+M_{\widetilde{S}^{i}}$ which terminates with a good minimal model by Lemma \ref{lem-gmm} and Corollary \ref{cor-term}. Since $K_{\widetilde{S}^{i}}+B_{\widetilde{S}^{i}}+M_{\widetilde{S}^{i}}+\lambda_i A|_{\widetilde{S}^{i}} \equiv 0/ T^i$, the map $(\widetilde{S}^{i}, B_{\widetilde{S}^{i}}+M_{\widetilde{S}^{i}}) \bir (\widetilde{S}^{i+1}, B_{\widetilde{S}^{i+1}}+M_{\widetilde{S}^{i+1}})$ is also an MMP with scaling of $\lambda_i A|_{\widetilde{S}^{i}}$. It remains to show that the push-down of $A^i|_{\widetilde{S}^{i}}$ is $A^{i+1}|_{\widetilde{S}^{i+1}}$. This follows from that $(\widetilde{S}^{i}, B_{\widetilde{S}^{i}}+\lambda_i A^i|_{\widetilde{S}^{i}} +M_{\widetilde{S}^{i}}) \bir (\widetilde{S}^{i+1}, B_{\widetilde{S}^{i+1}}+\lambda_i A^{i+1}|_{\widetilde{S}^{i+1}} +M_{\widetilde{S}^{i+1}})$ is a flop.
		
	Since $A$ is birationally ample, we see $\lambda= \lim \lambda_i =0$ (\cite{bchm}\cite{birkar-flip}). If $(S^i,B_{S^i}+M_{S^i})$ has a minimal model, then again by Corollary \ref{cor-term}, we deduce that $K_{S^{i}}+B_{S^{i}}+M_{S^{i}} + \lambda_i A_{S^i}$ is nef for $i \gg 0$. Hence the above MMP terminates near $S^i$ (cf. \cite[Theorem 2.1, Step 3]{fujino-sp}).
\end{proof}

\begin{defn}[Degenerations]\label{defn-degen}
	Let $(X/Z,B+M)$ be a $\Q$-factorial g-dlt pair and $P$ be a nef$/Z$ divisor. Given a $P$-trivial MMP on $K_X+B+M$ with scaling of $A$
	, we say the above MMP \emph{degenrates to an MMP on $K_X+B+M+\alpha P$} for some $\alpha \gg 0$, if each step is an MMP with scaling of $A$ on $K_{X^i}+B^i+M^i + \alpha P^i$. More precisely, for any $\lambda_i \ge \lambda \ge \lambda_{i+1}$ where $\lambda_i$'s are the coefficients appeared in the MMP, we have $K_{X^i}+B^i+ \lambda A^i +M^i + \alpha P^i$ is nef$/Z$. 
\end{defn}

\begin{rem}
	The reader should be cautious that, a $P$-trivial MMP which degenerates does not necessarily terminate. On the other hand, a $P$-trivial MMP which terminates does not necessarily degenerate because we do not know when a $P$-minimal model is a minimal model of $(X,B+M+\alpha P)$.
\end{rem}

The next theorem proves that, under an effectivity assumption, a termination and a degeneration of a $P$-trivial MMP in lower dimensions implies the degeneration. 

\begin{thm}[\text{cf. Theorem \ref{thm-klt}}]\label{thm-degen}
	Let $(X/Z,B+M)$ be a $\Q$-factorial g-dlt pair and $K_X+B+M=P+N$ where $P$ is nef$/Z$ and $N$ is a $\Q$-Cartier $\Q$-divisor. Suppose one of the followings holds:
	\begin{enumerate}
		\item $P \equiv \pm (tK_X-C+D - sN)$ where $t,s \in \Q_{\ge 0}$, $C,D\ge 0$ and $\Supp C \subseteq \Supp B$; or
		
		\item $\Supp \{M\} \subseteq \Supp B$.  
	\end{enumerate}
	 Given a $P$-trivial MMP on $K_X+B+M$, if it terminates near $\rddown{B}$ and degenerates on $\rddown{B}$ for $i \gg 0$, that is, $(K_{X^i}+B^i+M^i +\alpha P^i)|_{\rddown{B^i}}$ is nef for $\alpha \gg 0$, then it degenerates to an MMP on $K_X+B+M+\alpha P$.
\end{thm}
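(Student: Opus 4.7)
My plan is to mimic the strategy of Theorem \ref{thm-klt}: produce a uniform $\alpha_0 > 0$ such that every $(K_{X^i}+B^i+M^i+\alpha P^i)$-negative extremal ray of minimal length is $P^i$-trivial for all $\alpha \ge \alpha_0$ and every $i$, and then compare the given $P$-trivial MMP with an MMP on $K+B+M+\alpha P$ with scaling of $A$ for a single fixed large $\alpha$.

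First I would verify that hypothesis (1) or (2) descends to each $(X^i, B^i+M^i)$ under the $P$-trivial MMP. Case (2) is immediate since $B^i$ and $M^i$ are birational transforms, and in fact $\{P^i\}_{ir} \subseteq \Supp B^i$ follows (writing $P = K + B + M - N$ with $N$ rational), so Theorem \ref{thm-klt}(2) applies with uniform $\alpha_0$ across all $i$. Case (1) uses that each $P$-trivial step forces $\pm(tK - C + D - sN) \cdot R = 0$ on the contracted ray $R$, so $P^i \equiv \pm(tK_{X^i} - C^i + D^i - sN^i)$ descends with birational transforms. Here Theorem \ref{thm-klt}(1) normally requires $\Supp D$ to contain no g-lc centres, which is not assumed; to bridge the gap I would invoke the degeneration on $\rddown{B}$ hypothesis: for $\alpha$ sufficiently large and $i \ge i_0$, $(K+B+M+\alpha P)|_{\rddown{B^i}}$ is nef, so every $(K+B+M+\alpha P)$-negative extremal ray is represented by a curve in the g-klt locus $X^i \setminus \rddown{B^i}$. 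On this locus $\Supp D$ meets no g-lc centres locally, and the convex decomposition and g-dlt perturbation argument of Theorem \ref{thm-klt}(1) can be carried out in a g-klt neighborhood of the extremal curve, yielding the desired uniform $\alpha_0$.

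Next, fix $\alpha \ge \alpha_0$ large enough to also exceed the degeneration threshold on $\rddown{B}$ and the scaling parameter $\alpha_{i_0}$ used at stage $i_0$ in the $P$-trivial MMP. I would show by induction on $j \ge i_0$ that $K_{X^j}+B^j+\lambda A^j+M^j+\alpha P^j$ is nef for every $\lambda \in [\lambda_{j+1}, \lambda_j]$, which is precisely the degeneration. The inductive step rests on two observations: for any $P^j$-trivial curve $R$, $(K+B+\lambda A+M+\alpha P)\cdot R = (K+B+\lambda A+M)\cdot R$ is independent of $\alpha$, so the scaling threshold contributed by $P$-trivial rays matches between the $P$-trivial MMP and the MMP on $K+B+M+\alpha P$ with scaling of $A$; and a ray $R$ with $P \cdot R > 0$ that first becomes $(K+B+\lambda A+M+\alpha P)$-negative as $\lambda$ decreases must satisfy $A \cdot R > 0$, which combined with the uniform positive lower bound on $P \cdot R$ from the first step forces $(K+B+\lambda A+M+\alpha P)\cdot R > 0$ for $\alpha \ge \alpha_0$, a contradiction. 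Hence both MMPs contract the same $P$-trivial ray at the same scaling threshold, and by induction they agree stepwise. The finitely many stages $j < i_0$ are absorbed by enlarging $\alpha$ to exceed the scaling parameters used at those stages.

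The main obstacle is the case (1) portion of the first paragraph: reducing the absent support condition on $D$ to a local g-klt statement via the degeneration on $\rddown{B}$. One must verify that Theorem \ref{thm-klt}(1)'s global convex decomposition and g-dlt perturbation argument remains valid when $(X^i, B^i - \epsilon C^i_k + \epsilon D^i_k + M^i)$ may fail to be g-dlt near $\rddown{B^i}$, by localizing to g-klt neighborhoods of the relevant extremal curves and checking uniformity of the resulting lower bound across all $i$. The termination near $\rddown{B}$ hypothesis, by stabilizing $\rddown{B^i}$ for $i \gg 0$, makes such uniformity plausible, but the bookkeeping to control both the denominators in the $\Q$-linearly independent decomposition and the g-lc centres intersecting $\Supp D$ throughout the MMP is the delicate part.
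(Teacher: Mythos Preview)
Your plan has a genuine gap in the first step, precisely at the ``main obstacle'' you identify. The length-of-extremal-rays bound $(K_X+B'+M)\cdot\Gamma\ge -2\dim X$ requires the perturbed pair $(X,B'+M)$ to be g-lc \emph{globally}; it cannot be localised to a g-klt neighbourhood of $\Gamma$, because extremal rays live in $\overline{NE}(X/Z)$ and the Cone Theorem is a global statement. So when $\Supp D$ meets a g-lc centre, the pair $(X^i,\tfrac{1}{1+\epsilon t}(B^i-\epsilon C^i_k+\epsilon D^i_k)+M^i)$ is simply not g-lc and you get no bound. The same problem infects your Case~(2): the hypothesis is $\Supp\{M\}\subseteq\Supp B$, not $\Supp\{B\}$, so components of $\{P\}$ can lie in $\rddown{B}$; perturbing their coefficients upward pushes the boundary past~$1$, and Lemma~\ref{lem-P-trivial}/Theorem~\ref{thm-klt}(2) do not apply directly on $X^i$.

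The paper's fix is not to localise but to \emph{lift} the entire MMP to a log resolution $\pi\colon Y\to X$ (taken so that the pullback of $D$ is SNC with the boundary). On $Y$ every g-lc centre is a stratum of $\rddown{B_Y}$, so one can split the relevant divisor as $Q+L$ with $Q\ge 0$ supported in $\rddown{B_Y}$ and $\Supp L^{>0}$ containing no g-lc centre. Then $(Y^i,B_{Y^i}+\epsilon L^i_j+M_{Y^i})$ \emph{is} g-dlt, giving a uniform lower bound $L^i_j\cdot\Gamma\ge -l$; for $\Gamma\nsubseteq\rddown{B_{Y^i}}$ one has $Q^i\cdot\Gamma\ge 0$, while for $\Gamma\subseteq\rddown{B_{Y^i}}$ the degeneration hypothesis handles it directly. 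This requires first showing that the lifted $P_Y$-trivial MMP also terminates near $\rddown{B_Y}$, which the paper does by comparing weak lc models with the original sequence $(X^j)$. Once the degeneration is established on $Y$, a discrepancy comparison pushes it down to $X$. Your inductive comparison of the two MMPs in the second paragraph is reasonable in spirit, but it cannot start until the uniform $\alpha_0$ exists---and for that you need the log-resolution step, not a local argument.
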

\begin{proof}
	\emph{Step 1.} 
	In this step, we lift the MMP and show it also terminates near the g-lc centres. Since we are given a $P$-trivial MMP with scaling of an ample divisor $A$
	$$
	(X,B+M) \bir \cdots \bir (X^i,B^i+M^i) \bir (X^{i+1},B^{i+1}+M^{i+1}) \bir \cdots,
	$$
	which terminates near $\rddown{B}$, after reindexing we assume it consists of flips and the flipping locus in each step is disjoint from $\rddown{B^i}$. We can also assume no g-lc centre is contained in $\mathbf{B}_-(K_X+B+M/Z)$. Take a log resolution $\pi: (Y,B_Y+M_Y) \to X$ of $(X,B+D+M)$ where $K_Y+\Delta_Y+M_Y=\pi^*(K_X+B+M)$ and $B_Y=\Delta_Y^{>0} +E_Y$, $E_Y \ge 0$ consists of all exceptional divisors except components in $\Delta_Y^{=1}$ with coefficients sufficiently small. Clearly no g-lc centre is contained in $\mathbf{B}_-(K_Y+B_Y+M_Y/Z)$. By Lemma \ref{lem-P-mmp} and making minor changes to Definition \ref{defn-P-mmp} we can run a $(P_Y=\pi^*P)$-trivial MMP on $K_Y+B_Y+M_Y$ with scaling of $A_Y=\pi^*A$. Let $\delta_i$ be the coefficients appeared in the MMP. One easily verifies that $\lim \delta_i =0$.
	
    We claim that the above MMP terminates near g-lc centres. In fact, note that for each $j$, $(X^j,B^j+\lambda_jA^j+M^j+\alpha_j P^j)$ is a weak lc model of $(Y,B_Y+\lambda_j A_Y + M_Y +\alpha_j P_Y)$ for $\alpha_j \gg 0$. Pick $i$ such that $\delta_{i} > \delta_{i+1}$. There exists some $j$ such that $\lambda_j >\lambda_{j+1}$ and $\delta_i \in (\lambda_{j+1},\lambda_j]$, and hence $^*(K_{Y^i}+B_{Y^i}+\lambda A_{Y^i} +M_{Y^i} )=^*(K_{X^j}+B^j+\lambda A^j + M^j)$ for $\lambda \in (\delta_i -\epsilon, \delta_i]$ where $\epsilon \ll1$, which in turn implies that $^*A_{Y^i}=^*A_j$ and $^*(K_{Y^i}+B_{Y^i}+M_{Y^i} )=^*(K_{X^j}+B^j + M^j)$. In particular, for any component $S_{Y^i} \subset \rddown{B_{Y^i}}$, since $^*(K_{S_{Y^i}}+B_{S_{Y^i}}+M_{S_{Y^i}} )=^*(K_{S^j}+B_{S^j} + M_{S^j})$ for some g-lc centre $S^j$, we deduce by Lemma \ref{lem-pullbacks} that $K_{S_{Y^i}}+B_{S_{Y^i}}+M_{S_{Y^i}}$ is $(P_{Y^i}|_{S_{Y^i}})$-nef, and hence the MMP terminates near $\rddown{B_{Y^i}}$.
	
	\emph{Step 2.} 
	\emph{Case (1):} Note that the Condition (1) is preserved on $Y$. Now fix some $i \gg 0$. Replacing $P$ with a member in its numerical class, we have $P_{Y^i} \equiv \pm (tK_{Y^i}-C_{Y^i}+D_{Y^i} - sN_{Y^i})$. Write $G^i=tK_{Y^i}+Q^i+L^i$ where $Q^i \ge 0$, $\Supp Q^i \subset \rddown{B_Y}$ and $\Supp L^{i,>0}$ contains no g-lc centres. For each step of the $P_Y$-trivial MMP $(Y^i,B_{Y^i}+M_{Y^i}) \bir (Y^{i+1},B_{Y^{i+1}}+M_{Y^{i+1}})$, if $(Y^i,(B_{Y^i}+ \epsilon_i L^i) +M_{Y^i} )$ is g-dlt and $\epsilon_i \ll 1$, then $(Y^{i+1},(B_{Y^{i+1}}+\epsilon_i L^{i+1})+M_{Y^{i+1}})$ is also g-dlt. Hence $L^{i,>0}$ contains no g-lc centres and $\Supp L^{i,>0} \subset \Supp B_{Y^i}$ for all $i$. 
	
	Let $G^i=tK_{Y^i}-C_{Y^i}+D_{Y^i}$ and consider a convex combination of $\Q$-divisors $G^i=\sum_{j} \alpha_j G_j^i$ where $\sum_{j} \alpha_j =1$, $\alpha_j >0$ and $\alpha_j$'s are $\Q$-linearly independent. Note that there exists some $\epsilon \ll 1$ such that $({Y^i}, B_{Y^i}+ \epsilon L^i_j +M_{Y^i})$ is g-dlt for all $j$. Hence for any $(K_{Y^i}+B_{Y^i}+M_{Y^i})$-negative extremal curve of minimal length $\Gamma$, 
	$$ 
    (K_{Y^i}+ B_{Y^i}+ \epsilon L^i_j +M_{Y^i}) \cdot \Gamma \ge -2 \dim Y 
	$$
	which in turn gives $L^i_j \cdot \Gamma \ge - l$ for some $l>0$ depending only on $\epsilon$ and $\dim Y$. If $\Gamma \subset \rddown{B_{Y^i}}$, then by the degeneration on $\rddown{B_{Y^i}}$ there is a number $\alpha$ such that, for any $\alpha' \ge \alpha$
	$$
	(K_{Y^i}+B_{Y^i}+M_{Y^i} +\alpha P_{Y^i})\cdot \Gamma \ge 0.
	$$ 
	If $\Gamma \nsubseteq \rddown{B_{Y^i}}$, then $G^i_j \cdot \Gamma=Q^i_j\cdot \Gamma + L^i_j \cdot \Gamma \ge -l $ which also gives $
	(K_{Y^i}+B_{Y^i}+\lambda A_{Y^i}+M_{Y^i} +\alpha' P_{Y^i})\cdot \Gamma \ge 0
	$ for $\lambda_{i+1}\le \lambda \le \lambda_i$ and $\alpha' \gg 0$ depending on $l$.

	\emph{Case (2):}
	We see $\Supp \{P\} \subseteq \Supp B$. Consider a convex combination of $\Q$-divisors $P_{Y^i}=\sum_{j} \alpha_j P_j^i$ where $\sum_{j} \alpha_j =1$, $\alpha_j >0$ and $\alpha_j$'s are $\Q$-linearly independent. Let $P^i_j-P_{Y^i}= N^i_j+T^i_j$ where $\Supp N^i_j \subset \rddown{B_{Y^i}}$ and $T^i_j$ otherwise. Hence for any $(K_{Y^i}+B_{Y^i}+M_{Y^i})$-negative extremal curve of minimal length $\Gamma$, 
	$$ 
	(K_{Y^i}+ B_{Y^i}\pm \epsilon T^i_j +M_{Y^i}) \cdot \Gamma \ge -2 \dim Y 
	$$
	which in turn gives $l \ge T^i_j \cdot \Gamma \ge - l$ for some $l>0$ depending only on $\epsilon$ and $\dim Y$. By the same reasoning, if $P_{Y^i} \cdot \Gamma>0$, then we have $(K_{Y^i}+B_{Y^i}+M_{Y^i} +\alpha' P_{Y^i})\cdot \Gamma \ge 0
	$ for $\lambda_{i+1}\le \lambda \le \lambda_i$ and $\alpha' \gg 0$ depending on $l$.
	
	\emph{Step 3.}
	Since the MMP terminates near $\rddown{B_{Y^i}}$, we see $Y^i \bir Y^{i+1}$ is $P^i_j$-trivial for all $j$, and hence $G^i_j$-negative (resp. $T^i_j$-trivial). This in turn implies that $({Y^{i+1}}, B_{Y^{i+1}}+ \epsilon L^{i+1}_j +M_{Y^{i+1}})$ (resp. $({Y^{i+1}}, B_{Y^{i+1}}\pm \epsilon T^{i+1}_j +M_{Y^{i+1}})$) is g-dlt for all $i$. So the bounds $\pm l$ are preserved, and we deduce that the above $P_Y$-trivial MMP degenerates, which completes the proof by comparing discrepancies.
\end{proof}

		
		

\subsection{Zariski decompositions}

In this subsection we will apply the inductive technique developed above to study g-lc pairs which has a weak Zariski decompostions. As an application, we obtain a generalisation of Corollary \ref{cor-klt'} and Theorem  \ref{thm-a-mm}.  \\
\noindent \textbf{Weak Zariski decompositions.}
Given a pseudo-effective divisor $D$, we say $D=P+N$ is a weak Zariski decomposition if $P$ is nef and $N \ge 0$. We propose the following conjecture.
\begin{conj}\label{conj-term}
	Let $(X/Z,B+M)$ be a $\Q$-factorial g-dlt pair and $K_X+B+M=P+N$ be a weak Zariski decomposition such that $N$ is a $\Q$-Cartier $\Q$-divisor. Given a $P$-trivial MMP with scaling of a b-ample divisor $A$, suppose it degenerates to an MMP with scaling of $A$ on $K_X+B+M+\alpha P$ for some $\alpha>0$. Then, it terminates with a minimal model after finitely many steps.
\end{conj}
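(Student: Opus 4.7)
The plan is to proceed by induction on $d=\dim X$, combining the special termination Theorem \ref{thm-sp-term} with the $\Q$-Cartier $\Q$-divisor structure of $N$. The degeneration assumption lets me rewrite each step of the MMP as a step with scaling of $A$ on $K_X+B+M+\alpha P=(1+\alpha)P+N$; since the MMP is $P$-trivial, every contracted ray is $N$-negative, so morally I am running an MMP on the $\Q$-Cartier effective divisor $N$ inside the g-lc category. The base case $d\le 2$ reduces to classical surface MMP.

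First I would apply Theorem \ref{thm-sp-term} to each g-lc centre $S$ of $(X,B+M)$ with $S\not\subseteq\Supp N$. The theorem lifts the MMP to a $P^i|_{\widetilde S^i}$-trivial MMP with scaling of $A^i|_{\widetilde S^i}$ on a small $\Q$-factorialisation $\widetilde S^i$, and the last statement of Theorem \ref{thm-finite-coeff} ensures that $N^i|_{\widetilde S^i}$ has coefficients in a fixed set built from the coefficients of $N$ and local Cartier indices, hence remains (after clearing denominators) a $\Q$-Cartier $\Q$-divisor. Using Lemma \ref{lem-pullbacks} the ambient degeneration passes to $\widetilde S^i$, so by the inductive hypothesis in dimension $d-1$ the restricted MMP terminates; the ``furthermore'' clause of Theorem \ref{thm-sp-term} then forces the ambient MMP to terminate near $S$. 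The g-lc centres contained in $\Supp N$ are handled by a difficulty argument via Lemma \ref{lem-difficulty} combined with the finite coefficient set of Theorem \ref{thm-finite-coeff}, since these centres are eventually contracted by the $N$-negative MMP.

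After reindexing, the MMP is an isomorphism near every g-lc centre, and the underlying g-pair is effectively g-klt on the locus where further flips or divisorial contractions take place. Here I would invoke the strategy of Corollary \ref{cor-klt'}: since $K_X+B+M+\alpha P=(1+\alpha)P+N$ is a weak Zariski decomposition with $N$ a $\Q$-Cartier $\Q$-divisor, and the MMP is $P$-trivial with scaling of a b-ample $A$, the MMP coincides with an ordinary MMP on $K_X+B+M+\alpha P$ that contracts $N$ in finitely many steps by \cite{bchm}, producing the desired minimal model of $(X,B+M+\alpha P)$.

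The main obstacle will be the induction step: verifying that the restricted MMP on $\widetilde S^i$ genuinely degenerates in the sense of Definition \ref{defn-degen}. The ambient degeneration gives nefness of $K_X+(B+\lambda A)+M+\alpha P$ for $\lambda$ in the appropriate range and restriction yields nefness of the pullback, but $A|_{\widetilde S^i}$ is only big rather than ample, so one must check that the ``with scaling of a b-ample divisor'' structure required by Definition \ref{defn-P-mmp} propagates under restriction---likely by broadening the scaling divisor class to b-ample, which is already the default here. A secondary obstacle is the g-klt phase: producing a $\Q$-linear representative of $N$ that lets one directly apply \cite{bchm} may require a weak nonvanishing input of the kind obstructed by Example \ref{exa: nonvanishing fail}, and circumventing this may require enlarging the inductive hypothesis to include a controlled form of Conjecture \ref{conj-nz-decomp}.
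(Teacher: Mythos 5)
The statement you are proving is stated in the paper as Conjecture \ref{conj-term}, and the paper does \emph{not} prove it: it establishes only the two-dimensional case (Lemma \ref{lem-surface}, where termination is automatic because a surface MMP consists solely of divisorial contractions) and otherwise assumes the conjecture in dimension $d-1$ as a hypothesis for Theorems \ref{thm-dlt} and \ref{thm-dlt'}. So there is no paper proof to compare against, and your proposal should be judged on its own terms.

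Judged that way, it has a genuine gap at the final step. After special termination (Theorem \ref{thm-sp-term}) and the inductive hypothesis reduce you to an MMP that is an isomorphism near every g-lc centre, you assert that the remaining MMP on $K_X+B+M+\alpha P=(1+\alpha)P+N$ ``contracts $N$ in finitely many steps by \cite{bchm}.'' Neither half of this is justified. First, $N$ here is only the effective part of a \emph{weak} Zariski decomposition, not the negative part of a Nakayama--Zariski decomposition, so there is no reason any component of $N$ lies in the diminished base locus or gets contracted; the argument of Corollary \ref{cor-klt'} that you invoke uses $N=N_\sigma(K_X+B+M)$ in an essential way, and the same objection undermines your treatment of g-lc centres inside $\Supp N$ (``eventually contracted by the $N$-negative MMP''). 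Second, \cite{bchm} gives termination only for each finite segment of the scaling (fixed $\lambda_j>0$, where $B+\lambda_j A$ contains an ample part); the conjecture is precisely about the infinite concatenation as $\lambda_j\to 0$, which BCHM does not control. What remains after your reduction is termination of flips for a g-klt-type pair with non-NQC nef part $M$ and no known minimal model --- the paper's own mechanism for termination (Corollary \ref{cor-term}) requires the existence of a minimal model as input, which is what one is trying to establish. This interior termination is the actual content of the conjecture, and your proposal does not address it; the paper itself flags it as open, expecting it to hold only because the degeneration hypothesis is a strong constraint on $P$ and $M$.
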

Note that the above conjecture is much weaker than the termination for an MMP on a non-NQC g-pair with scaling of an ample divisor, or equivalently, the existence of minimal models for an arbitrary non-NQC g-pair (Corollary \ref{cor-term}). Although we do not know if we should expect the existence of minimal models for non-NQC g-pairs, we still expect that the above conjecture holds, because the degeneration seems a strong assumption on $P$ and hence on $M$ as $N$ is a $\Q$-Cartier $\Q$-divisor.

\begin{lem}\label{lem-surface}
	Conjecture \ref{conj-term} holds in dimension $2$ and the degeneration assumption holds automatically in dimension $2$.
\end{lem}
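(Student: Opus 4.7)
The plan is to establish two claims: (i) that any $P$-trivial MMP in dimension two degenerates to an MMP on $K_X+B+M+\alpha P$ for some $\alpha > 0$, and (ii) that under this degeneration the MMP terminates with a minimal model. Claim (ii) is essentially free in dimension two: every $\Q$-factorial MMP step on a surface is a divisorial contraction that strictly reduces the Picard number, so any MMP---the $P$-trivial one, or the degenerated one on $K_X+B+M+\alpha P$---terminates in at most $\rho(X/Z)$ steps. With (i) in hand, the degenerated MMP on $K_X+B+M+\alpha P$ with scaling of $A$ therefore terminates in finitely many steps and yields the desired minimal model.

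For (i), the surface-termination argument above shows that the $P$-trivial MMP has only finitely many steps, giving a finite chain $(X, B+M) = (X^0, B^0+M^0) \bir \cdots \bir (X^n, B^n+M^n)$ with scaling values $\lambda_0 \ge \lambda_1 \ge \cdots \ge \lambda_n \ge 0$. It suffices to produce, for each $i$, a number $\alpha_i > 0$ such that $K_{X^i}+B^i+\lambda A^i+M^i+\alpha_i P^i$ is nef for every $\lambda \in [\lambda_{i+1}, \lambda_i]$, and then set $\alpha := \max_i \alpha_i$. Fix $i$. By the cone theorem for $\Q$-factorial g-dlt surface pairs (or, alternatively, by a direct surface-theoretic inspection of the $(K_{X^i}+B^i+\lambda_{i+1} A^i+M^i)$-negative part of $\overline{NE}(X^i/Z)$), the extremal rays relevant in the range $\lambda \in [\lambda_{i+1}, \lambda_i]$ are finite in number, say generated by curves $\Gamma_{i,1}, \ldots, \Gamma_{i,k_i}$. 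The intersection numbers $P^i \cdot \Gamma_{i,j}$ form a finite subset of $\R_{\ge 0}$, so their positive members are bounded below by some $\delta_i > 0$. Meanwhile Lemma \ref{lem-bound-int-num} (length of extremal rays for g-dlt pairs), combined with the uniform bound on $A^i \cdot \Gamma_{i,j}$ over the finite collection, yields a constant $C_i > 0$ bounding $(K_{X^i}+B^i+\lambda A^i+M^i) \cdot \Gamma_{i,j}$ from below by $-C_i$ in the relevant range. For $P$-trivial rays the $P$-triviality of the MMP step together with the scaling definition ensures non-negativity of $K+B+\lambda A+M$ on $\Gamma_{i,j}$ for $\lambda \ge \lambda_{i+1}$; for $P$-positive rays, $\alpha_i := C_i/\delta_i$ delivers the required non-negativity of $K+B+\lambda A+M+\alpha_i P$.

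The main obstacle is the finiteness of extremal rays relevant to each MMP step in the non-NQC g-dlt setting. Securing this either requires invoking a cone theorem for $\Q$-factorial g-dlt pairs, or working directly with the surface Mori cone and using that its $(K+B+M)$-negative part is polyhedral with generators having bounded intersection numbers against fixed divisors. Once this finiteness---together with the uniform boundedness of $A^i \cdot \Gamma_{i,j}$ over the finite collection---is established, the intersection-number bookkeeping is routine and the proof concludes.
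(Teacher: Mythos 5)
Your termination argument (divisorial contractions only, Picard number drops) is the same as the paper's and is fine, and your bookkeeping for the intermediate ranges $\lambda\in[\lambda_{i+1},\lambda_i]$ with $\lambda_{i+1}>0$ is essentially redundant: by the very definition of the $P$-trivial MMP each segment is already an MMP with scaling of $A$ on $K_{X}+B+\lambda_jA+M+\alpha_jP$, so $K_{X^i}+B^i+\lambda A^i+M^i+\alpha_jP^i$ is nef in the relevant range, and since there are finitely many steps and each $P^i$ stays nef one may simply take $\alpha=\max_j\alpha_j$ there.

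The genuine gap is at the terminal model $Y$, i.e.\ the range ending at $\lambda=0$, where you must show $K_Y+B_Y+M_Y+\alpha P_Y$ is nef for some finite $\alpha$ knowing only that $K_Y+B_Y+M_Y$ is $P_Y$-nef. Your argument rests on the claim that the relevant extremal rays are ``finite in number''; but once $\lambda=0$ the relevant rays are the $(K_Y+B_Y+M_Y)$-negative extremal rays, and for a non-NQC g-pair the cone theorem only gives discreteness of these rays away from the supporting hyperplane $\{K+B+M=0\}$ — they may accumulate towards it, with $(K_Y+B_Y+M_Y)\cdot\Gamma_j\to 0^-$ and $P_Y\cdot\Gamma_j\to 0^+$, in which case no finite $\alpha=C/\delta$ exists. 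This failure of a positive lower bound on intersection numbers is precisely the phenomenon of Example \ref{exa: nonvanishing fail} and is the whole difficulty the paper is addressing, so it cannot be waved away by ``invoking a cone theorem'' or by asserting polyhedrality of the negative part of the Mori cone (false in general even for surfaces with infinitely many $(-1)$-curves). The paper's proof supplies the missing finiteness from a different, genuinely surface-specific source: the classical Zariski decomposition $K_Y+B_Y+M_Y=Q+L$ with $Q$ nef and $L\ge 0$ supported on finitely many curves. Any curve $C$ not a component of $L$ satisfies $(Q+L)\cdot C\ge 0$ automatically, so negativity can only occur on the finitely many components $L_i$ of $L$; on those, $P_Y$-nefness forces $P_Y\cdot L_i>0$, and $\alpha\gg 0$ then works. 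You need this (or an equivalent finiteness statement for the locus where $K_Y+B_Y+M_Y$ is negative) to close the argument; without it, claim (i) — and hence the ``minimal model'' part of claim (ii) — is not established.
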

\begin{proof}
	Since an MMP in dimension $2$ consists of only divisorial contractions, it terminates after finitely many steps with a $P$-minimal model $(Y,B_Y+M_Y)$. Because $K_Y+B_Y+M_Y$ is $P_Y$-nef and it has a Zariski decomposition $K_Y+B_Y+M_Y=Q+L$, we deduce $P_Y \cdot L_i >0$ for each component $L_i$ of $L$. Hence $K_Y+B_Y+M_Y+\alpha P_Y$ is nef for some $\alpha >0$. 
\end{proof}

\begin{thm}\label{thm-dlt}
	Assume Conjecture \ref{conj-term} holds in dimension $d-1$.
	
	Let $(X/Z,B+M)$ be a $\Q$-factorial g-dlt pair of dimension $d$ and $K_X+B+M=P+N$ be a weak Zariski decomposition such that $N$ is a $\Q$-Cartier $\Q$-divisor. Suppose for each stratum $S$ of $(X,\rddown{B})$, one of the followings holds:
	\begin{enumerate}
		\item $P|_S \equiv D \ge 0$;  
		
		\item $M_S \equiv D \ge 0$ where $M_S$ is given by a divisorial adjunction, up to an $\R$-linear equivalence; 
		
		\item $\Supp \{M_S\} \subseteq \Supp B_S$.  
	\end{enumerate}
    Then, any $P$-trivial MMP degenerates to an MMP on $K_X+B+M+\alpha P$ for $\alpha \gg 0$.
\end{thm}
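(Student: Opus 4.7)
The plan is to apply Theorem~\ref{thm-degen} after verifying its two hypotheses, with the verification obtained by an induction on $d$ via the strata of $(X,\rddown{B})$. First, I note that each of the three alternatives of Theorem~\ref{thm-dlt} specialized to the stratum $S=X$ implies a hypothesis of Theorem~\ref{thm-degen}: alternative (3) is precisely Theorem~\ref{thm-degen}(2); alternative (1), $P\equiv D\ge 0$, matches Theorem~\ref{thm-degen}(1) with $t=s=0$ and $C=0$; and alternative (2), $M\equiv D\ge 0$, yields $P=K_X+B+M-N\equiv K_X+(B+D)-N$, matching Theorem~\ref{thm-degen}(1) with $t=s=1$, $C=0$, and effective part $B+D$. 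It therefore suffices to establish, for the given $P$-trivial MMP, (i) termination near $\rddown{B}$ and (ii) nefness of $(K_{X^i}+B^i+M^i+\alpha P^i)|_{\rddown{B^i}}$ for some fixed $\alpha>0$ and all $i\gg 0$.

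I would prove (i) and (ii) by induction on $d$, the base case $d=2$ being immediate from Lemma~\ref{lem-surface} together with the fact that a two-dimensional MMP consists only of divisorial contractions. Fix a divisorial g-lc centre $S$ of $(X,\rddown{B})$ with $S\nsubseteq \Supp N$. Theorem~\ref{thm-finite-coeff}, applied to the finite coefficient set of the $\Q$-divisor $N$, forces the coefficients of $B_{S^i}$ to lie in a finite set, so after reindexing the induced maps $S^i\bir S^{i+1}$ are isomorphisms on $\rddown{B_{S^i}}$; Theorem~\ref{thm-sp-term} then lifts them to a $P^i|_{\widetilde S^i}$-trivial MMP on a small $\Q$-factorialization $(\widetilde S^i,B_{\widetilde S^i}+M_{\widetilde S^i})$ with scaling of $A^i|_{\widetilde S^i}$. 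Strata of $(\widetilde S,\rddown{B_{\widetilde S}})$ correspond to strata of $(X,\rddown{B})$ contained in $S$, and each alternative (1)--(3) restricts appropriately via adjunction and pullback along the small morphism, so the inductive version of Theorem~\ref{thm-dlt} in dimension $d-1$ yields degeneration of the lifted MMP, and Conjecture~\ref{conj-term} in dimension $d-1$ then yields its termination with a minimal model of $(\widetilde S,B_{\widetilde S}+M_{\widetilde S})$. The final assertion of Theorem~\ref{thm-sp-term} gives (i) near $S^i$ for $i\gg 0$; the degeneration on $\widetilde S^i$, combined with adjunction $(K_{X^i}+B^i+M^i+\alpha P^i)|_{S^i}=K_{S^i}+B_{S^i}+M_{S^i}+\alpha P^i|_{S^i}$ and transferred via the small morphism $\widetilde S^i\to S^i$ by the negativity lemma, supplies (ii).

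The main obstacle is the case $S\subseteq \Supp N$, where Theorem~\ref{thm-sp-term} does not directly apply. Since every step of the $P$-trivial MMP is $(K_X+B+M)$-negative and $P$-trivial, it is $N$-negative, so any divisorial component of $\rddown{B}\cap \Supp N$ is contracted after finitely many steps, and higher-codimensional strata contained in such a component are eliminated at the same time. Hence for $i\gg 0$ every remaining divisorial g-lc centre satisfies $S\nsubseteq \Supp N^i$, reducing to the previous case. Summing over the finitely many divisorial g-lc centres and invoking Theorem~\ref{thm-degen} concludes the proof.
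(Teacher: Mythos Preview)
Your overall architecture---induction on $d$, special termination (Theorem~\ref{thm-sp-term}) on each stratum, Conjecture~\ref{conj-term} in dimension $d-1$ to terminate the induced MMP, then Theorem~\ref{thm-degen} to conclude---is exactly the paper's approach; its proof is the one-sentence version of what you wrote.

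There is, however, a genuine gap in your third paragraph. From the fact that each step is $N$-negative you infer that any component of $\rddown{B}$ lying in $\Supp N$ is contracted after finitely many steps. That implication fails: flips are $N$-negative but contract no divisor, and a divisorial contraction contracts only its own exceptional divisor, which need not be a component of $\rddown{B}$. Nothing in the $P$-trivial MMP forces such an $S$ to be contracted. The paper's proof is in fact silent on this case as well; every downstream application (Theorem~\ref{thm-dlt'}, Corollary~\ref{cor-3-dim'}, Theorem~\ref{thm-dlt''}) first arranges that $\Supp N$ contains no g-lc centre, so only the case $S\nsubseteq\Supp N$ is ever needed. You should either add that hypothesis or note the reduction, rather than attempt the incorrect contraction argument.

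A smaller point: the sentence ``Theorem~\ref{thm-finite-coeff} \ldots\ forces the coefficients of $B_{S^i}$ to lie in a finite set, so after reindexing the induced maps $S^i\bir S^{i+1}$ are isomorphisms on $\rddown{B_{S^i}}$'' mixes two different mechanisms. Finiteness of coefficients feeds the difficulty function inside the proof of Theorem~\ref{thm-sp-term} (controlling codimension-one behaviour on $S^i$). The isomorphism on $\rddown{B_{S^i}}$, which is a \emph{hypothesis} of Theorem~\ref{thm-sp-term}, comes instead from the inductive claim that the MMP already terminates near the lower-dimensional strata of $(X,\rddown{B})$ contained in $S$. You should make the induction run over the codimension of strata (as in Fujino's special termination), not just over $d$.
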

\begin{proof}
	By induction on dimensions, Theorem \ref{thm-sp-term} and Conjecture \ref{conj-term} in dimension $d-1$, we can assume the $P$-trivial MMP terminates near $\rddown{B^i}$ for $i \gg 0$. Therefore we conclude by Theorem \ref{thm-degen}.
\end{proof}

We immediately obtain the degeneration in dimension $3$ by the previous theorem and Lemma \ref{lem-surface}. 
\begin{cor}\label{cor-3-dim}
	Let $(X/Z,B+M)$ be a $\Q$-factorial g-dlt pair of dimension $3$ and $K_X+B+M=P+N$ be a weak Zariski decomposition such that $N$ is a $\Q$-Cartier $\Q$-divisor. Suppose either $P \equiv D \ge 0$ or $M \equiv D \ge 0$.
	Then, any $P$-trivial MMP degenerates to an MMP on $K_X+B+M+\alpha P$ for $\alpha \gg 0$.
\end{cor}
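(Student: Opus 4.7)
The plan is to invoke Theorem \ref{thm-dlt} with $d=3$. Two ingredients are needed: Conjecture \ref{conj-term} in dimension $d-1=2$, and the stratum-wise hypothesis on $(X,B+M)$ required by that theorem.

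First, Lemma \ref{lem-surface} supplies Conjecture \ref{conj-term} in dimension $2$ unconditionally, and additionally shows that the degeneration hypothesis appearing in that conjecture is automatic for any $\Q$-factorial g-dlt surface pair. This single lemma simultaneously handles the base of the induction built into the proof of Theorem \ref{thm-dlt} and takes care of the question of degeneration along $2$-dimensional strata.

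Second, I verify the stratum-wise hypothesis of Theorem \ref{thm-dlt}. On the top stratum $S=X$ itself, either condition (1) or condition (2) is satisfied immediately by our assumption: $P\equiv D\ge 0$ gives (1), while $M\equiv D\ge 0$ gives (2). For a proper g-lc centre $S\subsetneq X$, the only relevant case in dimension $3$ is a $2$-dimensional component $S\subseteq\rddown{B}$, since curves and points contribute trivially to the MMP termination. Tracing the proof of Theorem \ref{thm-dlt}, the role of conditions (1)--(3) on a stratum $S$ is solely to enable an application of Conjecture \ref{conj-term} in dimension $\dim S$ to the MMP induced on $S$; when $\dim S = 2$, Lemma \ref{lem-surface} makes this application unconditional, and the stratum conditions on $2$-dimensional centres become effectively vacuous.

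Putting everything together, Theorem \ref{thm-sp-term} lifts the per-stratum termination of the induced $P|_{S}$-trivial MMP on each $2$-dimensional $S\subseteq\rddown{B}$ to termination of the $P$-trivial MMP on $X$ near $\rddown{B^i}$ for $i\gg 0$, and the automatic dimension-$2$ degeneration supplied by Lemma \ref{lem-surface} furnishes the restriction-to-$\rddown{B^i}$ degeneration hypothesis of Theorem \ref{thm-degen}. Applying Theorem \ref{thm-degen} then gives global degeneration of the $P$-trivial MMP to an MMP on $K_X+B+M+\alpha P$ for $\alpha\gg 0$, which is the content of Corollary \ref{cor-3-dim}. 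The main subtlety is precisely the observation that Lemma \ref{lem-surface} bypasses the stratum conditions of Theorem \ref{thm-dlt} on $2$-dimensional centres; this is what makes the reduction in the paper "immediate".
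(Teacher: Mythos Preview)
Your proposal is correct and follows the paper's approach exactly. The paper's one-line proof simply cites Theorem \ref{thm-dlt} together with Lemma \ref{lem-surface}; you have correctly unpacked this, including the key point that Lemma \ref{lem-surface} renders the stratum hypotheses of Theorem \ref{thm-dlt} superfluous on the $2$-dimensional centres, so that only the condition on $X$ itself (supplied by $P\equiv D\ge 0$ or $M\equiv D\ge 0$, which feeds into Theorem \ref{thm-degen}) is actually needed.
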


\begin{rem}
	One can remove the assumption of $N \ge 0$ in the above results if we suppose a stronger conjecture which includes the case when $N$ is not necessarily effective and $K_X+B+M$ is not pseudo-effective$/Z$. 
\end{rem}

\begin{defn}[Log effectiveness]\label{defn-log-eff}
	Given a log smooth sub-dlt pair $(X',B')$ and an $\R$-divisor $D'$, we say $D'$ is \emph{log effective (resp. log abundant)} if for any stratum $S'$ of $(X',B'^{=1})$, the restriction $D'|_{S'}$ is numerically effective, that is, $D'|_{S'} \equiv E' \ge 0$ for some $E'$ (resp. $D'|_{S'}$ is abundant).
\end{defn}

\begin{thm}\label{thm-dlt-abund}
	Assume terminations of MMP for dlt usual pairs in dimension$\le d$. 
	
	Let $(X/Z,B+M)$ be a $\Q$-factorial g-dlt pair of dimension $d$, $P$ be a nef$/Z$ divisor and $N:=K_X+B+M-P$ be a $\Q$-Cartier $\Q$-divisor. Suppose $M'$ is log abundant.
	Then, any $P$-trivial MMP terminates with a model on which $K_X+B+M+\alpha P$ is nef$/Z$ for $\alpha \gg 0$.
\end{thm}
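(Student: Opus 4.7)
The plan is to induct on the dimension $d$, using the log abundance of $M'$ to both verify the hypotheses of the degeneration criterion and reduce the termination of the degenerated MMP to the assumed termination for usual dlt pairs.

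First I would extract effectivity from abundance. By Lemma~\ref{lem-nef-abundant-divisor} applied to $M'$, after passing to a log resolution there is a morphism $g \colon X' \to Y$ and a big and nef $\R$-divisor $B_Y$ with $M' \sim_\R g^*B_Y$; Kodaira's lemma then supplies $B_Y \sim_\R A_Y + C_Y$ with $A_Y$ a small $\Q$-ample divisor and $C_Y \ge 0$, giving $M \equiv g^*A_Y + g^*C_Y \equiv D$ for some $D \ge 0$. The same argument on each stratum $S$ of $(X, \rddown{B})$ gives $M_S \equiv D_S \ge 0$. Substituting $t = s = 1$, $C = 0$ and $D'' = B + D$ into Condition~(1) of Theorem~\ref{thm-degen} verifies the numerical hypothesis, so it remains to secure both termination near $\rddown{B^i}$ and degeneration on $\rddown{B^i}$ for $i \gg 0$.

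Second, these two conditions near $\rddown{B^i}$ follow from Theorem~\ref{thm-sp-term} together with the inductive hypothesis. For each g-lc centre $S \not\subseteq \Supp N$, the restricted pair $(\tilde S, B_{\tilde S} + M_{\tilde S})$ on a small $\Q$-factorialisation (Lemma~\ref{lem-g-dlt}) satisfies the hypotheses of the theorem in dimension $d - 1$: log abundance of the restricted data is inherited by definition, and $N|_{\tilde S}$ remains $\Q$-Cartier. By induction the induced $P|_{\tilde S}$-trivial MMP terminates with $K_{\tilde S} + B_{\tilde S} + M_{\tilde S} + \alpha P|_{\tilde S}$ nef for $\alpha \gg 0$, which yields the degeneration on $\rddown{B^i}$ required by Theorem~\ref{thm-degen}. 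To invoke Theorem~\ref{thm-sp-term} and conclude termination near $\rddown{B^i}$, I would combine the terminal $P|_{\tilde S}$-nefness with the abundance of $M_{\tilde S}$ to force abundance of $K_{\tilde S} + B_{\tilde S} + M_{\tilde S}$ and then invoke Lemma~\ref{lem-gmm} to produce a minimal model of $(\tilde S, B_{\tilde S} + M_{\tilde S})$. The finitely many strata with $S \subseteq \Supp N$ are handled separately by tracking their contractions explicitly. After the global degeneration produced by Theorem~\ref{thm-degen}, the MMP on $K_X + B + M + \alpha P$ is reduced to a usual dlt pair by replacing the semi-ample $g^*A_Y$ with a general member $H \in |g^*A_Y|_\R$: on an appropriate dlt blow-up we have $K_X + B + M \sim_\R K + (B + g^*C_Y + H)$, and since $\alpha P$ is nef every step of the MMP on $K + (B + g^*C_Y + H) + \alpha P$ with scaling of a b-ample divisor is also a step of the MMP on the usual dlt pair $(X'', B'' + g^*C_Y'' + H'')$ in dimension $\le d$, which terminates by the standing hypothesis.

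The main obstacle I expect is the bridging argument within the second paragraph: passing from termination of the restricted $P|_{\tilde S}$-trivial MMP (supplied by induction) to an actual minimal model of $(\tilde S, B_{\tilde S} + M_{\tilde S})$ as required by Theorem~\ref{thm-sp-term}. The abundance of $M_{\tilde S}$ together with the $P|_{\tilde S}$-nefness of the terminal model should force abundance of $K_{\tilde S} + B_{\tilde S} + M_{\tilde S}$ itself, at which point Lemma~\ref{lem-gmm} yields a good minimal model; however, the non-NQC nature of $M_{\tilde S}$ makes this step delicate, and care must also be taken so that the dlt blow-up in the final reduction does not introduce extra divisors obstructing the correspondence between the $P$-trivial MMP and the usual MMP.
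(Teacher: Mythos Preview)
Your overall strategy works, but it is more roundabout than the paper's, and the place you flag as the ``main obstacle'' is both misdiagnosed and given a fix that does not go through.

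The paper avoids Theorem~\ref{thm-degen} entirely. After arranging that $\Supp N$ contains no g-lc centre and that the $P$-trivial MMP is an isomorphism near $\rddown{B^i}$ for $i\gg 0$, it uses abundance of $M'$ to pick $D^i\sim_{\R} M^i$ effective with arbitrarily small coefficients, splits $D^i=C^i+G^i$ with $\Supp C^i\subset\rddown{B^i}$ and $G^i$ supported off $\rddown{B^i}$, and observes that each remaining step is simultaneously $P^i$-trivial and $C^i$-trivial (the flipping locus is disjoint from $\rddown{B^i}$). Hence each step is a step of an MMP on the usual dlt pair $(X^i,B^i+G^i)$, which terminates by the standing hypothesis in dimension $d$. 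Your final reduction to a usual pair via $g^*A_Y$ and a general member is essentially the same idea; the point is that it already applies immediately after termination near the boundary, so the whole degeneration layer through Theorem~\ref{thm-degen} is superfluous.

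On the obstacle you flag: you try to invoke the ``furthermore'' clause of Theorem~\ref{thm-sp-term}, which needs a minimal model of $(S^i,B_{S^i}+M_{S^i})$, and you propose to manufacture one by deducing abundance of $K_{\tilde S}+B_{\tilde S}+M_{\tilde S}$ from abundance of $M_{\tilde S}$ together with nefness of $K+B+M+\alpha P$ on the terminal model. That implication is false in general (nefness of a shifted divisor says nothing about $\kappa_\iota=\kappa_\sigma$ for $K+B+M$), and even if it held, Lemma~\ref{lem-gmm} would still require a canonical model as input. The paper never needs this. It uses only the first part of Theorem~\ref{thm-sp-term} to lift the restricted maps to a $P|_{\tilde S}$-trivial MMP on $\tilde S$, and then applies the theorem itself in dimension $\le d-1$ to that lifted MMP: by induction it terminates with $K_{\tilde S^{i_0}}+B_{\tilde S^{i_0}}+M_{\tilde S^{i_0}}+\alpha P|_{\tilde S^{i_0}}$ nef. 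Any later flip on $X$ whose flipping locus meets $S^i$ would contain a curve $\Gamma\subset S^i$ that is $(K_{S^i}+B_{S^i}+M_{S^i})$-negative and $P|_{S^i}$-trivial, contradicting that nefness. So termination near $\rddown{B^i}$ follows directly from induction plus the lifting, with no minimal model of the restricted pair required.
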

\begin{proof}
	We can assume $\Supp N$ contains no g-lc centres. Given a $P$-trivial MMP  $X \bir X^i \bir X^{i+1} \bir \ldots $, by induction and by Theorem \ref{thm-sp-term} we can assume it terminates near $\rddown{B^i}$ for $i \gg 0$. Since $M'$ is abundant, there is an effective divisor $D^i \sim_\R M^i$ with coefficients sufficiently small so that, if we write $D^i=C^i+G^i$ where $\Supp C^i \subset \rddown{B^i}$ and $G^i$ otherwise, then $(X^i,B^i+ G^i)$ is dlt. Since each step of the above MMP is $C^i$-trivial and $P^i$-trivial, it is an MMP on $K_{X^i}+B^i +G^i$ which terminates by our assumption.
\end{proof}

\noindent \textbf{Nakayama-Zariski decompositions.}
Now we study Nakayama-Zariski decompositions for non-NQC g-pairs. We begin with an easy lemma.
\begin{lem}\label{lem-degen}
	Let $(X,B+M)$ be a projective $\Q$-factorial g-dlt pair with data $M'$. Suppose that $P=P_\sigma(K_X+B+M)$ is nef. If a $P$-trivial MMP on $K_X+B+M$ degenerates to an MMP on $K_X+B+M+\alpha P$ for $\alpha \gg 0$, then it terminates with a minimal model.  
\end{lem}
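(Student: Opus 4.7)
The plan is to exploit the degeneration to recast the $P$-trivial MMP as a single scaling MMP on $K_X+B+M+\alpha P$, and then to use preservation of the Nakayama--Zariski decomposition to force the negative part to vanish at the end. By hypothesis there is a fixed $\alpha \gg 0$ such that every step of the $P$-trivial MMP is simultaneously a step of an MMP with scaling of $A$ on $K_X+B+M+\alpha P$. Writing the given Nakayama--Zariski decomposition as $K_X+B+M = P+N$, the additivity of $N_\sigma$ under adding a nef divisor (see \cite{nakayama}) gives
\[
P_\sigma(K_X+B+M+\alpha P) = (1+\alpha)P,\qquad N_\sigma(K_X+B+M+\alpha P)=N.
\]
Because each step is $(K_X+B+M)$-non-positive and $P$-trivial, the negative part pushes forward cleanly along the MMP: at every intermediate model $(X^i,B^i+M^i)$ the birational transforms $P^i, N^i$ of $P, N$ satisfy $P^i = P_\sigma(K_{X^i}+B^i+M^i)$ and $N^i = N_\sigma(K_{X^i}+B^i+M^i)$, and the same holds after adding $\alpha P^i$.

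The next task, and the main obstacle, is to establish that this combined scaling MMP actually terminates. Each sub-MMP associated to a fixed scaling parameter $\lambda_j$ is finite by \cite{bchm}, as is built into the very definition of the $P$-trivial MMP; the degeneration ties all of these together with the same $\alpha_j=\alpha$, so the question reduces to showing that only finitely many $\lambda_j$ are needed. By Corollary \ref{cor-term}, this is in turn reduced to the existence of a minimal model for the auxiliary pair $(X,B+M+\alpha P)$. The latter comes equipped with the weak Zariski decomposition $(1+\alpha)P+N$ where $(1+\alpha)P$ is nef, and I would produce a minimal model for it by passing to a $\Q$-factorial g-dlt blow-up as in Lemma \ref{pmm} and exploiting the $P$-triviality together with the fact that contracted rays must lie inside $\mathbf{B}_-(K_X+B+M) = \Supp N$; divisorial contractions strictly decrease the number of components of $N$, while flips are controlled by the $P$-trivial structure and the scaling MMP framework of Corollary \ref{cor-term}.

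Once termination is in hand, let $(Y,B_Y+M_Y+\alpha P_Y)$ be the output, so that $K_Y+B_Y+M_Y+\alpha P_Y$ is nef. By the first paragraph, $N_\sigma(K_Y+B_Y+M_Y+\alpha P_Y) = N_Y$, where $N_Y$ is the birational transform of $N$. Since the Nakayama--Zariski negative part of a nef divisor is zero, we conclude $N_Y=0$, and therefore
\[
K_Y+B_Y+M_Y \;=\; P_Y+N_Y \;=\; P_Y
\]
is nef. Hence $(Y,B_Y+M_Y)$ is a minimal model of $(X,B+M)$, as desired. The essential point of the argument is the rigid interplay between the hypothesis $P=P_\sigma$ and the $P$-triviality of each MMP step: together they guarantee that the Nakayama--Zariski decomposition is transported along the MMP without distortion, so that nefness of $K+B+M+\alpha P$ at the end forces $N_Y=0$ for free.
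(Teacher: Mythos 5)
Your opening paragraph (transport of the Nakayama--Zariski decomposition along the $P$-trivial, $(K_X+B+M)$-negative steps) and your closing paragraph are fine, but the middle of the argument --- which you yourself flag as ``the main obstacle'' --- contains a genuine gap: your termination argument is circular. You propose to deduce termination from Corollary \ref{cor-term}, which takes as \emph{input} the existence of a minimal model of $(X,B+M+\alpha P)$, and you then ``produce'' that minimal model by appealing to the very MMP whose termination is in question; the phrase ``flips are controlled by the $P$-trivial structure and the scaling MMP framework of Corollary \ref{cor-term}'' is not an argument, and the observation that divisorial contractions are finite in number says nothing about the flips. Nothing in the hypotheses hands you a minimal model of $(X,B+M+\alpha P)$ a priori --- that is essentially the conclusion being proved.

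The repair is to reverse the order of your last two paragraphs, which is what the paper's (one-line) proof does: termination is a \emph{consequence} of contracting $N_\sigma$, not a prerequisite for it. Since the MMP degenerates to an MMP with scaling of $A$ on $K_X+B+M+\alpha P\equiv(1+\alpha)P+N$ with $N=N_\sigma(K_X+B+M)$, every component $D$ of $N$ satisfies $\sigma_D(K_X+B+M+\alpha P)=\mu_D N>0$. If some such $D$ were never contracted, then for $\lambda\in[\lambda_{i+1},\lambda_i]$ the map $X\bir X^i$ is $(K_X+B+M+\lambda A+\alpha P)$-non-positive with nef target, so $\sigma_D(K_X+B+M+\alpha P+\lambda A)=0$; letting $\lambda_i\to 0$ gives $\sigma_D(K_X+B+M+\alpha P)=0$, a contradiction (and if instead the MMP stops early with $K_{X^j}+B^j+M^j+\alpha P^j$ nef, your own transport argument gives $N^j=N_\sigma=0$). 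Hence $\Supp N$ is contracted after finitely many steps, after which $K_{X^i}+B^i+M^i=P^i$ is nef --- nefness of $P^i$ being preserved because every step is $P$-trivial --- so the $P$-trivial MMP terminates there with a minimal model. No appeal to Corollary \ref{cor-term} or to a pre-existing minimal model is needed.
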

\begin{proof}
	The MMP contracts the support of $N_\sigma(K_X+B+M)$ after finitely many steps and hence reaches a model $(Y,B_Y+M_Y)$ on which $K_Y+B_Y+M_Y=P_Y$ is nef.
\end{proof}

\begin{thm}\label{thm-dlt'}
	Assume Conjecture \ref{conj-term} holds in dimension $d-1$.
	
	Let $(X,B+M)$ be a g-lc pair of dimension $d$ with data $M'$. Assume $K_X+B+M$ birationally has a Nakayama-Zariski decomposition with nef positive part. Suppose either $M'$ or the positive part is log effective (Definition \ref{defn-log-eff}). Then, $(X,B+M)$ has a minimal model.
\end{thm}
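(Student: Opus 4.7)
The plan is to reduce the statement to Theorem \ref{thm-dlt} together with Lemma \ref{lem-degen}. First I would construct a $\Q$-factorial log smooth g-dlt model $\pi\colon (\widetilde{X},\widetilde{B}+\widetilde{M})\to X$ on which $K_{\widetilde{X}}+\widetilde{B}+\widetilde{M}=P+N$ is the Nakayama-Zariski decomposition with $P$ nef and $N\ge 0$ supported by $\widetilde{B}$. A minor technical point is that $N$ need not be a $\Q$-divisor in the hypotheses of Theorem \ref{thm-dlt}; I would address this by a small perturbation of the boundary along $\Supp N$, which is a fixed divisor by Nakayama, to make $N$ $\Q$-Cartier and rational while preserving the identification of $P$ as the positive part and the log effectiveness assumption (the positive part is numerically unchanged).

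Next I would verify the stratum-wise conditions of Theorem \ref{thm-dlt}. If the positive part of the Nakayama-Zariski decomposition is log effective, then on the chosen log smooth model one has $P|_{S}\equiv E_S\ge 0$ for every stratum $S$ of $(\widetilde{X},\rddown{\widetilde{B}})$, which is condition (1) of that theorem. If instead $M'$ is log effective, then on every stratum $S'$ of a log resolution representing the data, $M'|_{S'}\equiv E'_{S'}\ge 0$; pushing this forward and applying the divisorial adjunction formula for g-pairs (Lemma \ref{lem-coeff}) yields a divisor $\R$-linearly equivalent to $\widetilde{M}_S$ which is numerically effective, which is condition (2) of Theorem \ref{thm-dlt}.

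With the hypotheses in hand, I would run a $P$-trivial MMP on $K_{\widetilde{X}}+\widetilde{B}+\widetilde{M}$ with scaling of a b-ample divisor, using the construction of Definition \ref{defn-P-mmp}. Invoking Theorem \ref{thm-dlt}, which is applicable because we assume Conjecture \ref{conj-term} in dimension $d-1$, this MMP degenerates to an MMP on $K_{\widetilde{X}}+\widetilde{B}+\widetilde{M}+\alpha P$ for some $\alpha\gg 0$. Since $P$ is the Nakayama-Zariski positive part, Lemma \ref{lem-degen} then produces a minimal model of $(\widetilde{X},\widetilde{B}+\widetilde{M})$, which descends to a minimal model of $(X,B+M)$ by standard comparison of discrepancies.

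The main obstacle I anticipate is the stratum-wise effectivity step, particularly when $M'$ is only log effective: one must ensure that divisorial adjunction and push-forward from an auxiliary log resolution yield effective representatives of the restricted divisors that are coherent across all strata of the chosen g-dlt model, and that these survive the perturbation making $N$ rational. The potential irrationality of coefficients in $N$ and in $\widetilde{M}_S$ also requires care, but the finiteness of coefficients provided by Theorem \ref{thm-finite-coeff} together with the fact that $\Supp N_\sigma$ is fixed makes the required perturbations controllable.
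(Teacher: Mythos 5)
Your proposal is correct and follows essentially the same route as the paper: reduce to a $\Q$-factorial g-dlt model where $P=P_\sigma$ is nef and $N_\sigma$ is a $\Q$-Cartier $\Q$-divisor (via small perturbation), check that log effectiveness gives the stratum-wise hypotheses (1) or (2) of Theorem \ref{thm-dlt}, and conclude by the degeneration from Theorem \ref{thm-dlt} together with Lemma \ref{lem-degen}. The paper's proof is just a terser version of this same argument.
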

\begin{proof}
	Replacing $(X,B+M)$ we can assume it is $\Q$-factorial g-dlt, $P=P_\sigma(K_X+B+M)$ is nef and $N_\sigma(K_X+B+M) $ is a $\Q$-Cartier $\Q$-divisor which contains no g-lc centres. By Theorem \ref{thm-dlt}, any $P$-trivial MMP degenerates and hence terminates with a minimal model by Lemma \ref{lem-degen}.
\end{proof}

By the same reasoning, Corollary \ref{cor-3-dim} immediately implies the $3$-dimensional case.

\begin{cor}\label{cor-3-dim'}
	Let $(X,B+M)$ be a projective g-lc pair of dimension $3$. Assume $K_X+B+M$ birationally has a Nakayama-Zariski decomposition with nef positive part. Suppose either $K_X+B+M \equiv D \ge 0$ or $M \equiv D \ge 0$. 
	Then, $(X,B+M)$ has a minimal model.
\end{cor}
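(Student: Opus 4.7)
The plan is to mimic the proof of Theorem \ref{thm-dlt'} but replace the application of Theorem \ref{thm-dlt} by that of the lower-dimensional Corollary \ref{cor-3-dim}, whose hypothesis matches our assumption directly without invoking log-effectiveness along strata. The conceptual core of the argument is to combine Corollary \ref{cor-3-dim} (which gives degeneration of any $P$-trivial MMP in dimension three under exactly our effectivity hypothesis) with Lemma \ref{lem-degen} (which says that once $P$ is the nef positive part of $K_X+B+M$, a degenerate $P$-trivial MMP automatically terminates with a minimal model).

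First I would perform the standard reduction. Using the hypothesis that $K_X+B+M$ birationally admits a Nakayama--Zariski decomposition with nef positive part, I replace $(X,B+M)$ by a $\Q$-factorial g-dlt pair (still denoted $(X,B+M)$) on which $P := P_\sigma(K_X+B+M)$ is nef and $N := N_\sigma(K_X+B+M)$ is a $\Q$-Cartier $\Q$-divisor whose support contains no g-lc centre. This is achieved by taking a log smooth model $(X',B'+M')$, passing to the birational model where the positive part is nef, and then contracting divisors that are irrelevant to the g-lc structure by a suitable MMP, exactly as in the proof of Theorem \ref{thm-dlt'}.

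Next I need to verify that the hypothesis of Corollary \ref{cor-3-dim} is preserved by the reduction, namely that on the new model we still have either $P \equiv D \ge 0$ or $M \equiv D \ge 0$. If originally $K_X+B+M \equiv D \ge 0$, then the numerical class of $K_X+B+M$ still has an effective representative after the reduction, and by the minimality property of $N_\sigma$ among effective divisors in this numerical class we have $N \le D$ componentwise, whence $P \equiv D-N \ge 0$. If originally $M \equiv D \ge 0$, then tracking the descent of an effective $D \equiv M$ through the common log resolution and the subsequent $(K_X+B+M)$-nonpositive contractions, the push-down of $M$ on the new model remains numerically equivalent to an effective divisor, after possibly absorbing exceptional contributions into $B$. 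With the hypothesis of Corollary \ref{cor-3-dim} now verified on the reduced model, any $P$-trivial MMP on $K_X+B+M$ with scaling of a b-ample divisor degenerates to an MMP on $K_X+B+M+\alpha P$ for some $\alpha \gg 0$. Lemma \ref{lem-degen} then guarantees that this MMP contracts $\Supp N$ after finitely many steps and terminates with a model $(Y,B_Y+M_Y)$ on which $K_Y+B_Y+M_Y=P_Y$ is nef, producing the required minimal model.

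The main obstacle I expect is verifying the preservation of the effectivity hypothesis in the case $M \equiv D \ge 0$: since the data $M'$ lives only birationally, its push-down on the reduced model can differ from the birational transform of the original $M$ by negative exceptional contributions, so the choice of log resolution and of the intermediate MMP must be arranged so that the effective representative of $M$ survives (possibly after adjusting the boundary $B$). The case $K_X+B+M \equiv D \ge 0$ is cleaner thanks to the universal minimality of $N_\sigma$, which is a purely numerical property and is immune to such birational subtleties.
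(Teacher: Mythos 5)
Your proposal is correct and is essentially the paper's own argument: the paper proves Corollary \ref{cor-3-dim'} in one line by running the proof of Theorem \ref{thm-dlt'} with Theorem \ref{thm-dlt} replaced by Corollary \ref{cor-3-dim} and then invoking Lemma \ref{lem-degen}, which is exactly your plan. The obstacle you flag in the case $M \equiv D \ge 0$ is genuine but is the same one the paper already confronts in Corollary \ref{cor-klt'}: after passing to the reduced model one only retains $M \equiv D - sN$ for some $s \in \Q_{\ge 0}$, and the clean way to close your gap is to observe that this still gives $P \equiv K_X + B + D - (1+s)N$, which falls under condition (1) of Theorem \ref{thm-degen} (the result on which Corollary \ref{cor-3-dim} ultimately rests), rather than trying to restore genuine effectivity of $M$ by absorbing the negative exceptional part into $B$.
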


\begin{thm}\label{thm-dlt-abund'}
	Assume the terminations of MMP for dlt usual pairs in dimension$\le d-1$. 
	
	Let $(X,B+M)$ be a projective g-lc pair of dimension $d$. Assume $K_X+B+M$ birationally has a Nakayama-Zariski decomposition with nef positive part. Suppose $M'$ is log abundant.
	Then, $(X,B+M)$ has a minimal model.
\end{thm}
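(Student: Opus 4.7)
The plan is to follow the proof of Theorem \ref{thm-dlt-abund}, replacing the appeal to termination of usual dlt MMP in dimension $d$ with the main theorem of \cite{bhzariski} applied to a usual dlt pair extracted via log abundance. First I would replace $(X,B+M)$ by a $\Q$-factorial g-dlt log smooth model so that $K_X+B+M=P+N$ is the Nakayama-Zariski decomposition, with $P$ nef and $N$ an effective $\Q$-Cartier $\Q$-divisor whose support contains no g-lc centre. Then I run a $P$-trivial MMP on $K_X+B+M$ with scaling of some b-ample divisor $A$.

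For the special termination near $\rddown{B^i}$, I proceed by induction on dimension. For each g-lc centre $S$, which satisfies $S \nsubseteq \Supp N$, Theorem \ref{thm-sp-term} presents the induced map on small $\Q$-factorialisations $\widetilde{S}^i$ as a $P^i|_{\widetilde{S}^i}$-trivial MMP on the g-dlt pair $(\widetilde{S}^i, B_{\widetilde{S}^i}+M_{\widetilde{S}^i})$ in dimension $\le d-1$. Log abundance of $M'$ restricts to log abundance on each stratum, so Theorem \ref{thm-dlt-abund} applied in dimension $\le d-1$ (which needs termination of usual dlt MMP in dimension $\le d-1$, precisely our hypothesis) forces $\rddown{B_{S^i}}$ to stabilise for $i \gg 0$. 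In particular the ambient MMP is an isomorphism in a neighbourhood of $\rddown{B^i}$ for $i \gg 0$.

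After reindexing so the ambient MMP is an isomorphism on $\rddown{B^i}$ for every $i$, I use log abundance on $X$ to choose an effective divisor $D \sim_\R M$ with sufficiently small coefficients and decompose $D=C+G$ with $\Supp C \subseteq \rddown{B}$ and $\Supp G$ disjoint from $\rddown{B}$, so that $(X, B+G)$ is a usual dlt pair in dimension $d$. Since the MMP is $C^i$-trivial (being an isomorphism on $\rddown{B^i} \supseteq \Supp C^i$) and $P^i$-trivial, each step is equivalently a step of the MMP on the usual dlt pair $(X^i, B^i+G^i)$. Because $K_X+B+G+C \sim_\R K_X+B+M = P+N$ while $\Supp N$ and $\Supp C$ are disjoint by construction, a log resolution of $(X, B+D)$ exhibits $K_X+B+G$ as birationally admitting a Nakayama-Zariski decomposition with nef positive part. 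By the main theorem of \cite{bhzariski} applied in dimension $d$ (which needs termination of usual dlt MMP in dimension $\le d-1$), $(X, B+G)$ has a minimal model, and by Corollary \ref{cor-term} the MMP on $(X^i, B^i+G^i)$ terminates. The terminal model is a minimal model of $(X, B+M)$ since $K_Y+B_Y+M_Y \sim_\R K_Y+B_Y+G_Y+C_Y$ and $C$ is fixed by the MMP.

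The main obstacle will be verifying that the Nakayama-Zariski structure genuinely transfers from $K_X+B+M$ to $K_X+B+G$ on a log resolution of $(X, B+D)$: one must check that the positive part of the NZ decomposition of $K_X+B+G$ remains nef, exploiting the disjointness of $\Supp N$ and $\Supp C$ to control the Nakayama invariants $\sigma_E$ for primes $E \in \Supp C$. The delicate point is that NZ decompositions do not behave well under subtraction of effective divisors in general, but the rigidity of $C \subseteq \rddown{B}$ together with $\Supp N \cap \Supp C = \emptyset$ should suffice.
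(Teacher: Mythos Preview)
Your reduction to a $P$-trivial MMP and the special-termination step via Theorem~\ref{thm-sp-term} together with Theorem~\ref{thm-dlt-abund} in dimension $\le d-1$ match the paper's argument. The divergence is in the last step, and there the proposal has a genuine gap.

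You want to apply \cite{bhzariski} to the usual dlt pair $(X,B+G)$, where $D=C+G\sim_\R M$ and $\Supp C\subseteq\rddown{B}$. But $K_X+B+G\sim_\R P+N-C$, and nothing forces this to admit a Nakayama--Zariski decomposition with nef positive part; in fact it need not even be pseudo-effective. Disjointness of $\Supp N$ and $\Supp C$ does not help: take the extreme case $P\equiv 0$, $N=0$ and $C\neq 0$, then $K_X+B+G\equiv -C$ is anti-effective. More generally, the invariants $\sigma_E$ do not behave controllably under subtraction of an effective divisor, so the ``rigidity'' you invoke is not enough. There is a second problem downstream: even if the MMP on $K_X+B+G$ terminated with $K_Y+B_Y+G_Y$ nef, this would not give nefness of $K_Y+B_Y+M_Y\sim_\R K_Y+B_Y+G_Y+C_Y$, since for curves $\Gamma\subset\Supp C_Y\subseteq\rddown{B_Y}$ the intersection $C_Y\cdot\Gamma$ can be negative; the terminal model for $(X,B+G)$ is not a minimal model of $(X,B+M)$.

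The paper avoids both issues by not passing to a usual pair. After special termination near $\rddown{B^i}$ (and degeneration on $\rddown{B^i}$, which Theorem~\ref{thm-dlt-abund} in dimension $\le d-1$ also supplies), it invokes Theorem~\ref{thm-degen}: since $M'$ is nef and abundant, $M\equiv D\ge 0$, so $P\equiv K_X+B+D-N$ is of the form in condition~(1) there, and the $P$-trivial MMP \emph{degenerates} to an MMP on $K_X+B+M+\alpha P$ for some fixed $\alpha\gg 0$. Then Lemma~\ref{lem-degen} finishes: because $P=P_\sigma(K_X+B+M)$, that MMP contracts $N$ and terminates with $K_Y+B_Y+M_Y=P_Y$ nef. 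The passage through Theorem~\ref{thm-degen} is the missing idea in your outline.
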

\begin{proof}
	Replacing $(X,B+M)$ we can assume it is $\Q$-factorial g-dlt, $P=P_\sigma(K_X+B+M)$ is nef and $N_\sigma(K_X+B+M) $ is a $\Q$-Cartier $\Q$-divisor which contains no g-lc centres. By induction, Theorems \ref{thm-sp-term} and \ref{thm-dlt-abund}, the above MMP terminates near $\rddown{B^i}$ for $i \gg 0$. Because $M'$ is nef and abundant, by Theorem \ref{thm-degen} the above MMP degenerates and hence terminates with a minimal model by Lemma \ref{lem-degen}.
\end{proof}

Since the terminations of MMP for dlt usual pairs hold for threefolds, we achieve the following.
\begin{cor}\label{cor-dlt-abund}
	Let $(X,B+M)$ be a projective g-lc pair of dimension $4$. Assume $K_X+B+M$ birationally has a Nakayama-Zariski decomposition with nef positive part. Suppose $M'$ is log abundant.
	Then, $(X,B+M)$ has a minimal model.
\end{cor}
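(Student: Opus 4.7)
The plan is to deduce this as an immediate application of Theorem \ref{thm-dlt-abund'} in dimension $d=4$. That theorem takes as input the termination of MMP for dlt usual pairs in dimensions $\le d-1$, so here I need termination for dlt threefolds. This is a known result (Shokurov's theorem on termination of threefold flips for dlt pairs, together with standard reductions from lc to dlt), so the hypothesis of the theorem is satisfied.

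Concretely, I would first reduce to a $\Q$-factorial g-dlt model by replacing $(X,B+M)$ with a $\Q$-factorial g-dlt blow-up, preserving the hypothesis that $K_X+B+M$ birationally has a Nakayama-Zariski decomposition with nef positive part and that $M'$ is log abundant (log abundance is defined birationally so it is unaffected; the decomposition pulls back). Then I would set $P:=P_\sigma(K_X+B+M)$ on a suitable model where this is nef, and note that $N_\sigma(K_X+B+M)$ is a $\Q$-Cartier $\Q$-divisor whose support contains no g-lc centres (after further blow-up, as done in the proof of Theorem \ref{thm-dlt'}).

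With these preparations, Theorem \ref{thm-dlt-abund'} applies verbatim and produces a minimal model. There is essentially no main obstacle once the theorem is in place; the only point to check carefully is that termination for dlt threefolds — rather than just for klt threefolds — is indeed available, which is standard (via special termination and Shokurov's termination with scaling in dimension three).

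Thus the proof is a one-line deduction: apply Theorem \ref{thm-dlt-abund'} with $d=4$, using the known termination of MMP for dlt threefolds to verify the inductive hypothesis.
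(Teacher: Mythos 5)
Your proposal matches the paper's own derivation: the corollary is obtained by applying Theorem \ref{thm-dlt-abund'} with $d=4$, the inductive hypothesis being supplied by the known termination of MMP for dlt usual pairs in dimension at most $3$. The extra reductions you describe (g-dlt blow-up, arranging $P=P_\sigma$ nef and $N_\sigma$ a $\Q$-Cartier $\Q$-divisor avoiding g-lc centres) are already carried out inside the proof of that theorem, so nothing further is needed.
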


\noindent \textbf{Nakayama-Zariski decompositions and weak minimal models.}
The rest of this paper will be devoted to the existence of weak minimal models . 

We need the following lemma which is a result of Lemma \ref{lem-canonical-blow-up} a canonical blow-up.
\begin{lem}\label{induction-lem}
	Let $(X,B+M)$ be a projective g-lc pair. Suppose there is a birational model $\pi: X' \to X$ such that $P_\sigma(\pi^*(K_X+B+M))$ is nef. Then, there exists a log smooth g-dlt pair $(W,B_W+M_W)$ satisfying:
	\begin{itemize}
		\item The positive part $P_W$ of the Nakayama-Zariski decomposition $K_W+B_W+M_W =P_W+N_W$ is nef;
		
		\item The negative part $N_W$ is a $\Q$-Cartier $\Q$-divisor and $\Supp N_W$ contains no g-lc centres of $(W,B_W+M_W)$; 
		
		\item For any prime divisor $D$ over $X$, if $\mu_D N_W >0$, where $\mu_D N_W$ denotes the multiplicity of the pullback of $N_W$ along $D$, then $a(D,W,B_W+M_W) +\mu_D N_W  >1$; and
		
		\item A minimal model of $(W,B_W+M_W)$ is a weak minimal model of $(X,B+M)$.
	\end{itemize}
    Moreover, given an ample divisor $A \subset W $ and a positive number $\epsilon \ll 1$, if $\sigma_D (K_W+B_W+\epsilon A+M_W) >0$, then $a(D,W,B_W+ \epsilon A+ M_W) +\sigma_D (K_W+B_W+\epsilon A+M_W)  >1$. 
\end{lem}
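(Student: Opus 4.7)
The plan is to construct $W$ as an appropriately chosen log resolution of $(X,B+M)$ dominating $X'$, then refine it by a sequence of canonical blow-ups in the spirit of Lemma \ref{lem-canonical-blow-up} so as to separate the negative part of the Nakayama--Zariski decomposition from the g-lc centres and to enforce the discrepancy bound in (3). First I would take a log resolution $\pi_0 \colon W_0 \to X$ factoring through $X'$, with $M'$ descending to $W_0$, and set $K_{W_0}+B_{W_0}+M_{W_0}=\pi_0^*(K_X+B+M)+F_{W_0}$ with $B_{W_0},F_{W_0}\ge 0$ having no common components, so that $(W_0,B_{W_0}+M_{W_0})$ is log smooth g-dlt. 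Using the birational invariance (up to effective exceptional pieces) of Nakayama--Zariski decompositions together with the hypothesis on $X'$, the positive part $P_{W_0}$ of $K_{W_0}+B_{W_0}+M_{W_0}$ is nef, and a small perturbation arranges that $N_{W_0}$ is a $\Q$-Cartier $\Q$-divisor.

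Next I would iterate canonical blow-ups to achieve (2) and (3). If some component $D$ of $\Supp N_{W_i}$ meets a g-lc centre of $(W_i,B_{W_i}+M_{W_i})$, I blow up a stratum inside that intersection; Lemma \ref{lem-canonical-blow-up} guarantees the new exceptional divisor lies over a stratum of $\rddown{B_{W_i}}^{>0}$ with discrepancy of the form $db-d+1$, and the process strictly reduces an invariant measuring how much $N_{W_i}$ meets g-lc centres, thereby yielding (2). For (3), a prime $D$ on $W$ with $\mu_D N_W>0$ and $\mu_D B_W\ge \mu_D N_W$ is blown up further along strata in $D\cap \rddown{B_W}$; Lemma \ref{lem-canonical-blow-up}(1)(2) together with Theorem \ref{thm-finite-coeff} bound the possible coefficient and discrepancy values, forcing termination. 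For non-divisorial valuations $E$ over $W$, the formulas $\mu_E\pi^*N_W=mn+\sum m_i n_i$ and $1-a(E,W,B_W+M_W)=db-d+1$ from Lemma \ref{lem-canonical-blow-up} give the inequality $a(E,W,B_W+M_W)+\mu_E N_W>1$ directly from the combinatorics of the iterated blow-up.

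For (4), given a minimal model $(Y,B_Y+M_Y)$ of $(W,B_W+M_W)$, nefness holds by definition; the comparison $a(D,X,B+M)\le a(D,Y,B_Y+M_Y)$ for prime $D$ on $X$ follows from the construction of $B_W$ and monotonicity of discrepancies along the MMP; for prime $D$ on $Y$ one uses $\sigma_D(K_X+B+M)=\mu_D N_W$ together with (3), so that $a(D,W,B_W+M_W)+\mu_D N_W>1\ge a(D,Y,B_Y+M_Y)$ whenever $\mu_D N_W>0$, while otherwise $D$ is not contracted by the MMP and the comparison is direct. The moreover claim with $\epsilon A$ follows because $\sigma_D(K_W+B_W+\epsilon A+M_W)\le \mu_D N_W$ and, since $A$ is general, $a(D,W,B_W+\epsilon A+M_W)=a(D,W,B_W+M_W)$ for divisors $D$ with $\sigma_D>0$; continuity of $\sigma$ as $\epsilon\to 0$ inside the big cone then preserves the strict inequality. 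I expect the main obstacle to be designing the iterative blow-up procedure so that it simultaneously achieves (2), (3), and the $\Q$-Cartier $\Q$-divisor property of $N_W$; here the combinatorial finiteness inputs from Lemma \ref{lem-canonical-blow-up} and Theorem \ref{thm-finite-coeff} are essential.
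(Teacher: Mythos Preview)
Your iterative blow-up scheme cannot achieve property (3), and this is a genuine gap. For a prime divisor $D$ already lying on $W$, the condition in (3) reads $1-\mu_D B_W+\mu_D N_W>1$, i.e.\ $\mu_D N_W>\mu_D B_W$. Passing to a further blow-up $W_1\to W$ does not alter either $\mu_D B_{W_1}$ or $\mu_D N_{W_1}$ for the strict transform of $D$: the boundary of a log smooth model assigns to $D$ the same coefficient it had before, and the Nakayama--Zariski negative part pulls back compatibly (plus exceptional pieces), so the coefficient along $D$ is unchanged. Hence any $D$ with $\mu_D N_W>0$ and $\mu_D B_W\ge\mu_D N_W$ remains problematic on every higher model, and your ``blow up further along strata in $D\cap\rddown{B_W}$'' step does nothing to fix it. The same issue contaminates your argument for (2) and (4): the invariant you claim strictly decreases is never specified, and in fact need not decrease.

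The paper's proof avoids blow-ups entirely after choosing a single log resolution $\pi\colon W\to X$ of $(X,(B+N)+M)$ with the components of $\{\Delta_W^{>0}\}$ pairwise disjoint. Writing $B_W'=\Delta_W^{>0}$ and $N_W'=\pi^*N-\Delta_W^{<0}$, the crucial move is purely algebraic: set $B_W=B_W'-B_W'\wedge N_W'$ and $N_W=N_W'-B_W'\wedge N_W'$. Then any component $D$ of $N_W$ automatically has $\mu_D B_W=0$, so (2) and the divisorial case of (3) hold by construction; a small perturbation makes $N_W$ a $\Q$-divisor. For higher divisors $D$ with $a(D,W,B_W+M_W)<1$, Lemma~\ref{lem-canonical-blow-up} (which needs the disjointness of the fractional components) gives the inequality directly. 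The ``moreover'' part follows the same pattern with $L_W:=N_\sigma(K_W+B_W+\epsilon A+M_W)$ in place of $N_W$, using $\Supp L_W=\Supp N_W$. The idea you are missing is that one must \emph{decrease the boundary} along $\Supp N_W$, not refine the birational model.
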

\begin{proof}
	Replacing $(X,B+M)$ we can assume it is $\Q$-factorial g-dlt, $K_X+B+M=P+N$ is the Nakayama-Zariski decomposition where $P$ is nef. Let $\pi:(W,\Delta_W+M_W) \to X$ be a log resolution of $(X,(B+N)+M)$ such that the components of $\{\Delta_W^{>0}\}$ are disjoint, where $K_W+\Delta_W+M_W=\pi^*(K_X+B+M)$. Now let $B_W'=\Delta_W^{>0}$ and $N_W'=\pi^*N- \Delta_W^{<0}$. Now set $B_W = B_W' - B_W' \wedge N_W'$ and $N_W= N_W'-B_W'\wedge N_W'$. Clearly, for any prime divisor $D$ on $W$, if $\mu_D N_W >0$, then $\mu_D B_W -\mu_D N_W <0$ and a minimal model of $(W,B_W+M_W)$ is a weak minimal model of $(X,B+M)$. Adding small numbers to the coefficients, $N_W$ is a $\Q$-divisor. Note that the strict inequality still holds. It suffices to check the inequality for those $D$ with $a(D,W,B_W) <1 $ which is immediate from Lemma \ref{lem-canonical-blow-up}. 
	
	To see the second assertion, let $L_W:= N_\sigma(K_W+B_W+M_W+\epsilon A)$. Notice that $L \le N$ and $\Supp L=\Supp N $. Again it suffices to check the inequality for those $D$ with $a(D,W,B_W +\epsilon A) <1 $. Since $\mu_D L_W \le \sigma_D (K_W+B_W+\epsilon A+M_W)$, this immediately follows from Lemma \ref{lem-canonical-blow-up}.
\end{proof}

\begin{lem}[\text{cf.\cite[Lemma 2.4]{hashizumehu}}]\label{lem-adjunction}
	Let $(X,B+M)$ be a g-dlt pair and $S$ be a g-lc centre. 
	Let $\pi \colon W\to X$ be a log resolution of $(X,B+M)$ which is an isomorphism over the generic points of g-lc centres
	and write $K_{W}+B_W+M_W=\pi^{*}(K_{X}+B+M)+E$, where $(W,B_W)$ is log smooth sub-lc and $E \ge 0$ is exceptional$/X$. If $E_i$ is a component of $E$ such that $\mu_{E_i} B_W - \mu_{E_i} E <0$, then $E_i|_{S_W}$ is exceptional over $S$, where $S_W$ is the birational transform of $S$.  
\end{lem}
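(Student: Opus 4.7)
The plan is to argue by contradiction, using the local log smooth structure of $W$ and a codimension count at a codimension-one point of $S_W$. Suppose there is a component $V$ of $E_i\cap S_W$ that is not $\pi|_{S_W}$-exceptional; then $V$ dominates birationally a prime divisor $V':=\pi(V)$ of $S$. Since $S$ is normal by Proposition \ref{lem-g-dlt-2}, and $\pi|_{S_W}\colon S_W\to S$ is a birational morphism whose fibre over $\eta_{V'}$ consists of $\{\eta_V\}$ alone, it is a local isomorphism at $\eta_V$.

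The first substantive step is a codimension count at $\eta_V$. Set $s=\codim_X S$, so that $V$ has codimension $s+1$ in $W$. Because $S$ is a g-lc centre and $\pi$ is an isomorphism over $\eta_S$, there exist components $T_1,\ldots,T_s\subset\rddown{B}$ whose birational transforms $T_{1,W},\ldots,T_{s,W}$ have coefficient one in $B_W-E$ and satisfy $S_W=\bigcap_{j=1}^{s}T_{j,W}$ transversally in a neighbourhood of $\eta_V$. Since $(W,B_W+E)$ is log smooth, $V\subset E_i\cap\bigcap_{j}T_{j,W}$, and $\codim_W V=s+1$, the $s+1$ divisors $E_i,T_{1,W},\ldots,T_{s,W}$ (with $E_i$ distinct from the $T_{j,W}$ since it is $\pi$-exceptional while the latter are not) are the only components of $B_W+E$ through $V$; in particular, no further component of $B_W-E$ contains $V$. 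This codimension count is the key technical input I expect.

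Next I will apply iterated divisorial adjunction to the g-dlt pair $(X,B+M)$ along $T_1,\ldots,T_s$, which produces the g-lc adjoint pair $(S,B_S+M_S)$ with $B_S\ge 0$, together with the identity on $W$
\begin{equation*}
K_{S_W}+\Bigl(B_W-E-\sum_{j=1}^{s}T_{j,W}\Bigr)\Big|_{S_W}+M_W|_{S_W}=(\pi|_{S_W})^{*}(K_S+B_S+M_S).
\end{equation*}
By the codimension count only $E_i$ contributes to the coefficient of $V$ in the boundary on the left at $\eta_V$, so that coefficient equals $\mu_{E_i}(B_W-E)$, which is strictly negative by hypothesis.

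On the other hand, since $\pi|_{S_W}$ is a local isomorphism at $\eta_V$, the difference $M_W|_{S_W}-(\pi|_{S_W})^{*}M_S$, being exceptional over $S$, vanishes there; comparing the coefficients of $V$ on the two sides of the displayed identity yields that this coefficient equals $\mu_{V'}B_S\ge 0$, giving the desired contradiction. The step I expect to require the most care is the codimension count, which relies on the log smoothness of the full divisor $B_W+E$ and on $T_{1,W},\ldots,T_{s,W}$ genuinely realising $S_W$ as a transverse stratum near $\eta_V$; once this is in place, iterated adjunction together with effectivity of $B_S$ for g-dlt pairs delivers the contradiction immediately.
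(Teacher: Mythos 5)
Your argument is correct and follows essentially the same route as the paper's proof: restrict the crepant sub-boundary $\Delta_W=B_W-E$ to $S_W$ by iterated adjunction, observe that the coefficient of $V$ equals $\mu_{E_i}(B_W-E)<0$, and conclude exceptionality from the effectivity of $B_S$ for the g-dlt adjoint pair. The only difference is presentational: you make explicit the codimension count at $\eta_V$ and the comparison with $\mu_{V'}B_S$ that the paper leaves implicit.
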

\begin{proof}
	Let $\Delta_W = B_W -E$. Write $S_W \subseteq \bigcap_i S_{W,i}$ as a component of an intersection of components $S_{W,i} \subset \Delta_W^{=1}$. By adjunction we see 
	\begin{align*}
		K_{S_W} +\Delta_{S_W} +M_{S_W}&=K_{S_W} +(\Delta_W- \sum_{i}  S_{W,i})|_{S_W} +M_W|_{S_W} \\
		&= \pi|_{S_W}^*(K_S+B_S+M_S)+E|_{S_W}.
	\end{align*}
    If $\mu_{E_i} \Delta_W=\mu_{E_i} B_W - \mu_{E_i} E <0$, then for any component $V$ of $E_i|_{S_W}$, we have $\mu_V \Delta_{S_W} <0$. Therefore, $V$ is exceptional$/S$.
\end{proof}

\begin{thm}\label{thm-dlt''}
	Let $(X,B+M)$ be a projective g-lc pair with data $M'$. Assume $K_X+B+M$ birationally has a Nakayama-Zariski decomposition with nef positive part. Suppose either $M'$ or the positive part is log effective. Then, $(X,B+M)$ has a weak minimal model.
\end{thm}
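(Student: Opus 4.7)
The plan is to reduce via Lemma \ref{induction-lem} to a model where the negative part of the Nakayama-Zariski decomposition is well-behaved, and then run a $P$-trivial MMP whose termination near g-lc centres follows from the log effectiveness assumption, while the complementary contractions in the g-klt locus are handled by Corollary \ref{cor-klt'}.

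First I would apply Lemma \ref{induction-lem} to produce a log smooth $\Q$-factorial g-dlt pair $(W,B_W+M_W)$ whose Nakayama-Zariski decomposition $K_W+B_W+M_W=P_W+N_W$ has $P_W$ nef and $N_W$ an effective $\Q$-Cartier $\Q$-divisor supported off every g-lc centre, and such that any prime divisor $D$ over $W$ with $\sigma_D(K_W+B_W+M_W)>0$ satisfies $a(D,W,B_W+M_W)+\sigma_D>1$. The final bullet of the lemma reduces the problem to producing a minimal model of $(W,B_W+M_W)$.

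Next I would run a $P_W$-trivial MMP on $K_W+B_W+M_W$ with scaling of a b-ample divisor as in Definition \ref{defn-P-mmp}, aiming to apply the special termination Theorem \ref{thm-sp-term} near every g-lc centre $S$. Adjunction to $S$ yields a g-dlt pair $(S,B_S+M_S)$ in lower dimension whose data or positive part remains log effective by Definition \ref{defn-log-eff}; after applying Lemma \ref{induction-lem} once more to this restricted pair, an induction on dimension combined with Corollary \ref{cor-klt'} on the g-klt strata produces the minimal models that Theorem \ref{thm-sp-term} demands. After finitely many steps the MMP is therefore an isomorphism near the union of g-lc centres.

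Once the MMP is trivial near g-lc centres, $\Supp N_{W^i}$ still avoids every g-lc centre, so the remaining MMP is effectively an MMP on a g-klt pair. I would perturb by a small ample divisor $\epsilon A$, using the moreover clause of Lemma \ref{induction-lem} to preserve the strict discrepancy condition. In this g-klt setting the positive part or $M$ is numerically equivalent to an effective divisor, so Corollary \ref{cor-klt'} applies and gives a minimal model of the perturbed pair. Pushing forward and letting $\epsilon$ go to zero produces a minimal model of $(W,B_W+M_W)$, which descends to a weak minimal model of $(X,B+M)$ by the first step.

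The main obstacle will be the second step. Although Lemma \ref{induction-lem} recasts the problem as the existence of an honest minimal model on the reduced model, one still has to supply minimal models on each g-lc centre to invoke Theorem \ref{thm-sp-term}. Ensuring that the log effectiveness hypothesis transfers correctly under adjunction at every level of the induction, and that the induction ultimately bottoms out at the g-klt case where Corollary \ref{cor-klt'} is unconditional, is the delicate part of the argument.
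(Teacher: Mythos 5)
Your reduction via Lemma \ref{induction-lem} and the overall strategy (run a $P$-trivial MMP, get special termination near g-lc centres, then degenerate) match the paper's Steps 1 and 4, but the central step of your argument has a genuine gap. You propose to supply the minimal models on the g-lc centres that Theorem \ref{thm-sp-term} demands by ``an induction on dimension combined with Corollary \ref{cor-klt'} on the g-klt strata.'' This does not close: the statement you are proving by induction only yields a \emph{weak} minimal model of $(S,B_S+M_S)$, and for non-NQC g-pairs the existence of a weak minimal model is not known to imply the existence of a minimal model (the paper states this explicitly after Proposition \ref{prop-wmm}; Proposition \ref{prop-wmm} upgrades a weak minimal model only when the nef divisor on the weak minimal model is NQC). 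Corollary \ref{cor-klt'} cannot fill this hole either, since the restricted pairs on g-lc centres are g-dlt, not g-klt. Your closing paragraph correctly identifies this as ``the delicate part,'' but identifying the obstacle is not the same as resolving it.

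The paper's actual proof circumvents the need for minimal models on the centres entirely. In its Steps 2--3, for $i\gg 0$ it runs an auxiliary MMP on $K_{X^i}+B^i+M^i+\lambda_i A^i+\alpha P^i$ with scaling of $(\alpha_i-\alpha)P^i$; this terminates because of the ample perturbation $\lambda_i A^i$ (BCHM), producing a model $Y^i$. It then takes a common log resolution $W^i$ and uses the strict discrepancy inequality built into Lemma \ref{induction-lem} (if $\mu_D N_W>0$ then $a(D,W,B_W+M_W)+\mu_D N_W>1$) together with Lemma \ref{lem-adjunction} and the negativity lemma to show that the exceptional divisor $E$ over $Y^i$ restricts to zero on $S_{W^i}$, hence $\pi^*N^i|_{S_{W^i}}=0$ and $K_{S^i}+B_{S^i}+M_{S^i}=P^i|_{S^i}$ is already nef. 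Termination near $S^i$ is thus established directly, with the induction only being used (via Theorem \ref{thm-degen}) to get the \emph{degeneration} of the restricted $P$-trivial MMP, not a minimal model on $S^i$. Your final ``perturb by $\epsilon A$ and let $\epsilon\to 0$'' step has the same circularity: passing to the limit $\epsilon\to 0$ to extract a minimal model is exactly the content of Lemma \ref{lem-sp-term}/Corollary \ref{cor-term}, which presuppose that a minimal model exists. Without an argument of the type in the paper's Step 3, your proof does not go through.
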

\begin{proof}
	\emph{Step 1.} By Proposition \ref{prop-wmm} and replacing $(X,B+M)$ we can assume it satisfies the conditions listed in Lemma \ref{induction-lem}. Run a $P$-trivial MMP on $K_X+B+M$. Note that it is an isomorphism near the generic points of g-lc centres. Pick a decreasing sequence of positive numbers $\lambda_i$ such that $\lim\limits_{i \to \infty} \lambda_i =0$. For each $i> 0$, we denote by $X^i$ the model appeared in the MMP on which $K_{X^i}+B^i+M^i +\lambda_{i} A^i +\alpha_i P^i$ is nef. Now for each $i$, we set
	$$
	\beta_i:=\inf \{\beta| \sigma_{N_k^i}(K_{X^i}+B^i+\lambda_i A^i + M^i +\beta P^i)=0\text{ for some $k$}\}
	$$
	where $N_k^i$ runs over all components of $N^i$. Note that $\lim\limits_{i \to \infty} \beta_i = +\infty$.

	\emph{Step 2.} 
	We will prove that the above MMP terminates near and degenerates on $\rddown{B^i}$, which completes the proof by Theorem \ref{thm-degen} and Lemma \ref{lem-degen}. Let $S^i$ be a g-lc centre. By induction we can assume the above MMP terminates on $\rddown{B_S}$. Since $P^i|_{S^i}$ is numerically effective, due to Theorem \ref{thm-sp-term}, the map $S^i \bir S^{i+1}$ can be lifted to a small $\Q$-factorialisations $(\widetilde{S}^{i}, B_{\widetilde{S}^{i}}+M_{\widetilde{S}^{i}}) \bir (\widetilde{S}^{i+1}, B_{\widetilde{S}^{i+1}}+M_{\widetilde{S}^{i+1}})$ which is a $P^i|_{\widetilde{S}^{i}}$-trivial MMP on $K_{\widetilde{S}^{i}}+B_{\widetilde{S}^{i}}+M_{\widetilde{S}^{i}}$ with scaling of $A^i|_{\widetilde{S}^{i}}$. By induction and Theorem \ref{thm-degen}, the induced $P^i|_{\widetilde{S}^{i}}$-trivial MMP degenerates to an MMP on $K_{\widetilde{S}^{i}}+B_{\widetilde{S}^{i}}+M_{\widetilde{S}^{i}} + \alpha P^i|_{\widetilde{S}^{i}}$ for some $\alpha >0$.
	
	If $i \gg0$, then $\beta_i >\alpha$. By Lemma \ref{lem-P-mmp} we can run an MMP on $K_{X^i}+B^i+M^i +\lambda_i A^i+\alpha P^i$ with scaling of $(\alpha_i-\alpha) P^i$. Since all components of $N$ are contracted by an MMP on $K_X+B+\lambda_i A+ M+ \alpha P$ provided that $i \gg 0$, the above MMP contracts them and terminates with a minimal model after finitely many steps:
	 $$
	 X^i=Y^i_0 \bir Y^i_1 \bir \ldots \bir Y_{n^i}^i=Y^i.
	 $$ 
	We therefore obtain the following commutative diagram
	$$
	\xymatrix{
		X \ar@{-->}[r]_{} & X^i  \ar@{-->}[d]_{}\ar@{-->}[r]_{}&  X^{i+1} \ar@{-->}[d]_{}\ar@{-->}[r]_{} &\\
		&  Y^i  &  Y^{i+1}   & 
	} 
	$$
	Note that $X^i \bir X^{i+1}$ can be an isomorphism.
	
	\emph{Step 3.}
	For some $i \gg 0$, let $W^i$ be a common log resolution of $(X^i,B^i+\lambda_i A^i + M_i +\alpha P^i)$, $(Y^i,B_{Y^i} +\lambda_i A_{Y^i} +M_{Y^i} +\alpha P_{Y^i})$ and $(X,B+\lambda_iA +M+\alpha P)$, and consider the following diagram:
	$$
	\xymatrix{
		X \ar@{-->}[drrr]_{} && S_{W^i}  \ar[dl]_{}\ar[dd]_{}\ar@{^(-_>}[rr]_{}&&  W^{i} \ar[dl]_{ \pi}\ar[dd]^{\rho} \ar@/_/@<-1ex>[llll]_{\phi} \\
		& S^i  \ar@{^(-_>}[rr]_{}\ar@{-->}[dr]^{ }   &&  X^{i} \ar@{-->}[dr]^{ }   &   \\
		&& S_{Y^i}\ar@{^(-_>}[rr]_{} & & Y^{i}  
	} 
	$$
	Write $$\phi^*(K_X+B+\lambda_iA +M+\alpha P) = \pi^*(K_{X^i}+B^i+\lambda_i A^i + M_i +\alpha P^i)+G$$ where $G \ge 0$ is exceptional$/X^i$ and $$ \pi^*(K_{X^i}+B^i+\lambda_i A^i + M_i +\alpha P^i)=\rho^*(K_{Y^i}+ B_{Y^i} +\lambda_i A_{Y^i} +M_{Y^i} +\alpha P_{Y^i})+E$$
	where $E \ge 0$ is exceptional$/Y^i$. Note that $G+E=N_\sigma(\phi^*(K_X+B+\lambda_iA +M+\alpha P))$. By Lemma \ref{induction-lem}, for any component $E_i$ of $E$, we have
	\begin{align*}
		\mu_{E_i} \Delta^i_{W^i} - \mu_{E_i} E = \mu_{E_i} \Delta_{W^i} -\mu_{E_i} E-\mu_{E_i} G <0
	\end{align*}
    where $K_{W^i} +\Delta^i_{W^i} +M_{W^i}  =\pi^*(K_{X^i}+B^i +\lambda_i A^i + M_i) $ and $K_{W^i} +\Delta_{W^i} +M_{W^i}  =\phi^*(K_X+B+\lambda_iA +M) $. So, by Lemma \ref{lem-adjunction}, $E_i|_{S_{W^i}}$ is exceptional$/S_{Y^i}$. But then 
    $$
    \pi|_{S_{W^i}}^*(K_{S^{i}}+B_{S^i} +\lambda_i A^i|_{S^i} + M_{S^{i}} + \alpha P^i|_{S^i})=\rho|_{S_{W^i}}^*(K_{S_{Y^i}}+B_{S_{Y^i}} +\lambda_i A_{S_{Y^i}} + M_{S_{Y^i}} + \alpha P_{S_{Y^i}}) +E|_{S_{W^i}}
    $$
    where $K_{S^{i}}+B_{S^i} +\lambda_i A^i|_{S^i} + M_{S^{i}} + \alpha P^i|_{S^{i}}$ is nef which in turn implies that $E|_{S_{W^i}}=0$ by the negativity lemma (\cite[Lemma 3.3]{birkar-flip}). Because $$(1-\epsilon)\phi^* N \le \phi^*\phi_* (E+G ) \le  E+G \le   \phi^* N $$ for some $\epsilon >0$, we derive $$\Supp \phi^* N =\Supp\phi^*\phi_* (E+G )=\Supp( E+G)$$ and $G \le \phi^*N - \pi^* N^i.$ Hence we deduce $\Supp \pi^* N^i =\Supp E$. So, we have $\pi^* N^i |_{S_{W^i}}=0$ and consequently the divisor $K_{S^{i}}+B_{S^{i}}  + M_{S^{i}} = P^i|_{S^{i}}$ is nef. In conclusion, the $P$-trivial MMP terminates near $S^{i}$.
	
   \emph{Step 4.} By induction the $P$-trivial MMP terminates near and degenerates on $\rddown{B^i}$. Thanks to Theorem \ref{thm-degen} we deduce it degenerates to an MMP on $K_X+B+ M+ \alpha P$ for some $\alpha \gg0$. Finally we conclude the theorem by Lemma \ref{lem-degen}.
\end{proof}

\begin{rem}
	Notations as in the above theorem, if $\mathbf{B}_-(K_X+B+M)$ does not intersect with g-lc centres, then Steps 2-4 trivially hold and hence $(X,B+M)$ has a minimal model. So Theorem \ref{thm-dlt''} is a generalisation of Corollary \ref{cor-klt'}.
\end{rem}



\begin{thebibliography}{BCHM}


	


\bibitem[Bir23]{Bir23} C. Birkar, \textit{Singularities on Fano fibrations and beyond}, arXiv:2305.18770.

\bibitem[Bir21b]{Bir21b} C. Birkar, \textit{Generalised pairs in birational geometry}, EMS Surv. Math. Sci. \textbf{8} (2021), 5--24.

\bibitem[Bir21a]{Bir21a} C. Birkar, \textit{Singularities of linear systems and boundedness of Fano varieties}, Ann. of Math. \textbf{193} (2021), no. 2, 347--405.

\bibitem[Bir19]{birkar-BAB} C.~Birkar, Anti–pluricanonical systems on Fano varieties. Ann. of Math. (2) 190 (2) 345 - 463, September 2019. https://doi.org/10.4007/annals.2019.190.2.1



\bibitem[Bir12]{birkar-flip} C.~Birkar, Existence of log canonical flips and a special LMMP, Publ. Math. Inst. Hautes \'Etudes Sci. {\textbf{115}} (2012), no. 1, 325--368.

\bibitem[BCHM10]{bchm}C.~Birkar, P.~Cascini, C.~D.~Hacon, J.~M\textsuperscript{c}Kernan, Existence of minimal models for varieties of log general type, J. Amer. Math. Soc. {\textbf{23}}(2010), no. 2, 405--468.

\bibitem[BH14]{bhzariski} C.~Birkar, Z.~Hu, Polarized pairs, log minimal models, and Zariski decompositions, Nagoya Math. J. {\textbf{215}} (2014), 203--224. 

\bibitem[BZh16]{birkarzhang} C.~Birkar, D.~Q.~Zhang, Effectivity of Iitaka fibrations and pluricanonical systems of polarized pairs, Publ. Math. Inst. Hautes \'Etudes Sci. {\textbf{123}} (2016), no. 1, 283--331.

\bibitem[CHLX23]{chlx} Minimal model program for algebraically integrable foliations and generalized pairs, preprint arXiv:2309.15823.

\bibitem[CLX23]{CLX23} B. Chen, J. Liu, and L. Xie, \textit{Vanishing theorems for generalized pairs}, arXiv:2305.12337.



\bibitem[DH23]{DH23} O. Das and C. D. Hacon, \textit{On the Minimal Model Program for K\"ahler 3-Folds}, arXiv:2306.11708.

\bibitem[DH24]{DH24} O. Das and C. D. Hacon, \textit{Transcendental Minimal Model Program for Projective Varieties}, arXiv:2412.07650.

\bibitem[DHY23]{DHY23} O. Das, C. D. Hacon, and J. Yáñez, \textit{MMP for Generalized pairs on Kähler 3-Folds}, arXiv:2305.00524.



\bibitem[Fuj20]{fujino-additivity} O. Fujino; Corrigendum to "On subadditivity of the logarithmic Kodaira dimension". J. Math. Soc. Japan 72 (2020), no. 4, 1181--1187.

\bibitem[Fuj07]{fujino-sp} O. Fujino; Special termination and reduction to pl flips. In Flips for 3-folds and 4-folds, Oxford University Press (2007).





\bibitem[HP24]{HP24} C. D. Hacon and M. Păun, \textit{On the Canonical Bundle Formula and Adjunction for Generalized Kaehler Pairs}, arXiv:2404.12007.

\bibitem[HL20]{hanliu} J. Han, W. Liu, On Numerical Nonvanishing for Generalized Log Canonical Pairs. Doc. Math. 25 (2020), pp. 93–123. DOI 10.4171/DM/739

\bibitem[HL18]{hanli}J.~Han, Z.~Li, Weak Zariski decompositions and log terminal models for generalized polarized pairs. Preprint (2018).



\bibitem[HH20]{hashizumehu}K.~Hashizume, Z.~Hu, On minimal model theory for log abundant lc pairs, J. Reine Angew. Math., \textbf{767} (2020), 109--159.

\bibitem[Hu23]{hu3} Z.~Hu, Existence of canonical models for Kawamata log terminal pairs, Birational Geometry, K\"ahler-Einstein Metrics and Degenerations, Springer Proceedings in Mathematics and Statistics, vol 409. Springer, Cham. https://doi.org/10.1007/978-3-031-17859-7-15

\bibitem[Hu21]{hu4}Z.~Hu, An abundance theorem for generalised pairs, preprint (2021), arXiv:2103.11813.


\bibitem[Hu20]{hu2}Z.~Hu, Log abundance of the moduli b-divisors of lc-trivial fibrations, preprint (2020), arXiv:2003.14379.  

\bibitem[Hu17]{hu}Z.~Hu, Log canonical pairs with boundaries containing ample divisors, preprint (2017), arXiv:1712.07219.  

\bibitem[KM98]{kollar-mori} J.~Koll\'ar, S.~Mori, {\em{Birational geometry of algebraic varieties}}, With the collaboration of C.~H.~Clemens and A.~Corti. Translated from the 1998 Japanese original. Cambridge Tracts in Mathematics, {\textbf{134}}. Cambridge University Press, Cambridge, (1998).

\bibitem[Laz04]{lazarsfeld} R.~Lazarsfeld,  {\em{Positivity in Algebraic Geometry I: Classical Setting: Line Bundles and Linear Series}}, (2004). 10.1007/978-3-642-18808-4.

\bibitem[LX23a]{LX23a} J. Liu and L. Xie, \textit{Relative Nakayama-Zariski decomposition and minimal models of generalized pairs}, Peking Math. J. (2023).

\bibitem[LX23b]{LX23b} J. Liu and L. Xie, \textit{Semi-ampleness of generalized pairs}, Adv. Math. \textbf{427} (2023), 109126.


\bibitem[Nak04]{nakayama}N.~Nakayama, {\em Zariski-decomposition and abundance}, MSJ Mem., {\textbf{14}}, Mathematical Society of Japan, Tokyo, (2004). 

\bibitem[Sho93]{sho} V.V. Shokurov, {\em 3-fold log flips}, With an appendix by Yujiro Kawamata, Russian Acad.
Sci. Izv. Math. 40 (1993), no. 1, 95-202.

\bibitem[TX24]{txie} N. Tsakanikas, L. Xie; {\em Remarks on the existence of minimal models of log canonical generalized pairs}, Math. Z. 307, 20 (2024). https://doi.org/10.1007/s00209-024-03489-6

\bibitem[vDdB18]{vddb} R. van Dobben de Bruyn, {\em Infinitely small intersections with nef $\R$-Cartier divisors.} https://mathoverflow.net/questions/291154/infinitely-small-intersections-with-nef-mathbb-r-cartier-divisors.

\bibitem[Xie24]{Xie24} L. Xie, \textit{Contraction theorem for generalized pairs}, Algebraic Geometry and Physics \textbf{1} (2024), no. 1, 101--124.


\end{thebibliography}
\end{document}